\numberwithin{equation}{section}
\theoremstyle{plain}
\newtheorem{theorem}{Theorem}[section] 
\newtheorem{lemma}[theorem]{Lemma}
\newtheorem{proposition}[theorem]{Proposition}
\theoremstyle{definition}
\newtheorem{definition}[theorem]{Definition}
\newtheorem{remark}[theorem]{Remark}
\newtheorem{example}[theorem]{Example}
\theoremstyle{remark}
\newtheorem{problem}[theorem]{Problem}
\newcommand{\R}{\mathbb{R}}
\newcommand{\Q}{\mathbb{Q}}
\newcommand{\Z}{\mathbb{Z}}
\newcommand{\N}{\mathbb{N}}
\newcommand{\C}{\mathbb{C}}
\newcommand{\h}{\mathbb{H}}
\renewcommand{\H}{\mathbb{H}}
\newcommand{\G}{\Gamma}
\newcommand{\g}{\gamma}
\newcommand{\la}{\lambda}
\newcommand{\La}{\Lambda}
\newcommand{\back}{\backslash}
\newcommand{\wwedge}[1]{\sideset{}{^{#1}}\bigwedge}
\newcommand{\kzxz}[4]{\left(\begin{smallmatrix} #1 & #2 \\ #3 & #4\end{smallmatrix}\right) }
\newcommand{\calA}{\mathcal{A}}
\newcommand{\calF}{\mathcal{F}}
\newcommand{\calL}{\mathcal{L}}
\newcommand{\calS}{\mathcal{S}}
\newcommand{\eps}{\varepsilon}
\newcommand{\vol}{\operatorname{vol}}
\newcommand{\tr}{\operatorname{tr}}
\newcommand{\sgn}{\operatorname{sgn}}
\newcommand{\Span}{\operatorname{span}}
\newcommand{\Stab}{\operatorname{Stab}}
\newcommand{\Sl}{\operatorname{SL}}
\newcommand{\SL}{\operatorname{SL}}
\newcommand{\Spin}{\operatorname{Spin}}
\newcommand{\Orth}{\operatorname{O}}
\newcommand{\SO}{\operatorname{SO}}
\newcommand{\supp}{\operatorname{supp}}
\newcommand{\Lk}{\operatorname{Lk}}
\newcommand{\PD}{\operatorname{PD}}
\begin{document}

\title[The Geometric Theta Correspondence for Hilbert Modular Surfaces]
{The Geometric Theta Correspondence for Hilbert Modular Surfaces }

\author[Jens Funke and John Millson]{Jens Funke* and John Millson**}
\thanks{* Partially supported by NSF grant DMS-0710228}
\thanks{** Partially supported by NSF grant DMS-0907446, NSF FRG grant DMS-0554254, and the Simons Foundation}
\address{Department of Mathematical Sciences, University of Durham, Science Laboratories,
South Rd, Durham DH1 3LE, United Kingdom}
\email{jens.funke@durham.ac.uk}
\address{Department of Mathematics, University of Maryland, College Park, MD
20742, USA} \email{jjm@math.umd.edu}

\date{\today}

\maketitle

\section{Introduction}

In a series of papers \cite{FM1,FMcoeff,FMres,FMspec} we have been studying the geometric theta correspondence (see below) for non-compact arithmetic quotients of symmetric spaces associated to orthogonal groups. It is our overall goal to develop a general theory of geometric theta liftings in the context of the real differential geometry/topology of non-compact locally symmetric spaces of orthogonal and unitary groups which generalizes the theory of Kudla-Millson in the compact case, see \cite{KM90}. 

In this paper we study in detail the geometric theta lift for Hilbert modular surfaces. In particular, we will give a new proof and an extension (to all finite index subgroups of the Hilbert modular group) of the celebrated theorem of Hirzebruch and Zagier \cite{HZ} that the generating function for the intersection numbers of the Hirzebruch-Zagier cycles is a classical modular form of weight $2$.\footnote{Eichler, \cite{HZ} p.104, proposed a proof using ``Siegel's work on indefinite theta functions''. This is what our proof is, though with perhaps more differential geometry than Eichler had in mind.} In our approach we replace Hirzebuch's smooth complex analytic compactification $\tilde{X}$ of the Hilbert modular surface $X$ with the (real) Borel-Serre compactification $\overline{X}$. The various algebro-geometric quantities that occur in \cite{HZ} are then replaced by topological quantities associated to $4$-manifolds with boundary. In particular, the ``boundary contribution'' in \cite{HZ} is replaced by sums of linking numbers of circles (the boundaries of the cycles) in the $3$-manifolds of type Sol (torus bundle over a circle) which comprise the Borel-Serre boundary.

\subsubsection*{The geometric theta correspondence}

We first explain the term ``geometric theta correspondence''. The Weil (or oscillator) representation gives us a method to construct closed differential forms on locally
symmetric spaces associated to groups which belong to dual pairs. Let $V$ be a rational quadratic space of signature $(p,q)$ with for simplicity even dimension. Then the Weil representation induces an action of $\SL_2(\R) \times \Orth(V_\R)$ on $\mathcal{S}(V_\R)$, the Schwartz functions on $V_\R$. Let $G = \SO_0(V_\R)$ and let $K$ be a maximal compact subgroup. We let $\mathfrak{g}$ and $\mathfrak{k}$ be their respective Lie algebras and let $\mathfrak{g} = \mathfrak{p} \oplus \mathfrak{k}$ be the associated Cartan decomposition. Suppose 
\[
\varphi\in (\mathcal{S}(V_\R) \otimes \wedge^r \mathfrak{p}^{\ast})^{K}
\]
is a cocycle in the relative
Lie algebra complex for $G$ with values in $\mathcal{S}(V)$. Then $\varphi$ corresponds to a closed differential $r$-form $\tilde{\varphi}$ on the symmetric space $D= G/K$ of dimension $pq$ with values in $\mathcal{S}(V)$.
For a coset of a lattice $\mathcal{L}$ in $V$, we define the theta distribution $\Theta=\Theta_{\mathcal{L}}$ by $\Theta = \sum_{\ell \in \mathcal{L}} \delta_{\ell}$, where $ \delta_{\ell}$ is the delta measure concentrated at $\ell$. It is obvious that $\Theta$ is invariant under $\G = \Stab(\mathcal{L}) \subset G$. There is also a congruence subgroup $\Gamma'$ of $\SL(2,\Z)$) such that $\Theta$ is also invariant under $\Gamma'$. Hence we can apply the theta distribution to $\tilde{\varphi}$ to obtain a closed $r$-form $\theta_{\varphi}$ on $X = \Gamma \backslash D$ given by 
\[
\theta_{\varphi}(\mathcal{L})= \langle \Theta_{\mathcal{L}}, \tilde{\varphi} \rangle.
\]
Assume now in addition that $\varphi$ has weight $k$ under the maximal compact subgroup $\SO(2) \subset \SL_2(\R)$. Then $\theta_{\varphi}$ also gives rise to a (in general) non-holomorphic function on the upper half place $\h$ which is modular of weight $k$ for $\G'$. We may then use $\theta_{\varphi}$ as the kernel of a pairing of modular forms $f$ with (closed) differential $(pq-r)$-forms $\eta$ or $r$-chains (cycles) $C$ in $X$. The resulting pairing in $f$, $\eta$ (or $C$), and $\varphi$ as these objects vary, we call the {\bf geometric theta correspondence}.

\subsubsection*{The cocycle of Kudla-Millson}

The key point of the work of Kudla and Millson \cite{KM1,KM2} is that they found (in greater generality) a family of cocycles $\varphi^V_{q}$ in $(\mathcal{S}(V) \otimes \wedge^q \mathfrak{p}^{\ast})^K$ with weight $(p+q)/2$ for $\SL_2$. Moreover, these cocycles give rise to Poincar\'e dual forms for certain totally geodesic, ``special'' cycles in $X$. Recently, it has now been shown, first \cite{HoffmanHe} for $\SO(3,2)$, and then \cite{BMM} for all $\SO(p,q)$ and $p+q>6$ (with $p \geq q$) in the cocompact (standard arithmetic) case that the geometric theta correspondence specialized to $\varphi_q^V$ induces on the adelic level an {\it isomorphism} from the appropriate space of classical modular forms to $H^q(X)$. In particular, for any congruence quotient, the dual homology groups are spanned by special cycles. This gives further justification to the term geometric theta correspondence and highlights the significance of these cocycles. In \cite{FMcoeff} we generalize $\varphi^V_{q}$ to allow suitable non-trivial coefficient systems (and one has an analogous isomorphism in \cite{BMM}).

\subsubsection*{The main results}

In the present paper, we consider the case when $V$ has signature $(2,2)$ with $\Q$-rank $1$. Then $D \simeq \h \times \h$, and $X$ is a Hilbert modular surface. We let $\overline{X}$ be the Borel-Serre compactification of $X$ which is obtained by replacing each isolated cusp associated to a rational parabolic $P$ with a boundary face $e'(P)$ which turns out to be a torus bundle over a circle, a $3$-manifold of type Sol. This makes $\overline{X}$ a $4$-manifold with boundary.  For simplicity, we assume that $X$ has only one cusp so that $\partial \overline{X} = e'(P)$, and we write $k: \partial \overline{X}  \hookrightarrow \overline{X}$ for the inclusion. The special cycles $C_n$\footnote{We distinguish the relative cycles $C_n$ in $X$ from the Hirzebruch-Zagier cycles $T_n$ in $\tilde{X}$, see below.} in question are now embedded modular and Shimura curves, and are parameterized by $n \in \N$. They define relative homology classes in $H_2(X, \partial X,\Q)$. 

The geometric theta correspondence of Kudla-Millson \cite{KM90} for the cocycle $\varphi^V_{2}$ in this situation takes the following shape. For a {\it compact} cycle $C$ in $X$, we have that 
\begin{equation}\label{KM-id}
\langle \theta_{\varphi^V_{2}}, C \rangle =  \int_C  \theta_{\varphi^V_{2}}= \sum_{n \geq 0} (C_n \cdot C) q^n
\end{equation}
is a holomorphic modular form of weight $2$ and is equal to the generating series of the  intersection numbers with $C_n$. Here $q = e^{2\pi i \tau}$ with $\tau \in \h$. 
(There is a similar statement for the pairing of $\theta_{\varphi^V_{2}}$ with a closed {\it compactly supported} differential $2$-form on ${X}$ representing a class in $H^2_c(X)$, see Theorem~\ref{KM90}). Our first result is 

\begin{theorem}\label{FM-boundaryexact}  (Theorem~\ref{globalexact})
The differential form $\theta_{\varphi^V_{2}}$ on $X$ extends to a form on $\overline{X}$, and the restriction $k^{\ast}$ of $\theta_{\varphi^V_{2}}$ to $\partial \overline{X}$ gives an {\it exact} differential form on $\partial\overline{X}$. Moreover, there exists a theta series $\theta_{\phi_1^W}$ for a space $W$ of signature $(1,1)$  of weight $2$ with values in the $1$-forms on $\partial \overline{X}$ such that $\theta_{\phi_1^W}$ is a primitive for $k^{\ast} \theta_{\varphi^V_{2}}$:
\[
d (\theta_{\phi_1^W}) = k^{\ast} \theta_{\varphi^V_{2}}.
\]
\end{theorem}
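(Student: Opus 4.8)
The plan is to analyze $\theta_{\varphi^V_2}$ near the single cusp by passing to the Borel--Serre boundary face $e'(P)$ and computing the restriction $k^\ast\theta_{\varphi^V_2}$ explicitly. First I would fix a rational parabolic $P$ stabilizing an isotropic line $\ell$ in $V$; the quotient $W = \ell^\perp/\ell$ is then a rational quadratic space of signature $(1,1)$, and the Levi/unipotent structure of $P$ identifies a neighbourhood of the cusp in $D$ with a horoball fibered over $D_W$, the symmetric space attached to $\SO(W)$. The key computational input is the reduction of the Schwartz form $\varphi^V_2$ along this parabolic: for a vector $x\in V$, the size of its isotropic component governs exponential decay, so in the limit defining $k^\ast$ only the vectors in $\ell^\perp$ (more precisely, those whose image in $W$ is the relevant one) survive. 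This should produce an identity at the level of Schwartz forms expressing the boundary restriction of $\varphi^V_2$ in terms of a Schwartz form $\phi_0^W$ on $W_\R$ (the Kudla--Millson/Gaussian form in the rank-zero case for $W$) together with a $1$-form coming from the circle direction of $e'(P)$; one then recognizes the resulting theta series as being, up to an exact term, the differential $d\theta_{\phi_1^W}$ of a theta series attached to a weight-$2$ Schwartz form $\phi_1^W \in (\mathcal S(W_\R)\otimes\wedge^1)^{\SO(W)}$.

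Concretely, the steps I would carry out are: (1) write down coordinates on $e'(P)$ as a torus bundle over $S^1$, with the torus coordinates coming from the unipotent radical lattice and the circle coordinate from the $\Gl_1$-part of the Levi, and express the invariant $1$-forms; (2) restrict $\tilde\varphi^V_2$ to the boundary and take the limit $t\to\infty$ along the geodesic retraction into the cusp, using the explicit polynomial-times-Gaussian formula for $\varphi^V_2$ from \cite{KM1,KM2} and the splitting $V_\R = (\text{isotropic directions})\oplus W_\R$; (3) apply the theta distribution $\Theta_{\mathcal L}$ and use Poisson summation / the decomposition of $\mathcal L$ along $P$ to see that the boundary theta form is a sum over $n$ of theta series for $W$; (4) construct $\phi_1^W$ by the standard recipe that produces a weight-$2$ form for $\SL_2$ out of the signature-$(1,1)$ data (essentially Kudla's form for $\Orth(1,1)$, or equivalently a derivative of the Gaussian in the $\SL_2$-variable), and verify the two theta series for $W$ are related by $d$ on $e'(P)$ by an explicit computation of the exterior derivative in the circle/torus coordinates, together with an identity of the shape $d\phi_1^W = (\text{restriction of }\varphi^V_2)$ plus a term that integrates to zero over the torus fibers.

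I would organize the proof so that the main identity $d\theta_{\phi_1^W} = k^\ast\theta_{\varphi^V_2}$ on $\partial\overline X$ is reduced to a single pointwise Schwartz-form identity on $W_\R$, namely that the pullback of $\varphi^V_2$ under the boundary inclusion equals $d_W$ of the $1$-form-valued Schwartz function built from $W$, after discarding a piece annihilated by integration along the nilmanifold fiber. Exactness of $k^\ast\theta_{\varphi^V_2}$ is then immediate from the existence of the global primitive $\theta_{\phi_1^W}$.

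The hard part will be step (2)--(4): controlling the passage to the boundary rigorously (the naive restriction of $\tilde\varphi^V_2$ to the finite-distance boundary of $\overline X$ requires knowing that $\theta_{\varphi^V_2}$ extends smoothly to $\overline X$, which uses the rapid decay of $\varphi^V_2$ in the non-isotropic directions and careful bookkeeping of the slowly-decaying isotropic directions) and then matching the resulting expression on the nose with the exterior derivative of a genuinely weight-$2$ theta series for the smaller group $\SO(W)$. Identifying the correct Schwartz form $\phi_1^W$ — and checking it really has weight $2$ under $\SO(2)\subset\SL_2$ and that its theta series takes values in $1$-forms on the Sol-manifold $e'(P)$ — is where the real content lies; the rest is the structure theory of rational parabolics in $\SO(2,2)$ and routine (if lengthy) Weil-representation bookkeeping.
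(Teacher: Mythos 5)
Your steps (1)--(3) are essentially the paper's own route to the restriction formula: one passes to a mixed model of the Weil representation along $P$, applies Poisson summation in the isotropic direction, and finds that $k^{\ast}\theta_{\varphi^V_2}$ is the theta series $\theta^P_{\varphi_{1,1}}$ attached to a weight~$2$ Schwartz form $\varphi_{1,1}$ on the signature $(1,1)$ space $W$, pushed to a $2$-form on $e'(P)$ via the unipotent directions. That part of your plan is sound.

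The genuine gap is step (4). There is no Schwartz-form primitive of the kind you posit: no $\phi_1^W\in[\calS(W_\R)\otimes\wedge^1(\mathfrak{m}^{\ast}\oplus\mathfrak{n}^{\ast})]$ produced by the ``standard recipe'' can satisfy $d\phi_1^W=\varphi_{1,1}$ (and an identity that only holds ``up to a piece annihilated by integration along the torus fiber'' does not give exactness of $k^{\ast}\theta_{\varphi^V_2}$, which is what the theorem asserts). The obstruction is concrete: for a fixed $x\in W$ with $(x,x)>0$ the $1$-form $\varphi_{1,1}(x)$ on $D_W\simeq\R$ is a Thom/Poincar\'e-dual type form for the point $D_{W,x}$ with coefficient $x$, so $\int_{D_W}\varphi_{1,1}(x)\neq 0$; if $\varphi_{1,1}=d\phi$ with $\phi$ Schwartz, then $\phi(m(s)^{-1}x)\to 0$ as $s\to\pm\infty$ and this integral would have to vanish. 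The actual primitive used in the paper is the non-Schwartz function $\phi_{0,1}=\tilde{\psi}_{0,1}+\tilde{\psi}'_{0,1}$: a singular incomplete-Gamma correction of $\psi_{0,1}$ plus a locally constant singular term modeled on Hirzebruch--Zagier's Section~2.3. This $\phi_{0,1}$ is only $C^1$ (and $L^1\cap L^2$), so two further points, which your ``routine Weil-representation bookkeeping'' does not cover, become genuine work: (a) that $\theta_{\phi_{0,1}}$ still transforms like a modular form of weight~$2$ --- the paper proves this by Poisson summation applied to the component $B+B'$, which is legitimate only because $B+B'$ is $C^2$ off the light cone and $W$ is anisotropic, together with a current computation to establish the $\SO(2)$-weight; and (b) that the identity $d\theta^P_{\phi_{0,1}}=\theta^P_{\varphi_{1,1}}$ holds exactly on $e'(P)$, which follows from the pointwise equation $d\phi_{0,1}=\varphi_{1,1}$ (valid across the singularities because of the $C^1$ matching of the two singular pieces) and the fact that $\iota_P$ is a map of complexes. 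Without replacing your Schwartz-form ansatz by such a singular primitive, step (4) fails and with it the exactness claim.
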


Considering the mapping cone for the inclusion $k: \partial \overline{X} \hookrightarrow \overline{X}$ (see Section~\ref{mappingconesection}) we then view the pair $[\theta_{\varphi^V_{2}}, \theta_{\phi_1^W}]$ as an element of the compactly supported cohomology $H^2_c(X)$. Explictly, let $C$ be a relative cycle in $\overline{X}$ representing a class in $H_2({X},\partial {X},\Z)$. Then the Kronecker pairing between $[\theta_{\varphi^V_{2}}, \theta_{\phi_1^W}]$ and $C$ is given by
\begin{equation}\label{mappingconelift}
\langle [\theta_{\varphi^V_{2}}, \theta_{\phi_1^W}], C \rangle = \int_C  \theta_{\varphi^V_{2}}
 - \int_{\partial C} \theta_{\phi_1^W}.
\end{equation}
In this way, we obtain an extension of the geometric theta lift which captures the non-compact situation.

To describe the geometric interpretation of this extension, we study the cycle $C_n$ at the boundary $\partial \overline{X}$ (Section~\ref{capped-cycles}). The intersection of $C_n$ with $\partial \overline{X}$ is a union of circles contained in the torus fibers of Sol. But rationally such circles are homologically trivial. Hence we can find a (suitably normalized) rational $2$-chain $A_n$ in $\partial \overline{X}$ whose boundary is the boundary of $C_n$ in $\partial \overline{X}$. ``Capping'' off $C_n$ by $A_n$, we obtain a {\it closed} cycle $C_n^c$ in $\overline{X}$ defining a class in $H_2({X},\Q)$. Our main result is the extension of \eqref{KM-id}:
\begin{theorem}\label{FMHZ-main} (Theorem~\ref{FM-main-th})
 Let $C$ be a relative cycle in $\overline{X}$. Then
\[
\langle [\theta_{\varphi^V_{2}}, \theta_{\phi_1^W}], C \rangle  = \sum_{n \geq 0} (C_n^c \cdot C)q^n
\]
is a holomorphic modular form of weight $2$ and is equal to the generating series of the  intersection numbers with the capped cycles $C^c_n$. (Similarly for the pairing with an arbitrary closed $2$-form on $\overline{X}$ representing a class in $H^2(X)$).
\end{theorem}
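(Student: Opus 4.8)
The plan is to start from the compact-cycle formula \eqref{KM-id}, which I may assume, and to transport it across the mapping-cone pairing \eqref{mappingconelift} using Theorem~\ref{FM-boundaryexact}. First I would reduce to the case in which $C$ is a single relative cycle representing a class in $H_2(\overline X,\partial\overline X,\Q)$, and recall that the capped cycle $C_n^c = C_n + A_n$ is a genuine closed cycle in $\overline X$, so that the intersection number $(C_n^c\cdot C)$ is defined via the Poincar\'e--Lefschetz duality pairing $H_2(\overline X,\Q)\times H_2(\overline X,\partial\overline X,\Q)\to\Q$. The key identity to establish is the pointwise/cohomological statement
\[
\langle [\theta_{\varphi^V_{2}}, \theta_{\phi_1^W}], C \rangle \;=\; \sum_{n\ge 0}(C_n^c\cdot C)\,q^n,
\]
after which holomorphy and weight $2$ follow from the corresponding properties of the Kudla--Millson kernel together with Theorem~\ref{FM-boundaryexact} (the primitive $\theta_{\phi_1^W}$ is itself a weight $2$ form, being a signature $(1,1)$ theta series, so the mapping-cone combination is again weight $2$ and the potential non-holomorphic terms must cancel because the left side represents a class in $H^2_c(X)$ with $\Q$-coefficients pairing integrally against cycles).

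The heart of the argument is a term-by-term comparison of Fourier coefficients. Fix $n\ge 1$ and consider the $n$-th coefficient of the left-hand side, which by \eqref{mappingconelift} is $\int_C (\theta_{\varphi^V_2})_n - \int_{\partial C}(\theta_{\phi_1^W})_n$, where subscript $n$ denotes the $n$-th Fourier coefficient in $q$. The form $(\theta_{\varphi^V_2})_n$ on $X$ is, by the Kudla--Millson theory, a Poincar\'e-dual form for the special cycle $C_n$; the subtlety is that $C_n$ is only a relative cycle, so this duality holds on $X$ but the form does not have compact support. Here is where Theorem~\ref{FM-boundaryexact} enters: since $k^\ast\theta_{\varphi^V_2} = d\,\theta_{\phi_1^W}$, the pair $[\theta_{\varphi^V_2},\theta_{\phi_1^W}]$ is a cocycle in the mapping cone computing $H^2_c(X)$, and I want to identify its class with the Poincar\'e dual of $C_n^c$ coefficientwise. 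I would do this by choosing, for each $n$, a Thom form $\eta_n$ for $C_n^c$ supported in a tubular neighborhood of $C_n^c$ in $\overline X$ and showing that $[\theta_{\varphi^V_2},\theta_{\phi_1^W}]_n - \eta_n$ is exact in the mapping-cone complex. Concretely, $(\theta_{\varphi^V_2})_n$ differs from a Thom form of $C_n$ by $d\psi_n$ for some $1$-form $\psi_n$ on $X$ (this is the usual Kudla--Millson Poincar\'e duality argument), and on the boundary one must check that $k^\ast\psi_n$ and $\theta_{\phi_1^W}$ account exactly for the capping chain $A_n$; that is, $\int_{\partial C}\big((\theta_{\phi_1^W})_n - k^\ast\psi_n\big)$ should equal the intersection number $(A_n\cdot C)$ computed in $\partial\overline X$. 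This last point is precisely the content of the boundary analysis of Section~\ref{capped-cycles}, where the circles $\partial C_n$ lie in the torus fibers of the Sol-manifold and $A_n$ is the normalized bounding $2$-chain; the relevant intersection numbers there are the linking numbers of circles in Sol alluded to in the introduction.

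I expect the main obstacle to be the boundary bookkeeping: making the normalization of $A_n$ match the specific primitive $\theta_{\phi_1^W}$ produced in Theorem~\ref{FM-boundaryexact}, so that no spurious constant or lower-order term is left over. Concretely, one must verify that the signature $(1,1)$ theta series $\theta_{\phi_1^W}$, restricted to $e'(P)$ and integrated over $\partial C$, computes exactly the intersection pairing of $A_n$ against $C$ inside the $3$-manifold $\partial\overline X$ — including the correct treatment of the $n=0$ term, where $C_0^c$ involves the (suitably capped) zero-section/Euler class contribution. A secondary technical point is independence of all choices: the chain $A_n$, the Thom forms, and the primitives are only well defined up to boundaries, and one must check that \eqref{mappingconelift} is insensitive to these, which follows from Stokes' theorem and the fact that $\partial C$ is a cycle in $\partial\overline X$, but should be stated carefully. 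Once the coefficientwise matching is in place, the modularity and holomorphy assertions are inherited from \eqref{KM-id} applied on a slightly larger configuration, or directly from the transformation properties of the two theta kernels, completing the proof. $\qed$
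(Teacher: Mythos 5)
Your overall architecture (view the pair as a mapping-cone cocycle, match Fourier coefficients, let the boundary primitive account for the cap $A_n$) is the same as the paper's, but the proposal leaves precisely the hard analytic step unproven, and the way you propose to fill it would not work as stated. You assert that $(\theta_{\varphi_2})_n$ ``differs from a Thom form of $C_n$ by $d\psi_n$ for some $1$-form $\psi_n$ on $X$ (this is the usual Kudla--Millson Poincar\'e duality argument)'' and that the remaining boundary identity $\int_{\partial C}\bigl((\theta_{\phi_{0,1}})_n - k^{\ast}\psi_n\bigr) = A_n\cdot C$ ``is precisely the content of the boundary analysis of Section~\ref{capped-cycles}.'' Neither claim holds. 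The Kudla--Millson argument establishes Poincar\'e duality of $\varphi_2(n)$ only against rapidly decreasing forms (the paper points out explicitly that their homotopy argument is not applicable when $\eta$ is an arbitrary closed form on $\overline{X}$), and it does not produce a primitive $\psi_n$ with controlled behaviour at $\partial\overline{X}$; producing such a primitive and controlling its restriction is exactly the content the paper has to supply. It does so by a current/differential-character argument: the explicit singular form $\tilde{\psi}_1(n)$ satisfies $d[\tilde{\psi}_1(n)] + \delta_{C_n}e^{-2\pi n} = [\varphi_2(n)]$ (Theorem~\ref{BrFu}, from \cite{BFDuke}), its restriction to $e'(P)$ is computed to be $\tilde{\psi}^P_{0,1}(n)$ (Proposition~\ref{psitilderes}), and then the decomposition $\phi_{0,1} = \tilde{\psi}_{0,1} + \tilde{\psi}'_{0,1}$ together with the fact that $\tilde{\psi}'_{0,1}(n)$ is a linking dual whose periods compute $\bigl(\int_{A_n}\eta\bigr)e^{-2\pi n}$ (Proposition~\ref{finalintegral}, Theorem~\ref{linking-dual}) is what converts the boundary term into the cap contribution; the cutoff argument of Section~\ref{8.1} then assembles this into the current equation for $C_n^c$ (Theorem~\ref{newcurrenteq}). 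Section~\ref{capped-cycles}, to which you defer, is purely topological (construction, normalization and rationality of the caps, linking numbers in Sol) and contains no comparison between $A_n$ and the theta primitive $\theta_{\phi_{0,1}}$; the ``bookkeeping'' you flag as the main obstacle is in fact the main theorem-level input and is missing from your argument.

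Two smaller points. First, your justification of holomorphy (``the potential non-holomorphic terms must cancel because the left side represents a class in $H^2_c(X)$ with $\Q$-coefficients pairing integrally against cycles'') is not an argument: rationality of coefficients does not force holomorphy. In the paper holomorphy is either a consequence of the coefficient formula itself (once each coefficient is shown to be the constant $(C_n^c\cdot C)$) or, structurally, of the identity $L(\theta_{\varphi_2},\theta_{\phi_{0,1}}) = d(\theta_{\psi_1},0)$ in the mapping-cone complex (Theorem~\ref{La^c-hol}), which uses $L\theta_{\phi_{0,1}} = \theta_{\psi_{0,1}} = i_P^{\ast}\theta_{\psi_1}$ --- an input your sketch never invokes. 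Second, the $n\le 0$ coefficients are not an ``Euler class of a capped zero-section'' issue: for $n\le 0$ the form $\varphi_2^c(n)$ is exact with a decaying primitive, so only $x=0$ contributes, giving the K\"ahler-form term $-\tfrac{1}{2\pi}\delta_{h0}[\omega]$.
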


 Note that in view of \eqref{mappingconelift} the lift of classes of $H_2({X}, \partial {X})$ or $H^2(X)$ is the sum of two in general non-holomorphic modular forms (see below).

In \cite{FMres} we systematically study for $\Orth(p,q)$ the restriction of the classes $\theta_{\varphi^V_{q}}$ (also with non-trivial coefficients) to the Borel-Serre boundary. Whenever the restriction vanishes cohomologically, we can expect that a similar analysis to the one given in this paper will give analogous extensions of the geometric theta correspondence. In fact, aside from this paper we have at present managed to do this for several other cases, namely for modular curves with non-trivial coefficients \cite{FMspec} generalizing work of Shintani \cite{Shintani} and for Picard modular surfaces \cite{FM-Cogdell} generalizing work of Cogdell \cite{Cogdell}.

\subsubsection*{Linking numbers in $3$-manifolds of type Sol}

The theta series $\theta_{\phi_1^W}$ at the boundary is of independent interest and has geometric meaning in its own right. Recall that for two disjoint (rationally) homological trivial $1$-cycles $a$ and $b$ in a $3$-manifold $M$ we can define the {\it linking number} of $a$ and $b$ as the intersection number 
\[
\Lk(a,b) = A \cdot b
\]
of (rational) chains in $M$. Here $A$ is a $2$-chain in $M$ with boundary $a$. We show 

\begin{theorem}\label{FM-linking} (Theorem~\ref{xi'-integralP})
Let $c$ be homologically trivial $1$-cycle in $\partial \overline{X}$ which
is disjoint from the torus fibers containing components of $\partial C_n$. Then the holomorphic part of the weight $2$ non-holomorphic modular form $\int_c \theta_{\phi_1^W}$ is given by the generating series of the linking numbers $\sum_{n>0}\Lk(\partial C_n,c) q^n$.
\end{theorem}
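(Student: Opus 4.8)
The plan is to compute $\int_c \theta_{\phi_1^W}$ explicitly in terms of Fourier coefficients and then identify the holomorphic part with a sum of linking numbers, using the definition $\Lk(\partial C_n, c) = A_n \cdot c$ where $A_n$ is the capping $2$-chain in $\partial\overline{X}$ introduced in Section~\ref{capped-cycles}. First I would recall that $\theta_{\phi_1^W}$ is the theta series attached to a Schwartz form $\phi_1^W$ for the split binary space $W$ of signature $(1,1)$ sitting inside $V$; since $d\theta_{\phi_1^W} = k^*\theta_{\varphi_2^V}$ by Theorem~\ref{FM-boundaryexact}, the form $\theta_{\phi_1^W}$ is a genuine $1$-form on the Sol-manifold $\partial\overline{X}$ whose $q$-expansion has a piece supported on the torus fibers (the holomorphic part) and an Eisenstein-type piece coming from the $0$-dimensional rational boundary data. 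The key is that $\phi_1^W$ is constructed so that its $n$-th Fourier coefficient is, up to normalization, a Poincar\'e dual form (as a current on the torus fiber) for the $1$-cycle $\partial C_n \cap \partial\overline{X}$; this is exactly the boundary analogue of the Kudla--Millson duality used in \eqref{KM-id}, and it is where the signature $(1,1)$ computation with the explicit Gaussian does the work.

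The main steps, in order, would be: (i) write $\theta_{\phi_1^W} = \sum_{n\ge 0}\omega_n\, q^n + (\text{non-holomorphic terms})$, where $\omega_n$ is a $1$-form on $\partial\overline{X}$ whose restriction to each torus fiber is closed and represents the Poincar\'e dual (in the fiber, or in $\partial\overline{X}$ after summing over the relevant fibers) of the circles comprising $\partial C_n$; (ii) observe that the defining relation $dA_n = \partial C_n$ in $\partial\overline{X}$ together with (i) identifies $\omega_n$ cohomologically with a Thom form of a tubular neighborhood of $\partial C_n$; (iii) integrate over $c$: since $c$ is homologically trivial and disjoint from the fibers meeting $\partial C_n$, pick a $2$-chain $B$ in $\partial\overline{X}$ with $\partial B = c$, apply Stokes to $\int_c \omega_n = \int_B d\omega_n$, and use that $d\omega_n$ (as a current) equals the delta-current along $\partial C_n$ up to the exact correction $d(\text{Thom form} - \text{PD of }A_n)$; (iv) conclude $\int_c\omega_n = B\cdot \partial C_n = A_n\cdot c = \Lk(\partial C_n, c)$, using that $A_n\cdot c = \partial C_n \cdot B$ by the symmetry/antisymmetry of the intersection pairing of complementary-dimension chains in the $3$-manifold and the fact that $\partial(A_n - B') $ is nullhomologous for a suitable comparison chain. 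Finally, matching the $q$-expansions term by term gives the holomorphic part of $\int_c\theta_{\phi_1^W}$ as $\sum_{n>0}\Lk(\partial C_n,c)\,q^n$, and the $n=0$ and non-holomorphic contributions are separated out as they do not involve the cycles $C_n$.

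I expect the main obstacle to be step (i)--(ii): showing precisely that the Fourier coefficient $\omega_n$ of $\theta_{\phi_1^W}$ is (cohomologically, on the nose in $\partial\overline{X}$, not just fiberwise) a Poincar\'e dual / Thom form for $\partial C_n$, with the correct rational normalization matching the capping chain $A_n$. This requires unwinding the explicit form of $\phi_1^W$ (the Schwartz function times the element of $\wedge^1\mathfrak{p}_W^*$), carrying out the partial theta integral over the split torus direction of $W_\R$, and recognizing the result as a smoothed delta-form along the image circles — essentially a one-dimensional, non-compact instance of the Kudla--Millson construction, complicated by the fact that $W$ is isotropic so the relevant theta integral only conditionally converges and must be regulated. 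A secondary technical point is bookkeeping the transition between "linking number as $A_n\cdot c$" and "linking number as $B\cdot\partial C_n$", i.e. checking that the two natural definitions of $\Lk$ agree with the correct sign in the Sol-manifold; this is standard but must be done carefully because $\partial\overline{X}$ is not simply connected and one is working with rational rather than integral chains.
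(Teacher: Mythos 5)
Your outline has the right target — split off a holomorphic part, show its $n$-th coefficient paired with $c$ is $\Lk(\partial C_n,c)$ via a bounding chain — but the two steps you yourself flag as the crux are not merely technical, and the second is false in the form you state it. First, ``the holomorphic part'' is only meaningful once one has an explicit splitting of the theta kernel; in the paper this is $\phi_{0,1}=\tilde{\psi}_{0,1}+\tilde{\psi}'_{0,1}$ with $L\tilde{\psi}'_{0,1}=0$ (Lemma~\ref{xi'closed}), and the non-holomorphic corrections $\int_c\tilde{\psi}_{0,1}(n)$ occur in \emph{every} coefficient $n$ (incomplete Gamma terms, cf.\ Example~\ref{HZbeta}); they are not an ``Eisenstein-type piece coming from the $0$-dimensional rational boundary data''. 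Second, the holomorphic coefficient is not, even cohomologically as a current, a Poincar\'e dual or Thom form for $\partial C_n$: $\tilde{\psi}'_{0,1}(n)$ is a locally constant $1$-form on $e'(P)$ whose exterior derivative in the sense of currents is supported on the \emph{entire} singular fibers $F_n$ — by the jump in \eqref{psi'formula} it is a fiber delta current wedged with a translation-invariant $1$-form on the torus — and not on the circles $\partial C_n$. It only behaves like a multiple of $\delta_{\partial C_n}$ after pairing against objects supported away from $F_n$ and using closedness along the fiber, and the multiplicity that then appears, $\min'_{\lambda\in\Lambda_W}|(\lambda,x)|$ (the number of parallel circles $c_{x+ku}$ in Lemma~\ref{LemmaB}), is precisely what must emerge for the normalization to match the cap $A_n$. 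Establishing this is the actual content of the proof and is missing from your plan.

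Your step (iii) also does not survive scrutiny as written: pairing with a chain $B$ with $\partial B=c$ cannot absorb errors ``up to exact corrections'', since an error term $dS$ contributes $\int_cS$, which has no reason to vanish; moreover $B$ necessarily crosses $F_n$, where $\omega_n$ jumps, so the naive Stokes identity $\int_c\omega_n=\int_Bd\omega_n$ already requires the precise jump analysis you were hoping to avoid. The paper's route circumvents both problems: it takes a Thom form $\eta_c$ of the \emph{curve} $c$ (smooth, closed, supported in a tube around $c$ disjoint from $F_n$) and evaluates $\int_{e'(P)}\eta_c\wedge\tilde{\psi}'_{0,1}(n)$ in two ways — once by the explicit fiber computation of Proposition~\ref{finalintegral}, which produces $\bigl(\int_{A_n}\eta_c\bigr)e^{-2\pi n}=\Lk(\partial C_n,c)\,e^{-2\pi n}$ with the correct multiplicities, and once using that $\eta_c$ represents the Poincar\'e dual of $c$ on the complement of the fibers, where $\tilde{\psi}'_{0,1}(n)$ is smooth and closed, yielding $\int_c\tilde{\psi}'_{0,1}(n)$ (Theorem~\ref{linking-dual}). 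Until you supply an argument of comparable precision for the current $d[\tilde{\psi}'_{0,1}(n)]$ — including the passage from fiber-supported currents to circle deltas with the right rational normalization — your proposal is an accurate description of what must be proved rather than a proof.
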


We also give a simple formula in Theorem~\ref{linkSol} for the linking number of two circles contained in the fiber of a $3$-manifold $M$ of type Sol in terms of the glueing homeomorphism for the bundle.

One can reformulate the previous theorem stating that $\sum_{n>0}\Lk(\partial C_n,c) q^n$ is a ``mixed Mock modular form'' of weight $2$; it is the product of a Mock modular form of weight $3/2$ with a unary theta series. Such forms, which originate with the famous Ramanujan Mock theta functions, have recently generated great interest. 

Theorem~\ref{FM-linking} (and its analogues for the Borel-Serre boundary of modular curves with non-trivial coefficients and Picard modular surfaces) suggest that there is a more general connection between modular forms and linking numbers of nilmanifold subbundles over special cycles in nilmanifold  bundles over locally symmetric spaces.

\subsubsection*{Relation to the work of Hirzebruch and Zagier}

In their seminal paper \cite{HZ}, Hirzebruch-Zagier provided a map from the second homology of the smooth compactification of certain Hilbert modular surfaces $j:X \hookrightarrow \tilde{X}$ to modular forms. They introduced the Hirzebruch-Zagier curves $T_n$ in $X$, which are given by the closure of the cycles $C_n$ in $\tilde{X}$. They then defined ``truncated'' cycles $T_n^c$ as the
projections of $T_n$ orthogonal to the subspace of $H_2(\tilde{X},\Q)$ spanned by the 
compactifying divisors of $\tilde{X}$. The principal result of \cite{HZ} was that $\sum_{n \geq 0} [T_n^c] q^n$ defines a holomorphic modular form of weight $2$ with values in $H_2(\tilde{X},\Q)$.
We show $j_{\ast} C_n^c = T_n^c$ (Proposition~\ref{CnTn}), and hence the Hirzebruch-Zagier theorem follows easily from Theorem~\ref{FMHZ-main} above, see Theorem~\ref{HZTheorem}.

The main work in \cite{HZ} was to show that the generating function
\[\vspace{-.1cm}
F(\tau) = \sum_{n=0}^{\infty} (T_n^c \cdot T_m) q^n \vspace{-.1cm}
\]
for the intersection numbers in $\tilde{X}$ of $T^c_n$ with a fixed $T_m$ is a modular form of weight $2$. The Hirzebruch-Zagier proof of the modularity of $F$ was a remarkable synthesis of algebraic geometry, combinatorics, and modular forms. They explicitly computed the intersection numbers $T_n^c \cdot T_m$ as the sum of two terms, $T_n^c \cdot T_m = (T_n \cdot T_m)_X  + ({T}_n \cdot {T}_m)_{\infty}$, where $(T_n \cdot T_m)_X $ is the geometric intersection number of $T_n$ and $T_m$ in the interior of $X$ and $({T}_n \cdot {T}_m)_{\infty}$ which they called the ``contribution from infinity''. They then proved both generating functions $\sum_{n=0}^{\infty} (T_n \cdot T_m)_X  q^n$ and $\sum_{n=0}^{\infty}  (T_n \cdot T_m)_{\infty} q^n$ are the holomorphic parts of two non-holomorphic forms $F_X$ and $F_{\infty}$ with the {\it same} non-holomorphic part (with opposite signs). Hence combining these two forms gives $F(\tau)$.

We recover this feature of the original Hirzebruch-Zagier proof via \eqref{mappingconelift} with $C=C_m$. The first term on the right hand side of \eqref{mappingconelift} was studied in the thesis of the first author of this paper \cite{FCompo} and gives the interior intersections $(T_n \cdot T_m)_X$ encoded in $F_X$. So via Theorem~\ref{FMHZ-main} the second term on the right hand side of \eqref{mappingconelift} must match the boundary contribution $F_{\infty}$ in \cite{HZ}, that is, we obtain
\begin{theorem}
\[
({T}_n \cdot {T}_m)_{\infty} = \Lk( \partial C_n, \partial C_m).
\]
\end{theorem}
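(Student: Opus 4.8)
The plan is to derive this identity purely formally by comparing two established expressions for the generating function $F(\tau) = \sum_{n=0}^\infty (T_n^c \cdot T_m)q^n$, using the dictionary between the Borel--Serre picture developed in this paper and the smooth compactification picture of Hirzebruch--Zagier. First I would invoke Theorem~\ref{FMHZ-main} with $C = C_m$ together with the formula \eqref{mappingconelift}, which gives
\[
F(\tau) = \sum_{n\geq 0}(C_n^c \cdot C_m)q^n = \int_{C_m}\theta_{\varphi^V_2} - \int_{\partial C_m}\theta_{\phi_1^W}.
\]
On the other side, I would use the comparison $j_\ast C_n^c = T_n^c$ (Proposition~\ref{CnTn}) to identify $(C_n^c \cdot C_m) = (T_n^c \cdot T_m)$, and the Hirzebruch--Zagier splitting $T_n^c \cdot T_m = (T_n \cdot T_m)_X + (T_n \cdot T_m)_\infty$. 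The term $\int_{C_m}\theta_{\varphi^V_2}$ is precisely the generating function $F_X$ of interior intersection numbers studied in \cite{FCompo}, i.e. $\sum_n (T_n\cdot T_m)_X q^n$ is its holomorphic part; so subtracting, the generating function $\sum_n (T_n\cdot T_m)_\infty q^n$ must be (the holomorphic part of) $-\int_{\partial C_m}\theta_{\phi_1^W}$, up to the matching of non-holomorphic parts guaranteed by holomorphicity of $F$.

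Next I would invoke Theorem~\ref{FM-linking} with $c = \partial C_m$: its holomorphic part of $\int_{\partial C_m}\theta_{\phi_1^W}$ is $\sum_{n>0}\Lk(\partial C_n, \partial C_m)q^n$ (assuming the genericity hypothesis, which holds after replacing $T_m$ by a homologous cycle, or can be arranged by the usual perturbation/moving-lemma argument). Combining with the previous paragraph and extracting holomorphic parts coefficient-by-coefficient then yields
\[
(T_n \cdot T_m)_\infty = -\,\bigl(\text{coefficient of }q^n\text{ in }\int_{\partial C_m}\theta_{\phi_1^W}\bigr) = \Lk(\partial C_n, \partial C_m)
\]
once signs are tracked correctly. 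I would need to be careful about the sign: the capping chain $A_n$ satisfies $\partial A_n = -\partial C_n$ (the component in $\partial\overline X$), and $\int_{\partial C_m}\theta_{\phi_1^W}$ with $\theta_{\phi_1^W}$ a primitive for $k^\ast\theta_{\varphi^V_2}$ enters \eqref{mappingconelift} with a minus sign, so the two minus signs should cancel against the sign in the definition $(T_n\cdot T_m)_\infty$ relative to $T_n^c\cdot T_m$; this bookkeeping is where I expect to spend the most care.

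The main obstacle is not any single hard estimate but rather the rigorous justification that the non-holomorphic parts of $F_X$ (coming from $\int_{C_m}\theta_{\varphi^V_2}$, which is \emph{a priori} only non-holomorphic modular of weight $2$) and of $-\int_{\partial C_m}\theta_{\phi_1^W}$ genuinely coincide, so that extracting ``holomorphic parts'' is legitimate and the equality of $q$-expansion coefficients follows. This is exactly the structural phenomenon flagged in the introduction --- two non-holomorphic forms with the same shadow --- and making it precise requires knowing the explicit non-holomorphic completion of each piece, i.e. comparing the shadow of the Shintani-type theta integral over $C_m$ with that of the $\Theta_{\phi_1^W}$ integral over $\partial C_m$. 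Granting the results of \cite{FCompo} for the former and Theorem~\ref{FM-linking} (with its implicit computation of the non-holomorphic completion) for the latter, the identity then drops out; the remaining work is purely the sign and normalization bookkeeping described above, plus checking that the genericity hypothesis in Theorem~\ref{FM-linking} can be met for the pair $(\partial C_n, \partial C_m)$ after a harmless homological modification.
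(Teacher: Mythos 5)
Your proposal follows essentially the same route as the paper: apply the mapping-cone pairing \eqref{mappingconelift} with $C = C_m$, identify $C_n^c \cdot C_m = T_n^c \cdot T_m$ via Proposition~\ref{CnTn} and Theorem~\ref{HZTheorem}, use the results of \cite{FCompo} (Theorem~\ref{interior-lift}) for the interior term and the boundary linking-number theorem for the second term, and cancel the common non-holomorphic parts. The two worries you flag are already resolved in the paper's setup: no perturbation of $\partial C_m$ is needed, since Theorem~\ref{xi'-integralP} is stated (and proved, via case (ii) of Theorem~\ref{linking-dual}) to cover $c = \partial C_y$ lying in the fibers meeting $\partial C_n$, and the matching of non-holomorphic parts is automatic because Theorems~\ref{interior-lift} and \ref{xi'-integral} exhibit literally the same non-holomorphic term $\sum_{n}\sum_{[P]} \int_{(\partial C_y)_P} \tilde{\psi}_{0,1}^P(n)(\tau)$, consistent with the a priori holomorphicity from Theorem~\ref{La^c-hol}.
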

Hence we give an interpretation for the boundary contribution in \cite{HZ} in terms of linking numbers in $\partial \overline{X}$. In fact, the construction of $\theta_{\phi_1^W}$ owes a great deal to Section~2.3 in \cite{HZ}, where a scalar-valued version of $\theta_{\phi_1^W}$ is introduced, see also Example~\ref{HZbeta}. Using Theorem~\ref{LinkCnCm} one can also make the connection between our linking numbers and the formulas of the boundary contribution in \cite{HZ} explicit. 

To summarize, we start with the difference of theta integrals \eqref{mappingconelift} (which we know a priori is a holomorphic modular form), then by functorial differential topological computations we relate its Fourier coefficients to intersection/linking numbers, and by direct computation of the integrals involved we obtain the explicit formulas of Hirzebruch-Zagier and a ``closed form'' for their generating function.

Note that Bruinier \cite{B-123} and Oda \cite{Oda} use related theta series to consider \cite{HZ}, but their overall approach is different.

\subsubsection*{Currents}

One of the key properties of the cocycle $\varphi^V_2$ is that the $n$-th Fourier coefficients of $\theta_{\varphi^V_{2}}$ represents the Poincar\' e dual class for the cycle $C_n$. Kudla-Millson establish this by showing that $\varphi^V_2$ gives rise to a Thom form for the normal bundle of each of the components of $C_n$. To prove our main result, Theorem~\ref{FMHZ-main}, we follow a different approach using currents which is implicit in \cite{BFDuke} and is closely related to the Green's function $\Xi(n)$ for the divisors $C_n$ constructed by Kudla \cite{KAnn97,KBforms}. This function plays an important role in the Kudla program (see eg \cite{Kmsri}) which considers the analogous generating series for the special cycles in arithmetic geometry. In the non-compact situation however, one needs to modify  $\Xi(n)$ to obtain a Green's function for the cycle $T_n^c$ in $\tilde{X}$. Discussions with U. K\"uhn suggest that the constructions in this paper indeed give rise to such a modification of $\Xi(n)$, see Remark~\ref{Kudla-modification}.

\vskip.5cm

We would like to thank Rolf Berndt, Jan Bruinier, Jose Burgos, Misha Kapovich, and Ulf K\"uhn for fruitful and extensive discussions on the constructions and results of this paper. As always it is a great pleasure to thank Steve Kudla for his interest and encouragement. Each of us began the work of relating theta lifts and special cycles with him.

We dedicate this paper to the memory of Gretchen Taylor Millson, beloved wife of the second author.

\section{The Hilbert modular surface and  its Borel-Serre compactification}

\subsection{The symmetric space and its arithmetic quotient}

\subsubsection{The orthogonal group and its symmetric space}

Let $V$ be a rational vector space of dimension $4$ with a
non-degenerate symmetric bilinear form  $(\,,\,)$ of signature
$(2,2)$.
We let $\underline{G} = \SO(V)$, viewed as an algebraic group over $\Q$.
We let $G=\underline{G}_0(\R) \simeq \SO_0(2,2)$ be the connected component of the identity of the real points of $\underline{G}$.
It is most convenient to identify the associated symmetric space $D= D_V$ with the
space of negative $2$-planes in $V(\R)$ on which the bilinear form $(\,,\,)$ is
negative definite:
\[
D = \{z \subset V_{\R} ; \;\text{$\dim z =2$ and $(\,,\,)|_z < 0$}
\}.
\]
We pick an orthogonal basis $\{e_1,e_2,e_3,e_4\}$ of $V_{\R}$ with $(e_1,e_1)=(e_2,e_2)=1$ and $(e_3,e_3)=(e_4,e_4)=-1$. We denote the coordinates of a vector $x$ with respect to this basis by $x_i$. We pick as base point of $D$ the plane $z_0=[e_3,e_4]$ spanned by $e_3$ and $e_4$, and we let $K \simeq \SO(2)\times \SO(2)$ be the maximal compact subgroup of $G$ stabilizing $z_0$. Thus $D \simeq G/K$. Of course, $D \simeq \H \times \h$, the product of two upper half planes.

We let  $\underline{P}$ be a rational parabolic subgroup stabilizing a rational isotropic line $\ell$ and define $P= \underline{P}_0(\R)$ as before. We let $\underline{N}$ be its unipotent subgroup and $N = \underline{N}(\R)$. 
We let $u =(e_1+e_4)/\sqrt{2}$ and $u' =(e_1-e_4)/\sqrt{2}$ be two isotropic vectors so that $(u,u')=1$. We assume that $u,u'$ are defined over $\Q$ and obtain a rational Witt decomposition 
\[
V = \ell \oplus W \oplus \ell'
\]
with $\ell = \Q u$, $\ell'=\Q u'$, and a subspace $W = \ell^{\perp} \cap {\ell'}^{\perp}$ such that $W_{\R} = \Span_{\R}(e_2,e_3)$. The choice of $u'$ gives a Levi splitting of $\underline{P}$, and we write
\[
P =NAM
\]
for the Langlands decomposition. Here, with respect to the basis $u,e_2,e_3,u'$, we have  
\begin{align*}
{N} &=
 \left\{ n(w)   = \left( 
\begin{smallmatrix}
1&(\cdot,w)& -(w,w)/2 \\
 &   1_W   & -w  \\
 &         & 1  \\
\end{smallmatrix} \right)
; \; w \in W_{\R} \right\}, \\ 
{A} &= \left\{ a(t) = 
\left( \begin{smallmatrix}
t& &  \\
 &   1_W   &  \\
 &         & t^{-1}\\
\end{smallmatrix} \right); \, t \in \R_+
 \right\}, \\ 
{M} & =
\left\{ m(s)  =
\left( \begin{smallmatrix}
1& & \\
 &    \begin{smallmatrix} \cosh(s) & \sinh(s) \\ \sinh(s) & \cosh(s)
 \end{smallmatrix}    & \\\
 &         & 1  \\
\end{smallmatrix} \right)
; \; s \in \R \right\}.
\end{align*}
Note $N \simeq W_{\R}$. We obtain coordinates for $D$ by $z=z(t,s,w)$ where $z$ is the negative two-plane in $V_{\R}$ with $z=[n(w)a(t)m(s)e_3,n(w)a(t)m(s)e_4]$.

\subsubsection{Arithmetic Quotient}

We let $L$ be an even lattice in $V$ of level $N$, that is $L \subseteq L^{\#}$, the dual lattice, $(x,x) \in 2 \Z$ for $x \in L$, and $q(L^{\#}) \Z = \tfrac1{N}\Z$. We fix $h \in L^{\#}$ and let $\Gamma \subseteq \Stab{L}$ be a subgroup of finite index of the stabilizer of $\mathcal{L}:=L+h$ in $G$. For each isotropic line $\ell =\Q u$, we assume that $u$ is primitive in the lattice $L$ in $V$. We will throughout assume that the $\Q$-rank of $\underline{G}$ is $1$, that is, $V$ splits exactly one hyperbolic plane over $\Q$. Then we define the Hilbert modular surface
\[
X = \G \back D.
\]

\begin{example}\label{HZex}

An important example is the following. Let $d>0$ be the discriminant of the real quadratic field $K = \Q(\sqrt{d})$ over $\Q$, $\mathcal{O}_K$ its ring of integers. We denote by $x \mapsto x'$
the Galois involution on $K$. We let $V \subset M_2(K)$ be the space
of skew-hermitian matrices in $M_2(K)$, i.e., which satisfy $^tx' =-x$. Then the determinant on $M_2(K)$ gives $V$ the structure of a non-degenerate rational quadratic space of signature $(2,2)$ and $\Q$-rank $1$. We define the integral skew-hermitian matrices by 
\begin{equation*}
L = \left\{ x = \left( \begin{smallmatrix} a\sqrt{d}&\lambda\\-\lambda'&b\sqrt{d}
  \end{smallmatrix}  \right) \; : \; a,b \in \Z, \; \lambda  \in
  \mathcal{O}_K \right\}.
\end{equation*}
 Then $L$ is a lattice of level $d$. We embed $\SL_2(K)$ into $\SL_2{\R} \times \SL_2(\R)$ by $g \mapsto (g,g')$ so that $\SL_2(\mathcal{O}_K)$ acts on $L$ by $\g.x = \g x{^t\g'}$ as isometries. Hirzebruch and Zagier actually considered this case for $d \equiv 1 \pmod{4}$ a prime.
\end{example}

The quotient space $X$ is in general an oriented uniformizable orbifold with isolated singularities.  We will treat $X$ as a manifold - we will use Stokes' Theorem and Poincar\'e duality over $\Q$ on $X$. This is justified because in each instance we can pass to a finite normal cover $Y$ of $X$ with $Y$ a manifold. Hence, the formulas we want hold on $Y$. 
We then then go back to the quotient by taking invariants or summing over the group $\Phi$ of covering transformations. The point is that the de Rham complex of $X$ is the algebra of $\Phi$-invariants in the one of $Y$ and the {\it rational} homology (cohomology) groups of $X$ are the groups of $\Phi$-coinvariants (invariants) of those
 of $Y$.

\subsection{Compactifications}

\subsubsection{Admissible Levi decompositions of $P$}

We let $\G_P = \G \cap P$ and $\G_N = \G_P \cap N$. Then the quotient $\G_P/\G_N$ is a non-trivial arithmetic subgroup of $\underline{P}/\underline{N}$ and lies inside the connected component of the identity of the real points of $\underline{P}/\underline{N}$. Furthermore, $\G_P/\G_N$ acts as isometries of spinor norm $1$ on the anisotropic quadratic space $\ell^{\perp}/\ell$  of signature $(1,1)$. Hence $\G_P/\G_N \simeq \Z$ is infinite cyclic. Therefore the exact sequence 
\begin{equation*} \label{exactsequence}
1 \to \G_N \to \G_P \to \G_P/\G_N \to 1
\end{equation*}
splits. We fix $g \in \G_P$ such that its image $\bar{g}$ generates $\G_P/\G_N$. Then $g$ defines a Levi subgroup $M$. In fact, the element $g$ generates $\G_M :=\G_P \cap M$. Hence
\[
\G_P = \G_M \ltimes \G_N.
\]
We will say a Levi decomposition $P = NAM$ is admissible if $
\G_P = (M \cap \G_P) \ltimes \G_N$. In the following we assume that we have picked an admissible Levi decomposition for each rational parabolic.

\subsubsection{Borel-Serre compactification}\label{BScomp}

We let $\overline{D}$ be the (rational) Borel-Serre enlargement of $D$, see \cite{BorelSerre} or \cite{BJ}, III.9. For any parabolic $\underline{P}$ as before with admissible Levi decomposition $P=NAM$, we define the boundary component
\begin{equation*}
e({P}) = MN \simeq D_W \times W.
\end{equation*}
Here $D_W \simeq M \simeq \R$ is the symmetric space associated to the orthogonal group of $W$. Then $\overline{D}$ is given by 
\begin{equation*}
\overline{D} = D \cup \coprod_{\underline{P}} e({P}),
\end{equation*}
where $\underline{P}$ varies over all rational parabolics. The action of $\G$ on $D$ extends to $\overline{D}$ in a natural way, and we let 
\begin{equation*}
\overline{X}:= \G \back \overline{D}
\end{equation*}
be the Borel-Serre compactification of $X = \G \back D$. This makes $\overline{X}$ a manifold with boundary such that
\[
\partial \overline{X} = \coprod_{[\underline{P}]} e'({P}),
\]
where for each cusp, the corresponding boundary component is given by
\[
e'(P) = \G_P \back e(P).
\]
Here $[\underline{P}]$ runs over all $\G$-conjugacy classes. The space $X_W := \G_M \back D_W$ is a circle. Hence $e'(P)$ is a torus bundle over the circle, where the torus $T^2$ is given by $\G_N \back N$. That is, $
e'(P) = X_W \times T^2$, and we have the natural map $\kappa: e'(P) \to X_W$. We have a natural product neighborhood of $e(P)$ in $\overline{D}$ and hence for $e'(P)$ in $\overline{X}$ given by $[(T,\infty] \times e'(P)]$ for $T$ sufficiently large given by $z(t,s,w)$ with $t>T$. We let $i: X \hookrightarrow \overline{X}$ and $i_P: e'(P)\hookrightarrow \overline{X} $ be the natural inclusions.

It is one of the fundamental properties of the Borel-Serre compactification $\overline{X}$ that it is homotopic equivalent to $X$ itself. Hence their (co)homology groups coincide.

\subsubsection{Hirzebruch's smooth compactification}\label{H-compact}

We let $X'$ be the Baily-Borel compactifciation of $X$, which is obtained by collapsing in $\overline{X}$ each boundary component $e'(P)$ to a single point or topologically by taking a cone on each component of the Borel-Serre boundary. It is well known that $X'$ is a projective algebraic variety. We let $\tilde{X}$ be Hirzebruch's smooth resolution of the cusp singularities and $\pi:\tilde{X} \to X'$ be the natural map collapsing the compactifying divisors for each cusp. We let $j:X \hookrightarrow \tilde{X}$ be the natural embedding. Note that the Borel-Serre boundary separates $\tilde{X}$ into two pieces, the (connected) inside $X^{in}$, which is isomorphic to $X$ and the (disconnected) outside $X^{out}$, which for each cusp is a neighborhood of the compactifying divisors. Note that we can view $e'(P)$ as lying in both $X^{in}$ and $X^{out}$ since the intersection $X^{in} \cap X^{out}$ is equal to $ \coprod_{\underline{P}} e({P})$.

\section{(Co)homology}

In this section we describe the relationship between the (co)homology of the various
compactifications. 

\subsection{The homology of the boundary components}\label{boundaryhom}

Every element of $\Gamma_N =\pi_1(T^2)$ is a rational multiple of a commutator in $\Gamma_P$ and accordingly the image of $H_1(T^2,\Q)$ in $H_1(e'(P),\Q)$ is trivial.
Let $a_P \in H_1(e'(P),\Z)$ be the class of the identity section of $\kappa:e'(P) \to X_W$ and $b_P \in H_2(e'(P),\Z)$ be the class of the torus fiber of $\kappa$. It is clear that the intersection number of $a_P$ and $b_P$ is $1$ (up to sign) whence $a_P$ and $b_P$ are nontrivial primitive classes. Furthermore, $a_P$ generates $H_1(e'(P),\Q)$ and $ H_2(e'(P),\Z)  \cong \Z$, generated by $b_P$. So

\begin{lemma}\label{ePhomology}

\begin{enumerate}
\item[(i)] The first rational homology group of $e'(P)$ is generated by $a_P$.
\item[(ii)] The second homology group of $e'(P)$ is generated by $b_P$.
\end{enumerate}
\end{lemma}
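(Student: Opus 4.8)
The plan is to use that the boundary face $e'(P)$, being the total space of the torus bundle $\kappa\colon e'(P)\to X_W$ over the circle $X_W$, is the mapping torus of a monodromy diffeomorphism $\phi\colon T^2\to T^2$ of the fiber $T^2=\Gamma_N\backslash N$. Concretely, the admissible Levi decomposition gives $\Gamma_P=\Gamma_M\ltimes\Gamma_N$ with $\Gamma_M=\langle g\rangle\cong\Z$, and $\phi$ is the self-homeomorphism of $T^2$ induced by the conjugation action of $g$ on $N\simeq W_\R$. I would then feed this into the Wang exact sequence of the fibration,
\[
\cdots\to H_q(T^2;\Z)\xrightarrow{\ \mathrm{id}-\phi_*\ }H_q(T^2;\Z)\to H_q(e'(P);\Z)\to H_{q-1}(T^2;\Z)\xrightarrow{\ \mathrm{id}-\phi_*\ }H_{q-1}(T^2;\Z)\to\cdots,
\]
and extract $H_1$ and $H_2$. (Equivalently, one can run the Lyndon--Hochschild--Serre spectral sequence for $1\to\Gamma_N\to\Gamma_P\to\Gamma_M\to1$, which collapses at $E^2$ because $\Gamma_M\cong\Z$ has homological dimension one; the outcome is the same.)

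First I would pin down the action of $\phi_*$ on $H_*(T^2;\Z)$. On $H_0$ it is the identity. On $H_2(T^2)\cong\Z$ it is multiplication by $\det A$, where $A\in\Gl_2(\Z)$ is the matrix by which $g$ acts on $\Gamma_N\cong\Z^2$; since $g$ acts on $N\simeq W_\R$ through $M\simeq\SO_0(W_\R)$ with $W$ of signature $(1,1)$, this action has determinant $+1$, so $\phi$ preserves the orientation of the fiber and $\phi_*=\mathrm{id}$ on $H_2$. On $H_1(T^2)\cong\Z^2$ we have $\phi_*=A$, and the essential input is that $A-I$ is \emph{injective}: this is exactly the fact recalled before the lemma, namely that $\Gamma_P/\Gamma_N$ acts on the anisotropic plane $\ell^\perp/\ell$ of signature $(1,1)$ as an infinite cyclic group of isometries, so $A$ corresponds to a nontrivial element of $\SO_0(1,1)$, hence has real eigenvalues $\lambda,\lambda^{-1}$ with $\lambda\neq\pm1$; in particular $1$ is not an eigenvalue, and $\det(A-I)=2-\tr A\neq0$.

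Then part (ii) drops out of the top of the Wang sequence: since $\mathrm{id}-\phi_*=0$ on $H_2(T^2)$, the fiber-inclusion map $H_2(T^2;\Z)\to H_2(e'(P);\Z)$ is injective, and its cokernel injects into $\ker\big(\mathrm{id}-\phi_*\colon H_1(T^2)\to H_1(T^2)\big)=\ker(I-A)=0$; hence $H_2(e'(P);\Z)\cong H_2(T^2;\Z)\cong\Z$, generated by the image of the fundamental class of a fiber, i.e.\ by $b_P$. For part (i), since $\mathrm{id}-\phi_*=0$ on $H_0(T^2)$ and $I-A$ is injective with finite cokernel of order $|\tr A-2|$, the sequence yields a short exact sequence
\[
0\to\Z^2/(I-A)\Z^2\to H_1(e'(P);\Z)\to H_0(T^2;\Z)=\Z\to0,
\]
so tensoring with $\Q$ kills the finite group and leaves $H_1(e'(P);\Q)\cong\Q$; the surjection onto $\Z$ is intersection with a fiber, under which the identity section $a_P$ maps to a generator, so $a_P$ generates $H_1(e'(P);\Q)$.

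The only real obstacle is the injectivity of $A-I$ — equivalently, that no nonzero element of $\Gamma_N$ is fixed by $g$, or that every element of $\Gamma_N$ is a rational multiple of a commutator in $\Gamma_P$ — and this has already been established in the paragraph preceding the lemma from the fact that $\Gamma_P/\Gamma_N$ acts on $\ell^\perp/\ell$ with infinite order. Everything else is a formal unwinding of the Wang sequence together with the elementary observation that conjugation by elements of $M\simeq\SO_0(1,1)$ preserves the orientation of $T^2$; identifying the generators produced by the exact sequence with $a_P$ (the identity section) and $b_P$ (the fiber class) is immediate, and is consistent with the already-noted relation $a_P\cdot b_P=\pm1$, which forces both classes to be primitive.
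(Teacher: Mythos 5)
Your proposal is correct and is essentially the paper's own route: the paper's remark after the lemma points precisely to the Wang sequence of the fiber bundle over the circle, and your key input — that the hyperbolic monodromy $A$ has $1$ not an eigenvalue, so $I-A$ is injective with finite cokernel — is the same fact the paper uses in the form ``every element of $\Gamma_N$ is a rational multiple of a commutator in $\Gamma_P$'' to kill the image of $H_1(T^2,\Q)$, while the identification of the generators $a_P$ and $b_P$ via the section, the fiber class, and their intersection number matches the paper's argument.
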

\begin{remark} To compute the homology over $\Z$ one has only to use the Wang sequence for a fiber bundle over a circle, see \cite{Milnor}, page 67.
\end{remark}

Let $\Omega_P$ be the unique $P$-invariant $2$-form on $e'(P)$ such that
\begin{equation}\label{areaform}
\int_{b_P} \Omega_P = 1.
\end{equation}
Since $b_P$ is the image of the fundamental class of $T^2$ inside $H_2(e'(P),\Z)$, we see that that the restriction of $\Omega_P$ to $T^2$ lifts to the area form on $W_{\R} \simeq N$ normalized such that $T^2=\G_N \back N$ has area $1$.

\subsection{Homology and cohomology of $X$ and $\tilde{X}$}

Accordingly to the discussion in Section~\ref{H-compact} we have the Mayer-Vietoris sequence
\[
0 \to \oplus_P H_2(e'(P)) \to H_2(X) \oplus (\oplus_P S_P)  \to H_2(\tilde{X}) \to 0.
\]
Here $S_{P}$ denotes the span of the classes defined by compactifying divisors at the cusp associated to $P$. The zero on the left comes from $H_3(\tilde{X}) =0$ and the zero on the right comes from the fact that for each $P$ the class $a_P$ injects into $H_1(X^{out})$, see \cite{vGeer}, II.3. Since the generator $b_P$ has trivial intersection with each of the compactifying divisors, $b_P$ bounds on the outside so a fortiori it bounds in $\tilde{X}$. Thus the above short exact sequence is the sum
of the two short exact sequences $\oplus_P H_2(e'(P)) \to H_2(X) \to j_{\ast} H_2(X)$ and
$ 0 \to \oplus_P S_P \to \oplus_P S_P $.  By adding the third terms of the two sequences and equating them to $H_2(\tilde{X})$  we obtain the orthogonal splittings (for the intersection pairing) - see also \cite{vGeer}, p.123,
\begin{align*}\label{vdG2}
H_2(\tilde{X}) = j_{\ast} H_2(X) \oplus \left( \oplus_{[P]} S_{P} \right), \qquad \qquad 
H^2(\tilde{X})  = j_{\#} H_c^2(X) \oplus \left(\oplus_{[P]} S^{\vee}_{P}\right).
\end{align*}
Here $j_{\#}$ is the push-forward map. Furthermore, the pairings on each summand are non-degenerate. Considering $\oplus_P H_2(e'(P)) \to H_2(X) \to j_{\ast} H_2(X)$ we also obtain
\begin{proposition}\label{intersectionhom}
$H_2(\partial \overline{X})$ is the kernel of $j_{\ast}$ so that
\[
j_{\ast} H_2(X) \simeq H_2(X)/ \sum_{[P]} H_2(e'(P)).
\]
\end{proposition}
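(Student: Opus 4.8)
The plan is to extract everything from the Mayer--Vietoris sequence that was set up in the previous subsection, together with the two orthogonal splittings derived there. First I would record the relevant segment of the Mayer--Vietoris sequence for the decomposition $\tilde X = X^{in} \cup X^{out}$, namely
\[
0 \to \bigoplus_P H_2(e'(P)) \xrightarrow{\ \alpha\ } H_2(X) \oplus \Bigl(\bigoplus_P S_P\Bigr) \to H_2(\tilde X) \to 0,
\]
noting that $\partial\overline X = \coprod_{[P]} e'(P)$, so $H_2(\partial\overline X) = \bigoplus_{[P]} H_2(e'(P))$. The map $\alpha$ has two components: the first is induced by the inclusions $e'(P) \hookrightarrow X$ (via $X^{in} \simeq X$), the second by the inclusions $e'(P) \hookrightarrow X^{out}$. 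By Lemma~\ref{ePhomology}(ii) each $H_2(e'(P))$ is generated by the torus-fiber class $b_P$, and the discussion already in the text observes that $b_P$ has trivial intersection with every compactifying divisor, hence maps to $0$ in each $S_P$. Therefore the second component of $\alpha$ vanishes, and $\alpha$ is carried entirely by the map $\iota_{\ast}\colon \bigoplus_P H_2(e'(P)) \to H_2(X)$ induced by inclusion of the boundary.

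Next I would combine this with injectivity of $\alpha$: since $\alpha$ is injective and its $S_P$-component is zero, the map $\iota_{\ast}\colon H_2(\partial\overline X) \to H_2(X)$ is itself injective. Now consider the composite $j_{\ast}\colon H_2(X) \to H_2(\tilde X)$ and its restriction/relation to the Mayer--Vietoris boundary map. From the splitting of the short exact sequence into the two sub-sequences $\bigoplus_P H_2(e'(P)) \to H_2(X) \to j_{\ast}H_2(X)$ and $0 \to \bigoplus_P S_P \to \bigoplus_P S_P$ (which the text has already established, using $j_{\ast}H_2(X) \oplus (\oplus_{[P]} S_P) = H_2(\tilde X)$), one reads off directly that the first sub-sequence is \emph{exact}: the image of $\iota_{\ast} = \alpha|_{H_2(e'(P))}$ is precisely the kernel of $j_{\ast}$. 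Combined with the injectivity of $\iota_{\ast}$ this says $H_2(\partial\overline X)$ is isomorphic (via $\iota_{\ast}$) to $\ker j_{\ast} = \sum_{[P]} H_2(e'(P)) \subseteq H_2(X)$, and the quotient statement $j_{\ast}H_2(X) \simeq H_2(X)/\sum_{[P]} H_2(e'(P))$ follows by the first isomorphism theorem applied to $j_{\ast}$.

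The only genuine content beyond bookkeeping is the exactness of the sub-sequence $\bigoplus_P H_2(e'(P)) \to H_2(X) \to j_{\ast}H_2(X) \to 0$, i.e. that $\ker j_{\ast}$ equals the image of the boundary; this is exactly what the text's decomposition of the Mayer--Vietoris sequence into a direct sum of two short exact sequences gives, once one knows $b_P \mapsto 0$ in $\oplus_P S_P$ and that $a_P$ injects into $H_1(X^{out})$ (which forces the left-hand zero and hence that nothing new enters the kernel of $j_{\ast}$ from the $S_P$ side). So the main — and essentially only — obstacle is verifying that the second component of $\alpha$ vanishes on $H_2(e'(P))$, which reduces to the already-cited fact that $b_P$ bounds in $X^{out}$ because it meets every compactifying divisor trivially. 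Everything else is a diagram chase, so I would keep the proof to a few lines: state the MV segment, invoke Lemma~\ref{ePhomology} and the splitting from the preceding subsection, observe $H_2(\partial\overline X) = \ker j_{\ast}$, and conclude.
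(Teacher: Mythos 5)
Your argument is correct and is essentially the paper's own: it uses the Mayer--Vietoris sequence for $\tilde{X}=X^{in}\cup X^{out}$, the vanishing of the $\oplus_P S_P$-component on the fiber classes $b_P$ (since $b_P$ meets the compactifying divisors trivially and hence bounds on the outside), and the resulting splitting into the two short exact sequences, from which $\ker j_{\ast}=\sum_{[P]}H_2(e'(P))$ and the quotient statement are read off. One small slip in your closing parenthetical: the injection of $a_P$ into $H_1(X^{out})$ is what gives the right-hand zero (surjectivity onto $H_2(\tilde{X})$), while the left-hand zero comes from $H_3(\tilde{X})=0$; this does not affect your main chain of reasoning.
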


\subsection{Compactly supported cohomology and the cohomology of the mapping cone}\label{mappingconesection}

We briefly review the mapping-cone-complex realization of the cohomology of compact supports of $X$. For a more detailed discussion, see \cite{FMspec}, section~5. 

We let $A_c^{\bullet}(X)$ be the complex of compactly supported differential forms on $X$ which gives rise to $H_c^{\bullet}(X)$, the cohomology of compact supports. We now represent the compactly-supported cohomology of $X$ by the cohomology of the mapping cone $C^{\bullet}$ of $i^*$, see \cite{Weibel}, p.19, where as before $i: X \hookrightarrow \overline{X}$. However, we will change the sign of the differential on $C^{\bullet}$ and shift the grading down by one. Thus we have
$$C^i =\{ (a,b), a \in A^i (\overline{X}), b \in A^{i-1}(\partial \overline{X})\}$$
with $d(a,b) = (da, i^*a - db)$.
If $(a,b)$ is a cocycle in $C^{\bullet}$ we will use $[[a,b]]$ to denote its cohomology class. We have
\begin{proposition} \label{quasiiso}
The cochain map $A_c^{\bullet}(X) \to C^{\bullet}$ given by $c \mapsto (c,0)$ is a quasi-isomorphism.
\end{proposition}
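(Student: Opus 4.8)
The plan is to exhibit an explicit cochain homotopy witnessing the quasi-isomorphism, or — more efficiently — to deduce it from the long exact sequences the two complexes fit into, together with the five lemma. First I would recall the two standard long exact sequences. The compactly supported cohomology sits in the long exact sequence of the pair $(\overline{X},\partial\overline{X})$, namely
\[
\cdots \to H^{i-1}(\partial\overline{X}) \to H^i_c(X) \to H^i(\overline{X}) \to H^i(\partial\overline{X}) \to \cdots,
\]
using that $H^i_c(X)\cong H^i(\overline{X},\partial\overline{X})$ since $\overline{X}$ is a compact manifold with boundary whose interior is $X$ (and $X\simeq\overline{X}$). On the other side, the mapping cone $C^\bullet$ of $i^\ast\colon A^\bullet(\overline{X})\to A^\bullet(\partial\overline{X})$ fits, by its very construction, into a long exact sequence
\[
\cdots \to H^{i-1}(\partial\overline{X}) \to H^i(C^\bullet) \to H^i(\overline{X}) \to H^i(\partial\overline{X}) \to \cdots,
\]
where the connecting map $H^i(C^\bullet)\to H^i(\overline{X})$ is $[[a,b]]\mapsto [a]$ and the map $H^{i-1}(\partial\overline{X})\to H^i(C^\bullet)$ is $[b]\mapsto[[0,b]]$; here one uses that de Rham cohomology computes $H^\bullet(\overline{X})$ and $H^\bullet(\partial\overline{X})$, and that $i^\ast$ is surjective on forms (extend by a collar/partition of unity) so the cone sequence is exact on cohomology. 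The sign change and degree shift in the definition of $C^\bullet$ are exactly what is needed to make these the \emph{same} sequence.

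Next I would check that the cochain map $\Phi\colon A_c^\bullet(X)\to C^\bullet$, $c\mapsto(c,0)$, is compatible with the two long exact sequences, i.e.\ that the resulting ladder diagram commutes (up to a consistent sign). The middle vertical map $H^i_c(X)\to H^i(C^\bullet)$ is $\Phi$ itself; the right vertical map is the natural $H^i_c(X)\to H^i(\overline{X})$ composed with the isomorphism to $H^i(C^\bullet)\to H^i(\overline{X})$, which one sees is just $[c]\mapsto[c]$ since a compactly supported form is in particular a form on $\overline{X}$; and the left vertical map, comparing the connecting homomorphism $H^{i-1}(\partial\overline{X})\to H^i_c(X)$ with $[b]\mapsto[[0,b]]$, is the identity once one unwinds the definition of the connecting map of the pair in terms of a collar extension. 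Commutativity of each square is a diagram chase at the level of forms: given a closed form $\beta$ on $\partial\overline{X}$, extend it to $\tilde\beta$ on $\overline{X}$, and then $d\tilde\beta$ is compactly supported in $X$ and represents the image under the connecting map; on the cone side $(d\tilde\beta,\tilde\beta|_{\partial}-\beta)=(d\tilde\beta,0)$ is cohomologous to $(0,-\beta)$ via $d(\tilde\beta,0)$-type adjustments — this is the one place a sign must be tracked carefully.

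Once the ladder commutes, the five lemma applies verbatim: the outer four vertical maps (two copies of $\mathrm{id}$ on $H^\bullet(\partial\overline{X})$ and two copies of the identification $H^\bullet_c(X)\to H^\bullet(\overline{X})$... more precisely the identity-type maps in degrees $i-1,i$ on the boundary and the $H^i(\overline X)$, $H^{i+1}(\overline X)$ terms) are isomorphisms, hence the middle map $\Phi_\ast\colon H^i_c(X)\to H^i(C^\bullet)$ is an isomorphism for every $i$, which is exactly the assertion that $\Phi$ is a quasi-isomorphism.

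\textbf{Main obstacle.} The genuinely delicate point is not any single step but the bookkeeping of signs and the precise identification of the connecting homomorphisms: the mapping cone $C^\bullet$ has been defined here with a nonstandard sign convention and a degree shift, so I expect the real work to be verifying that, with \emph{these} conventions, the connecting map of the pair $(\overline X,\partial\overline X)$ matches $[b]\mapsto[[0,b]]$ on the nose (not up to sign), so that the five-lemma diagram commutes rather than anticommutes. A cleaner alternative, which avoids invoking the pair sequence, is to construct an explicit homotopy inverse: using a collar $(-1,0]\times\partial\overline{X}$ of the boundary and a bump function, send $(a,b)\mapsto a - d(\rho\cdot\pi^\ast b)$ suitably interpreted, landing in $A^\bullet_c(X)$, and check directly that this is a cochain map homotopy-inverse to $\Phi$; the obstacle there is again making the collar construction and the associated homotopy operators globally well defined, but it has the advantage of being self-contained. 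I would present the five-lemma argument as the main proof and relegate the sign verification to a short explicit computation.
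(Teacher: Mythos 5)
Your argument is correct, but it is not the route the paper takes: the paper gives no five-lemma argument at all, instead citing \cite{FMspec}, Section 5, and immediately writing down the homotopy inverse that you relegate to your ``cleaner alternative'' — namely, using the product neighborhood of $\partial\overline{X}$ and a cutoff function $f$, a cocycle $(a,b)$ is sent to the class of the compactly supported form $\alpha$ with $a-d(f\pi^{\ast}b)=\alpha+d\mu$. The comparison is this: your main proof (the ladder between the long exact sequence of the cone and the sequence $\cdots\to H^{i-1}(\partial\overline{X})\to H^i_c(X)\to H^i(\overline{X})\to H^i(\partial\overline{X})\to\cdots$, plus the five lemma) is a clean, citation-based verification of the bare statement; the sign issue you flag is genuinely there but harmless, since with the convention $d(a,b)=(da,i^{\ast}a-db)$ one computes $d(f\pi^{\ast}\beta,0)=(d(f\pi^{\ast}\beta),\beta)$, so $(d(f\pi^{\ast}\beta),0)$ is cohomologous to $(0,-\beta)$ and the relevant square commutes up to a sign that can be absorbed into the connecting map without affecting the five lemma. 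What the paper's explicit-inverse approach buys, and what your main proof does not deliver, is the concrete representative $\alpha$ and hence the integral formulas of Lemma~\ref{integralformula} for the Kronecker pairings $\langle[a,b],C\rangle=\int_C a-\int_{\partial C}b$, which are used throughout the rest of the paper (e.g.\ in the proofs of the main theorems); the quasi-isomorphism statement alone would not suffice there. Two further small caveats: the long exact sequence for $H^\bullet_c$ of the interior of a compact manifold with boundary is itself usually proved by the very same collar argument, so your two approaches are closer in content than they appear (you should cite that sequence rather than treat it as formal); and the cone sequence needs no surjectivity of $i^{\ast}$ — it exists for the cone of any cochain map — so that remark can be dropped. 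Also, the map $H^i(C^{\bullet})\to H^i(\overline{X})$, $[[a,b]]\mapsto[a]$, is not a connecting homomorphism; only $H^{i-1}(\partial\overline{X})\to H^i(C^{\bullet})$ is.
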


We now give a cochain map from $C^{\bullet}$ to $ A_c^{\bullet}(X)$ which induces the inverse to the above isomorphism. We let $V$ be a product neighborhood of $\partial \overline{X}$ as in Section~\ref{BScomp}, and we let $\pi:V \to \partial \overline{X}$ be the projection. If $b$ is a form on $\partial \overline{X}$ we obtain a form $\pi^{\ast} b$ on V. Let $f$ be a smooth function of the geodesic flow coordinate $t$ which is $1$ near $t=\infty$ and zero for $t \leq T$
for some sufficiently large $T$. We may regard $f$ as a function on $V$ by making it constant on the $\partial \overline{X}$ factor. We extend $f$ to all of $ \overline{X}$ by making it zero off of $V$. Let $(a,b)$ be a cocycle in $C^i$. Then there exist a compactly supported closed form $\alpha $ and a form $\mu$ which vanishes on $\partial \overline{X}$ such that
\[
a - d(f \pi^{\ast}b ) = \alpha + d\mu.
\]
We define the cohomology class $[a,b]$ in the compactly supported  cohomology $H^i_c(X)$ to be the class of $\alpha$, and the assignment $[[a,b]] \mapsto [a,b]$ gives the desired inverse. From this we obtain the  following integral formulas for the Kronecker pairings with $[a,b]$. 

\begin{lemma}\label{integralformula}
Let $\eta$ be a closed form on $\overline{X}$ and $C$ a relative cycle in $\overline{X}$ of appropriate degree. Then 
$$
\langle[a, b], [\eta]]\rangle 
= \int_{\overline{X}}a\wedge \eta - \int_{\partial \overline{X}} b \wedge i^*\eta, \ \text{and} \ \  
\langle [a,b],C \rangle =  \int_{C}a - \int_{\partial C} b.$$
\end{lemma}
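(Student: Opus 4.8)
The statement to be proved is Lemma~\ref{integralformula}, giving the two integral formulas for the Kronecker pairing of the compactly supported class $[a,b]$ (represented in the mapping cone by the cocycle $(a,b)$) with a closed form $\eta$ on $\overline{X}$, respectively with a relative cycle $C$ in $\overline{X}$. The plan is to trace through the explicit chain-level description of the inverse quasi-isomorphism $C^{\bullet} \to A_c^{\bullet}(X)$ given just above the lemma, namely $[[a,b]] \mapsto [a,b] = [\alpha]$ where $\alpha$ is the compactly supported closed form determined by $a - d(f\pi^{\ast}b) = \alpha + d\mu$ with $\mu$ vanishing on $\partial\overline{X}$. The idea is simply to substitute this representative into the ordinary (compactly supported) de Rham pairing and integrate by parts, keeping careful track of the boundary terms produced by Stokes' theorem on the manifold-with-boundary $\overline{X}$.

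First I would treat the pairing with a closed form $\eta$. Write $\langle [a,b],[\eta]\rangle = \int_{\overline{X}} \alpha \wedge \eta = \int_{\overline{X}} \left( a - d(f\pi^{\ast}b) - d\mu \right)\wedge\eta$. Since $\eta$ is closed, $d\mu \wedge \eta = d(\mu\wedge\eta)$ and $d(f\pi^{\ast}b)\wedge\eta = d(f\pi^{\ast}b\wedge\eta)$, so by Stokes' theorem these two contributions become $\int_{\partial\overline{X}} \mu\wedge i^{\ast}\eta$ and $\int_{\partial\overline{X}} f\,\pi^{\ast}b\wedge i^{\ast}\eta$ respectively. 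The first vanishes because $\mu$ restricts to zero on $\partial\overline{X}$; in the second, $f \equiv 1$ near $t=\infty$ and $\pi$ restricts to the identity on $\partial\overline{X}$, so it equals $\int_{\partial\overline{X}} b\wedge i^{\ast}\eta$. Collecting terms gives $\langle[a,b],[\eta]\rangle = \int_{\overline{X}} a\wedge\eta - \int_{\partial\overline{X}} b\wedge i^{\ast}\eta$, as claimed. For the pairing with a relative cycle $C$ in $\overline{X}$, the argument is the same with $\eta$ replaced by integration over $C$: $\langle[a,b],C\rangle = \int_C \alpha = \int_C a - \int_C d(f\pi^{\ast}b) - \int_C d\mu$, and Stokes on the chain $C$ turns the last two into $-\int_{\partial C} f\pi^{\ast}b - \int_{\partial C}\mu = -\int_{\partial C} b - 0$, using again that $\partial C$ lies in $\partial\overline{X}$ where $f\equiv 1$, $\pi = \mathrm{id}$, and $\mu$ vanishes. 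This yields $\langle[a,b],C\rangle = \int_C a - \int_{\partial C} b$.

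The only genuinely delicate points are bookkeeping rather than conceptual. One must check that $\alpha$ is indeed compactly supported away from $\partial\overline{X}$ so that the pairing $\int_{\overline{X}}\alpha\wedge\eta$ is the honest Kronecker pairing on $H^i_c(X)$ (this is exactly the content of the construction preceding the lemma, so it may be invoked), and one must be careful that the closed form $\eta$ is only assumed closed on $\overline{X}$, not compactly supported, so the wedge $a\wedge\eta$ and $b\wedge i^{\ast}\eta$ make sense as integrals over the compact manifold $\overline{X}$ and its compact boundary — which they do. A minor sign check is needed in the Stokes' theorem applications, since the differential on $C^{\bullet}$ was deliberately chosen with a sign change and a degree shift; one should confirm that with the stated convention $d(a,b) = (da, i^{\ast}a - db)$ the boundary terms come out with the signs displayed in the lemma. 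I expect the main (very mild) obstacle to be precisely this consistent tracking of orientation and sign conventions through the two Stokes' theorem steps; everything else is a direct substitution.
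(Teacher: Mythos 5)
Your computation is exactly the argument the paper has in mind: Lemma~\ref{integralformula} is stated as an immediate consequence of the representative $\alpha$ with $a - d(f\pi^{\ast}b) = \alpha + d\mu$, and your substitution plus Stokes' theorem, using $f\equiv 1$ and $\mu\equiv 0$ on $\partial\overline{X}$, is the intended (omitted) verification. The proposal is correct and follows essentially the same route as the paper.
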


\section{Capped special cycles and linking numbers in Sol}\label{capped-cycles}

For $x \in V$ such that $(x,x)>0$, we define 
\[
D_x =\{ z \in D; \, z \perp x \}.
\]
Then $D_x$ is an embedded upper half plane in $D$. We let $\G_x \subset \G$ be the stabilizer of $x$ and define the special or Hirzebruch-Zagier cycle by 
\[
C_x = \G_x \back D_x, 
\]
and by slight abuse identify $C_x$ with its image in $X$. These are modular or Shimura curves. For positive $n \in \Q$, we write $\calL_n = \{ x \in \mathcal{L}; \, \tfrac12(x,x)= n\}$. Then the composite cycles $C_n$ are given by
\[
C_n= \sum_{x \in \G \back \calL_n} C_x.
\]
Since the divisors define in general relative cycles, we take the sum in $H_2(X,\partial X,\Z)$.

\subsection{The closure of special cycles in the Borel-Serre boundary and the capped cycle $C_x^c$}

We now study the closure of $C_x$ in $\partial \overline{X}$, which is the same as the intersection of $\overline{C}_x$ or $\partial C_x$ with the union of the hypersurfaces $e'(P)$. A straightforward calculation gives 

\begin{proposition}\label{boundaryofC}
If $(x,u) \neq 0$ then there exists a neighborhood $U_{\infty}$ of $e(P)$ such that
$$
D_x \cap U_{\infty} = \emptyset.
$$
If $(x,u) = 0$, then $\overline{D}_x \cap e(P)$ is contained in the fiber of $p$ over $s(x)$, where $s(x)$ is the unique element of $\R$ satifying
\[
(x, m(s(x)) e_3) = 0.
\]
At $s(x)$ the intersection $\overline{D}_x \cap e(P)$ is the affine line in $W$ given by 
\[
\{ w \in W: (x,w) = (u',x)\}.
\]
\end{proposition}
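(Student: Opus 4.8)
The plan is to make everything explicit in the coordinates $z = z(t,s,w)$ introduced for $D$ and then take the limit $t \to \infty$ to pass to $e(P)$. Recall $z(t,s,w)$ is the negative two-plane spanned by $n(w)a(t)m(s)e_3$ and $n(w)a(t)m(s)e_4$, and that $D_x$ is cut out by the two linear conditions $(x, n(w)a(t)m(s)e_3) = 0$ and $(x, n(w)a(t)m(s)e_4) = 0$. So first I would compute these two functions of $(t,s,w)$ for a fixed $x \in V_\R$ with $(x,x) > 0$, writing $x$ in the Witt basis $u, e_2, e_3, u'$ as $x = \alpha u + \beta e_2 + \gamma e_3 + \delta u'$ with $\delta = (x,u)$ and $\alpha = (x,u')$. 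Since $n(w)^{-1}$ acting on the left is the adjoint of $n(w)$ acting on the right (the form is $n$-invariant), it is cleaner to rewrite $(x, n(w)a(t)m(s)e_i) = (n(w)^{-1}x, a(t)m(s)e_i)$ and expand $n(w)^{-1}x$ using the displayed matrix for $n(w)$; then $a(t)m(s)e_3$ and $a(t)m(s)e_4$ are elementary.

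Second, from those two equations I would read off the first assertion: the $u$-component of $x$ is unchanged by $n(w)^{-1}$, and pairing against $a(t)m(s)e_3, a(t)m(s)e_4$ produces a term proportional to $t \cdot (x,u)$ (coming from the $u'$-direction of $a(t)m(s)e_i$, scaled by $t$) plus bounded terms. Hence if $(x,u)\neq 0$ the defining equations force, say, $|w| \lesssim t$ together with a constraint that pushes the relevant coordinate to infinity, so $D_x$ eventually leaves any fixed product neighborhood $U_\infty = \{t > T\}$ of $e(P)$; choosing $T$ large enough gives $D_x \cap U_\infty = \emptyset$. Third, assuming $(x,u) = 0$, the $t$-dependence in the two equations degenerates: one of them becomes $t$-independent and, after dividing by the surviving powers of $t$, the other does too in the limit. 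The $t$-independent equation is exactly a condition on $s$ alone — it says $(x, m(s)e_3) = 0$, which is the defining equation for $s(x)$ (note $(x,u)=0$ means $x \in \ell^\perp = \ell \oplus W$, and the $M$-action only moves the $W$-part, so $(x,m(s)e_3) = \gamma \cosh s + (\text{$e_2$-coeff})\sinh s$ — a strictly monotone function of $s$ once $(x,x)>0$ forces the coefficients into the right range, giving existence and uniqueness of $s(x)$). Plugging $s = s(x)$ into the remaining equation and letting $t\to\infty$, the bounded-in-$t$ terms survive and yield a single affine condition on $w \in W$; a direct check of the constant identifies it as $(x,w) = (u',x)$, giving the claimed affine line.

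The main obstacle is bookkeeping rather than conceptual: correctly tracking which matrix entries contribute which powers of $t$ after conjugating $x$ by $n(w)^{-1}$ and pairing against $a(t)m(s)e_{3,4}$, and verifying that in the $(x,u)=0$ case the $w$-dependence of the two equations collapses to the single stated affine equation in the limit (rather than leaving a residual second constraint). I would double-check the normalization constant in $(x,w) = (u',x)$ against the $\sqrt{2}$'s built into $u = (e_1+e_4)/\sqrt2$, $u' = (e_1 - e_4)/\sqrt2$, since that is the easiest place to drop a factor. The claim that $\overline{D}_x \cap e(P)$ lies in the single fiber of $p$ over $s(x)$ then follows because $s$ is precisely the $D_W \cong M \cong \R$-coordinate on $e(P) = D_W \times W$, and we have shown the only admissible value is $s(x)$.
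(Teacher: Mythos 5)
The paper itself gives no argument for this proposition (it is introduced as ``a straightforward calculation''), so the only issue is whether your computation is sound. Your set-up and your treatment of the case $(x,u)=0$ are fine: writing $x=\alpha u+x_W+\delta u'$ with $\alpha=(x,u')$, $\delta=(x,u)$, $x_W\in W$, the two defining equations of $D_x$ at $z(t,s,w)$ come out as $(x_W+\delta w,\,m(s)e_3)=0$ and $t\delta+t^{-1}\bigl(\tfrac{\delta}{2}(w,w)+(x_W,w)-\alpha\bigr)=0$; for $\delta=0$ these are exactly $(x,m(s)e_3)=0$ and $(x,w)=(u',x)$, both independent of $t$, which gives the fiber over $s(x)$ (your monotonicity argument for uniqueness, using $(x_W,x_W)=(x,x)>0$, is correct) and the stated affine line with no stray constants.

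The gap is in the case $(x,u)\neq 0$. You claim the $e_4$-equation is ``$t\cdot(x,u)$ plus bounded terms'' and conclude $D_x\cap\{t>T\}=\emptyset$ for large $T$. But the remaining terms are $t^{-1}\bigl(\tfrac{\delta}{2}(w,w)+(x_W,w)-\alpha\bigr)$, quadratic in $w$, and they are \emph{not} bounded on the collar $\{t>T\}$: since $W$ is Lorentzian, one can choose $w$ so that $x_W+\delta w$ is strongly timelike and solve this single equation with $t$ arbitrarily large, so the $e_4$-equation alone does not bound $t$ on $D_x$ and your inference fails as stated. What your remark does show is that no sequence in $D_x$ with $(s,w)$ staying bounded can have $t\to\infty$, i.e.\ $\overline{D}_x\cap e(P)=\emptyset$, which yields \emph{some} neighborhood of $e(P)$ (the complement of $\overline{D}_x$) but not disjointness from a product collar. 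To get the clean uniform statement you must use both equations: the $e_3$-equation says $x_W+\delta w\perp m(s)e_3$ in $W$, and since the orthocomplement of a negative vector in the $(1,1)$-plane $W$ is positive definite, $(x_W+\delta w,\,x_W+\delta w)\ge 0$; multiplying the $e_4$-equation by $2\delta t$ and using $(x,x)=2\alpha\delta+(x_W,x_W)$ gives $2\delta^2t^2+(x_W+\delta w,\,x_W+\delta w)=(x,x)$, hence $t^2\le (x,x)/2(x,u)^2$ on $D_x$, and $U_\infty=\{t>T\}$ works for any $T$ exceeding this bound.
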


We define $c_x \subset \partial C_x$ to be the closed geodesic in the fiber over $s(x)$ which is the image of $\overline{D}_x \cap e(P)$ under the covering $e(P) \to e'(P)$. We have

\begin{proposition}\label{TnBS}
\begin{enumerate}
\item[(i)] The $1$-cycle $\partial C_x$ is a finite union of circles. 
\item[(ii)] At a cusp associated to $P$, each circle is contained in a fiber of the map $\kappa: e'(P) \to X_W$ and hence is a rational boundary (by Lemma~\ref{ePhomology}). 
\item[(iii)] Two boundary circles $c_x$ and $c_y$ are parallel if they are contained in the same fiber. In particular, 
$
c_x \cap c_y \neq \emptyset \iff c_x= c_y.
$
\end{enumerate}
\end{proposition}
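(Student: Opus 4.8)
The plan is to establish the three assertions by unwinding the explicit description of $\overline{D}_x \cap e(P)$ furnished by Proposition~\ref{boundaryofC}, together with the structure of $e'(P)$ as a torus bundle over the circle $X_W$ described in Section~\ref{BScomp}.

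\textit{Step 1 (reduce to a single cusp and a single component).} Since $X$ has finitely many cusps and $C_x$ is a closed modular/Shimura curve in $X$, its closure in $\overline{X}$ meets $\partial\overline{X}$ in a compact set. By Proposition~\ref{boundaryofC}, at the cusp associated to $P$ the intersection $\overline{C}_x \cap e'(P)$ is empty when $(x,u)\neq 0$, so only the conjugates of $x$ orthogonal to the isotropic line contribute; there are finitely many $\G_x$-orbits of these. Thus it suffices to analyze one component, namely the image $c_x$ of $\overline{D}_x \cap e(P)$ under $e(P)\to e'(P)$.

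\textit{Step 2 (each component is a circle, proving (i)).} By Proposition~\ref{boundaryofC}, $\overline{D}_x \cap e(P)$ lives in the single torus fiber of $p$ (equivalently $\kappa$) over $s(x)$, and inside that fiber $N \simeq W_\R$ it is the affine line $A_x := \{w \in W_\R : (x,w) = (u',x)\}$. The covering map $e(P) \to e'(P)$ quotients the fiber $W_\R$ by the lattice $\G_N \simeq \Z^2$. I will check that the stabilizer $\G_x \cap N$ acts on $A_x$ as an infinite cyclic group of translations with compact quotient: an element $n(w_0)\in\G_N$ fixes $x$ iff $n(w_0)$ preserves $A_x$, which happens iff $w_0$ lies in the direction of the line $A_x$ (one computes $n(w_0)x = x + (x,w_0)u - \cdots$, so $(x,u)=0$ forces the relevant condition to be $(x,w_0)=0$, i.e. $w_0$ parallel to $A_x$). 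Since $\G_N$ is a full-rank lattice in $W_\R$, the subgroup fixing $A_x$ is infinite cyclic and acts cocompactly on $A_x$; hence the image of $A_x$ in the torus $\G_N\backslash W_\R$ is a closed circle. Taking the (finite) union over the finitely many orbits from Step~1 and over all cusps gives (i).

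\textit{Step 3 (containment in a fiber and rational triviality, proving (ii)).} That each such circle lies in a fiber of $\kappa : e'(P) \to X_W$ is immediate from Proposition~\ref{boundaryofC}, since $\overline{D}_x\cap e(P)$ lies in the fiber of $p$ over $s(x)$ and $\kappa$ is induced by $p$. The fiber is the torus $T^2 = \G_N\backslash N$, and by the discussion in Section~\ref{boundaryhom} (every element of $\G_N = \pi_1(T^2)$ is a rational multiple of a commutator in $\G_P$, so $H_1(T^2,\Q) \to H_1(e'(P),\Q)$ is zero, cf. Lemma~\ref{ePhomology}), any $1$-cycle supported in a single fiber is rationally null-homologous in $e'(P)$. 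Hence $c_x$ is a rational boundary.

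\textit{Step 4 (parallelism and the intersection dichotomy, proving (iii)).} Two components $c_x$, $c_y$ lying in the same torus fiber of $\kappa$ are images of affine lines $A_x$, $A_y$ in $W_\R$. I claim $A_x$ and $A_y$ have the same direction. Indeed, all these affine lines are orthogonal complements (inside $W_\R$, with respect to the pairing $(\cdot\,,\cdot)|_{W_\R}$, which has signature $(1,1)$) of vectors obtained by projecting $\G$-translates of $x$, $y$ into $W$; but a vector $x$ with $(x,u)=0$ has its $W$-component $x_W$ satisfying $(x_W,x_W) = (x,x) > 0$, and the component lies in a fixed positive definite... more precisely, since $x$ is orthogonal to both $u$ and $m(s(x))e_3$, one sees $x_W$ is a fixed multiple of $m(s(x))e_4 \in W_\R$; as $s$ ranges over a bounded set these give lines of a single direction in each fiber because the fiber is fixed (same $s$). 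So $c_x \parallel c_y$ whenever they share a fiber. Finally, two distinct parallel closed geodesics in a flat $2$-torus are disjoint, while if they coincide as subsets there is nothing to prove; and if $c_x$, $c_y$ lie in different fibers of $\kappa$ they are obviously disjoint. This gives $c_x \cap c_y \neq \emptyset \iff c_x = c_y$, and summing over components yields the stated form of (iii) for the full cycles $\partial C_x$, $\partial C_y$.

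The main obstacle will be Step~4: making precise the claim that within a fixed torus fiber the affine lines $A_x$ coming from different vectors are genuinely parallel, which amounts to identifying the $W$-direction cut out by the condition ``$(x,u)=0$ and $s(x)$ fixed'' and checking it depends only on the fiber, not on $x$. This is a concrete linear-algebra computation in the Witt basis $u, e_2, e_3, u'$ using the explicit form of $m(s)$, but it is the step where one must be careful that "parallel" is the right statement (rather than merely "non-transverse"). Steps~1--3 are essentially bookkeeping on top of Proposition~\ref{boundaryofC}, Lemma~\ref{ePhomology}, and the bundle structure of $e'(P)$.
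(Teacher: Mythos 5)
Your proposal is correct and follows essentially the route the paper intends: Proposition~\ref{TnBS} is read off from Proposition~\ref{boundaryofC}, Lemma~\ref{ePhomology}, and elementary flat-torus geometry (the affine line $\{w\in W_\R:(x,w)=(u',x)\}$ has rational direction $x_W^{\perp}\cap W_\R=\R\, m(s(x))e_3$, so it descends to a closed geodesic, and all such lines in a fixed fiber share this direction, hence are parallel and either equal or disjoint). One minor slip in Step~4: the $W$-component of $x$ is a multiple of $m(s(x))e_2$, not $m(s(x))e_4$ (which does not even lie in $W_\R$); with that correction, and granting the standard finiteness of the cusps of $\G_x$ (equivalently of the $\G_P$-orbits of relevant translates) asserted in Step~1, the argument is exactly right.
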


We now describe the intersection of $\overline{C}_n$ or $\partial C_n$ with $e'(P)$. For $\calL_V=\calL = L +h$ we can write
\begin{equation*}
{\calL}_W= {\calL}_{W_P}= (\calL \cap u^{\perp}) / (\calL \cap u) \simeq \coprod_k \left(L_{W,k} +
h_{W,k}\right)
\end{equation*}
for some lattices $L_{W,k} \subset W$ and vectors $h_{W,k} \in L^{\#}_{W,k}$.

Via the isomorphism $W \simeq N$, we can identify $\G_N = N \cap \G$ with a lattice $\Lambda_W$ in $W$. Since  $u$ is primitive in $L $ and $n(w) x= x + (w,x)u$ for a vector $x \in u^{\perp}$ we see that ${\calL}_W$ is contained in the dual lattice of $\Lambda_W$. 

\begin{lemma}\label{LemmaB}
The intersection $\partial C_n \cap e'(P)$ is given by 
\[
 \coprod_{ \substack{x\in \G_M \back \mathcal{L}_W \\ (x,x)=2n}} \coprod_{0 \leq k < \min'_{\la \in \Lambda_W}   |(\la,x)|} c_{x+ku}.
 \] 
Here $\min'$ denotes that we take the minimum over all nonzero values of $ |(\la,x)|$.
\end{lemma}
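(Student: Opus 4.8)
The plan is to assemble the description of $\partial C_n \cap e'(P)$ from the pointwise description in Proposition~\ref{boundaryofC} together with the parametrization of $\calL_n$ and the action of $\G_P = \G_M \ltimes \G_N$. First I would observe that, by Proposition~\ref{boundaryofC}, only vectors $x \in \calL_n$ with $(x,u)=0$ contribute to the closure of $C_n$ in $e'(P)$, and for such $x$ the contribution is the geodesic circle $c_x$ in the fiber of $\kappa$ over $s(x)$. So $\partial C_n \cap e'(P)$ is the image in $e'(P) = \G_P \back e(P)$ of the union of these affine lines $\{w \in W : (x,w) = (u',x)\}$ over all $x \in (\calL \cap u^\perp)$ with $\tfrac12(x,x) = n$, and I must organize this union into $\G_P$-orbits. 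Since $n(w)x = x + (w,x)u$ for $x \in u^\perp$, the group $\G_N \simeq \Lambda_W$ acts on such $x$ by $x \mapsto x + (\la, x)u$ for $\la \in \Lambda_W$; the image of $x$ in $\calL_W = (\calL \cap u^\perp)/(\calL \cap u)$ is $\G_N$-invariant and carries the value $(x,x)=2n$, while the $\G_M$-action descends to $\calL_W$ as well. This already explains the outer index set $x \in \G_M \bs \calL_W$ with $(x,x)=2n$.

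Next I would analyze, for a fixed $x \in \calL \cap u^\perp$ projecting to a class in $\calL_W$ with $(x,x)=2n$, exactly which translates $x + ku$ with $k \in \Z$ (using that $u$ is primitive and $\calL \cap u = \Z u$, after adjusting $h$ appropriately) give rise to $\G_N$-inequivalent circles, and which translates produce the \emph{same} circle $c$ in $e'(P)$. Two vectors $x+ku$ and $x+k'u$ lie in the same $\G_N$-orbit iff $k' - k = (\la, x)$ for some $\la \in \Lambda_W$; hence the distinct $\G_N$-orbits among $\{x+ku : k \in \Z\}$ are indexed by $k$ modulo the subgroup $(\Lambda_W, x)\Z \subset \Z$, and a set of representatives is $0 \le k < \min'_{\la \in \Lambda_W} |(\la,x)|$ — the stated index set. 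Here I must also check that translates in the same $\G_N$-orbit really do give the \emph{same} geodesic circle in $e'(P)$ (not merely parallel ones): this is Proposition~\ref{TnBS}(iii) combined with the fact that $n(\la)$ carries the affine line for $x$ to the affine line for $x + (\la,x)u$, since $n(\la)$ acts on the fiber $W$ by the translation $w \mapsto w + \la$ and one checks $(x+(\la,x)u, w+\la) = (u',x+(\la,x)u)$ is equivalent to $(x,w) = (u',x)$. Finally I would note that the map $s(\,\cdot\,)$ is $\G_N$-invariant and $\G_M$-equivariant, so circles coming from genuinely distinct classes in $\G_M \bs \calL_W$ (or from inequivalent $k$'s) sit in distinct fibers and hence are genuinely distinct, making the displayed union disjoint as written.

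The main obstacle I anticipate is the bookkeeping around primitivity and the coset vector $h$: one must be careful that $\calL \cap u = \Z u$ exactly (so that the translates by $u$ are indexed by $\Z$ rather than a finer or coarser group), that the pairing $(\Lambda_W, x)$ lands in $\Z$ (which follows from $\calL_W \subset \Lambda_W^\#$, noted just before the lemma), and that the condition $\tfrac12(x+ku,x+ku) = n$ is equivalent to $\tfrac12(x,x)=n$ precisely because $x \in u^\perp$ and $u$ is isotropic, so adding $ku$ does not change the value of $n$ — this is what lets the inner union over $k$ be unconstrained by the quadratic condition. Everything else is a routine unwinding of the coordinates $z(t,s,w)$ and the $N$-action on $e(P) = MN$.
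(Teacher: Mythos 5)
Your argument is correct and follows essentially the same route as the paper: reduce via Proposition~\ref{boundaryofC} to the vectors of $\calL\cap u^{\perp}$ of length $2n$, identify $\partial C_n\cap e'(P)$ with the circles $c_y$ indexed by $\G_P$-orbits on this set, and then produce the representatives $x+ku$ with $0\leq k<\min'_{\la\in\Lambda_W}|(\la,x)|$ by splitting the orbit count along $\G_P=\G_M\ltimes\G_N$, using $n(\la)x=x+(\la,x)u$ and the integrality $(\la,x)\in\Z$ coming from $\calL_W\subset\Lambda_W^{\#}$ (the paper compresses this last step into its final sentence, so your version is if anything more explicit). One small correction: for fixed $x$ the circles $c_{x+ku}$ with inequivalent $k$ do \emph{not} lie in distinct fibers --- since $s(x+ku)=s(x)$ they are parallel translates inside the single fiber over $s(x)$ (cf.\ Proposition~\ref{TnBS}(iii)), and their distinctness follows instead because the defining constants $(u',x)+k$ differ by an integer that is not a multiple of the generator of the subgroup $(\Lambda_W,x)\subset\Z$, i.e.\ of the minimal nonzero value of $|(\la,x)|$.
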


\begin{proof}

We will first prove $\partial C_{n,P} := \partial C_n \cap e'(P)$ is a disjoint union
\begin{equation} \label{union}
\partial C_{n,P} = \coprod_{y \in \G_P \back \calL_{n,u}} c_y,
\end{equation}
where $\calL_{n,u} = \{ x \in \calL \cap u^{\perp};\, (x,x)=2n\}$. Indeed, first note that by Proposition~\ref{boundaryofC} only vectors in $\calL_{n,u}$ can contribute to $\partial C_{n,P}$. 
The action of $\Gamma$ on $V$ induces an equivalence relation $\sim_{\Gamma}$ on the set $\G_p \back \calL_{n,u} \subset V$ which is consequently a
union of equivalence classes $[x_i]= [x_i]_P, 1 \leq i \leq k$.  We may accordingly organize the union $R$ on the right-hand side of \eqref{union} as $R =  \coprod _{i=1}^k  \coprod_{ y \in [x_i]} c_y.$
But it is clear that $(\partial C_{x_i})_P = \coprod_{ y \in [x_i]} c_y$ and hence we have the equality of $1$-cycles in $e'(P)$
and $\partial X$
\begin{equation}\label{boundaryofspecialcycle}
(\partial C_{x_i})_P = \sum_{ y \in [x_i]_P} c_y \qquad  \text{and} \qquad  \partial C_{x_i}= \sum_{[P]} \sum_{ y \in [x_i]_P} c_y,
\end{equation}
since an element $y  \in [x_i]$ gives rise to the lift $D_y$ of $C_{x_i}$ to $D$ that intersects
$e(P)$ and this intersection projects to $c_y$.  Thus we may rewrite the right-hand side of\eqref{union} as
$R= \coprod_{  \sim_{\Gamma} \back \calL_{n,u}} \partial C_{x_i}.$
But it is clear that this latter union is $\partial C_{n,P}$ and \eqref{union} follows. Finally, we easily see that $\coprod_{ \substack{x\in \G_M \back \mathcal{L}_W \\ (x,x)=2n}} \coprod_{0 \leq k < \min'_{\la \in \Lambda_W}   |(\la,x)|} x+ku$ is a complete set of representatives of $\G_P$-equivalence classes in $\calL_{n,u}$. These give the circles $c_{x+ku}$ above. 

\end{proof}

\begin{proposition}\label{rat-cap}
Let $x \in \mathcal{L}_{n,u}$ with $n>0$. Then there exists a rational $2$-chain $a_x$ in $e'(P)$ such that
\begin{enumerate}
\item $\partial a_x = c_x$
\item $\int_{a_x} \Omega_P = 0$, here $\Omega_P$ is the area form for the fibers (see \eqref{areaform}) 
\end{enumerate}

\end{proposition}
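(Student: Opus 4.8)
The plan is to produce $a_x$ by starting from \emph{any} $2$-chain bounding $c_x$ in $e'(P)$ and then correcting its $\Omega_P$-period by adding a rational multiple of the torus fiber. First I would observe that by Proposition~\ref{TnBS}(ii) the circle $c_x$ is rationally null-homologous in $e'(P)$, so there exists a rational $2$-chain $a_x^0$ in $e'(P)$ with $\partial a_x^0 = c_x$. This uses only Lemma~\ref{ePhomology}(i): $H_1(e'(P),\Q)$ is generated by $a_P$, and $c_x$, lying in a torus fiber of $\kappa$, maps to zero in $H_1(X_W,\Q)$, hence is trivial in $H_1(e'(P),\Q)$.

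Next I would set $a_x = a_x^0 - \left(\int_{a_x^0}\Omega_P\right) b_P$, where $b_P$ is (a $2$-cycle representing) the fundamental class of the torus fiber $T^2$, normalized by \eqref{areaform} so that $\int_{b_P}\Omega_P = 1$. Since $b_P$ is a cycle, $\partial a_x = \partial a_x^0 = c_x$, giving (1); and by construction and linearity of integration, $\int_{a_x}\Omega_P = \int_{a_x^0}\Omega_P - \left(\int_{a_x^0}\Omega_P\right)\int_{b_P}\Omega_P = 0$, giving (2). Rationality of $a_x$ is preserved because $\int_{a_x^0}\Omega_P \in \Q$ — this is the only point requiring a small argument: one should check that the $\Omega_P$-period of a rational chain is rational, which follows since $[\Omega_P]$ is a rational cohomology class on $e'(P)$ (it is the pullback under $\kappa$ of a generator, up to scaling, or more directly, $\Omega_P$ evaluates to $1$ on the integral class $b_P$ which generates $H_2(e'(P),\Z)$, and $H_2(e'(P),\Z)\cong\Z$, so $[\Omega_P]$ is integral on integral classes, hence rational periods on rational chains).

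I expect the only real subtlety — though it is minor — to be the bookkeeping to ensure $a_x$ genuinely is a rational \emph{chain} (a finite rational combination of singular simplices) rather than merely a rational homology class: one works at the level of chains throughout, choosing a simplicial or smooth-singular model, picking an explicit rational chain $a_x^0$ with $\partial a_x^0 = c_x$ (possible because the relevant rational $1$-cycle is a rational boundary), and then subtracting the explicit fiber cycle with a rational coefficient. Everything else is formal. Since this chain-level construction will be needed to define the capped cycle $C_x^c = C_x \cup (-a_x)$ (extended over all cusps $[P]$ using \eqref{boundaryofspecialcycle}), I would phrase the proof so that the $a_x$ are produced compatibly across the boundary components, though for the present proposition it suffices to work one cusp at a time.
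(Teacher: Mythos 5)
There is a genuine gap, and it sits exactly at the point you dismiss as ``a small argument.'' Your reduction is fine: a rational $2$-chain $a_x^0$ with $\partial a_x^0 = c_x$ exists by Proposition~\ref{TnBS}(ii) and Lemma~\ref{ePhomology}, and replacing $a_x^0$ by $a_x^0 - \bigl(\int_{a_x^0}\Omega_P\bigr) b_P$ kills the $\Omega_P$-period while preserving the boundary; this correction step is implicitly how the paper finishes as well. But the claim that $\int_{a_x^0}\Omega_P \in \Q$ ``because $[\Omega_P]$ is a rational cohomology class'' is a non sequitur: the integral of a closed form over a chain with nonzero boundary is not a homological invariant. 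Rationality of $[\Omega_P]$ (equivalently $\int_{b_P}\Omega_P = 1$ and $H_2(e'(P),\Z)\cong\Z$) only tells you that $\int\Omega_P$ is rational on rational $2$-\emph{cycles}, hence that the value $\int_{a}\Omega_P$ for rational caps $a$ of $c_x$ is well defined in $\R/\Q$ --- it does not tell you this class is zero. A toy example shows the principle you invoke is false: on $T^2=\R^2/\Z^2$ with the integral area form, a small null-homotopic circle enclosing irrational area has every rational cap of irrational area. Note also that $c_x$ is a closed geodesic representing a nonzero class in $H_1(T^2)$ of the fiber, so any cap must travel around the base circle using the monodromy; there is no reason of pure homological algebra for its $\Omega_P$-area mod $\Q$ to vanish.

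Establishing that this period is in fact rational is precisely the content of the paper's Section~\ref{rat-cap11}, and it uses the specific geometry: $c_x$ is a geodesic circle through a rational point with rational direction, and the glueing map $f$ lies in $\SL(2,\Z)$. The paper builds an explicit cap of the form $P + \tfrac1N T + \tfrac1N \mathcal{M}(\gamma_0)$ with $N=\det(f^{-1}-I)\neq 0$ ($f$ hyperbolic), where $P$ is a parallelogram with rational vertices (rational $\Omega_P$-area), $T$ is a triangle with integral vertices correcting the cycle-level discrepancy in $\partial\mathcal{M}(\gamma_0) = f^{-1}(\gamma_0)-\gamma_0 \sim N\alpha_0$ (integral $\Omega_P$-area), and $\mathcal{M}(\gamma_0)$ is a monodromy $2$-chain on which $\Omega_P$ restricts to zero. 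Without this (or some substitute) computation your proof only produces a rational chain with $\Omega_P$-period zero \emph{or} a real chain with period zero and rational boundary, but not both properties simultaneously, so the proposition is not yet proved.
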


\begin{proof}

Except for the rationality of the cap this follows immediately from Proposition~\ref{TnBS}. The problem is to find a cap $a_x$ such that $\int_{a_x} \Omega_P \in \Q$. We will prove this in Section~\ref{rat-cap11} below. 
\end{proof}

We will define $(A_x)_P$ by $(A_x)_P = \sum_{y \in [x]} a_x$. Then sum over the components $e'(P)$ to obtain $A_x$ a rational $2$-chain 
in $\partial X$. Then we have (noting that $(\partial C_x)_P = \sum_{y \in [x]} c_y$)
\[
\partial A_x = \partial C_x.
\]

\begin{definition}
We define the rational absolute $2$-cycle in $\overline{X}$ by 
\[
C_x^c = C_x \cup (-A_x)
\]
with the $2$-chain $A_x$  in $\partial \overline{X}$ as in Proposition~\ref{rat-cap}. In particular, $C_x^c$ defines a class in $H_2(\overline{X}) = H_2(X)$. In the same way we obtain $C_n^c$. 
\end{definition}

\subsection{The closure of the special cycles in $\tilde{X}$ and the cycle $T_n^c$}

Following Hirzebruch-Zagier we let $T_n$ be the cycle in $\tilde{X}$ given by the closure of the cycle $C_n$ in $\tilde{X}$. Hence $T_n$ defines a class in $H_2(\tilde{X})$.

\begin{definition}
Consider the decomposition $H_2(\tilde{X}) = j_{\ast} H_2(X) \oplus \left( \oplus_{[P]} S_{p
} \right)$, which is orthogonal with respect to the intersection pairing on $\tilde{X}$. We let $T_n^c$ be the image of $T_n$ under orthogonal projection onto the summand $j_{\ast} H_2(X)$.
\end{definition}

\begin{proposition}\label{CnTn}
We have
\[
j_{\ast} C_n^c = T_n^c.
\]
\end{proposition}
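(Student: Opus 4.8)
The plan is to show that the difference $T_n - j_\ast C_n^c$ lies in the compactifying summand $\bigoplus_{[P]} S_P$, so that applying the orthogonal projection onto $j_\ast H_2(X)$ kills it and sends $T_n$ to $j_\ast C_n^c$. First I would recall that $C_n^c = C_n \cup (-A_n)$ with $A_n$ a rational $2$-chain supported in $\partial\overline{X}$, and that under the decomposition of $\tilde X$ into $X^{in} \cup X^{out}$ from Section~\ref{H-compact}, the Borel--Serre boundary $e'(P)$ sits inside $X^{out}$ (indeed in $X^{in}\cap X^{out}$). Thus I can view $A_n$ as a $2$-chain living in $X^{out}$, and $C_n$ as a relative cycle in $X^{in}\cong X$.

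The key computation is to identify $T_n$ as a class on $\tilde X$. The curve $T_n$ is by definition the closure of $C_n$ in $\tilde X$; concretely, near each cusp $T_n$ is obtained from the (non-compact) $C_n$ by adding a chain $B_n$ in $X^{out}$ whose boundary is $\partial C_n = \partial A_n$. So as chains in $\tilde X$ we have $T_n = C_n \cup B_n$ and $j_\ast C_n^c = C_n \cup (-A_n)$, whence
\[
T_n - j_\ast C_n^c = B_n \cup A_n,
\]
which is a \emph{closed} $2$-cycle supported entirely in $X^{out}$, i.e.\ in a tubular neighborhood of the compactifying divisors. Its homology class therefore lies in the image of $H_2(X^{out}) \to H_2(\tilde X)$, which by the Mayer--Vietoris analysis in Section~\ref{boundaryhom}--\ref{H-compact} is exactly $\bigoplus_{[P]} S_P$ together with (the images of) the classes $b_P$ of the torus fibers. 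But condition (2) of Proposition~\ref{rat-cap} was arranged precisely so that $\int_{A_n}\Omega_P = 0$, which forces the $b_P$-component of $[A_n]$ (and hence of $B_n \cup A_n$) to vanish: pairing a class in $H_2(X^{out})$ with $\Omega_P$ detects exactly the $b_P$-coefficient since $\Omega_P$ integrates to $1$ over $b_P$ and the compactifying divisors are disjoint from the generic torus fiber. Hence $[B_n\cup A_n] \in \bigoplus_{[P]} S_P$, and applying the orthogonal projection onto $j_\ast H_2(X)$ gives $T_n^c = j_\ast C_n^c$.

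The main obstacle is making the bookkeeping with the chains $B_n$ and $A_n$ precise: one must check that the closure operation in $\tilde X$ really does produce a chain $B_n \subset X^{out}$ with $\partial B_n = \partial C_n$ (this uses that $C_n$ has only finitely many ends, each a punctured disk limiting onto the compactifying configuration, which follows from Proposition~\ref{boundaryofC} and Proposition~\ref{TnBS}), and that the orientations match so that $T_n - j_\ast C_n^c = B_n \cup A_n$ with no stray boundary. One should also verify that $b_P$ itself bounds in $X^{out}$ (noted already in Section~\ref{boundaryhom}), so that the ambiguity in choosing $A_n$ and $B_n$ only moves the class by something already in the span detected above; combined with the normalization $\int_{A_n}\Omega_P = 0$ this pins down the image uniquely. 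The rest is the formal fact that $j_\ast H_2(X)$ and $\bigoplus_{[P]} S_P$ are orthogonal complements, which is exactly the content of the displayed splitting in Section~\ref{boundaryhom}.
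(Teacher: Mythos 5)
Your proposal is correct and follows essentially the same route as the paper: split $T_n$ along $e'(P)$ into its inside part $j_{\ast}\overline{C}_n$ and an outside chain $B_n$, write $T_n = j_{\ast}C_n^c + (B_n + A_n)$ with the second term a closed cycle supported in $X^{out}$ whose class lies in $\bigoplus_{[P]} S_P$, and conclude by projecting with respect to the splitting $H_2(\tilde{X}) = j_{\ast}H_2(X) \oplus \bigl(\bigoplus_{[P]} S_P\bigr)$. The only superfluous (and slightly shaky) step is your appeal to the normalization $\int_{A_n}\Omega_P = 0$ to kill a $b_P$-component: since $b_P$ already bounds in $X^{out}$, the image of $H_2(X^{out})$ in $H_2(\tilde{X})$ is exactly $\bigoplus_{[P]} S_P$, so no such correction is needed (and pairing against $\Omega_P$, which lives only on $e'(P)$, is not the right tool to detect it anyway); that normalization is needed elsewhere in the paper, not here.
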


\begin{proof}
For simplicity, we assume that $X$ has only one cusp. The $3$-manifold $e'(P)$ separates $T_n$
and  we can write $T_n = T_n \cap X^{in} + T_n \cap X^{out}$ as (appropriately oriented)  $2$-chains  in $\tilde{X}$. It is obvious that we have $j_{\ast} \overline{C}_n = T_n \cap X^{in}$ as $2$-chains. We write $B_n = T_n \cap X^{out}$. We have $\partial C_n = - \partial B_n$. Hence we can write $T_n = j_{\ast} C_n^c + B_n^c$, the sum of two  $2$-cycles in $\tilde{X}$. Here $B_n^c$ is obtained by `capping' $B_n$ in $e'(P)$ with the negative of the cap $A_n$ of $C_n^c$. 
Since  $j_*C_n^c$  is clearly orthogonal to $S_P$ (since it lies in $X^{in}$) and $B_n^c \in
S_P$ (since it lies in $X^{out}$)
the decomposition $T_n = j_*C_n^c + B_n^c$ is just the decomposition of
$T_n$ relative to the splitting $H_2(\tilde{X}) = j_*H_2(X)  \oplus S_P$.
Hence $T_n^c =  j_*C_n^c$, as claimed.
\end{proof}

\subsection{Rationality of the cap}\label{rat-cap11}
We will now prove Proposition \ref{rat-cap}.  In fact we will show that it holds for any circle $\alpha$ contained in a torus fiber of $e(P)$ and passing through a rational point. We would like to thank Misha Kapovich for simplifying  our original argument. The idea is to construct, for each component of $\partial C_x$, a $2$-chain $A$ with that component as boundary so that $A$ is a sum $P+ T  +\mathcal{M}(\gamma_0)$ of three simplicial $2$-chains in $M$. We then verify that the ``parallelogram'' $P$  and the ``triangle'' $T$  have rational area and the period of $\Omega$ over the ``monodromy chain'' $\mathcal{M}(\gamma_0)$ is zero. 

In what follows we will pass from pictures in the plane involving directed line segments, triangles and parallelograms to identities in the space of simplicial $1$-chains $C_1(T^2)$  on $T^2$. The principal behind this is that any $k$-dimensional subcomplex $S$ of a simplicial complex $Y$ which is the fundamental cycle of an oriented $k$-submanifold $|S|$ (possibly with boundary) of $Y$ corresponds in a {\it unique} way to a sum of oriented $k$-simplices in $C_k(Y)$.

In this subsection we will work with a general $3$-manifold $M$ with Sol geometry. Of course this includes all the manifolds $e'(P)$ that occur in this paper. Let $f \in \SL(2,\Z)$ be a hyperbolic element. We will then consider the $3$-manifold $M$ obtained from $ \R \times T^2$ (with the $2$-torus $T^2 = W/ \Z^2$) given by the relation
\begin{equation}\label{glueing}
(s,w) \sim (s+1,f(w)).  
\end{equation}
We let $\pi: \R\times T^2 \to M$ be the resulting infinite cyclic covering.

We now define notation we will use below. We will use Greek letters to denote closed geodesics on $T^2$, a subscript $c$  will indicate that the geodesic starts at the point $c$ on $T^2$. We will use the analogous notation for geodesic arcs on $W$.  We will use $[\alpha]$ 
to denote the corresponding homology class of a closed geodesic $\alpha$ on $T^2$. 
If $x$ and $y$ are points on $W$ we will use $\overline{xy}$ to denote the oriented line segment joining $x$ to $y$ and $\overrightarrow{xy}$ to denote the corresponding (free) vector
i.e. the equivalence class of $\overline{xy}$ modulo parallel translation. 
   
We first take care of the fact that $\alpha$ does not necessarily pass through the origin. For convenience we will assume $\alpha$ is in the fiber over the base-point $z(x)$ corresponding to $s=0$. Let $\alpha_0$ be the parallel translate of $\alpha$ to the origin. Then we can find a cylinder $P$, image of an oriented parallelogram $\widetilde{P}$ under the universal cover $W \to T^2$ with rational vertices, such that in $Z_1(T^2,\Q)$, the group of rational $1$-cycles, we have
\begin{equation}\label{firstrectangle}
\partial P = \alpha - \alpha_0.
\end{equation}
Since $\widetilde{P}$ has rational vertices we find $\int_{P} \Omega = \int_{\widetilde{P}} \Omega \in \Q$.

Now we take care of the harder part of finding $A$ as above. The key is the construction  of ``monodromy $2$-chains''.  For any closed geodesic $\gamma_0 \subset T^2$ starting at $0$ we define the monodromy $2$-chain  $\mathcal{M}(\gamma_0)$ to be the image of the cylinder $\gamma_0 \times [0,1] \subset T^2 \times \R$ in $M$.
The reader will verify using \eqref{glueing} that in $Z_1(T^2,\Q)$ we have 
\begin{equation} \label{boundaryofmon}
 \partial \mathcal{M}(\gamma_0) = f^{-1}(\gamma_0) -\gamma_0.
\end{equation}
Since $f$ preserves the origin, the geodesic $f^{-1}(\gamma_0)$ is also a closed geodesic starting at the origin. Since $f^{-1}$ is hyperbolic we have $|\tr(f^{-1})| >2$ and hence $\det(f^{-1} -I)=  det( I - f) = \tr(f) -2 \neq 0$. Put $N= \det(f^{-1} -I)$ and define $[\gamma_0] \in H_1(T^2,\Z)$ by  
\begin{equation}\label{invertmatrix}
f^{-1}([\gamma_0]) -[\gamma_0] = N[\alpha_0]. 
\end{equation}
Note that $[\gamma_0] = N \{(f^{-1} - I)^{-1} ([\alpha_0] \}$ is necessarily an integer homology class. Also note that is an equation in the first {\it homology}, it is not an equation in the group of $1$-cycles $Z_1(T^2,\Q)$. Since any homology class contains a unique closed geodesic starting at the origin we obtain a closed geodesic $\gamma_0 \in [\gamma_0]$  and a corresponding  monodromy $2$-chain $\mathcal{M}(\gamma_0)$ whence \eqref{boundaryofmon} holds in $Z_1(T^2,\Q)$. We now solve

\begin{problem}
Find an equation in $Z_1(T^2,\Z)$ which descends to  \eqref{invertmatrix}. 
\end{problem}

Let $h_1$ resp. $h_2$ denote the covering transformation of $\pi$ corresponding to the element $\alpha_0$ resp $\gamma_0$ of the fundamental group of $T^2$. Define $c_1$ and $c_2$ in $W$ by $c_1 = Nh_1(0)$ and $c_2=h_2(0)$. Define $d \in W$ by $d =f^{-1}(c_2)$ in $W$. Let $\widetilde{T}$ be the oriented triangle
with vertices $0,c_2,d$.  Then
 \[
 (i) \quad \pi(\overline{0c_1}) =N\alpha_0 \qquad (ii) \quad \pi(\overline{0c_2})= \gamma_0 \qquad (iii) \quad 
 \pi(\overline{0d}) = \pi(f^{-1}(\overline{0c_2})) = f^{-1}(\gamma_0).
 \]
 We now leave it to the reader to combine the homology equation \eqref{invertmatrix} and the three equations to show the equality of directed line segments
\begin{equation}\label{equalityofsegments}
h_2( \overline{0c_1}) = \overline{c_2d}.
\end{equation}
With this we can solve the problem. 
We see that if we consider $\widetilde{T}$ as an oriented $2$-simplex we have the following equality of one chains
$$
\partial \widetilde{T} = \overline{0c_2} + \overline{c_2d} - \overline{0d}.
$$
Let $T$ be the image of $\widetilde{T}$ under $\pi$. Take the direct image of the previous equation under $\pi$ and use equation \eqref{equalityofsegments} which implies that the second edge $\overline{c_2d}$ is equivalent under $h_2$ in the covering group to the directed line segment $\overline{0c_1}$ which maps to $N\alpha_0$. Hence $\overline{c_2d}$ also maps to $N\alpha_0$. We obtain  the following equation in $Z_1(T^2,\Z)$
\begin{equation}\label{triangle}
\partial T =  \gamma_0 + N \alpha_0 - f^{-1}(\gamma_0),
\end{equation}
and we have solved the above problem. Combining \eqref{boundaryofmon} and \eqref{triangle} we have
$$
\partial (\mathcal{M}(\gamma_0) + T ) = f^{-1}(\gamma_0) -\gamma_0 +\gamma_0 + \alpha_0 - f^{-1}(\gamma_0)= N\alpha_0.
$$
Combining this with \eqref{firstrectangle} and setting where $A_0 = \mathcal{M}(\gamma_0) +T$ we obtain
\begin{equation} \label{cap}
\partial (NP + A_0 ) = N \alpha,
\end{equation}
in $Z_1(M,\Z)$. Hence if we define $A$ to be the  {\it rational} chain $A = \frac{1}{N} (NP + A_0) = P + \frac{1}{N}T + \frac{1}{N} \mathcal{M}(\gamma_0)$ in $M$ we have the following equation in $Z_1(M,\Q)$:
$$
\partial A = \alpha.
$$
Finally, the integral of $\Omega$ over $A$ is rational. Indeed, the integral over $P$ is rational.  Since all vertices of $\widetilde{T}$  are integral the area of $\widetilde{T}$ is integral, the integral of $\Omega$ over $T$ is integral. Thus it suffices to observe that the restriction of $\Omega$ to $\mathcal{M}(c)$ is zero. With this we have completed the proof of Proposition \ref{rat-cap}.

\subsection{Linking numbers in Sol} \label{generaltheoryoflink}

In the introduction we defined the linking number of two two disjoint homologically trivial $1$-cycles $a$ and $b$ in a closed $3$-manifold $M$ as $\Lk(a,b) = \langle A,b \rangle$, where $A$ is any rational $2$-chain in $M$ with boundary $a$. Since $b$ defines a trivial homology class in $M$, the link is well-defined, ie, does not depend on the choice of $A$. 

We let $M$ be the Sol manifold as before realized as in Section~\ref{rat-cap11} via \eqref{glueing} and consider the case when $a$ and $b$ are two contained in two torus fibers. Then by the previous section they are homologically trivial. If $a$ and $b$ are contained in the same fiber we move $b$ to the right (i.e. in the direction of positive $s$)  to a nearby fiber. We take $a,b \in H_1(T^2,\Z)$, and in this section we are allowed to confuse $a$ and $b$ with their representatives in the lattice $\Z^2$ and the unique closed geodesic in $T^2$ passing through the origin that represents them. We will write for the image of $a$ and $b$ in $\R \times T^2$ and $M$ $a=a(0)=0 \times a$ and $b=b(\eps)= \eps \times b$. Our goal is to compute the linking number $Lk(a, b(\epsilon))$. By the explicit construction of the cap $A$ in Section~\ref{rat-cap11} we obtain

\begin{lemma}
 \[
 Lk(a,b(\epsilon)) = M(c) \cdot b(\epsilon) = c(\epsilon) \cdot b(\epsilon)= c \cdot b.
 \] 
Here $c$ is the rational one cycle obtained by solving $(f^{-1} - I) (c) =a$
and $M(c)$ is the (rational) monodromy $2$-chain associated to $c$ (see above) with boundary $\partial M(c) = (f^{-1} - I) (c) =a$. Here the first $\cdot$ is the intersection of 
chains in $M$, the next $\cdot$ is the intersection number of $1$-cycles in the fiber $\epsilon \times T^2$ and the last $\cdot$ is the intersection number of $1$-cycles in $0 \times T^2$. 
 \end{lemma}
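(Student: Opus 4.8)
The plan is to construct the cap $A$ for $a$ explicitly along the lines of Section~\ref{rat-cap11} and then intersect it with $b(\epsilon)$, reading off that the only contribution comes from the monodromy $2$-chain. Recall from the construction that (after translating $a$ to pass through the origin, which we may assume since $a$ already passes through $0$ here) we obtained a rational cap of the form $A = P + \tfrac1N T + \tfrac1N \mathcal{M}(\gamma_0)$, where $P$ is a parallelogram cylinder correcting for the base point, $T$ is a triangle $2$-simplex lying entirely in the fiber $0 \times T^2$, and $\mathcal{M}(\gamma_0)$ is the monodromy $2$-chain of a closed geodesic $\gamma_0$ on $T^2$ chosen so that $f^{-1}([\gamma_0]) - [\gamma_0] = N[a]$ in $H_1(T^2,\Z)$. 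Over the rationals we may simply take $c = (f^{-1}-I)^{-1}(a) = \tfrac1N[\gamma_0]$, a rational $1$-cycle in $T^2$, so that $A = \mathcal{M}(c)$ up to chains supported in the single fiber $0 \times T^2$; indeed $P$ and $\tfrac1N T$ both lie in that fiber and $\partial \mathcal{M}(c) = (f^{-1}-I)(c) = a$.

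First I would observe that since $a$ and $b$ lie in distinct fibers (we have already moved $b$ to the fiber over $s=\epsilon>0$ if necessary), any cap of $a$ supported in the fiber $0\times T^2$ is disjoint from $b(\epsilon)$, hence contributes $0$ to the intersection pairing $\langle A, b(\epsilon)\rangle$. Therefore $\Lk(a,b(\epsilon)) = \langle A, b(\epsilon)\rangle = \langle \mathcal{M}(c), b(\epsilon)\rangle$, which is the first claimed equality. Next I would compute $\langle \mathcal{M}(c), b(\epsilon)\rangle$ directly: the monodromy chain $\mathcal{M}(c)$ is the image of $c \times [0,1] \subset T^2 \times \R$, so it meets the fiber $\epsilon \times T^2$ (for $0 < \epsilon < 1$) precisely in the curve $c(\epsilon) = \epsilon \times c$, and the intersection of $\mathcal{M}(c)$ with $b(\epsilon)$ inside $M$ is transverse and equals the intersection number $c(\epsilon) \cdot b(\epsilon)$ of the two $1$-cycles inside the $2$-torus fiber $\epsilon \times T^2$. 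This gives the second equality. Finally, since the intersection pairing on $H_1(T^2)$ is a homotopy/homology invariant and translation in the $s$-direction carries $\epsilon \times T^2$ isometrically to $0 \times T^2$ carrying $c(\epsilon)$ to $c$ and $b(\epsilon)$ to $b$, we get $c(\epsilon)\cdot b(\epsilon) = c \cdot b$, the last equality.

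The one point needing a little care is the bookkeeping of orientations and the claim that the ``fiber part'' $P + \tfrac1N T$ of the cap genuinely does not affect the linking number — i.e. that replacing the carefully-chosen integral $A_0 = \mathcal{M}(\gamma_0) + T$ by the cleaner rational monodromy chain $\mathcal{M}(c)$ changes $A$ only by a cycle, or by a chain supported away from $b(\epsilon)$, so that $\langle A, b(\epsilon)\rangle$ is unchanged. This follows because both $NP + A_0$ and $N\mathcal{M}(c)$ have boundary $N a$, so their difference is a $2$-cycle in $M$; by Lemma~\ref{ePhomology}(ii) (applied to the Sol manifold $M$) $H_2(M,\Q)$ is spanned by the torus fiber class $b_P$, and intersecting $b_P$ with the $1$-cycle $b(\epsilon)$ gives $0$ since $b(\epsilon)$ is a $1$-cycle in a fiber. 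Hence the difference contributes nothing to the pairing with $b(\epsilon)$, and $\Lk(a,b(\epsilon)) = \langle \mathcal{M}(c), b(\epsilon)\rangle$ as claimed. I expect this homological-invariance step — justifying that the choice of cap is immaterial, which is of course the well-definedness of the linking number combined with the explicit computation of $H_2(M,\Q)$ — to be the main (though modest) obstacle; the rest is the transversality bookkeeping for $\mathcal{M}(c) \cap b(\epsilon)$, which is routine.
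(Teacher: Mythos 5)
Your proposal is correct and follows essentially the same route as the paper, whose proof of this lemma is just an appeal to the explicit cap $A = P + \tfrac1N T + \tfrac1N \mathcal{M}(\gamma_0)$ from Section~\ref{rat-cap11}: the fiber-supported pieces miss $b(\epsilon)$, the monodromy chain meets the fiber $\epsilon \times T^2$ in $c(\epsilon)$, and translation invariance gives $c(\epsilon)\cdot b(\epsilon) = c\cdot b$. Your extra step justifying the replacement of the integral cap by the rational chain $\mathcal{M}(c)$ (difference is a $2$-cycle pairing trivially with the rationally null-homologous $b(\epsilon)$, i.e.\ well-definedness of the linking number) is exactly the detail the paper leaves implicit.
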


Noting that this last intersection number coincides with the intersection number of the underlying homology classes which in term coincides with the symplectic form $\langle \cdot, \cdot \rangle$ on $H_1(T^2,\Q)$ we have found our desired formula for the linking number.
\begin{theorem}\label{linkSol}
$ Lk(a, b(\epsilon)) = \langle (f^{-1} - I)^{-1} (a), b \rangle.$
\end{theorem}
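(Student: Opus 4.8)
The plan is to reduce the computation of $Lk(a,b(\epsilon))$ to the explicit description of the cap constructed in Section~\ref{rat-cap11}, and then to identify the resulting intersection number with the value of the symplectic form on $H_1(T^2,\Q)$. First I would invoke the construction of the rational $2$-chain $A$ with $\partial A = a$ given in the proof of Proposition~\ref{rat-cap}: when the circle $a$ already passes through the origin (so that the ``parallelogram'' correction $P$ is unnecessary), the cap is, up to a rational multiple, the monodromy $2$-chain $\mathcal{M}(\gamma_0)$ together with the triangle $T$, and by \eqref{boundaryofmon} and \eqref{triangle} the essential piece is the monodromy chain $M(c)$ attached to a $1$-cycle $c$ solving $(f^{-1}-I)(c)=a$. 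The first step is therefore to record, as in the Lemma preceding the theorem, that $Lk(a,b(\epsilon)) = M(c)\cdot b(\epsilon)$, where the intersection is taken in $M$; the contributions of $P$ and $T$ are supported in the single fiber over $s=0$ and, after pushing $b$ to the fiber over $s=\epsilon$ with $\epsilon>0$ small, they do not meet $b(\epsilon)$.

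Next I would compute $M(c)\cdot b(\epsilon)$. The monodromy chain $M(c)$ is the image of $c\times[0,1]\subset T^2\times\R$, so as $s$ runs over $[0,1]$ it sweeps out the geodesic $c$ in every intermediate fiber; in particular its slice over the fiber $\epsilon\times T^2$ is precisely (a representative of) $c$. Hence $M(c)\cdot b(\epsilon)$ equals the intersection number of the $1$-cycles $c$ and $b$ inside the fiber $\epsilon\times T^2\cong T^2$. Since intersection numbers of closed geodesics on the torus depend only on their homology classes and compute the standard symplectic (cup-product) pairing $\langle\,\cdot\,,\,\cdot\,\rangle$ on $H_1(T^2,\Z)$ (extended $\Q$-bilinearly, since $c$ is only a rational cycle), we get $M(c)\cdot b(\epsilon) = \langle c, b\rangle$. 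Finally, substituting $c = (f^{-1}-I)^{-1}(a)$ — which is well defined because $f$ is hyperbolic, so $\det(f^{-1}-I)=\tr(f)-2\neq 0$ — yields
\[
Lk(a,b(\epsilon)) = \langle (f^{-1}-I)^{-1}(a), b\rangle,
\]
as claimed.

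The main obstacle, and the step requiring the most care, is the bookkeeping of orientations and of the rational coefficients in the cap: one must check that the monodromy chain $M(c)$ attached to the rational class $c$ really does have boundary exactly $a$ (not $-a$ or $Na$), that the slice of $M(c)$ over a generic intermediate fiber is $c$ with the correct orientation, and that the auxiliary chains $P$ and $T$ from Section~\ref{rat-cap11} genuinely contribute nothing after the small perturbation $b\mapsto b(\epsilon)$. Once the sign conventions are pinned down — matching the convention $\Lk(a,b)=\langle A,b\rangle$ with $\partial A = a$ and with the orientation of the Sol bundle induced by the coordinates $z(t,s,w)$ — everything else is formal. The identification of the torus intersection pairing with the symplectic form $\langle\,\cdot\,,\,\cdot\,\rangle$ on $H_1(T^2,\Q)$ is standard and needs no further argument.
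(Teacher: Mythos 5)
Your proposal is correct and follows essentially the same route as the paper: you reduce $\Lk(a,b(\epsilon))$ to the intersection of the monodromy $2$-chain $\mathcal{M}$ attached to $c=(f^{-1}-I)^{-1}(a)$ with $b(\epsilon)$ (the chains $P$ and $T$ lying in the fiber $s=0$ and hence missing $b(\epsilon)$), slice at $s=\epsilon$ to get $c\cdot b$ in the torus fiber, and identify this with the symplectic pairing $\langle c,b\rangle$ on $H_1(T^2,\Q)$. This is exactly the Lemma preceding Theorem~\ref{linkSol} in the paper together with its concluding remark, so no further comment is needed.
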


It is a remarkable fact that there is a simple formula involving only the action of the
glueing homeomorphism $f \in  \SL(2,\Z)$ on $H_1(T^2, \Z)$ for linking numbers for $1$-cycles contained in fiber tori $T^2$ of in Sol (unlike the case of linking numbers in $\R^3$). 

This immediately leads to an explicit formula for the numbers $Lk(\partial C_n, \partial C_m)$. Using Lemma~\ref{LemmaB} we obtain

\begin{theorem}\label{LinkCnCm} 
Let $g = (f^{-1} - I)^{-1}$. Then
 \[
 Lk( (\partial C_n)_P, (\partial C_m)_P) = \sum_{ \substack{x\in \G_M \back \mathcal{L}_W \\ (x,x)=2n}} \sum_{\substack{x'\in \G_M \back \mathcal{L}_W \\ (x,x)=2m}} (\min_{\lambda \in \Lambda_W}  {\hspace{-5pt}'}
 |(\lambda,x)|) (\min_{\mu \in \Lambda_W}  {\hspace{-5pt}'}
|(\mu,x')|) \langle g(Jx),Jx' \rangle. 
\]
Here $Jx$ is properly oriented primitive vector in $\Lambda_W$ such that $(Jx,x)=0$. 
 \end{theorem}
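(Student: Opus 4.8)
The plan is to reduce the computation, by means of Lemma~\ref{LemmaB} and the bilinearity of the linking pairing, to the model linking-number formula of Theorem~\ref{linkSol}.

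First I would fix the cusp and work entirely inside the single boundary component $M := e'(P)$, realized as a manifold with Sol geometry by the glueing \eqref{glueing}, where $f \in \SL(2,\Z)$ is the monodromy, that is, the action of a generator of $\G_M$ on $H_1(T^2,\Z) = \Lambda_W$. By Lemma~\ref{LemmaB}, $(\partial C_n)_P$ is the disjoint union of the fiber circles $c_{x+ku}$, indexed by $x \in \G_M\back\mathcal{L}_W$ with $(x,x)=2n$ and by $k$ with $0 \le k < m_x$, where $m_x := \min'_{\lambda \in \Lambda_W}|(\lambda,x)|$; likewise $(\partial C_m)_P$ is built from circles $c_{x'+k'u}$ with $(x',x')=2m$ and $0 \le k' < m_{x'}$. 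Since all of these circles are rational boundaries in $M$ (Proposition~\ref{TnBS}(ii)) and the linking number of pairwise disjoint, rationally null-homologous $1$-cycles in a $3$-manifold is additive in each argument, after applying the perturbation convention of Section~\ref{generaltheoryoflink} --- moving the circles of $(\partial C_m)_P$ to nearby fibers so that none of them shares a torus fiber with a circle of $(\partial C_n)_P$ --- we obtain
\[
\Lk\big((\partial C_n)_P,(\partial C_m)_P\big) = \sum_{x,k}\ \sum_{x',k'}\Lk\big(c_{x+ku},\,c_{x'+k'u}\big).
\]

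Next I would identify the homology class of each fiber circle and apply Theorem~\ref{linkSol} term by term. By Proposition~\ref{boundaryofC}, $c_{x+ku}$ is the image in the fiber torus $T^2 = W/\Lambda_W$ of the affine line $\{\,w \in W : (x,w) = (u',x)+k\,\}$, whose direction is the rational line $x^{\perp}\cap W$; by definition $Jx$ is the properly oriented primitive vector of $\Lambda_W$ spanning this line, so $c_{x+ku}$ is a translate of the closed geodesic through the origin in the class $[Jx]$ and therefore $[c_{x+ku}] = [Jx]$ in $H_1(T^2,\Z)$ for every $k$ (in particular the $m_x$ circles attached to a fixed $x$ are all homologous, as befits Proposition~\ref{TnBS}(iii)); similarly $[c_{x'+k'u}] = [Jx']$. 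Because the linking number is unchanged under homotoping either cycle through configurations keeping the two disjoint, and the two circles lie in distinct fibers, I may translate each within its fiber to pass through the origin, so that Theorem~\ref{linkSol} applies and gives
\[
\Lk\big(c_{x+ku},\,c_{x'+k'u}\big) = \big\langle (f^{-1}-I)^{-1}(Jx),\,Jx'\big\rangle = \langle g(Jx),Jx'\rangle,
\]
independently of $k$ and $k'$, the ordering convention of Section~\ref{generaltheoryoflink} being used to place the $C_n$-datum $Jx$ in the first slot. For each pair $(x,x')$ the $m_x m_{x'}$ pairs $(k,k')$ contribute identical terms, and summing over $x$ and $x'$ gives precisely $\sum_{x,x'} m_x m_{x'}\langle g(Jx),Jx'\rangle$, which is the claim.

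The decisive input, the evaluation of linking in Sol through the glueing homeomorphism, is already available as Theorem~\ref{linkSol}, so the remaining work is bookkeeping of orientations and conventions rather than anything deep --- and this is exactly where I expect the care to be needed. Three points must be nailed down: (a) the ``properly oriented'' generator $Jx$ has to be defined from the given orientation of $C_x$ (equivalently of $\partial C_x$), and one must check that $J(x+ku) = Jx$ with this orientation so that the multiplicity $m_x$ is not destroyed by cancellation; (b) the form $(a,b)\mapsto\langle g(a),b\rangle$ on $H_1(T^2,\Q)$ is not symmetric --- one computes $\langle g(a),b\rangle - \langle g(b),a\rangle = -\langle a,b\rangle$ from the identity $(f-I)^{-1} + (f^{-1}-I)^{-1} = -I$ --- so the perturbation and fiber-ordering convention that makes $\Lk$ additive must consistently treat the $C_n$-component as the ``left'' fiber, which is the asymmetry reflected in the final formula; and (c) one must verify that the perturbed cycles are genuinely disjoint, so that the additivity of $\Lk$ is legitimate.
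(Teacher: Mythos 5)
Your proposal is correct and follows essentially the same route as the paper, whose own (one-line) derivation of Theorem~\ref{LinkCnCm} is exactly the combination of the circle decomposition in Lemma~\ref{LemmaB} with the Sol linking formula of Theorem~\ref{linkSol}. Your additional bookkeeping --- identifying each class $[c_{x+ku}]$ with the primitive vector $Jx$ independently of $k$, invoking bilinearity after the fiber-perturbation convention of Section~\ref{generaltheoryoflink}, and flagging the asymmetry of $\langle g(\cdot),\cdot\rangle$ --- just makes explicit what the paper leaves implicit.
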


 \begin{example}\label{LinkCnCmex} 
We consider the integral skew Hermitian matrices in Example~\ref{HZex}. Let $u= \kzxz{\sqrt{p}}{0}{0}{0}$, so that $W = \{ \kzxz{0}{\la}{-\la'}{0};\; \la \in K \} \simeq K$. The symplectic form on $K$ is given by $\langle \la, \mu \rangle = \frac{1}{\sqrt{p}} (\la \mu' - \la'\mu)$. The action of the unipotent radical $N= \left\{ n(\la)= \kzxz{1}{\la}{0}{1} \right\}$ on a vector $\mu \in K$ is now slightly different, namely, $n(\la) \mu = \mu + \langle \la, \mu \rangle u$. Hence in these coordinates, $\partial C_{\mu}$ is given by the image of the line $\R \mu = \{\la \in K_\R; \; \langle \la, \mu \rangle =0 \}$, and $(\min'_{\lambda \in \mathcal{O}_K}  
 |\langle \la, \mu \rangle|)\mu$ is a primitive generator in $\mathcal{O}_K$ for that line. We let $\eps$ be a generator of $U_+$, the totally positive units in $\mathcal{O}_K$, and we assume that the glueing map $f$ is realized by multiplication with $\eps'$. For $d \equiv 1 \pmod{4}$ a prime and $m=1$, $C_1$ has only component arising from $x =1 \in K$ and $C_1 \simeq \SL_2(\Z) \back \h$. Then Theorem~\ref{LinkCnCm} becomes (the $\min'$-term is now wrt $\langle\,,\, \rangle$)
\[
 Lk( (\partial C_n)_P, (\partial C_1)_P) = 
 2 \sum_{ \substack{\mu \in U_+ \back \mathcal{O}_K\\ \mu\mu'=n, \mu \gg 0}} \left\langle \tfrac{\mu}{\eps-1}, 1 \right\rangle = 2\sum_{ \substack{\mu \in U_+ \back \mathcal{O}_K\\ \mu\mu'=n, \mu \gg0}}  = \frac{2}{\sqrt{p}}\frac{\mu+\mu'\eps}{\eps-1}.
\]
This is (twice) the ``boundary contribution'' in \cite{HZ}, Section~1.4, see also Section~\ref{special-lift-section}.
\end{example}

\section{Schwartz functions and forms}

Let $U$ be a non-degenerate rational quadratic space of  signature $(p,q)$ and even dimension $m$. We will later apply the following to $U=V$ and $U=W$. Changing notation from before, we let $G = \SO_0(U_{\R})$ with maximal compact subgroup $K$ and write $D=G/K$ for the associated symmetric space. We let $\calS(U_{\R})$ be the space of Schwartz functions on $U_{\R}$ on which $\SL_2(\R)$ acts via the Weil representation $\omega$.

\subsection{Extending certain Schwartz functions to functions of $\tau \in \h$ and $z\in D$}\label{conventions}

Let $\varphi \in \calS(U_{\R})$ be an eigenfunction under the maximal compact $\SO(2)$ of $\SL_2(\R)$ of
weight $r$. Define $g'_{\tau} \in \SL_2(\R)$
by $g'_{\tau} = \left(
\begin{smallmatrix}1&u\\0&1\end{smallmatrix} \right) \left(
\begin{smallmatrix}v^{1/2}&0\\0&v^{-1/2}\end{smallmatrix} \right)$.
Then we have $ \omega(g'_{\tau}) \varphi (x)= v^{m/4} \varphi(\sqrt{v}x) e^{\pi i (x,x)u}$.
Accordingly we define 
\begin{equation}\label{group-tau}
\varphi(x,\tau)  = v^{-r/2} \omega(g'_{\tau}) \varphi
(x)  = v^{-r/2+m/4} \varphi^0(\sqrt{v}x) e^{\pi i (x,x)\tau}.
\end{equation}
Here we have also defined $\varphi^0(x) = \varphi(x) e^{\pi (x,x)}$. Let $E$ be a $G$-module and let $g_z \in G$ be any element that carries the basepoint $z_0$ in $D$ to $z \in D$. Then define for $\varphi \in [\calS(U_{\R}) \otimes E]^K$, the $E$-valued $K$-invariant Schwartz functions on $U_{\R}$, the functions $\varphi(x,z)$ and $\varphi(x,\tau,z)$ for $x \in U, z \in D, \tau \in \mathbb{H}$ by
\[
\varphi(x,z) =g_z \varphi(g_z^{-1}x) \qquad  \text{and} \qquad  \varphi(x,\tau,z) = g_z\varphi(g_z^{-1}x,\tau).
\]
We will continue to use these notational conventions for other (not necessarily Schwartz) functions that arise in this paper.

\subsection{Schwartz forms for $V$}\label{V-forms}

Let $\mathfrak{g} $ be the Lie algebra of $G$ and $\mathfrak{g}= \mathfrak{k} \oplus \mathfrak{p}$ be the Cartan decomposition of $\mathfrak{g}$ associated to $K$.  We identify 
 $\mathfrak{g} \simeq \wwedge{2} V_{\R}$ as usual via $
(v_1 \wedge v_2)(v) = (v_1,v)v_2 - (v_2,v)v_1$. We write $X_{ij} = e_i \wedge e_j \in \mathfrak{g}$ and note that $\mathfrak{p}$ is spanned by $X_{ij}$ with $1 \leq i \leq 2$ and $3 \leq j \leq 4$. We write $\omega_{ij}$ for their dual. We orient $D$ such that $\omega_{13} \wedge \omega_{14} \wedge \omega_{23} \wedge \omega_{24}$ gives rise to the $G$-invariant volume element on $D$. 

\subsubsection{Special forms for $V$}

The Kudla-Millson form $\varphi_2$ is an element in 
\[
 [\calS(V_{\R}) \otimes \calA^2(D)]^G\simeq
[\calS(V_{\R}) \otimes \wwedge{2} \mathfrak{p}^{\ast}]^K,
\]
where the isomorphism is given by evaluation at the base point. Here $\calA^2(D)$ denotes the differential $2$-forms on $D$. Note that $G$ acts diagonally in the natural fashion. At the base point $\varphi_2$ is given by
\[
\varphi_2= \frac12 \prod_{\mu=3}^4 \sum_{\alpha=1}^{2}  \left( x_{\alpha} - \frac1{2\pi}\frac{\partial}{\partial x_{\alpha}} \right) \varphi_0 \otimes \omega_{\alpha\mu}.
\]
Here $\varphi_0(x) := e^{-\pi(x,x)_{0}}$, where $(x,x)_0= \sum_{i=1}^4 x_i^2$ is the minimal majorant associated to the base point in $D$. Note that $\varphi_2$ has weight $2$, see \cite{KM1}. There is another Schwartz form $\psi_1$ of weight $0$ which lies in $
[\calS(V_{\R}) \otimes \calA^1(D)]^G\simeq
[\calS(V_{\R}) \otimes \mathfrak{p}^{\ast}]^K$ and is given by
\begin{equation}\label{psi20}
\psi_1 =  -x_1x_3\varphi_0(x) \otimes \omega_{14}+x_1x_4  \varphi_0(x) \otimes \omega_{13} - x_2x_3 \varphi_0(x) \otimes \omega_{24}+x_2x_4\varphi_0(x) \otimes \omega_{23}. 
\end{equation}

The key relationship is (see \cite{KM90}, \S 8)

\begin{theorem}\label{localholomorphic1}
\[
\omega(L) \varphi_2 = d \psi_1.
\]
Here $\omega(L)$ is the Weil representation action
of the $\SL_2$-lowering operator $L = \tfrac12 \left(
\begin{smallmatrix}1 & -i \\ -i & -1\end{smallmatrix} \right)  \in \mathfrak{sl}_2(\R)$ on
$\calS(V_{\R})$, while $d$ denotes the exterior differentiation on $D$. 
\end{theorem}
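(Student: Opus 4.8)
The plan is to verify the identity $\omega(L)\varphi_2 = d\psi_1$ directly at the base point $z_0$, where both sides live in the finite-dimensional space $[\calS(V_\R)\otimes\wwedge{2}\mathfrak{p}^\ast]^K$; by $G$-equivariance the identity at $z_0$ then propagates to all of $D$. First I would record the explicit formula for the action of the lowering operator $L = \tfrac12\left(\begin{smallmatrix}1&-i\\-i&-1\end{smallmatrix}\right)$ under the Weil representation on $\calS(V_\R)$ for a space of signature $(2,2)$: on a function of the form $p(x)\varphi_0(x)$ with $p$ polynomial, $\omega(L)$ acts (up to the standard normalizing constants) as a second-order differential operator, essentially a multiple of the ``Gaussian-twisted'' Laplacian $\sum_{\alpha=1}^2\left(x_\alpha - \tfrac1{2\pi}\partial_{x_\alpha}\right)^2 - \sum_{\mu=3}^4\left(x_\mu + \tfrac1{2\pi}\partial_{x_\mu}\right)^2$ or its appropriate sign-twisted variant dictated by the $(2,2)$-signature of the majorant $(x,x)_0$. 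Getting the signs and the factor right is the bookkeeping heart of the computation.

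Next I would compute the right-hand side. Since $\psi_1$ is a $G$-invariant $1$-form on $D$ with values in $\calS(V_\R)$, its exterior derivative $d\psi_1$ at $z_0$ is computed using the standard formula for $d$ on a homogeneous space: $d$ applied to a left-invariant form picks up both the derivative of the Schwartz-function coefficients in the $\mathfrak p$-directions (via the $G$-action differentiated, i.e. the infinitesimal action of $X_{\alpha\mu}$ on $\calS(V_\R)$, which multiplies by $x_\alpha$ and differentiates by $x_\mu$ or vice versa) and the structure constants $[\mathfrak p,\mathfrak p]\subset\mathfrak k$ term — but the latter contributes only through $\mathfrak k$-components, which vanish on the $K$-invariant part. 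So in practice $d\psi_1$ at $z_0$ is $\sum_{\alpha,\mu}(\text{infinitesimal action of }X_{\alpha\mu})(\psi_1\text{-coefficient of }\omega_{\alpha'\mu'})\otimes\omega_{\alpha\mu}\wedge\omega_{\alpha'\mu'}$, summed appropriately. Plugging in the four explicit terms of $\psi_1$ from \eqref{psi20} and collecting the wedge products $\omega_{\alpha\mu}\wedge\omega_{\alpha'\mu'}$ (there are $\binom{4}{2}=6$ of them, but several cancel by antisymmetry), I would match the resulting coefficient of each $\omega_{\alpha\mu}\wedge\omega_{\alpha'\mu'}$ against the corresponding coefficient of $\omega(L)\varphi_2$ computed from the formula above.

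I expect the main obstacle to be purely organizational rather than conceptual: keeping straight the three different sources of signs — the signature splitting between the $x_1,x_2$ (positive) and $x_3,x_4$ (negative) variables, the antisymmetry of the wedge products $\omega_{\alpha\mu}\wedge\omega_{\alpha'\mu'}$, and the sign conventions in the Weil representation action of $L$ versus the raising operator — so that the cross terms assemble into an exact match rather than off by a sign or a factor of $2$. A cleaner alternative, which I would fall back on if the direct computation gets unwieldy, is to invoke the general Kudla–Millson machinery: one knows abstractly that $\varphi_2$ is closed ($d\varphi_2=0$) and that $\psi_1$ was constructed in \cite{KM90}, \S8 precisely as a transgression, so the identity is really the statement that lowering the weight of the (closed, weight $2$) form $\varphi_2$ produces an exact form with the named primitive; citing the relevant computation in \cite{KM90} and checking consistency of normalizations would suffice. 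Either way, once the identity holds at $z_0$ it holds on $D$ because $L$ commutes with the $G$-action (the $\SL_2$ and $G$ actions in the Weil representation commute) and $d$ is $G$-invariant, so both sides are the $G$-translates of their values at the base point.
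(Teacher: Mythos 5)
Your proposal is correct in outline, but note that the paper itself does not prove this identity at all: it is quoted directly from Kudla--Millson, \cite{KM90}, \S 8 (your ``fallback'' option is exactly the paper's route). Your primary route --- a direct verification at the base point --- is a sound reconstruction of what lies behind that citation, and the structural reductions you make are the right ones: both sides are determined by their values at $z_0$ because the $\SL_2(\R)$- and $G$-actions in the Weil representation commute and $d$ preserves equivariance; and under the evaluation isomorphism $[\calS(V_\R)\otimes\calA^2(D)]^G\simeq[\calS(V_\R)\otimes\wwedge{2}\mathfrak{p}^\ast]^K$ the exterior derivative becomes the relative Lie algebra differential $\sum_{\alpha,\mu}\omega_{\alpha\mu}\wedge\rho(X_{\alpha\mu})$, the bracket term dropping because $[\mathfrak p,\mathfrak p]\subset\mathfrak k$ and \emph{relative} cochains annihilate $\mathfrak k$ (this is the precise reason, rather than $K$-invariance as you phrase it). Two practical remarks on what each approach buys. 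The citation is short but leaves all normalizations implicit, whereas your computation forces the sign and constant conventions into the open --- which is genuinely valuable here, since the analogous identity for $W$ (Theorem~\ref{localholW}) is stated in this paper precisely to correct a sign error in \cite{FMcoeff}, and the explicit form \eqref{partial-d}, namely $v\tfrac{\partial}{\partial v}\varphi_2^0(\sqrt v x)=d\bigl(\psi_1^0(\sqrt v x)\bigr)$, gives you an independent consistency check on your answer. On the other hand, be aware that the ``bookkeeping heart'' you identify is exactly why Kudla--Millson carry out such verifications in the Fock model rather than the Schr\"odinger model: there $\omega(L)$ and the $\mathfrak p$-action act on polynomials in the Fock variables by simple first- and second-order operators, and the six wedge coefficients can be matched with far less sign-chasing; your Schr\"odinger-model computation with the operator you write (whose exact constant and signature twist you would still need to pin down from the standard $\mathfrak{sl}_2$ formulas $\omega(E)=\pi i(x,x)$, $\omega(F)=\tfrac{i}{4\pi}\Delta$, $\omega(H)=\sum_i x_i\partial_{x_i}+\tfrac{m}{2}$) will work but is noticeably heavier.
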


On the upper half plane $\h$, the action of $L$ corresponds to the action of the classical Maass lowering operator which we also denote by $L$. For a function $f$ on $\h$, we have
\[
Lf  = -2iv^2 \frac{\partial}{\partial \bar{\tau}} f.
\]
When made explicit using \eqref{group-tau} Theorem \ref{localholomorphic1} translates to
\begin{equation}\label{partial-d}
v \frac{\partial}{\partial v }  \varphi_2^0(\sqrt{v}x) =  d
\left(\psi_1^0(\sqrt{v}x)\right).
\end{equation}

\subsubsection{The singular form $\tilde{\psi}_1$}

We define the singular form $\tilde{\psi}_1$ by
\begin{align}\label{GreeneqV}
\tilde{\psi}_1(x)  &= - \left( \int_1^{\infty} \psi_1^0(\sqrt{r}x)  \frac{dr}{r} \right)e^{-\pi(x,x)} = - \frac1{2\pi(x_3^2+x_4^2)} \psi_1(x). 
\end{align}
for $x\ne 0$, and as before $\tilde{\psi}^0_{2,0}(x) = \tilde{\psi}_1(x) e^{\pi (x,x)}$ and 
$\tilde{\psi}_1(x,z)$. We see that $\tilde{\psi}_1$ is defined for $x \notin \Span[e_3,e_4]^{\perp}$. Formulated differently, $\tilde{\psi}_1(x,z)$ for fixed $x$ is defined for $z \notin D_x$. Furthermore, as if $\tilde{\psi}_1$ was a Schwartz function of weight $2$, we define
\begin{align}\label{xiZV}
\tilde{\psi}_1(x,\tau,z) &=  \tilde{\psi}_1^0(\sqrt{v}x,z) e^{\pi i (x,x)\tau}  = - \left( \int_v^{\infty} \psi_1^0(\sqrt{r}x,z) \frac{dr}{r} \right) e^{\pi i (x,x)\tau}.
\end{align}

\begin{proposition}\label{schluesselV}

$\tilde{\psi}_1(x,z)$ is a differential $1$-form with singularities along $D_x$. Outside $D_{x}$, we have
\[
d\tilde{\psi}_1(x,z) = \varphi_2(x,z).
\]
Here $d$ denotes the exterior differentiation on $D$. In particular,
for $(x,x)\leq 0$, we see that $\varphi_2(x)$ is exact. 
Furthermore,
\[
L\tilde{\psi}_1(x,\tau) = \psi_1(x,\tau).
\]
\end{proposition}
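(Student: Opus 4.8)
The plan is to establish the three assertions of Proposition~\ref{schluesselV} by reducing them to the corresponding local statements for $\varphi_2$ and $\psi_1$ already proved in Theorem~\ref{localholomorphic1}, together with a direct analysis of the defining integral~\eqref{GreeneqV}. First, I would verify that $\tilde\psi_1(x,z)$ is genuinely a smooth $1$-form on $D \setminus D_x$ with the claimed singularity type along $D_x$: from the closed formula $\tilde\psi_1(x) = -\frac{1}{2\pi(x_3^2+x_4^2)}\psi_1(x)$ at the base point (and its $G$-translate $\tilde\psi_1(x,z)$), the only possible singularity is where $x_3^2 + x_4^2 = 0$, i.e. where the projection of $x$ onto the negative $2$-plane $z$ vanishes; but $(x,x) > 0$ forces $x$ to have nonzero positive part, and $x_3^2 + x_4^2 = 0$ (with the majorant) means exactly $z \perp x$, i.e. $z \in D_x$. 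Near $D_x$ the integral $\int_1^\infty \psi_1^0(\sqrt r x)\,\frac{dr}{r}$ converges for $x$ outside the degenerate locus because $\psi_1^0(\sqrt r x)$ decays like a Gaussian in the positive-definite directions, so the only issue is the blow-up captured by the $1/(x_3^2+x_4^2)$ factor, which is an $L^1_{loc}$ singularity of the expected order.

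Next, the equation $d\tilde\psi_1(x,z) = \varphi_2(x,z)$ on $D \setminus D_x$: using \eqref{xiZV} and the substitution $r = v s$, one has $\tilde\psi_1(x,\tau,z) = -\big(\int_1^\infty \psi_1^0(\sqrt{vs}\,x,z)\,\frac{ds}{s}\big)e^{\pi i (x,x)\tau}$, and I would differentiate under the integral sign in the $D$-variable $z$. By \eqref{partial-d}, $d\big(\psi_1^0(\sqrt{t}x,z)\big) = t\frac{\partial}{\partial t}\varphi_2^0(\sqrt t x,z)$ (here $d$ is exterior differentiation on $D$, and this identity is the translate to the point $z$ of Theorem~\ref{localholomorphic1}/\eqref{partial-d}). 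Hence
\[
d\tilde\psi_1(x,z) = -\int_1^\infty \left(r\frac{\partial}{\partial r}\varphi_2^0(\sqrt r x,z)\right)\frac{dr}{r}\, e^{-\pi(x,x)} = -\int_1^\infty \frac{\partial}{\partial r}\varphi_2^0(\sqrt r x,z)\,dr\, e^{-\pi(x,x)},
\]
which by the fundamental theorem of calculus equals $\big(\varphi_2^0(x,z) - \lim_{r\to\infty}\varphi_2^0(\sqrt r x,z)\big)e^{-\pi(x,x)} = \varphi_2(x,z)$, since $(x,x) > 0$ makes $\varphi_2^0(\sqrt r x,z)$ decay as $r \to \infty$ (the Gaussian $e^{-\pi r (x,x)_z}$ with $(x,x)_z \geq (x,x) > 0$ dominates). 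The interchange of $d$ and $\int$ is justified on $D\setminus D_x$ by uniform convergence on compact subsets away from the singular locus. The parenthetical remark for $(x,x) \leq 0$ then needs a separate short argument: when $(x,x) < 0$ the majorant still gives convergence and decay at $r=\infty$, and for $(x,x) = 0$ one checks the integral still converges (the relevant decay comes from the $x_3^2+x_4^2 > 0$ factor built into $\psi_1^0$); alternatively cite that $\varphi_2(x,z)$ is a Gaussian times a polynomial so exactness on the simply connected $D$ is automatic once a primitive is exhibited.

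Finally, the identity $L\tilde\psi_1(x,\tau) = \psi_1(x,\tau)$: here $L$ is the Maass lowering operator acting in the $\tau$-variable, $Lf = -2iv^2 \frac{\partial}{\partial\bar\tau}f$. Since $\tilde\psi_1(x,\tau,z) = -\big(\int_v^\infty \psi_1^0(\sqrt r x,z)\frac{dr}{r}\big)e^{\pi i(x,x)\tau}$, I would compute $\frac{\partial}{\partial\bar\tau}$ by the Leibniz rule, differentiating both the variable lower limit $v = \im\tau$ (note $\frac{\partial v}{\partial\bar\tau} = \frac{\partial}{\partial\bar\tau}\frac{\tau-\bar\tau}{2i} = -\frac{1}{2i} = \frac{i}{2}$) and the exponential factor (whose $\bar\tau$-derivative is zero). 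The limit term gives $-\big(-\psi_1^0(\sqrt v x,z)\frac{1}{v}\big)\cdot\frac{i}{2}\cdot e^{\pi i (x,x)\tau} = \frac{i}{2v}\psi_1^0(\sqrt v x,z)e^{\pi i(x,x)\tau}$, and multiplying by $-2iv^2$ yields $v\,\psi_1^0(\sqrt v x,z)e^{\pi i (x,x)\tau}$. Comparing with the convention \eqref{group-tau} applied to $\psi_1$, which has weight $r=0$ and $m = \dim V = 4$, so $\psi_1(x,\tau,z) = v^{m/4}\psi_1^0(\sqrt v x,z)e^{\pi i(x,x)\tau} = v\,\psi_1^0(\sqrt v x,z)e^{\pi i(x,x)\tau}$, gives exactly $L\tilde\psi_1(x,\tau) = \psi_1(x,\tau)$, as claimed; one should double-check there is no contribution from differentiating $\psi_1^0(\sqrt r x,z)$ inside the integral, which is correct because that integrand does not depend on $\tau$ at all.

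\textbf{Main obstacle.} The routine parts are the two fundamental-theorem-of-calculus computations; the genuinely delicate point is controlling the interchange of differentiation and the improper integral \emph{uniformly near, but outside,} the singular divisor $D_x$ — i.e. making precise that the convergence is locally uniform on $D \setminus D_x$ and that the singularity of $\tilde\psi_1$ along $D_x$ is exactly of the order displayed in \eqref{GreeneqV} — and separately handling the boundary case $(x,x) = 0$ where the decay at $r = \infty$ no longer comes from the Gaussian in $(x,x)$ but only from the majorant's positive-definite directions.
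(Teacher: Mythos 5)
Your proposal is correct and follows essentially the same route as the paper's proof: differentiate \eqref{GreeneqV} under the integral sign, invoke \eqref{partial-d} (Theorem~\ref{localholomorphic1}), apply the fundamental theorem of calculus in $r$, and read off the $L$-identity directly from \eqref{xiZV} with the weight convention \eqref{group-tau}. One small touch-up: the vanishing of $\lim_{r\to\infty}\varphi_2^0(\sqrt{r}x,z)$ comes from the strict majorant inequality $(x,x)_z>(x,x)$, which holds precisely because $z\notin D_x$, not from $(x,x)>0$ itself — this is also what makes the same argument work for all $x\neq 0$ and yields the exactness statement for $(x,x)\leq 0$ without a separate convergence discussion.
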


\begin{proof}

Using \eqref{GreeneqV} and \eqref{partial-d}, we see
\begin{align*}
d \tilde{\psi}_1^0(x,z) &= - \int_1^{\infty}d \left(\psi_1^0(\sqrt{r}x,z)\right)\frac{dr}{r}   =-\int_1^{\infty} \frac{\partial}{\partial r } \left(
\varphi_2^0(\sqrt{r}x,z)\right) \frac{dr}{r}  = \varphi_2^0(x,z),
 \end{align*}
as claimed. The formula $L\tilde{\psi}_1(x,\tau) = \psi_1(x,\tau)$ follows easily from \eqref{xiZV}.
\end{proof}

\begin{remark}
The construction of the singular form $\tilde{\psi}$ works in much greater generality for $\Orth(p,q)$ whenever we have two Schwartz forms $\psi$ and $\varphi$ (of weight $r-2$ and $r$ resp.) such that 
\[
d \psi = L \varphi.
\]
Then the analogous construction of $\tilde{\psi}$ then immediately yields $d \tilde{\psi} = \varphi$ outside a singular set. The main example for this are the general Kudla-Millson forms $\varphi_{q}$ and $\psi_{q-1}$, see \cite{KM90}. For these forms, this construction is already implicit in \cite{BFDuke}. In particular, the proof of Theorem~7.2 in \cite{BFDuke} shows that $\tilde{\psi}$ gives rise to a differential character for the analogous cycle $C_x$, see also Section~\ref{currents} of this paper. The unitary case will be considered in \cite{F-unitary}. 
\end{remark}

\subsection{Schwartz forms for $W$}

Let $W\subset V$ be the rational quadratic space of signature $(1,1)$ obtained from the Witt decomposition of $V$. We will refer to the nullcone of $W$ as the light-cone. We write $\mathfrak{m} \simeq \R$ for the Lie algebra of $M = \SO_0(W_{\R})$. 
Then $X_{23} = e_2 \wedge e_3$ is its natural generator with dual $\omega_{23}$.
We identify the associated symmetric space $D_W$ to $M$ with the
space of lines in $W_{\R}$ on which the bilinear form $(\,,\,)$ is
negative definite:
\[
D_W = \{{\bf s} \subset W_{\R} ; \;\text{$\dim {\bf s} =1$ and $(\,,\,)|_{\bf s} < 0$}
\}.
\]
We pick as base point of $D_W$ the line ${\bf s}_0$ spanned by $e_3$. We set $
x(s) := m(s) e_3 = \sinh(s) e_2 + \cosh(s) e_3$. This realizes the isomorphism $D_W \simeq \R$. Namely, ${\bf s} = \Span x(s)$. Accordingly, we frequently write $s$ for ${\bf s}$ and vice versa. A vector $x \in W$ of positive length defines a point $D_{W,x}$ in
$D$ via $D_{W,x} = \{ {\bf s} \in D; \; {\bf s} \perp x \}$. So ${\bf s} = D_{W,x}$ if and only if $(x,x({\bf s})) =0$. We also write ${\bf s}(x)=D_{W,x}$.

\subsubsection{Special forms for $W$}\label{W-forms}

We carry over the conventions from section~\ref{conventions}. We first consider the Schwartz form $\varphi_{1,1}$ on $W_{\R}$ constructed in \cite{FMcoeff} (in much greater generality) with values in $\calA^1(D_W) \otimes W_{\C}$. More precisely,
\[
\varphi_{1,1} \in [\calS(W_{\R}) \otimes \calA^1(D_W) \otimes W_{\C}]^M \simeq
[\calS(W_{\R}) \otimes \mathfrak{m}^{\ast} \otimes W_{\C}],
\]
Here $M$ acts diagonally on all three factors. Explicitly at the base point, we have
\begin{equation*}
\varphi_{1,1}(x) = \frac{1}{2^{3/2}} \left(4 x_2^2-\frac1{\pi}\right) e^{-\pi (x_2^2+x_3^2)} \otimes \omega_{23} \otimes e_2.
\end{equation*}
Note that $\varphi_{1,1}$ has weight $2$, see \cite{FMcoeff}, Theorem~6.2. We define $\varphi_{1,1}(x,s)$ and $\varphi_{1,1}^0$ as before. There is another Schwartz function $\psi_{0,1}$ of weight $0$ given by
\begin{multline*}
\psi_{0,1}(x) =   -\frac1{\sqrt{2}} x_2x_3 e^{-\pi(x_2^2+x_3^2)}  \otimes 1 \otimes e_2 + \frac{1}{4\sqrt{2}\pi} e^{-\pi(x_2^2+x_3^2)} \otimes 1\otimes e_3 \\ \in
[\calS(W_{\R}) \otimes \wwedge{0} \mathfrak{m}^{\ast} \otimes W_{\C}],
\end{multline*}
and also $\psi_{0,1}(x,s)$ and $\psi_{0,1}^0$. Note that the notation differs from \cite{FMcoeff}, section~6.5. The function $\psi_{0,1}$ defined here is the term $-\psi_{1,1} - \tfrac12 \Lambda_{1,1}$ given in Theorem~6.11 in \cite{FMcoeff}. 
The key relation between $\varphi_{1,1}$ and $\psi_{0,1}$ (correcting a sign mistake in \cite{FMcoeff}) is given by

\begin{theorem}(\cite{FMcoeff}, Theorem~6.2\label{Millson})\label{localholW}
\[
\omega(L) \varphi_{1,1} = d \psi_{0,1}.
\]
\end{theorem}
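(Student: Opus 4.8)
Since the asserted identity is stated as \cite{FMcoeff}, Theorem~6.2 (with the sign correction recorded above), the plan is simply to recall the computation and carry it out with the corrected normalization of $\psi_{0,1}$. Both $\omega(L)\varphi_{1,1}$ and $d\psi_{0,1}$ are $M$-invariant elements of $[\calS(W_{\R})\otimes\calA^1(D_W)\otimes W_\C]^M$: the Weil representation operator $\omega(L)$ commutes with the linear $\Orth(W_{\R})$-action on $\calS(W_{\R})$, and the (covariant) exterior derivative $d$ on $W_\C$-valued forms preserves $M$-invariance. Hence it suffices to compare the two $\calS(W_{\R})$-valued $1$-forms at the base point ${\bf s}_0$, i.e. two elements of $\calS(W_{\R})\otimes\mathfrak{m}^{\ast}\otimes W_\C$. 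Throughout I use coordinates $x = x_2 e_2 + x_3 e_3$, so that $(x,x) = x_2^2 - x_3^2$ while the majorant attached to ${\bf s}_0$ is $x_2^2 + x_3^2$.

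First I would make the right-hand side explicit. At ${\bf s}_0$ the differential of the $W_\C$-coefficient relative Lie algebra complex $[\calS(W_{\R})\otimes\wedge^{\bullet}\mathfrak{m}^{\ast}\otimes W_\C]$ acts on degree-zero elements by $\psi\mapsto\omega_{23}\otimes(X_{23}\cdot\psi)$, where $X_{23}=e_2\wedge e_3$ generates $\mathfrak{m}$ and acts on $\calS(W_{\R})\otimes W_\C$ diagonally: on the Schwartz factor by the derived linear action $X_{23}\cdot f = -(x_3\partial_{x_2}+x_2\partial_{x_3})f$ (since $X_{23}$ interchanges $e_2$ and $e_3$ on $W_{\R}$), and on $W_\C$ by $e_2\mapsto e_3$, $e_3\mapsto e_2$. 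Feeding in the two terms of $\psi_{0,1}$, the two $e_3$-contributions cancel and the $e_2$-part assembles to
\[
d\psi_{0,1} = \tfrac{1}{4\sqrt{2}}\Big(\tfrac1\pi + 4x_2^2 + 4x_3^2 - 16\pi x_2^2 x_3^2\Big)\, e^{-\pi(x_2^2+x_3^2)}\otimes\omega_{23}\otimes e_2 .
\]

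Next I would make the left-hand side explicit. Writing $L = \tfrac12(H-iE-iF)$ in the standard $\mathfrak{sl}_2$-triple and using the Weil representation formulas (with $m=\dim W = 2$): $\omega(E) = \pi i(x_2^2-x_3^2)$, $\omega(H) = 1 + x_2\partial_{x_2}+x_3\partial_{x_3}$, and $\omega(F) = -\tfrac1{4\pi i}(\partial_{x_2}^2-\partial_{x_3}^2)$ (the last pinned down by $[\omega(E),\omega(F)]=\omega(H)$), one gets
\[
\omega(L) = \tfrac12\Big(1 + x_2\partial_{x_2}+x_3\partial_{x_3} + \pi(x_2^2-x_3^2) + \tfrac1{4\pi}(\partial_{x_2}^2-\partial_{x_3}^2)\Big).
\]
Since the Schwartz part of $\varphi_{1,1}$ is $2^{-3/2}(4x_2^2-\tfrac1\pi)e^{-\pi(x_2^2+x_3^2)}$, a Gaussian times a quadratic, applying $\omega(L)$ is a short computation; after cancellation (the purely $x_2^4$ term drops out) it reproduces exactly the displayed expression for $d\psi_{0,1}$, which proves the theorem.

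The argument is entirely mechanical, so I expect the only real difficulty to be bookkeeping: keeping straight the powers of $2\pi$ and $i$ in the Weil action, the signature signs in $(x,x)$ and in the associated operator $\partial_{x_2}^2-\partial_{x_3}^2$, the constant $2^{3/2}$, and the precise normalization of $\psi_{0,1}$ relative to the forms $\psi_{1,1}$ and $\Lambda_{1,1}$ of \cite{FMcoeff} (recall $\psi_{0,1} = -\psi_{1,1}-\tfrac12\Lambda_{1,1}$). It is at exactly this level that the sign slip in \cite{FMcoeff} occurred; once the constants are handled with care, nothing further is needed.
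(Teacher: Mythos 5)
Your computation is correct — I checked it: with $X_{23}$ acting on the Schwartz factor by $-(x_3\partial_{x_2}+x_2\partial_{x_3})$ and on $W_\C$ by $e_2\leftrightarrow e_3$ (a convention consistent with the paper's relation $-X_{23}B=A$), the $e_3$-components of $d\psi_{0,1}$ cancel, the $e_2$-component is $\tfrac1{\sqrt2}\bigl(\tfrac1{4\pi}+x_2^2+x_3^2-4\pi x_2^2x_3^2\bigr)e^{-\pi(x_2^2+x_3^2)}\otimes\omega_{23}\otimes e_2$, and applying your (correctly normalized) operator $\omega(L)=\tfrac12\bigl(1+x_2\partial_{x_2}+x_3\partial_{x_3}+\pi(x_2^2-x_3^2)+\tfrac1{4\pi}(\partial_{x_2}^2-\partial_{x_3}^2)\bigr)$ to $2^{-3/2}(4x_2^2-\tfrac1\pi)e^{-\pi(x_2^2+x_3^2)}$ reproduces exactly that expression, the $x_2^4$ terms cancelling as you say. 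Note, though, that the paper itself offers no proof of this statement: it is quoted from \cite{FMcoeff}, Theorem~6.2, where it is established in much greater generality (arbitrary signature and coefficient systems, with the machinery of that paper), and the only new content here is the sign correction and the renormalization $\psi_{0,1}=-\psi_{1,1}-\tfrac12\Lambda_{1,1}$. So your route differs from the source in being a self-contained Schr\"odinger-model verification at the base point for this one $(1,1)$ case; what it buys is an independent check of the corrected sign and of the constants in the specific normalizations used in this paper, at the cost of not explaining the identity structurally. Two small points of care: reduction to the base point should be justified by $M$-equivariance of both sides together with the fact that evaluation at ${\bf s}_0$ is an isomorphism on invariants (which you do note), and the bracket relation $[\omega(E),\omega(F)]=\omega(H)$ pins down only the constant in $\omega(F)$ once one knows a priori that $\omega(F)$ is a multiple of the dual signature Laplacian (Fourier conjugate of $\omega(E)$); as stated your phrasing slightly overclaims, but the formula you use is the standard correct one.
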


When made explicit, we have, again using \eqref{group-tau},
\begin{equation}
v^{3/2} \frac{\partial}{\partial v } \left(v^{-1/2} \varphi_{1,1}^0(\sqrt{v}x,s) \right)=  d
\left(\psi_{0,1}^0(\sqrt{v}x,s)\right).
\end{equation}

\subsubsection{The singular Schwartz function $\tilde{\psi}_{0,1}$}

In the same way as for $V$ we define
\begin{align}\label{Greeneq}
\tilde{\psi}_{0,1}(x)  &= - \left(\int_1^{\infty} \psi_{0,1}^0(\sqrt{r}x) r^{-3/2} dr\right)e^{-\pi(x,x)}
\end{align}
for {\it all} $x \in W$, including $x=0$. Define $\tilde{\psi}_{0,1}^0(x)$, $\tilde{\psi}_{0,1}^0(x,s)$ as before and also
\begin{align}\label{xiZ}
\tilde{\psi}_{0,1}(x,\tau,s) &= v^{-1/2} \tilde{\psi}_{0,1}^0(\sqrt{v}x,s) e^{\pi i (x,x)\tau}  = - \left( \int_v^{\infty} \psi_{0,1}^0(\sqrt{r}x,s) r^{-3/2} dr \right) e^{\pi i (x,x)\tau} \notag.
\end{align}
Note that $\tilde{\psi}_{0,1}(x,s)$ has a singularity at $D_{w,x}$. Define functions $A$ and $B$ by
\[
\tilde{\psi}_{0,1}(x)  = A(x)  \otimes 1 \otimes e_2  + B(x) \otimes 1 \otimes e_3
\]
and note 
\begin{equation}\label{AB-eq}
-X_{23} B(x) = A(x).
\end{equation}
We extend these functions to $D_W$ as before. We see by integrating by parts

\begin{lemma}\label{firstformulaforAandB}
\begin{align*}
A(x) &= 
\frac{1}{2\sqrt{\pi}}  x_2 \frac{x_3}{|x_3|}  \Gamma(\tfrac12,2 \pi x_3^2)  e^{-\pi (x,x)}    \label{AA}\\
B(x)& =
 - \frac{1}{2 \sqrt{2} \pi} e^{- \pi(x_2^2+ x_3^2)} + \frac{1}{2  \sqrt{\pi}}|x_3|    \Gamma(\tfrac12,2 \pi x_3^2)  e^{-\pi (x,x)}.
\end{align*}
Here $\G(\tfrac12,a) = \int_a^{\infty} e^{-u} u^{-1/2} du$ is the incomplete $\G$-funtion at $s=1/2$. 
\end{lemma}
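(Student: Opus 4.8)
The plan is to compute $A(x)$ and $B(x)$ directly from the defining integral \eqref{Greeneq} for $\tilde{\psi}_{0,1}$, by substituting the explicit formula for $\psi_{0,1}$ and integrating by parts. Recall that $\psi_{0,1}(x) = -\tfrac{1}{\sqrt{2}} x_2x_3 e^{-\pi(x_2^2+x_3^2)} \otimes 1 \otimes e_2 + \tfrac{1}{4\sqrt{2}\pi} e^{-\pi(x_2^2+x_3^2)} \otimes 1 \otimes e_3$, so $\psi_{0,1}^0(x) = \psi_{0,1}(x) e^{\pi(x,x)}$, and since $(x,x) = x_2^2 - x_3^2$ for $x \in W$, we have $\psi_{0,1}^0(x) = -\tfrac{1}{\sqrt{2}} x_2 x_3 e^{-2\pi x_3^2} \otimes 1 \otimes e_2 + \tfrac{1}{4\sqrt{2}\pi} e^{-2\pi x_3^2} \otimes 1 \otimes e_3$. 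Then $\psi_{0,1}^0(\sqrt{r}x) = -\tfrac{1}{\sqrt{2}} r x_2 x_3 e^{-2\pi r x_3^2} \otimes 1 \otimes e_2 + \tfrac{1}{4\sqrt{2}\pi} e^{-2\pi r x_3^2} \otimes 1 \otimes e_3$, and I substitute this into \eqref{Greeneq}.

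For the $e_3$-component, I must evaluate $B(x) = -\left(\int_1^\infty \tfrac{1}{4\sqrt{2}\pi} e^{-2\pi r x_3^2} r^{-3/2} dr\right) e^{-\pi(x,x)}$. I would integrate by parts using $\int_1^\infty e^{-2\pi r x_3^2} r^{-3/2} dr = \left[-2 r^{-1/2} e^{-2\pi r x_3^2}\right]_1^\infty - 4\pi x_3^2 \int_1^\infty r^{-1/2} e^{-2\pi r x_3^2} dr = 2e^{-2\pi x_3^2} - 4\pi x_3^2 \int_1^\infty r^{-1/2} e^{-2\pi r x_3^2} dr$. The remaining integral, after the substitution $u = 2\pi r x_3^2$, becomes $(2\pi x_3^2)^{-1/2}\int_{2\pi x_3^2}^\infty u^{-1/2} e^{-u} du = (2\pi x_3^2)^{-1/2} \Gamma(\tfrac12, 2\pi x_3^2)$. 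Assembling and multiplying by the prefactor and by $e^{-\pi(x,x)}$ gives the two stated terms of $B(x)$; the first term $-\tfrac{1}{2\sqrt{2}\pi} e^{-\pi(x_2^2+x_3^2)}$ comes from the boundary term (noting $2\pi x_3^2 + \pi(x,x) = \pi(x_2^2 + x_3^2)$), and the second comes from the $\Gamma$-integral.

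For the $e_2$-component I can either repeat the same integration-by-parts computation with the integrand $-\tfrac{1}{\sqrt{2}} r x_2 x_3 e^{-2\pi r x_3^2} r^{-3/2} = -\tfrac{1}{\sqrt{2}} x_2 x_3 r^{-1/2} e^{-2\pi r x_3^2}$, which after $u = 2\pi r x_3^2$ yields $-\tfrac{1}{\sqrt 2} x_2 x_3 (2\pi x_3^2)^{-1/2} \Gamma(\tfrac12, 2\pi x_3^2)$, and then multiply by $-1$ and by $e^{-\pi(x,x)}$ to get $\tfrac{1}{\sqrt{2}} x_2 x_3 (2\pi)^{-1/2} |x_3|^{-1} \Gamma(\tfrac12, 2\pi x_3^2) e^{-\pi(x,x)} = \tfrac{1}{2\sqrt{\pi}} x_2 \tfrac{x_3}{|x_3|} \Gamma(\tfrac12, 2\pi x_3^2) e^{-\pi(x,x)}$, as claimed. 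Alternatively, and more cleanly, I would derive $A(x)$ from $B(x)$ via the relation \eqref{AB-eq}, $A(x) = -X_{23} B(x)$, using that $X_{23}$ acts on functions of $x_2, x_3$ as the infinitesimal hyperbolic rotation $x_3 \tfrac{\partial}{\partial x_2} + x_2 \tfrac{\partial}{\partial x_3}$; a short check shows differentiating the $\Gamma$-term of $B$ reproduces the $e_2$ formula (the derivative of the incomplete Gamma contributes a term proportional to the first term of $B$, which must cancel against $-X_{23}$ applied to that first term).

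The computation is entirely routine; the only mild obstacle is bookkeeping of the various constants ($2^{1/2}$ versus $2^{3/2}$, factors of $\pi$, and the sign in $e^{-\pi(x,x)}$ versus $e^{-2\pi x_3^2}$) and correctly tracking the absolute values $|x_3|$ and the sign factor $x_3/|x_3|$ that arise from the substitution $u = 2\pi r x_3^2$ when one writes $(2\pi x_3^2)^{-1/2} = (2\pi)^{-1/2}|x_3|^{-1}$. I would present the $e_3$-component calculation in full and then obtain the $e_2$-component by the shortcut \eqref{AB-eq}, remarking that it can also be checked by direct integration.
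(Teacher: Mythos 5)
Your computation is correct and is exactly the argument the paper intends: the paper proves this lemma by the single remark "we see by integrating by parts," i.e.\ by substituting the explicit formula for $\psi_{0,1}^0(\sqrt{r}x)$ into \eqref{Greeneq}, integrating by parts in $r$ for the $e_3$-component, and substituting $u = 2\pi r x_3^2$ to produce the incomplete Gamma function, with the $e_2$-component following by the same substitution (or via \eqref{AB-eq}). Your constants, boundary term, and sign factor $x_3/|x_3|$ all check out, so this matches the paper's proof.
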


It is now immediate that $B$ is continuous and bounded on $D_W$. Since $A$ is clearly bounded
we find that $A$ and $B$ are locally integrable on $D_W$ and integrable and square-integrable on $W$. The singularities of $A$ and $B$ are given as follows.

\begin{lemma}\label{singularitiesofAandB}
\begin{enumerate}
\item[(i)] $B(x) - (1/2)|x_3| e^{-\pi (x,x)} $ is $C^2$ on the Minkowski plane $W$.
\item[(ii)] $A(x)- (1/2) x_2 \frac{x_3}{|x_3|} e^{-\pi (x,x)}$ is $C^1$ on the Minkowski plane $W$.
\end{enumerate}
\end{lemma}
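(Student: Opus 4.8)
The plan is to use the explicit formulas for $A(x)$ and $B(x)$ from Lemma~\ref{firstformulaforAandB} and isolate the non-smooth part by analyzing the behaviour of the incomplete $\Gamma$-function $\Gamma(\tfrac12, 2\pi x_3^2)$ near $x_3 = 0$, which is the only place where either function can fail to be smooth. Everywhere on $W$ away from the line $x_3 = 0$, both $A$ and $B$ are manifestly real-analytic, since $\Gamma(\tfrac12, a)$ is smooth for $a > 0$ and the remaining factors are polynomials times a Gaussian; so the entire content of the lemma is a local statement along $\{x_3 = 0\}$.

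First I would record the standard expansion $\Gamma(\tfrac12, a) = \sqrt{\pi} - 2\sqrt{a} + \tfrac{2}{3} a^{3/2} - \cdots = \sqrt{\pi} - 2\sqrt{a}\,(1 - \tfrac13 a + \cdots)$ for $a \to 0^+$; more precisely, $\Gamma(\tfrac12,a) = \sqrt{\pi} - 2\sqrt{a}\, g(a)$ where $g(a) = \int_0^1 e^{-at}\,t^{-1/2}\,dt \big/ \big(2\big)$ — in any case $g$ is real-analytic in $a$ near $0$ with $g(0)=1$. Substituting $a = 2\pi x_3^2$ gives $\sqrt{a} = \sqrt{2\pi}\,|x_3|$, so
\[
\Gamma(\tfrac12, 2\pi x_3^2) = \sqrt{\pi} - 2\sqrt{2\pi}\,|x_3|\, g(2\pi x_3^2),
\]
where $g(2\pi x_3^2)$ is a smooth (indeed real-analytic) function of $x_3^2$, hence smooth on all of $W$. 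Plugging this into the formula for $B$, the $\sqrt{\pi}$-term combines with the prefactor $\tfrac{1}{2\sqrt\pi}|x_3|$ to give exactly $\tfrac12 |x_3|\, e^{-\pi(x,x)}$, while the $|x_3|$-term produces $-\tfrac{1}{\sqrt 2}|x_3|^2 g(2\pi x_3^2) e^{-\pi(x,x)} = -\tfrac{1}{\sqrt 2} x_3^2\, g(2\pi x_3^2)\, e^{-\pi(x,x)}$, which is smooth since $x_3^2$ and $g(2\pi x_3^2)$ are. Thus $B(x) - \tfrac12 |x_3| e^{-\pi(x,x)}$ is smooth, in particular $C^2$, proving (i). For (ii) I would do the identical substitution in the formula for $A$: the $\sqrt{\pi}$-term gives $\tfrac12 x_2 \tfrac{x_3}{|x_3|} e^{-\pi(x,x)}$, and the remaining term is $-\tfrac{1}{\sqrt 2} x_2 \tfrac{x_3}{|x_3|}|x_3|\, g(2\pi x_3^2) e^{-\pi(x,x)} = -\tfrac{1}{\sqrt2} x_2 x_3\, g(2\pi x_3^2) e^{-\pi(x,x)}$, which is smooth; hence $A(x) - \tfrac12 x_2 \tfrac{x_3}{|x_3|} e^{-\pi(x,x)}$ is smooth. (Alternatively, one can deduce (ii) directly from (i) via the relation $A = -X_{23} B$ of \eqref{AB-eq}, noting that $X_{23}$ acts as a first-order differential operator, which costs one derivative and explains why one only gets $C^1$ rather than $C^2$ in (ii) if one argues that way; the direct computation gives smoothness, but the stated $C^1$ is all that is needed.)

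The only mild subtlety — and the step I would be most careful about — is justifying the claimed regularity of the "tail" function $g$, i.e. that $a \mapsto \big(\sqrt\pi - \Gamma(\tfrac12,a)\big)/\sqrt{a}$ extends to a function that is smooth in the variable $\sqrt{a}$, equivalently that after the substitution $a = 2\pi x_3^2$ we genuinely land in the smooth functions of $x_3$. This follows because $\sqrt\pi - \Gamma(\tfrac12,a) = \int_0^a e^{-t} t^{-1/2}\,dt = 2\int_0^{\sqrt a} e^{-u^2}\,du = \sqrt\pi\,\erf(\sqrt a)$, and $\erf$ is an odd entire function, so $\erf(\sqrt a)/\sqrt a = \erf(|x_3|\sqrt{2\pi})/(|x_3|\sqrt{2\pi})$ is an even entire function of $|x_3|$, hence a smooth (even real-analytic) function of $x_3^2$ on all of $W$. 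Feeding this back in makes both computations above completely rigorous, and no further estimates are needed.
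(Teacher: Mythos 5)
Your proposal is correct and follows essentially the same route as the paper: start from the explicit formulas of Lemma~\ref{firstformulaforAandB} and expand the incomplete gamma function at $x_3=0$, the only difference being that your $\erf$-identity bookkeeping actually yields real-analytic (hence a fortiori $C^2$, resp.\ $C^1$) remainders, slightly more than the paper's ``$|x|x^n$ is $C^n$'' observation records. The only blemish is the harmless constant slip in the smooth remainder ($-\sqrt{2}\,x_3^2\,g(2\pi x_3^2)e^{-\pi(x,x)}$ rather than $-\tfrac{1}{\sqrt{2}}\,x_3^2\,g(2\pi x_3^2)e^{-\pi(x,x)}$, and likewise for $A$), which does not affect the regularity statement.
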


\begin{proof} 
Use Lemma \ref{firstformulaforAandB},
expand the incomplete gamma function around $x_3=0$,
and observe that $|x|x^n$ is $C^n$ for $n>0$. 
\end{proof}

The key properties of $\tilde{\psi}_{0,1}$ analogous to Lemma~\ref{schluesselV} are given by 

\begin{lemma}\label{schluessel}
Outside $D_{W,x}$,
\[
d\tilde{\psi}_{0,1}(x,s) = \varphi_{1,1}(x,s) \qquad \text{and} \qquad 
L\tilde{\psi}_{0,1}(x,\tau) = \psi_{0,1}(x,\tau).
\]
\end{lemma}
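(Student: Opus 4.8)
The statement to prove is Lemma~\ref{schluessel}: away from the singular locus $D_{W,x}$ one has $d\tilde\psi_{0,1}(x,s)=\varphi_{1,1}(x,s)$ and $L\tilde\psi_{0,1}(x,\tau)=\psi_{0,1}(x,\tau)$. The strategy is to mimic exactly the proof of Proposition~\ref{schluesselV} for $V$, replacing the measure $dr/r$ by the measure $r^{-3/2}dr$ that appears in the definition \eqref{Greeneq} of $\tilde\psi_{0,1}$, since the homogeneity/weight bookkeeping for $W$ (dimension $m=2$, lowering from weight $2$ to weight $0$) is slightly different from the case of $V$.

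\textbf{Step 1: the exterior derivative identity.} First I would work at the level of the ``$0$-normalized'' Schwartz forms. By definition,
\[
\tilde\psi_{0,1}^0(x,s)=-\int_1^\infty \psi_{0,1}^0(\sqrt r\,x,s)\,r^{-3/2}\,dr.
\]
Since $d$ (exterior differentiation on $D_W$) commutes with the integral over $r$ and with the substitution $x\mapsto\sqrt r\,x$, I would pull $d$ inside to get $d\tilde\psi_{0,1}^0(x,s)=-\int_1^\infty d\big(\psi_{0,1}^0(\sqrt r\,x,s)\big)\,r^{-3/2}\,dr$. Now invoke Theorem~\ref{localholW}, in its explicit form
\[
v^{3/2}\frac{\partial}{\partial v}\Big(v^{-1/2}\varphi_{1,1}^0(\sqrt v\,x,s)\Big)=d\big(\psi_{0,1}^0(\sqrt v\,x,s)\big),
\]
specialized at $v=r$. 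Substituting, the $r$-integral becomes $-\int_1^\infty \frac{\partial}{\partial r}\big(r^{-1/2}\varphi_{1,1}^0(\sqrt r\,x,s)\big)\,dr$, which by the fundamental theorem of calculus equals $\big[r^{-1/2}\varphi_{1,1}^0(\sqrt r\,x,s)\big]_{r=1}=\varphi_{1,1}^0(x,s)$, provided the boundary term at $r=\infty$ vanishes. The latter is where one uses $z\notin D_{W,x}$ (equivalently $(x,x(s))\ne 0$): then $\varphi_{1,1}^0(\sqrt r\,x,s)=\varphi_{1,1}(\sqrt r\,x,s)e^{\pi r(x,x)}$ carries a genuinely decaying Gaussian in the majorant $(\sqrt r\,x,\sqrt r\,x)_z=r\,(x,x)_z$ with $(x,x)_z>0$ strictly (because the negative line $s$ is not perpendicular to $x$), so $r^{-1/2}\varphi_{1,1}^0(\sqrt r\,x,s)\to 0$ as $r\to\infty$. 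Un-normalizing (multiplying through by $e^{-\pi(x,x)}$ and transporting by $g_z$ as in Section~\ref{conventions}) gives $d\tilde\psi_{0,1}(x,s)=\varphi_{1,1}(x,s)$ outside $D_{W,x}$.

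\textbf{Step 2: the lowering-operator identity.} This one is purely formal and does not require avoiding the singular set. From \eqref{xiZ},
\[
\tilde\psi_{0,1}(x,\tau,s)=-\int_v^\infty \psi_{0,1}^0(\sqrt r\,x,s)\,r^{-3/2}\,dr\cdot e^{\pi i(x,x)u}\Big|_{\text{after }v\text{-power}}
\]
(more precisely $v^{-1/2}\tilde\psi_{0,1}^0(\sqrt v\,x,s)e^{\pi i(x,x)\tau}$). Recalling $Lf=-2iv^2\partial_{\bar\tau}f$ and that $\partial_{\bar\tau}$ acts only through $e^{\pi i(x,x)\tau}$ and the $v^{-1/2}$ and integration-limit dependence, I would differentiate: the derivative of the lower limit $v$ in the integral produces the integrand at $r=v$, and combined with the explicit $v$-powers and the $-2iv^2$ factor this telescopes to exactly $v^{-1/2}\psi_{0,1}^0(\sqrt v\,x,s)e^{\pi i(x,x)\tau}=\psi_{0,1}(x,\tau,s)$. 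This is the same computation as the last line of the proof of Proposition~\ref{schluesselV}, with $r^{-3/2}dr$ and the weight-shift power $v^{-1/2}$ in place of $dr/r$; it is a one-line check once the normalization conventions \eqref{group-tau} are unwound.

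\textbf{Main obstacle.} The only genuine subtlety is the justification of differentiating under the integral sign and the vanishing of the boundary term at $r=\infty$ in Step~1 — i.e., the uniform (local-in-$z$, away from $D_{W,x}$) convergence that legitimizes the interchange. Once one notes that on any compact subset of $D_W\setminus\{{\bf s}(x)\}$ the quantity $(x,x)_z$ is bounded below by a positive constant, the integrand and its $z$-derivatives decay like $e^{-cr}$ times a polynomial in $r$, so dominated convergence applies and the boundary term vanishes; near $D_{W,x}$ the integral genuinely diverges, which is precisely the source of the singularity asserted in the lemma (and already quantified by Lemma~\ref{singularitiesofAandB}). Everything else is bookkeeping with the weight-$r$ normalization \eqref{group-tau} and the explicit formula \eqref{xiZ}.
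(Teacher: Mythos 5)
Your proof is correct and takes essentially the same route the paper intends: Lemma~\ref{schluessel} is stated there as the direct analogue of Proposition~\ref{schluesselV}, and your Step~1 (pulling $d$ inside the $r$-integral, using the explicit form of Theorem~\ref{localholW} to turn the integrand into $\frac{\partial}{\partial r}\bigl(r^{-1/2}\varphi_{1,1}^0(\sqrt{r}x,s)\bigr)$, and killing the boundary term at $r=\infty$ by Gaussian decay away from $D_{W,x}$), together with the one-line computation of $L$ from \eqref{xiZ} in Step~2, is exactly that argument. The only slip is in your parenthetical justification of the decay: the relevant positive quantity is $(x,x)_z-(x,x)=2(x,x(s))^2$, not $(x,x)_z$ itself (the majorant is positive for every $x\neq 0$, even on $D_{W,x}$); since you also state the correct equivalent condition $(x,x(s))\neq 0$, this does not affect the argument.
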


\subsubsection{The singular function $\tilde{\psi}_{0,1}'$}

Inspired by \cite{HZ}, section~2.3, we define a functions $A'(x)$ and $B'(x)$ on $W$ by
\begin{align}\label{AB'-eq}
B'(x) &= \begin{cases} \frac12\min(|x_2-x_3|,|x_2+ x_3|)e^{- \pi (x,x)}  & \text{if}  \, x_2^2-x_3^2 >0,\\ 0 \  & \text{otherwise},
\end{cases} \\
A'(x) &= -X_{23}B'(x)=   -\sgn(x_2x_3)B'(x). \notag
\end{align}
 
 \begin{lemma}\label{singularitiesofA'andB'} 
\begin{enumerate}
\item[(i)] 
$B'(x) + \tfrac12|x_3|e^{- \pi (x,x)}$ is $C^2$ on the complement of the light-cone in $W$ and  $C^2$ on nonzero $M$-orbits.
 \item[(ii)] 
 $A'(x) + \tfrac12 x_2 \frac{x_3}{|x_3|}e^{- \pi (x,x)}$ is $C^1$ on the complement of the light-cone in $W$ and  $C^1$  on nonzero $M$-orbits.
 \end{enumerate}
\end{lemma}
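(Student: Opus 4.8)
The plan is to reduce everything to an elementary calculus statement about the explicit function $B'(x)$ defined in \eqref{AB'-eq}, since $A'(x) = -\sgn(x_2x_3)B'(x)$ differs from $-X_{23}B'(x)$ only by the sign operator and the two claims (i) and (ii) are related exactly as the claims in Lemma~\ref{singularitiesofAandB} are related (indeed $A'$ carries one fewer derivative of regularity precisely because of the extra factor of $\sgn$ or $x_2$, just as for $A$ versus $B$). So I would first observe that it suffices to prove (i), and then deduce (ii) by the same reasoning used in the proof of Lemma~\ref{singularitiesofAandB}: namely that $x_2\cdot(\text{function that is }C^2) $ is $C^1$, and on a nonzero $M$-orbit the coordinate $x_3$ is bounded away from $0$ so $\sgn(x_2x_3)$ introduces no new singularity off the light-cone.

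For (i) itself, the key step is to analyze the function $g(x) := B'(x) + \tfrac12|x_3|e^{-\pi(x,x)}$ region by region. Away from the light-cone $x_2^2 = x_3^2$ there are two open regions. On $x_2^2 - x_3^2 < 0$ we have $B' \equiv 0$, so $g(x) = \tfrac12|x_3|e^{-\pi(x,x)}$, and here $|x_3| \ne 0$ (since $x_3^2 > x_2^2 \ge 0$), so $g$ is smooth, a fortiori $C^2$. On $x_2^2 - x_3^2 > 0$ we have $B'(x) = \tfrac12\min(|x_2-x_3|,|x_2+x_3|)e^{-\pi(x,x)}$; the quantity $\min(|x_2-x_3|,|x_2+x_3|)$ equals $|x_2|-|x_3|$ on this region (because $x_2^2 > x_3^2$ forces the two factors $||x_2|-|x_3||$ and $|x_2|+|x_3|$, and the minimum of $|x_2-x_3|$ and $|x_2+x_3|$ is $||x_2|-|x_3|| = |x_2|-|x_3|$). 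Hence on $x_2^2-x_3^2>0$ we get $g(x) = \tfrac12|x_2|e^{-\pi(x,x)}$, and here $|x_2| \ne 0$, so again $g$ is smooth. Thus $g$ is $C^\infty$ on each of the two open components of the complement of the light-cone, which already gives ``$C^2$ on the complement of the light-cone'' once one checks there is no issue; and the $M$-orbit statement follows because a nonzero $M$-orbit $\{m(s)x_0 : s\in\R\}$ with $(x_0,x_0)\ne 0$ lies entirely in one of these two open regions (the sign of $x_2^2-x_3^2 = -(x,x)$ is an $M$-invariant), so the restriction of $g$ to such an orbit is the restriction of a smooth function, hence $C^2$ (in fact smooth) in the orbit parameter $s$. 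For the null orbits $(x,x)=0$, $x\ne 0$, the orbit sits in the light-cone itself, but these are not ``nonzero $M$-orbits'' in the relevant sense only if one reads the statement as excluding them — here I would follow the paper's convention and note that the light-cone minus the origin consists of two $M$-orbits on each of which $B'\equiv 0$ and $|x_3| = |x_2|$, so $g(x) = \tfrac12|x_2|e^{-\pi(x,x)}$ restricted there is still smooth.

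The only real subtlety — and the step I expect to be the main obstacle — is matching the two formulas for $g$ across the light-cone to justify the bare phrase ``$C^2$ on the complement of the light-cone'': one must be careful that ``complement of the light-cone'' is the disjoint union of the two open regions, so that ``$C^2$ on the complement'' only requires regularity separately on each piece (which we have, in fact $C^\infty$), not regularity across $x_2^2=x_3^2$. Once that reading is fixed, the proof is the short region-by-region computation above, together with the identity $\min(|x_2-x_3|,|x_2+x_3|) = |x_2|-|x_3|$ when $x_2^2>x_3^2$ and the observation that in each region the surviving factor ($|x_3|$ or $|x_2|$) is nonvanishing; then (ii) follows from (i) exactly as in the proof of Lemma~\ref{singularitiesofAandB}, using that multiplication by a $C^\infty$ function preserving the relevant regularity, or by $x_2$, costs at most one derivative, and $\sgn(x_2x_3)$ is locally constant off the light-cone and off $x_2x_3=0$, the latter locus being handled by absorbing it into the already-present factor $x_2$ in $A'$. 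I would write this up in a few lines, citing the expansion-of-$|x|x^n$ observation from the proof of Lemma~\ref{singularitiesofAandB} where needed.
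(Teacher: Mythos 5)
Your treatment of part (i) is correct and complete, and it is essentially the intended argument (the paper states this lemma without proof): on $\{(x,x)<0\}$ one has $B'=0$, so the sum is $\tfrac12|x_3|e^{-\pi(x,x)}$ with $x_3\neq 0$ there, while on $\{(x,x)>0\}$ the identity $\min(|x_2-x_3|,|x_2+x_3|)=|x_2|-|x_3|$ gives $B'(x)+\tfrac12|x_3|e^{-\pi(x,x)}=\tfrac12|x_2|e^{-\pi(x,x)}$ with $x_2\neq 0$ there; each expression is smooth on its region, and since $(x,x)$ is $M$-invariant no orbit crosses the cone, with the null orbits handled as you say. (Minor slip: the punctured light-cone consists of four rays, hence four $M$-orbits, not two; this is harmless.)

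The gap is in your derivation of (ii) from (i). The correction term in (ii) is $+\tfrac12 x_2\sgn(x_3)e^{-\pi(x,x)}$, which is not what multiplying the correction term of (i) by $-\sgn(x_2x_3)$ would produce (that gives $-\tfrac12\sgn(x_2)\,x_3\,e^{-\pi(x,x)}$), so (ii) is not a formal consequence of (i) and a sign factor. Moreover, two of your supporting claims fail exactly where they are needed: $\sgn(x_2x_3)$ is \emph{not} locally constant on the complement of the light-cone, since it jumps across the line $x_3=0$, which lies inside the region $(x,x)>0$; and on a positive-length $M$-orbit (e.g.\ the orbit of $e_2$, where $x_3=\sinh s$) the coordinate $x_3$ is not bounded away from $0$. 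There is also no factor of $x_2$ in $A'$ to absorb anything: near $x_3=0$ with $x_2^2>x_3^2$ one has $A'=-\sgn(x_2x_3)\tfrac12(|x_2|-|x_3|)e^{-\pi(x,x)}$, whose one-sided limits as $x_3\to 0^{\pm}$ are nonzero and of opposite sign. What actually saves (ii) is a cancellation of these jumps against those of the correction term: for $x_3\neq 0$ and $(x,x)>0$ a one-line computation gives $A'(x)+\tfrac12 x_2\sgn(x_3)e^{-\pi(x,x)}=\tfrac12\sgn(x_2)\,x_3\,e^{-\pi(x,x)}$, which extends smoothly (by the value $0$) across $x_3=0$ and agrees there with the left-hand side under the convention $\sgn(0)=0$; on $\{(x,x)<0\}$ one has $A'=0$ and the correction term is smooth because $x_3\neq 0$ and $\sgn(x_3)$ is locally constant. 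With this direct check replacing your reduction, (ii) holds (the sum is in fact smooth off the cone), and the statement on nonzero $M$-orbits follows just as in (i).
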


We define $\tilde{\psi}'_{0,1}$ by
\[
\tilde{\psi}_{0,1}'(x) = A'(x) \otimes 1 \otimes e_2 +  B'(x) \otimes 1 \otimes e_3
\]
and $\tilde{\psi}_{0,1}'(x,\tau,s) = v^{-1/2} m(s) \tilde{\psi}_{0,1}'(m^{-1}(s)\sqrt{v}x)) e^{\pi i (x,x)\tau}$. A little calculation shows that $\tilde{\psi}_{0,1}'(x)$ is locally constant on $D_W$ with a singularity at $D_{W,x}$ and holomorphic in $\tau$:

\begin{lemma}\label{xi'closed}
Outside $D_{W,x}$ we have
 \[
d \tilde{\psi}_{0,1}'(x) = 0 \qquad \qquad \text{and} \qquad \qquad 
L \tilde{\psi}_{0,1}'(x,\tau) = 0.
\]
\end{lemma}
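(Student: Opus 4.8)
The plan is to verify both identities directly from the explicit formulas \eqref{AB'-eq} for $A'$ and $B'$, exploiting the piecewise-elementary nature of $\tilde{\psi}_{0,1}'$. First I would observe that, away from $D_{W,x}$, the function $\tilde{\psi}_{0,1}'(x,s)$ is locally constant in $s$: the region $\{x_2^2 - x_3^2 > 0\}$ is $M$-invariant (since $M$ acts by hyperbolic rotations preserving the quadratic form), and within a fixed $M$-orbit the quantity $\min(|x_2 - x_3|, |x_2 + x_3|)\cdot e^{-\pi(x,x)}$ — written in the null coordinates $x_2 \pm x_3$, on which $m(s)$ acts by $x_2 \pm x_3 \mapsto e^{\mp s}(x_2 \pm x_3)$ — is not locally constant as a function on the orbit, so I must instead use that the full object $\tilde{\psi}_{0,1}'(x,s)$, defined with the $m(s)$-conjugation twist $m(s)\tilde{\psi}_{0,1}'(m^{-1}(s)x)$, is what appears. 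The cleanest route is: $D_W \simeq \R$ is one-dimensional, so $d$ on $D_W$ is $\tfrac{\partial}{\partial s}\,\omega_{23}$ applied to the scalar components after accounting for the $M$-action on the coefficient factor $W_\C$. Writing $\tilde{\psi}_{0,1}'(x,s) = A'(x,s)\otimes 1 \otimes e_2 + B'(x,s)\otimes 1\otimes e_3$, the condition $d\tilde{\psi}_{0,1}' = 0$ unwinds — exactly as in the passage from \eqref{psi20}-type formulas to Lemma~\ref{schluessel}, and using the relation \eqref{AB'-eq} $A' = -X_{23}B'$ together with $X_{23}e_2 = e_3$, $X_{23}e_3 = e_2$ — to the single scalar ODE $\bigl(\tfrac{\partial}{\partial s} + \text{(action terms)}\bigr)B'(x,s) = 0$ along each orbit, which I would check holds because $B'(x(s),\cdot)$ is constant on orbits in the complement of the light cone by the very definition \eqref{AB'-eq} (the twisted object absorbs precisely the $m(s)$-scaling).

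For the second identity $L\tilde{\psi}_{0,1}'(x,\tau) = 0$: by the conventions of Section~\ref{conventions}, the $\tau$-dependence of $\tilde{\psi}_{0,1}'(x,\tau,s)$ enters only through the prefactor $v^{-1/2}$ and the Gaussian-type factor $e^{\pi i(x,x)\tau}$ multiplying $v^{-1/2}m(s)\tilde{\psi}_{0,1}'(m^{-1}(s)\sqrt{v}x)$. Since $A'$ and $B'$ are each of the form (constant in $v$)$\,\cdot\, e^{-\pi(x,x)}$ on the region where they are nonzero — the min-expression is homogeneous of degree $1$ and scales out the $\sqrt{v}$ cleanly against... wait, it does not scale out: $\min(|{\sqrt v}(x_2-x_3)|,\dots) = \sqrt v \min(|x_2-x_3|,\dots)$, giving an overall $v^{1/2}\cdot v^{-1/2} = 1$ — so $\tilde{\psi}_{0,1}'(x,\tau,s)$ equals $\tilde{\psi}_{0,1}'(x,s)\cdot e^{\pi i (x,x)\tau}$ with a $v$-independent first factor. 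Then using $Lf = -2iv^2\tfrac{\partial}{\partial\bar\tau}f$ and $\tfrac{\partial}{\partial\bar\tau}e^{\pi i(x,x)\tau} = 0$, the identity $L\tilde{\psi}_{0,1}'(x,\tau) = 0$ is immediate. I would present this as a short computation parallel to the proof of the $L$-identity in Lemma~\ref{schluessel}, citing \eqref{xiZ}-style reasoning.

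The main obstacle is bookkeeping the $M$-action on the coefficient system $W_\C$ when computing $d$ on $D_W$: one must be careful that $d\tilde{\psi}_{0,1}'$ means the covariant exterior derivative incorporating how $m(s)$ acts on $e_2, e_3$, not just $\tfrac{\partial}{\partial s}$ of the scalar components, and that the singularity locus $D_{W,x}$ is correctly excised (on its complement, within each connected component, $B'$ is genuinely smooth by Lemma~\ref{singularitiesofA'andB'}(i), which I would invoke to justify differentiating under the formula). Once the twisted-derivative formalism is set up as in Section~\ref{W-forms}, both statements reduce to the observation that $B'$, in the appropriate trivialization, is locally constant in $s$ and independent of $v$ after the canonical normalization — a one-line check in each of the two coordinate regions $x_2 - x_3$ and $x_2 + x_3$ of definite sign.
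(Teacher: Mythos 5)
Your overall route is the same as the paper's (which only records the result of ``a little calculation'': that $\tilde{\psi}_{0,1}'(x)$ is locally constant on $D_W$ and holomorphic in $\tau$), but the one step that carries the content of the first identity is asserted with an incorrect reason rather than verified. You claim the scalar ODE holds ``because $B'(x(s),\cdot)$ is constant on orbits \ldots by the very definition \eqref{AB'-eq}''; this is false, and it contradicts your own correct remark two sentences earlier: in the null coordinates $p=x_2+x_3$, $q=x_2-x_3$ the element $m(s)$ acts by $p\mapsto e^{s}p$, $q\mapsto e^{-s}q$, so $\min(|p|,|q|)e^{-\pi pq}$ scales by $e^{\mp s}$ along an $M$-orbit and is in no way $M$-invariant. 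What has to be checked is precisely that this scaling is cancelled by the $m(s)$-action on the coefficients, and that is a two-line computation you never perform: since $\sgn(x_2x_3)=\sgn(|p|-|q|)$, one has $\tilde{\psi}_{0,1}'(y)=-\tfrac12|q(y)|e^{-\pi(y,y)}(e_2-e_3)$ on the region $|p|>|q|$ and $\tilde{\psi}_{0,1}'(y)=\tfrac12|p(y)|e^{-\pi(y,y)}(e_2+e_3)$ on $|q|>|p|$, while $m(s)(e_2\pm e_3)=e^{\pm s}(e_2\pm e_3)$; hence $m(s)\tilde{\psi}_{0,1}'(m(s)^{-1}x)$ equals $-\tfrac12|q|e^{-\pi pq}(e_2-e_3)$, resp.\ $\tfrac12|p|e^{-\pi pq}(e_2+e_3)$, independent of $s$, with the jump occurring exactly where $e^{2s}=p/q$, i.e.\ at $D_{W,x}$. (Equivalently, vanishing of $d\tilde{\psi}_{0,1}'$ needs both $X_{23}B'=-A'$ and $X_{23}A'=-B'$; the first is \eqref{AB'-eq}, and the second follows only after differentiating $A'=-\sgn(x_2x_3)B'$ and using that $\sgn(x_2x_3)$ is locally constant off the singular locus — your reduction to ``a single scalar ODE'' uses this silently. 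Also note that $d$ here is simply the componentwise de Rham differential of the $W_{\C}$-valued function $s\mapsto m(s)\tilde{\psi}_{0,1}'(m(s)^{-1}x)$; the twist is already built into the definition, so no extra covariant-derivative formalism is needed, but the cancellation above is exactly what must be exhibited.)

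For the second identity your conclusion is right but the scaling argument as written drops the Gaussian: $B'(\sqrt{v}x)$ carries the factor $e^{-\pi v(x,x)}$, so the coefficient of $e^{\pi i(x,x)\tau}$ is \emph{not} $v$-independent if one reads the displayed definition of $\tilde{\psi}_{0,1}'(x,\tau,s)$ literally. One must use the normalization of Section~\ref{conventions}, as in \eqref{xiZ}, i.e.\ insert $\tilde{\psi}_{0,1}'^{\,0}(x)=\tilde{\psi}_{0,1}'(x)e^{\pi(x,x)}$; then homogeneity of degree one of the min-term gives $\tilde{\psi}_{0,1}'(x,\tau,s)=\tilde{\psi}_{0,1}'^{\,0}(x,s)\,e^{\pi i(x,x)\tau}$, and annihilation by $L=-2iv^{2}\partial/\partial\bar{\tau}$ is immediate. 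So the second part is a normalization slip that is easily repaired, but the first part needs the actual verification (in null coordinates, or via the pair of $X_{23}$-identities) rather than the stated, incorrect, invariance claim.
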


\begin{remark}
The functions $\tilde{\psi}_{0,1}(x)$ and $\tilde{\psi}_{0,1}'(x)$ define currents on $D_W$. One can show, similarly to Section~\ref{W-currents}, that for $(x,x)>0$ we have
\begin{align*}
d[\tilde{\psi}_{0,1}(x)] = \delta_{D_{W,x} \otimes x} + [\varphi_{1,1}(x)], \qquad \qquad 
d[\tilde{\psi}'_{0,1}(x)] = -\delta_{D_{W,x} \otimes x},
\end{align*}
where $D_{W,x} \otimes x$ is the $0$-cycle $D_{W,x}$ `with coefficient $x \in W$' defined in \cite{FMcoeff}. 
\end{remark}

\subsubsection{The form $\phi_{0,1}$ on $W$}

We now combine $\tilde{\psi}_{0,1}$ and $\tilde{\psi}_{0,1}'$ to obtain an integrable and also square-integrable $W$-valued function 
\[
\phi_{0,1}  \in [L^2(W_{\R}) \otimes \wwedge{0} \mathfrak{m}^{\ast} \otimes W_{\C}]
\]
by
\begin{equation*}
\phi_{0,1}(x) = \tilde{\psi}_{0,1}(x) + \tilde{\psi}_{0,1}'(x)
\end{equation*}
and then also $\phi_{0,1}(x,s)$. Combining Lemmas \ref{singularitiesofAandB}, \ref{singularitiesofA'andB'} and \eqref{AB-eq}, \eqref{AB'-eq} we obtain

\begin{proposition}\label{phi-prop}
\begin{itemize}
\item[(i)] $B(x) + B'(x)$ is $C^2$ on the complement of the light-cone in $W$ and  $C^2$ on nonzero $M$-orbits.
\item[(ii)] $A(x) + A'(x)$ is $C^1$ on the complement of the light-cone in $W$ and  $C^1$ on nonzero $M$ orbits.
\item[(iii)] $X_{23}(B + B') = -(A + A')$ on all of $W$.
\end{itemize}
So for given $x$, the function $\phi_{0,1}(x,s)$ is a $C^1$-function on $D_W$ with values in $W_{\C}$.
\end{proposition}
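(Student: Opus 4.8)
The plan is to reduce everything to the already-established regularity statements in Lemmas~\ref{singularitiesofAandB} and~\ref{singularitiesofA'andB'}, which identify the ``leading singular parts'' of $A,B$ and of $A',B'$ explicitly. First I would observe that by definition $\phi_{0,1}(x) = \tilde\psi_{0,1}(x) + \tilde\psi_{0,1}'(x) = (A(x)+A'(x))\otimes 1 \otimes e_2 + (B(x)+B'(x))\otimes 1 \otimes e_3$, so (i)--(iii) are literally statements about the two scalar functions $A+A'$ and $B+B'$; and part (iii) is the only one that is genuinely a global identity. The route is: prove (i) and (ii) first (these are local regularity statements away from, or along, $M$-orbits), and then prove (iii) as a pointwise identity everywhere.

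For (i): write $B + B' = \bigl(B - \tfrac12|x_3|e^{-\pi(x,x)}\bigr) + \bigl(B' + \tfrac12|x_3|e^{-\pi(x,x)}\bigr)$. By Lemma~\ref{singularitiesofAandB}(i) the first bracket is $C^2$ on all of $W$, and by Lemma~\ref{singularitiesofA'andB'}(i) the second bracket is $C^2$ on the complement of the light-cone and $C^2$ on nonzero $M$-orbits; the sum inherits the weaker of these, i.e.\ it is $C^2$ off the light-cone and $C^2$ on nonzero $M$-orbits. The point is that the non-smooth term $\tfrac12|x_3|e^{-\pi(x,x)}$, which is only Lipschitz across $x_3 = 0$ in $W$, is the \emph{same} in both singular expansions (up to sign) and therefore cancels. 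Statement (ii) is proved identically, subtracting/adding $\tfrac12 x_2\tfrac{x_3}{|x_3|}e^{-\pi(x,x)}$ and invoking parts (ii) of the two lemmas; here the term is only $C^1$ (being $|x_3|\cdot(\text{smooth})$ times $x_2/|x_3|$, i.e.\ of the shape $\sgn(x_3)x_2\cdot(\text{smooth})$), which explains why one gains only $C^1$ and not $C^2$ in (ii).

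For (iii): combine the relation $-X_{23}B(x) = A(x)$ from \eqref{AB-eq} with $A'(x) = -X_{23}B'(x)$ from \eqref{AB'-eq}; adding these gives $X_{23}(B+B') = -(A+A')$ wherever $X_{23}$ can be applied, i.e.\ on the open dense set where $B,B'$ are differentiable. The content of the assertion ``on all of $W$'' is that this identity persists across the singular loci: this follows because, by (ii), $A+A'$ is continuous on nonzero $M$-orbits, and by (i), $B+B'$ is $C^1$ (in particular $X_{23}$-differentiable, $X_{23}$ being the generator of the $M$-action, i.e.\ the derivative \emph{along} those orbits) on nonzero $M$-orbits, so both sides are continuous along orbits and agree on a dense subset, hence everywhere; the origin is handled separately using that $\tilde\psi_{0,1}$ is defined and smooth there by construction (see \eqref{Greeneq}) and $\tilde\psi'_{0,1}$ vanishes near $0$ outside the light-cone. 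The final sentence of the proposition is then just the restatement of (i)--(iii) in the language of $D_W$: transporting by $m(s)$ and recalling $D_W \simeq \R$, the function $s \mapsto \phi_{0,1}(x,s)$ has components $A+A'$ and $B+B'$ evaluated along the orbit, which by (i)--(ii) are $C^1$ in $s$. I expect the only mildly delicate point to be bookkeeping the precise differentiability class across the light-cone versus along $M$-orbits in (iii) — i.e.\ making sure ``$X_{23}$ applied to a function that is merely $C^1$ on orbits'' is legitimate — but this is immediate once one notes $X_{23}$ differentiates in the orbit direction only.
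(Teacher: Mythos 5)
Your proposal is correct and is essentially the paper's own (largely implicit) argument: the paper derives the proposition precisely by combining Lemmas~\ref{singularitiesofAandB} and~\ref{singularitiesofA'andB'} so that the singular terms $\pm\tfrac12|x_3|e^{-\pi(x,x)}$ and $\pm\tfrac12 x_2\tfrac{x_3}{|x_3|}e^{-\pi(x,x)}$ cancel in the sums $B+B'$ and $A+A'$, and by adding the relations \eqref{AB-eq} and \eqref{AB'-eq} for part (iii). The only point you could tighten is (iii) at nonzero light-cone points, where your density-plus-continuity step is cleanest if you note that $B'$ and $A'$ vanish identically on the closed region $x_2^2\le x_3^2$, so along a light-cone ray the identity reduces to \eqref{AB-eq}, which holds there since $x_3\neq 0$.
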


The following theorem is fundamental for us. It is an immediate consequence of the Lemmas~\ref{schluessel} and \ref{xi'closed}. 

\begin{theorem}\label{local-phi}
The form $\varphi_{1,1}$ on $D_W$ is exact. Namely,
\[
d \phi_{0,1} = \varphi_{1,1}.
\]
Furthermore,
\[
L \phi_{0,1} = d \psi_{0,1}.
\]
\end{theorem}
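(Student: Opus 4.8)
The plan is to derive Theorem~\ref{local-phi} directly from the two singular-function lemmas that immediately precede it, namely Lemma~\ref{schluessel} and Lemma~\ref{xi'closed}, together with the defining relation $\phi_{0,1}(x) = \tilde{\psi}_{0,1}(x) + \tilde{\psi}_{0,1}'(x)$. The only genuine issue is that each of $\tilde{\psi}_{0,1}$ and $\tilde{\psi}_{0,1}'$ is singular along $D_{W,x}$, so a priori the identities $d\tilde{\psi}_{0,1}(x,s) = \varphi_{1,1}(x,s)$ and $d\tilde{\psi}_{0,1}'(x,s) = 0$ hold only on the complement of that locus; I must upgrade them to genuine identities of smooth forms on all of $D_W$. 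This is exactly what Proposition~\ref{phi-prop} provides: it asserts that although the two summands individually have jump discontinuities across $D_{W,x}$, their sum $\phi_{0,1}(x,s)$ is a $C^1$-function on all of $D_W$ with values in $W_\C$.

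First I would fix a vector $x \in W$ with $(x,x) > 0$ (the case $(x,x) \le 0$ is easier since then there is no singular locus at all, $D_{W,x}$ being empty) and work on the one-dimensional manifold $D_W \simeq \R$. On the open dense set $D_W \setminus D_{W,x}$ I would simply add the two identities from Lemma~\ref{schluessel} and Lemma~\ref{xi'closed}: $d\phi_{0,1}(x,s) = d\tilde{\psi}_{0,1}(x,s) + d\tilde{\psi}_{0,1}'(x,s) = \varphi_{1,1}(x,s) + 0 = \varphi_{1,1}(x,s)$. Since $\varphi_{1,1}(x,s)$ is smooth on all of $D_W$, this shows that the $1$-form $d\phi_{0,1}(x,s)$, computed on the complement of a single point of $\R$, agrees there with a globally smooth form.

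The key step is then to argue that $d\phi_{0,1}(x,s) = \varphi_{1,1}(x,s)$ holds across the point $s(x)$ as well. By Proposition~\ref{phi-prop}, $\phi_{0,1}(x,s)$ is $C^1$ on $D_W$, so its exterior derivative $d\phi_{0,1}(x,s)$ is a continuous ($C^0$) $1$-form on all of $D_W$; a continuous form that agrees with the smooth form $\varphi_{1,1}(x,s)$ on the complement of a point must agree with it everywhere by continuity. (Equivalently, in the language of currents: $\phi_{0,1}(x,\cdot)$ being $C^1$ means it has no distributional boundary term, so $d[\phi_{0,1}(x)] = [\varphi_{1,1}(x)]$ as currents, hence as forms.) This gives the first assertion $d\phi_{0,1} = \varphi_{1,1}$. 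For the second assertion, I would apply the lowering operator $L$ to the definition of $\phi_{0,1}$ in the $\tau$-variable and use the two relations $L\tilde{\psi}_{0,1}(x,\tau) = \psi_{0,1}(x,\tau)$ from Lemma~\ref{schluessel} and $L\tilde{\psi}_{0,1}'(x,\tau) = 0$ from Lemma~\ref{xi'closed}, giving $L\phi_{0,1}(x,\tau) = \psi_{0,1}(x,\tau)$; combining this with Theorem~\ref{localholW}, which says $\omega(L)\varphi_{1,1} = d\psi_{0,1}$, and with $d\phi_{0,1} = \varphi_{1,1}$ yields $L\phi_{0,1} = \psi_{0,1}$ and hence $d(L\phi_{0,1}) = d\psi_{0,1}$; but one wants the cleaner statement $L\phi_{0,1} = d\psi_{0,1}$ only after a further differentiation, so I would state the relation at the level already proved, $L\phi_{0,1} = \psi_{0,1}$, and note that applying $d$ and Theorem~\ref{localholW} recovers the displayed form $L\phi_{0,1} = d\psi_{0,1}$ up to the identification being used in the source; more carefully, since $\psi_{0,1}$ itself is the object whose $d$ appears, I would simply invoke $d\psi_{0,1}=\omega(L)\varphi_{1,1}$ together with $d\phi_{0,1}=\varphi_{1,1}$.

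The main obstacle is the regularity argument at the singular locus $D_{W,x}$: one must be careful that adding two forms each with a jump discontinuity can produce a $C^1$ form, and that the naive termwise computation of $d$ on the complement then propagates to the whole space. This is precisely where Proposition~\ref{phi-prop}(iii), the identity $X_{23}(B+B') = -(A+A')$ valid on \emph{all} of $W$ (not merely off the light-cone), does the work: it guarantees that the would-be distributional boundary contributions of $\tilde{\psi}_{0,1}$ and $\tilde{\psi}_{0,1}'$ along $D_{W,x}$ cancel exactly, so that no delta-current appears in $d\phi_{0,1}$. Given Proposition~\ref{phi-prop}, everything else is a two-line bookkeeping of the previously established lemmas.
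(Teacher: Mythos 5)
Your proposal is correct and follows essentially the same route as the paper, which deduces the theorem immediately by adding Lemma~\ref{schluessel} and Lemma~\ref{xi'closed} off the singular locus $D_{W,x}$, with the $C^1$-regularity of Proposition~\ref{phi-prop} ensuring the identity $d\phi_{0,1}=\varphi_{1,1}$ extends across it. Your observation about the second displayed identity is also right: the lemmas give $L\phi_{0,1}=\psi_{0,1}$, which is the form of the statement the paper actually uses later (Proposition~\ref{globalholomorphic2}), the extra $d$ in the theorem's display being a degree-inconsistent slip.
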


\begin{proposition}
The function $\phi_{0,1}$ is an eigenfunction of $K'=\SO(2)$ of weight $2$ under the Weil representation. More precisely,
\[
\omega(k') \phi_{0,1} =  \chi^2(k')\phi_{0,1},
\]
where $\chi$ is the standard character of $\SO(2) \simeq U(1)$.
\end{proposition}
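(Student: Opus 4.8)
The plan is to detect the $K'=\SO(2)$-weight infinitesimally and componentwise. Since $(\SL_2,\Orth(W))$ is a dual pair, $\SL_2(\R)$ acts in the Weil representation $\omega$ only on the factor $\calS(W_\R)$ of $[\calS(W_\R)\otimes\wwedge{0}\mathfrak{m}^{\ast}\otimes W_\C]$, and the element $X_{23}\in\mathfrak{m}$ relating the two $W_\C$-components of $\phi_{0,1}$ through $A+A'=-X_{23}(B+B')$ (Proposition~\ref{phi-prop}(iii)) acts in a manner commuting with $\omega(\SL_2(\R))$; hence it is enough to show that the $e_3$-coefficient $g:=(B+B')(x)$ of $\phi_{0,1}$ at the base point is a weight-$2$ eigenvector of $\omega|_{K'}$. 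Writing $Z'=\kzxz{0}{1}{-1}{0}$ for the generator of $\mathfrak{so}(2)$, the formulas behind \eqref{group-tau} give $\omega(Z')=i\bigl(\pi(x,x)-\tfrac1{4\pi}\Box\bigr)$ on $\calS(W_\R)$, where $\Box$ is the Laplacian attached to the quadratic form of $W$ (so at the base point $(x,x)=x_2^2-x_3^2$ and $\Box=\partial_{x_2}^2-\partial_{x_3}^2$). Thus the proposition is equivalent to the second-order identity $\Box g-4\pi^2(x,x)g+8\pi g=0$, which, once $g$ is known to lie in the domain of the closed skew-adjoint operator $\omega(Z')$, exponentiates by the spectral theorem to $\omega(k'_\theta)g=e^{2i\theta}g=\chi^2(k'_\theta)g$.

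First I would verify this identity for each summand of $g=B+B'$ separately, away from its (measure-zero) singular locus. For $B$: substitute the closed form of Lemma~\ref{firstformulaforAandB} and use $\tfrac{d}{dt}\Gamma(\tfrac12,2\pi t^2)=-2\sqrt{2\pi}\,\sgn(t)\,e^{-2\pi t^2}$; the $x_2$-derivatives are immediate, and in the $x_3$-derivatives the incomplete-$\Gamma$ contributions cancel in pairs, leaving $\Box B-4\pi^2(x,x)B+8\pi B=0$ off $\{x_3=0\}$. (Alternatively this is forced structurally: since $\psi_{0,1}$ has weight $0$, differentiating under the integral in the representation \eqref{Greeneq} of $\tilde{\psi}_{0,1}$ and integrating by parts in $r$ turns the weight-$0$ eigenequation for $\psi_{0,1}$ into the weight-$2$ eigenequation for $\tilde{\psi}_{0,1}$ — the $W$-analogue of the mechanism in the remark following Lemma~\ref{schluesselV}.) For $B'$: in null coordinates $\xi=x_2+x_3$, $\eta=x_2-x_3$ one has $(x,x)=\xi\eta$, $\Box=4\partial_\xi\partial_\eta$, and $B'=\tfrac12\min(|\xi|,|\eta|)e^{-\pi\xi\eta}$ on $\xi\eta>0$ and $0$ otherwise; on each of the two sub-wedges the function is a constant multiple of $|\xi|e^{-\pi\xi\eta}$ or $|\eta|e^{-\pi\xi\eta}$, for which a one-line computation gives the same identity.

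Then I would promote this to an identity of tempered distributions, and hence to a genuine operator equation. By Lemmas~\ref{singularitiesofAandB}(i) and~\ref{singularitiesofA'andB'}(i) the singular terms $\pm\tfrac12|x_3|e^{-\pi(x,x)}$ of $B$ and of $B'$ cancel, so $g$ is $C^2$ off the light cone (Proposition~\ref{phi-prop}); along $\{x_3=0\}$ this cancellation kills the would-be $\delta$-terms in $\Box B$ and $\Box B'$, and along the light cone the null cross-derivative $\Box=4\partial_\xi\partial_\eta$ sees only the corner of $B'$ there, which contributes no $\delta$. Hence $\Box g=4\pi^2(x,x)g-8\pi g$ holds in $\mathcal{S}'(W_\R)$; the right-hand side and $(x,x)g$ lie in $L^2(W_\R)$ because all functions occurring have Gaussian decay, so a standard argument places $g$ in the domain of (the closure of) $\omega(Z')$ with $\omega(Z')g=2ig$, which finishes the proof. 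The same argument applied to $A+A'=-X_{23}(B+B')$, or the $X_{23}$-intertwining remark above, handles the $e_2$-component.

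I expect the last step to be the main obstacle: one must be sure that applying the second-order operator $\omega(Z')$ to the merely $C^1$ function $\phi_{0,1}$ produces no spurious distribution supported on the singular loci. This is exactly why $\tilde{\psi}_{0,1}$ by itself will not do and one must add the Hirzebruch--Zagier-type correction $\tilde{\psi}'_{0,1}$, whose singularity is engineered to cancel that of $\tilde{\psi}_{0,1}$ (Proposition~\ref{phi-prop}); granted this cancellation, what remains is a routine differentiation of explicit elementary functions.
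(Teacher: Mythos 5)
Your proposal is correct and follows essentially the same route as the paper's proof: reduce to the component $B+B'$ via the $X_{23}$-intertwining, verify the infinitesimal weight-$2$ eigenequation for $B$ and $B'$ separately off their singular loci, and observe that the would-be delta contributions (coming from the $\pm\tfrac12|x_3|e^{-\pi(x,x)}$ singular parts) cancel in the sum, so the eigenequation holds distributionally. The only difference is the final exponentiation step, where you invoke essential self-adjointness and the spectral theorem in $L^2$, while the paper argues at the group level by noting that $\omega(k')(B+B')$ is continuous (Fourier transform of an $L^1$ function) and that continuous functions inducing the same current coincide; both are adequate, though your appeal to ``Gaussian decay'' of $B'$ should be replaced by the (true) integrability estimates near the light cone.
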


\begin{proof}
It suffices to show this for one component of $\phi_{0,1}$, that is, the function $B(x)+B'(x)$. Then the assertion has been already proved in \S 2.3 by showing that $B(x)+B'(x)$ is an eigenfunction under the Fourier transform. We give here an infinitesimal proof. Since $\omega(k')$ acts essentially as Fourier transform and $B+B'$ is $L^1$, we see that $\omega(k')(B+B')$ is continuous. Hence it suffices to establish the corresponding current equality $[\omega(k')(B+B')] = \chi^2(k')[B+B']$, since continuous functions coincide when they induce the same current. The infinitesimal generator of $K'$ acts by $
H:=\frac{-i}{4\pi} \left(\tfrac{\partial^2}{\partial{x_2^2}} - \tfrac{\partial^2}{\partial{x_3^2}}\right) + \pi i (x_2^2-x_3^2)$,
and a straightforward calculation immediately shows
\[
H B' = 2i B' \qquad \text{and} \qquad  H {B} = 2i {B},
\]
outside the singularity $x_2^2-x_3^2=0$. 
Now we consider the currents $H[B]$ and $H[B']$. An easy calculation using that $B$ and $B'$ are $C^2$ up to $|x_3|e^{-\pi(x_2^2-x_3^2)}$  shows that for a test function $f$ on $W$ we have
\begin{align*}
H[B](f) &= [HB](f) + \int_{0}^{\infty} e^{-\pi x_2^2} f(x_2,0)dx_2, \\
H[B'](f) &= [HB'](f) - \int_{0}^{\infty} e^{-\pi x_2^2} f(x_2,0) dx_2.
\end{align*}
Thus $H[B+B'] = [H(B+B')]= 2i[B+B']$ as claimed.
\end{proof}

\subsubsection{The map $\iota_P$}\label{iotaP}

We define a map 
\[
\iota_P: \calS(W_{\R}) \otimes \wwedge{i} \mathfrak{m}^{\ast} \otimes W_{\C} \to \calS(W_{\R}) \otimes \wwedge{i+1} \left( \mathfrak{m}^{\ast} \oplus  \mathfrak{n}^{\ast} \right) 
\]
by 
\[
\iota_P(\varphi \otimes \omega \otimes w) = \varphi \otimes \left((\omega \wedge (w \wedge u')\right).
\]
Here we used the isomorphism $\mathfrak{n} \simeq W \wedge \R u \in \bigwedge^{2} V_\R \simeq \mathfrak{g}$ and identify $W$ with its dual via the bilinear form $(\,,\,)$ so that $\mathfrak{n}^{\ast} \simeq W \wedge \R u'$. In \cite{FMres}, Section ~6.2 we explain that $\iota_P$ is a map of Lie algebra complexes. Hence we obtain a map of complexes
\[
[\calS(W_{\R}) \otimes \calA^i(D_W) \otimes W_{\C}]^M \to [\calS(W_{\R}) \otimes \calA^{i+1}(e(P))]^{NM}, 
\]
which we also denote by $\iota_P$. Here $N$ acts trivially on $\calS(W_{\R})$. Explicitly, the vectors $e_2$ and $e_3$ in $W$ map under $\iota_P$ to the left-invariant $1$-forms
\[
e_2 \mapsto \cosh(s)dw_2 - \sinh(s)dw_3 \qquad e_3 \mapsto \sinh(s)dw_2 - \cosh(s)dw_3
\]
with the coordinate functions $w_2,w_3$ on $W$ defined by $w=w_2e_2+w_3e_3$. We apply $\iota_P$ to the forms on $W$ of this section, and we obtain $\varphi_{1,1}^P$, $\phi_{0,1}^P$, $\psi_{0,1}^P$, and ${\psi'}_{0,1}^P$.

\section{The boundary theta lift and linking numbers in Sol}

\subsection{Global theta functions for $W$}

We let $\calL_W$ be a $\G_P$-invariant (coset of a) lattice in $W$, where $\G_N$ acts trivially on $W$. For $\varphi_{1,1}$, we define its theta function by 
\[
\theta_{\varphi_{1,1}}(\tau,{\calL_W})= \sum_{x \in \calL_W} \varphi_{1,1}(x,\tau)
\]
and similarly for $\psi_{0,1}$, and $\phi_{0,1}$. Then the usual theta machinery gives that
$\theta_{\varphi_{1,1}}(\tau,{\calL_W})$ and $\theta_{\phi_{0,1}}(\calL_W)$ both transform like (non)-holomorphic modular forms of weight $2$ for some congruence subgroup of $\Sl_2(\Z)$. 

\begin{remark}
The claim is not obvious for $\theta_{\phi_{0,1}}$, since $\phi_{0,1}$ is not a Schwartz function. In that case, we use Proposition~\ref{phi-prop}. The component $B+B'$ of $\phi_{0,1}$ is $C^2$ outside the light cone. Since $W$ is anisotropic we can then apply Possion summation, and this component transforms like a modular form. Then apply the differential operator $X_{23}$ to obtain the same for the other component $A+A'$ of $\phi_{0,1}$. 
 
 In fact, if $W$ is isotropic and $\calL_W$ intersects non-trivially with the light cone, then $\theta_{\phi_{0,1}}$ is not quite a modular form. The case, when the $\Q$-rank of $V$ is $2$ is interesting in its own right. We will discuss this elsewhere. 
\end{remark}

Via the map $\iota_P$ from Section~\ref{iotaP} we can view all theta functions for $W$ as functions resp. differential forms on $e'(P)$. We set $
\theta^P_{\varphi_{1,1}} =   \theta_{\varphi_{1,1}^P}$, 
and similarly $\theta^P_{\psi_{0,1}}$ and $\theta^P_{\phi_{0,1}}$. Since $\iota_P$ is a map of complexes we immediately see by Theorem~\ref{localholW} and Theorem~\ref{local-phi}

\begin{proposition}\label{globalholomorphic2}
\[
L \theta^P_{\varphi_{1,1}} = d\theta^P_{\psi_{0,1}} \qquad \text{and} \qquad 
L \theta^P_{\phi_{0,1}} = \theta^P_{\psi_{0,1}}.
\]
\end{proposition}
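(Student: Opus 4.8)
The plan is to deduce Proposition~\ref{globalholomorphic2} directly from the corresponding ``local'' identities, Theorem~\ref{localholW} and Theorem~\ref{local-phi}, by pushing them through the summation over the lattice $\calL_W$ and through the map $\iota_P$. The two ingredients I need are: first, that both the Weil-representation lowering operator $L$ (acting in the $\tau$-variable) and the exterior derivative $d$ on $e'(P)$ commute with the infinite sum defining the theta functions; and second, that $\iota_P$ commutes with $d$ (it is a map of complexes, as recalled in Section~\ref{iotaP}) and is $\C$-linear so commutes with $L$, which acts only on the Schwartz/coefficient variable.

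The steps, in order: (1) Recall that $\theta^P_{\varphi_{1,1}}(\tau) = \sum_{x\in\calL_W}\varphi_{1,1}^P(x,\tau)$, and similarly for $\psi_{0,1}$ and $\phi_{0,1}$, where $\varphi_{1,1}^P=\iota_P(\varphi_{1,1})$ etc. (2) Apply $\omega(L)$ termwise. For each fixed $x$, equation~\eqref{group-tau} shows $\varphi_{1,1}(x,\tau)$ is built from $\omega(g'_\tau)\varphi_{1,1}$ up to a power of $v$, so the action of $L$ in $\tau$ is the Weil-representation action $\omega(L)$ on the Schwartz function; by Theorem~\ref{localholW}, $\omega(L)\varphi_{1,1} = d\psi_{0,1}$, and applying $\iota_P$ (a map of complexes) gives $\omega(L)\varphi_{1,1}^P = d\psi_{0,1}^P$ as forms on $e(P)$. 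Likewise, by Theorem~\ref{local-phi}, $L\phi_{0,1} = d\psi_{0,1}$ on $D_W$, so $L\phi_{0,1}^P = d\psi_{0,1}^P$ on $e(P)$; note that although $\phi_{0,1}$ is not Schwartz, Proposition~\ref{phi-prop} guarantees $\phi_{0,1}(x,s)$ is $C^1$, which is all that is needed to make sense of $d\phi_{0,1}^P$ and to run the argument pointwise in $x$. (3) Sum over $x\in\calL_W$. Since $W$ is anisotropic, the theta series and all its termwise derivatives converge absolutely and locally uniformly (standard theta machinery, as invoked just before the Proposition; for $\phi_{0,1}$ one uses the $C^2$-up-to-$|x_3|e^{-\pi(x,x)}$ decomposition from Proposition~\ref{phi-prop} together with Poisson summation as in the Remark), so we may interchange $\sum$ with $d$ and with $L$. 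This yields $L\theta^P_{\varphi_{1,1}} = \sum_x d\psi_{0,1}^P(x,\tau) = d\theta^P_{\psi_{0,1}}$ and $L\theta^P_{\phi_{0,1}} = \sum_x d\psi_{0,1}^P(x,\tau) = \theta^P_{\psi_{0,1}}$, wait—I should be careful: the second identity reads $L\theta^P_{\phi_{0,1}} = d\theta^P_{\psi_{0,1}}$? No: Theorem~\ref{local-phi} states $L\phi_{0,1}=d\psi_{0,1}$, so summing gives $L\theta^P_{\phi_{0,1}} = d\theta^P_{\psi_{0,1}}$; but the Proposition claims $L\theta^P_{\phi_{0,1}} = \theta^P_{\psi_{0,1}}$ without the $d$. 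The resolution is that $\iota_P$ shifts form-degree, and the relevant identity to quote is the one from Lemma~\ref{schluessel}/the definition of $\tilde\psi$, namely $L\tilde\psi_{0,1}(x,\tau) = \psi_{0,1}(x,\tau)$ (and $L\tilde\psi'_{0,1}=0$ from Lemma~\ref{xi'closed}), so that $L\phi_{0,1}(x,\tau) = \psi_{0,1}(x,\tau)$ holds \emph{before} applying $d$; then $\iota_P$ and summation give $L\theta^P_{\phi_{0,1}} = \theta^P_{\psi_{0,1}}$ directly. So in step (2) I must quote $L\tilde\psi_{0,1}=\psi_{0,1}$ and $L\tilde\psi'_{0,1}=0$ for the second identity, and $\omega(L)\varphi_{1,1}=d\psi_{0,1}$ for the first.

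The main obstacle is purely the justification of interchanging the infinite sum with the operators $d$ and $L$, and in particular handling $\theta^P_{\phi_{0,1}}$, whose summand $\phi_{0,1}$ is only $C^1$ and is not a Schwartz function, so that naive termwise differentiation and Poisson summation require the more delicate analysis of the singularities along the light cone recorded in Proposition~\ref{phi-prop} and the Remark following it. Everything else is formal: once the interchange is licensed, the Proposition is an immediate consequence of Theorems~\ref{localholW} and~\ref{local-phi} together with the fact that $\iota_P$ is a $\C$-linear map of complexes.
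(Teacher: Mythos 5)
Your proof is correct and is essentially the paper's own argument: the authors simply note that $\iota_P$ is a map of complexes and quote Theorems~\ref{localholW} and \ref{local-phi}, the summation over $\calL_W$ and the interchange with $L$ and $d$ being left implicit (the convergence subtlety for $\theta^P_{\phi_{0,1}}$ is addressed only in the remark preceding the proposition, as you indicate). Your correction that the second identity must come from $L\tilde{\psi}_{0,1}=\psi_{0,1}$ and $L\tilde{\psi}'_{0,1}=0$ (Lemmas~\ref{schluessel} and \ref{xi'closed}), i.e.\ $L\phi_{0,1}=\psi_{0,1}$ rather than the degree-inconsistent $L\phi_{0,1}=d\psi_{0,1}$ printed in Theorem~\ref{local-phi}, is exactly what the paper intends and is the right way to get $L\theta^P_{\phi_{0,1}}=\theta^P_{\psi_{0,1}}$.
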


We now interpret the (holomorphic) Fourier coefficients of the boundary theta lift associated to $ \theta^P_{\phi}(\tau,\calL_{W_P})$. They are given by linking numbers. We have

\begin{theorem}\label{xi'-integralP}
Let $c$ a homological trivial $1$-cycle in $e'(P)$ which
is disjoint from the torus fibers containing components of $\partial C_n$ or for $c=\partial C_y$ for $C_y$ one of the components of $C_n$, we have
\[
\int_{c} \theta^P_{\phi_{0,1}}(\tau,\calL_{W_P}) = 
  \sum_{n=1}^{\infty}  \Lk((\partial C_n)_P,c) q^n
 \; + \; \sum_{n \in \Q} \int_{c}  {\tilde{\psi}_{0,1}^P}(n)(\tau).
\]
 So the Fourier coefficients of the holomorphic part of $\int_{c} \theta^P_{\phi}(\tau,\calL_{W_P})$ are the linking numbers of the cycles $c$ and $ \partial C_n$ at the boundary component $e'(P)$.  
 \end{theorem}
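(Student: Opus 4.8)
The plan is to combine the decomposition $\phi_{0,1}=\tilde\psi_{0,1}+\tilde\psi_{0,1}'$ from the previous section with a differential-topological identification of $\tilde\psi_{0,1}'$ as a ``Green current'' for the boundary circles $c_x$. First I would write $\theta^P_{\phi_{0,1}}=\theta^P_{\tilde\psi_{0,1}}+\theta^P_{\tilde\psi_{0,1}'}$ and sort each sum according to the value $n=\tfrac12(x,x)$. For the first summand this is, by the very definition of the notation, the series $\sum_{n\in\Q}\tilde\psi^P_{0,1}(n)(\tau)$; the Gaussian decay visible from the incomplete-$\Gamma$ formulas of Lemma~\ref{firstformulaforAandB} lets one interchange $\int_c$ with the sum, and this produces exactly the tail $\sum_{n\in\Q}\int_c\tilde\psi^P_{0,1}(n)(\tau)$ (which, by the standard behaviour of the incomplete $\Gamma$-function, contributes only to the non-holomorphic part). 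For the second summand, \eqref{AB'-eq} shows $A'(x)=B'(x)=0$ whenever $(x,x)\le 0$, so only positive-norm vectors contribute; and since $\tilde\psi_{0,1}'(x,\tau,s)$ is annihilated by the lowering operator $L$ (Lemma~\ref{xi'closed}) and carries the factor $e^{\pi i(x,x)\tau}$, its $n$-th Fourier coefficient is $q^n$ times a $v$-independent spatial $1$-form, namely $\sum_{x\in\calL_W,\ (x,x)=2n}\iota_P(\tilde\psi_{0,1}'(x,s))$ on $e'(P)$. Thus the theorem reduces to proving, for each $n>0$, that $\int_c\sum_{(x,x)=2n}\iota_P(\tilde\psi_{0,1}'(x,s))=\Lk((\partial C_n)_P,c)$.

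The heart of the matter is the current identity on $e'(P)$
\[
d\big[\,\iota_P(\tilde\psi_{0,1}'(x,s))\,\big]=-\,\delta_{c_x},
\]
obtained by pushing forward under $\iota_P$ (and descending to $e'(P)$) the identity $d[\tilde\psi_{0,1}'(x)]=-\delta_{D_{W,x}\otimes x}$ recorded in the Remark after Lemma~\ref{xi'closed}; the ``coefficient $x$'' of the $0$-cycle $D_{W,x}$ turns, under $\iota_P$, into the natural orientation of the circle $c_x$. I expect this to be the main obstacle: because $A'$ and $B'$ are only $C^1$ along nonzero $M$-orbits (Lemma~\ref{singularitiesofA'andB'}), the exterior derivative must be computed distributionally, tracking the jump of the locally constant part of $\tilde\psi_{0,1}'$ across $D_{W,x}$. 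I also need the normalization that $\iota_P(\tilde\psi_{0,1}'(x,s))$ has vanishing period over the torus-fiber class $b_P$; this follows again from \eqref{AB'-eq}, which makes $\tilde\psi_{0,1}'(x,s)$ vanish on the open set of directions outside the two light-cone rays, so that its restriction to a fiber torus has zero period. This vanishing is the precise analogue of condition $(2)$ in Proposition~\ref{rat-cap}, and it is what singles out the cap $a_x$ of that proposition as the correct object of comparison.

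Granting the current identity, I would evaluate $\int_c\iota_P(\tilde\psi_{0,1}'(x,s))$ for a single $x$ of norm $2n$ as follows. In the self-linking case $c=\partial C_y$ one first replaces $c$ by its push-off $b(\epsilon)$ to a neighboring fiber, as in Section~\ref{generaltheoryoflink}, so that in all cases $c$ is homologically trivial in $e'(P)$ and disjoint from the fiber over $s(x)$. Choosing a rational $2$-chain $B$ with $\partial B=c$ and pairing the current identity with $[B]$,
\[
\int_c\iota_P(\tilde\psi_{0,1}'(x,s))=\big\langle[B],\,d[\iota_P(\tilde\psi_{0,1}'(x,s))]\big\rangle=-\,(B\cdot c_x)=\Lk(c_x,c),
\]
where the last step is the symmetry (up to the fixed sign convention of Section~\ref{generaltheoryoflink}) of the linking pairing of $1$-cycles in a $3$-manifold together with the fiber-period normalization above; equivalently, one checks directly that $\int_c\iota_P(\tilde\psi_{0,1}'(x,s))=\langle a_x,c\rangle$ for the explicit cap $a_x=P+\tfrac1N T+\tfrac1N\mathcal{M}(\gamma_0)$ built in Section~\ref{rat-cap11}.

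Finally I would sum over $x$. The theta sum runs over all of $\calL_W$, and reorganizing it into $\G_M$-orbits and recovering the multiplicities $\min'_{\lambda\in\Lambda_W}|(\lambda,x)|$ --- that is, the circles $c_{x+ku}$ for $0\le k<\min'_{\lambda\in\Lambda_W}|(\lambda,x)|$ --- is precisely the counting already carried out in the proof of Lemma~\ref{LemmaB}, which identifies this weighted union of circles with the $1$-cycle $(\partial C_n)_P$. Additivity of $\Lk(\cdot,c)$ in the first argument then gives $\sum_x\Lk(c_x,c)=\Lk((\partial C_n)_P,c)$, which is the required identity. (The explicit formula of Theorem~\ref{linkSol} is not needed for this; it serves only to make the Fourier coefficients explicit afterwards, as in Theorem~\ref{LinkCnCm} and Example~\ref{LinkCnCmex}.)
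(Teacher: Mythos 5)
Your overall strategy is the paper's: split $\phi_{0,1}=\tilde\psi_{0,1}+\tilde\psi'_{0,1}$, observe that the $\tilde\psi_{0,1}$-part produces the non-holomorphic tail, and show that the $n$-th coefficient of the $\tilde\psi'_{0,1}$-part computes $\Lk((\partial C_n)_P,c)$ (this is Theorem~\ref{linking-dual}). But the step you call the heart of the matter is false as stated. The current identity $d[\iota_P(\tilde\psi'_{0,1}(x,s))]=-\delta_{c_x}$ cannot hold: by \eqref{psi'formula} the form $\iota_P\tilde\psi'_{0,1}(x)$ is (for $x=\mu e_2$) the $w$-independent form $\tfrac12|\mu|(\sgn(s)dw_2-dw_3)e^{-\pi\mu^2}$, so its distributional exterior derivative is $|\mu|e^{-\pi\mu^2}\,\delta(s)\,ds\wedge dw_2$, a current supported on the \emph{entire} torus fiber $F_x$, not on the circle $c_x$. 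Tested against a $1$-form $\omega$ it gives the $dw_2$-average of the periods of $\omega$ over the parallel translates of $c_x$; this equals a multiple of $\int_{c_x}\omega$ only when the restriction of $\omega$ to the fiber is closed, and then the multiple is $\min'_{\lambda\in\Lambda_W}|(\lambda,x)|\,e^{-2\pi n}$, coming from the transverse length $\int_{T^2/c_{e_2}}dw_2=\min'_\lambda|(\lambda,e_2)|$. This is exactly why the paper does not argue through a pointwise delta current but through Proposition~\ref{finalintegral}: one restricts to exact test forms $\eta=d\omega$ supported away from $F_n$, applies Stokes' theorem on $e'(P)\setminus U_\eps$, and the jump formula \eqref{psi'formula} together with the fiber integral produces the factor $\min'_\lambda|(\lambda,x)|$; the linking interpretation then comes from pairing with a Thom form $\eta_c$ of $c$ as in Section~\ref{generallinking} (Lemmas~\ref{integrallink} and \ref{laststep}).

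The missing multiplicity is not a cosmetic issue, and your proposed fix --- ``reorganizing the theta sum and recovering the multiplicities $\min'_\lambda|(\lambda,x)|$ as in Lemma~\ref{LemmaB}'' --- does not work: the theta series $\theta^P_{\tilde\psi'_{0,1}}$ is a sum over the \emph{quotient} lattice $\calL_{W}=(\calL\cap u^\perp)/(\calL\cap u)$, in which $x$ and $x+ku$ are the same element, so each $\G_M$-orbit occurs exactly once and the $\min'$-many parallel circles $c_{x+ku}$ of Lemma~\ref{LemmaB} cannot be produced by re-indexing the lattice sum. With your claimed current identity you would obtain $\sum_{x\in\G_M\backslash\calL_W(n)}\Lk(c_x,c)$ instead of $\Lk((\partial C_n)_P,c)=\sum_{x}\min'_\lambda|(\lambda,x)|\Lk(c_x,c)$; the multiplicity must come out of the analysis (the fiber integral above, combined with Lemma~\ref{LemmaA} on the cap side), which is the real content of Theorem~\ref{linking-dual}. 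Two further points need care even after this is repaired: your normalization claim that the restriction of $\iota_P\tilde\psi'_{0,1}(x)$ to a fiber has ``zero period'' is not correct (it is a nonzero invariant form on the fiber; the relevant normalization is condition (2) of Proposition~\ref{rat-cap} for the cap $A_n$, used in Remark~\ref{youwillneedthis}), and in the self-linking case $c=\partial C_y$ you must justify both $\int_{c}\tilde\psi'_{0,1}(n)=\int_{c(\eps)}\tilde\psi'_{0,1}(n)$ (Lemma~\ref{selflinkingforx}) and $\Lk(\partial C_n,c)=\Lk(\partial C_n,c(\eps))$, rather than simply replacing $c$ by its push-off.
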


Theorem \ref{xi'-integralP} follows from $\phi_{0,1} = \tilde{\psi}_{0,1} + \tilde{\psi'}_{0,1}$ combined with Theorem~\ref{linking-dual} below.  

\begin{example}\label{HZbeta}
In the situation of Examples~\ref{HZex} and \ref{LinkCnCmex}, we obtain
\[
\int_{\partial C_1} \theta^P_{\phi_{0,1}}(\tau,\calL_{W_P}) = \frac{1}{\sqrt{2d}} \sum_{\substack{\la \in \mathcal{O}_K \\ \la\la'>0}} \min(|\la|,|\la'|) e^{-2\pi \la \la' \tau } - \frac{\sqrt{2}}{\sqrt{dv}} \sum_{\la \in \mathcal{O}_K} \beta (\pi v (\la-\la')^2) e^{-2\pi \la \la' \tau},
\]
where $\beta(s) = \tfrac1{16\pi} \int_1^{\infty} e^{-st}t^{-3/2} dt$. This is (up to a constant) exactly Zagier's function $\mathcal{W}(\tau)$ in \cite{HZ}, \S 2.3.

\end{example}

\subsection{Linking numbers, de Rham cohomology and linking duals} \label{generallinking}
We begin with a general discussion of integral formulas for linking numbers.
Such formulas go back to the classical Gauss-Amp\`ere formula for $\R^3$, see \cite{F}, p.79-81, and \cite{DG} for its generalization to $S^3$ and $H^3$. 
Suppose now that $c$ is a $1$-cycle in an oriented compact $3$-manifold M that is a rational boundary and $U$ is a tubular neighborhood of $c$. 
\begin{definition}
We will say any closed form $\beta$ in $M-U$ is a {\it linking dual} (relative to $U$) of the bounding $1$-cycle $c$ if for any $1$-cycle $a$ in $M-U$
which is a rational boundary in $M$ we have
\begin{equation*} \label{linkingdualdef} 
\int_a \beta = \Lk(a,c).
\end{equation*}
\end{definition}

We will prove that given a cycle $c$ that bounds rationally then linking duals for $c$  exist for all tubular neighborhoods $U$ of $c$.  Let $\eta$ be a Thom form for $c$
compactly supported in $U$. This means that $\eta$ is closed and has integral $1$ over any normal disk to $c$. Let $\eta_M$ be the extension of $\eta$ to $M$ by zero.  It is standard in topology (the extension of the Thom class by zero is the Poincar\'e dual of the zero section of the normal bundle) that the form  $\eta_M$ represents the $2$-dimensional cohomology class on $M$ which is Poincar\'e dual to $c$. Since $c$ is a rational boundary there exists a $1$-form $\beta$ on $M$ such that $d \beta = \eta_M$. We will now see that $\beta$ is a linking dual of $c$.  To this end,  suppose  $a$ is a $1$-cycle in $M -U$ which is a rational boundary in $M$, hence there exists a rational
chain $A$ with $\partial A = a$.  We may suppose $\eta_M$ vanishes in a neighborhood $V$ of $a$ which is disjoint from $U$ .  Then the restriction $\eta_{M-V}$ of $\eta_M$ to $M-V$ represents the (relative) Poincar\'e dual of the absolute cycle $c$ in $(M-V, \partial (M-V))$. Using this
the reader will show that
\begin{equation*} \label{firstequation}
 \int_A \eta_M = \int_{A \cap (M-V)} \eta_{M-V}  = A \cdot c = \Lk(a,c).
\end{equation*}
 Note that restriction of $\beta$ to $M-U$ is closed. Then
\[
\int_a \beta  = \int_A \eta_M = \Lk(a,c).
\]
Hence we have 
\begin{proposition} \label{linkingdualprop}
$\beta$ is a linking dual of $c$. 
\end{proposition}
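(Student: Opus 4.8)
The plan is to collect the ingredients already assembled just before the statement and to make precise the one step that was left to the reader. We have the smooth closed $2$-form $\eta_M$ on $M$, namely the extension by zero of a Thom form $\eta$ of $c$ supported in the tubular neighborhood $U$; it represents the de Rham Poincar\'e dual of the class $[c]\in H_1(M;\Q)$. Since $c$ bounds rationally, $[c]=0$, hence $[\eta_M]=0$ in $H^2_{\mathrm{dR}}(M)$, and so there is a smooth $1$-form $\beta$ on $M$ with $d\beta=\eta_M$. First I would note that $\beta$ qualifies as a candidate: on $M-U$ we have $d\beta=\eta_M=0$ because $\eta$ is supported in $U$, so $\beta|_{M-U}$ is closed, which is part of the definition of a linking dual.

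Next I would verify the integral identity. Let $a$ be a $1$-cycle in $M-U$ that bounds rationally in $M$, and choose a rational $2$-chain $A$ in $M$ with $\partial A=a$; by definition $\Lk(a,c)=A\cdot c$, and this is independent of $A$ since $[c]=0$. Because $\beta$ is a genuine global form on $M$, Stokes' theorem gives $\int_a\beta=\int_{\partial A}\beta=\int_A d\beta=\int_A\eta_M$. The point that needs care is the identification $\int_A\eta_M=A\cdot c$: here $A$ is a chain, not a cycle, and $\eta_M$ is only a closed representative of the Poincar\'e dual of $c$, so the clean identity for cycles does not apply verbatim. I would localize: choose an open neighborhood $V$ of $a$ disjoint from $U$ on which $\eta_M$ vanishes, and pass to the compact manifold with boundary $M-V$. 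In $M-V$ the cycle $c$ is an absolute cycle (as $c\subset U\subset M-V$) and $A\cap(M-V)$ is a relative cycle with boundary in $\partial(M-V)$; the restriction $\eta_{M-V}$ represents the relative Poincar\'e--Lefschetz dual of $c$ in $(M-V,\partial(M-V))$, so pairing with $A\cap(M-V)$ computes the intersection number $[A\cap(M-V)]\cdot[c]$. After perturbing $A$ rel $\partial A$ into transverse position to $c$ (which changes neither $\int_A\eta_M$ nor $A\cdot c$, since both are unchanged under adding a $2$-cycle when $[c]=0$), all intersection points lie in $U\subset M-V$, so this intersection number equals $A\cdot c$. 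Combining, $\int_a\beta=\int_A\eta_M=\int_{A\cap(M-V)}\eta_{M-V}=A\cdot c=\Lk(a,c)$, which is exactly the defining property of a linking dual.

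The main obstacle is precisely this last step: justifying that integrating the extension-by-zero of a Thom form over a chain bounding $a$ recovers the chain intersection number $A\cdot c$. This forces one to leave the closed manifold $M$ and argue on the manifold with boundary $M-V$ via relative Poincar\'e--Lefschetz duality, and to keep careful track of supports so that $\partial A$ stays away from the support of $\eta_M$. Everything else is Stokes' theorem and bookkeeping; the rationality of the chains is harmless because de Rham cohomology and intersection numbers are used over $\Q$ throughout.
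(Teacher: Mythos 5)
Your argument is correct and follows essentially the same route as the paper: extend the Thom form by zero, take a primitive $\beta$ using that $c$ bounds rationally, apply Stokes' theorem over a chain $A$ with $\partial A = a$, and identify $\int_A \eta_M$ with $A\cdot c$ by restricting to $M-V$ where $\eta_{M-V}$ is the relative Poincar\'e dual of $c$. The only difference is that you spell out (via a transversality perturbation of $A$ rel its boundary) the step the paper leaves to the reader, which is a welcome addition but not a different proof.
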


\subsection{The $1$-form $e^{2 \pi n} \tilde{\psi'}_{0,1}(n)$ is a linking dual of $(\partial C_n)_P$}
We now return to the case in hand. In what follows, we drop subscript and superscript $P$'s since we are fixing a boundary component $e(P)$.
We let $F_n$ be the union of the fibers containing components of $\partial C_n$, and we let $F_x$ be the fiber containing $c_x$. Recall that $c_x$ is the image of $D_x \cap e(P)$ in $e'(P)$. 

\begin{theorem}\label{linking-dual}
Let $n>0$. The $1$-form $e^{2 \pi n} \tilde{\psi'}_{0,1}(n)$ is a linking dual for $\partial C_n$ in $e'(P)$ relative to any neighborhood
$U$ of $F_n$.  Hence, for $c$ a rational $1$-boundary in $e'(P)$ which is disjoint from $F_n$ we have
\begin{equation}\label{Linkingdualintegral}
\int_c  \tilde{\psi'}_{0,1}(n) =  \Lk(\partial C_n,c) e^{- 2 \pi n}. 
\end{equation}
Furthermore, \eqref{Linkingdualintegral} holds when $c=c_y$ contained in one fiber $F_x$ of $\partial C_n$.
\end{theorem}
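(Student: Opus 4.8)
The plan is to verify directly the two defining properties of a linking dual: that $\beta:=e^{2\pi n}\tilde\psi'_{0,1}(n)$ is a closed $1$-form on $e'(P)-U$, and that $\int_a\beta=\Lk(a,\partial C_n)$ for every rational $1$-boundary $a$ in $e'(P)-U$. Closedness is immediate: by Lemma~\ref{xi'closed} and the fact that $\iota_P$ is a map of complexes, the form attached via $\iota_P$ to a single $x$ is closed away from the fiber of $\kappa$ over $s(x)$; summing over $x\in\calL_W$ with $(x,x)=2n$ (which meet only finitely many $\G_M$-orbits, so the sum converges and descends to $e'(P)$) shows $\beta$ is a smooth closed $1$-form on the complement of $F_n$, hence on $e'(P)-U$.

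The key step is a current identity on $e'(P)$. Applying the chain map $\iota_P$ to the current equation $d[\tilde\psi'_{0,1}(x)]=-\delta_{D_{W,x}\otimes x}$ of the Remark following Lemma~\ref{xi'closed}, and then averaging over $\G_M$, one obtains that $d[\beta]$ is a finite sum, indexed by $\G_M$-orbits of norm-$2n$ vectors $x$, of $2$-currents supported on the fiber tori $F_x$: paired with a $1$-form $\varphi$, the $x$-term equals $\pm\sqrt{2n}\int_{F_x}\varphi\wedge\nu_x$, where $\nu_x$ is the translation-invariant $1$-form on $F_x$ obtained by restricting $\iota_P(x)$ to the fiber over $s(x)$, and the constant $\pm\sqrt{2n}=\pm|x|$ is the jump of the $A'$-component of $\tilde\psi'_{0,1}(x)$ across $D_{W,x}$ identified in Lemma~\ref{singularitiesofA'andB'} (all Gaussian factors cancelled by the prefactor $e^{2\pi n}$, with the sign fixed by the orientation conventions). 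Granting the one-variable current equation, this step is essentially mechanical.

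Now take a rational $1$-boundary $a$ in $e'(P)-U$ and a rational $2$-chain $A$ with $\partial A=a$, transverse to the fibers in $F_n$. Since $a$ avoids the singular set, Stokes' theorem for currents (remove tiny tubes around $F_n$ and pass to the limit) gives $\int_a\beta=\pm\sqrt{2n}\sum_x\int_{A\cap F_x}\nu_x$. Because $\partial(A\cap F_x)=a\cap F_x=\emptyset$, each $A\cap F_x$ is a genuine $1$-cycle in the torus $F_x$, so $\int_{A\cap F_x}\nu_x$ is the intersection number in $F_x$ of $[A\cap F_x]$ with $\PD_{F_x}(\nu_x)$. The remaining input is a lattice computation: using that $c_x$ is the image of the affine line $\{w:(x,w)=(u',x)\}$ (Proposition~\ref{boundaryofC}) and the description of $\partial C_n\cap e'(P)$ in Lemma~\ref{LemmaB}, one checks that $\pm\sqrt{2n}\,\PD_{F_x}(\nu_x)$ equals the class of $\sum_{0\le k<\min'_{\lambda\in\Lambda_W}|(\lambda,x)|}c_{x+ku}$ in $H_1(F_x,\Q)$, the factor $\min'_\lambda|(\lambda,x)|$ accounting for the multiplicity in Lemma~\ref{LemmaB} and for the normalization $\vol(F_x)=\vol(\G_N\backslash N)=1$. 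Since the intersection of a $2$-chain with a $1$-cycle lying in a fiber localizes to the intersection pairing in that fiber, we get $\int_a\beta=A\cdot\partial C_n=\Lk(a,\partial C_n)$, which is \eqref{Linkingdualintegral}. (Equivalently one could route the argument through Proposition~\ref{linkingdualprop}, by checking that $\beta$ is cohomologous in $H^1(e'(P)-U)$ to the restriction of a primitive of a Thom form for $\partial C_n$ supported in $U$.)

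Finally, when $c=c_y$ lies in a fiber $F_x$ of $\partial C_n$ one uses the push-off convention of Section~\ref{generaltheoryoflink}, in which $\Lk(\partial C_n,c_y)$ is computed after displacing $c_y$ to a neighbouring fiber. By Lemma~\ref{singularitiesofA'andB'}, the only failure of smoothness of the $x$-term across $F_x$ is a bounded jump of the $A'$-component that is antisymmetric about $D_{W,x}$, while the $B'$-component is continuous; hence $\int_{c_y(\epsilon)}\beta$ has equal one-sided limits as $\epsilon\to0^{\pm}$, equal to the integral over $c_y$ of the (well-defined) restriction of the $B'$-part of $\beta$ to $F_x$, and comparison with the explicit formula of Theorem~\ref{linkSol} yields \eqref{Linkingdualintegral} in this case too. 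The main obstacle is the lattice/multiplicity matching in the third paragraph: tying the analytically defined jump datum $(\pm|x|,\nu_x)$ to the topological cycle $\sum_k c_{x+ku}$ with its correct multiplicity $\min'_\lambda|(\lambda,x)|$.
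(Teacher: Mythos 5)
Your argument is correct in substance, and it rests on the same two computational pillars as the paper's proof: the explicit jump of $\tilde{\psi}'_{0,1}(x)$ across the singular fiber (formula \eqref{psi'formula}, equivalently Lemma~\ref{singularitiesofA'andB'}) and the lattice/covolume identity that produces the multiplicity $\min'_{\la\in\Lambda_W}|(\la,x)|$ of Lemma~\ref{LemmaB}. What you do differently is the packaging: the paper never integrates $\beta$ over a bounding chain of $a$; it introduces a Thom form $\eta_c$ for the cycle $c$, proves Proposition~\ref{finalintegral} (the pairing of $\tilde{\psi}'_{0,1}(n)$ against any exact $2$-form supported off $F_n$ equals $e^{-2\pi n}\int_{A_n}\eta$), and then evaluates $\int_{e'(P)}\tilde{\psi}'_{0,1}(n)\wedge\eta_c$ in two ways (Lemmas~\ref{integrallink} and \ref{laststep}). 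You instead work on the chain level: Stokes' theorem over $A$ with small tubes around $F_n$ removed localizes $\int_a\beta$ to the jump integrated over the $1$-cycles $A\cap F_x$, and fiberwise Poincar\'e duality in $T^2$ identifies the jump current with $\min'_{\la}|(\la,x)|\,[c_x]$ -- which is exactly the covolume computation ($w\mapsto(w,e_2)$ inducing $T^2/c_{e_2}\simeq \R/\min'_{\la}|(\la,e_2)|\Z$) at the end of the paper's proof of Proposition~\ref{finalintegral}. Your route is somewhat more direct for the theorem at hand; the paper's detour through Proposition~\ref{finalintegral} pays off later, as that proposition is reused in the proof of Theorem~\ref{newcurrenteq} (see Remark~\ref{youwillneedthis}). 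Two small points to tighten: the current equation $d[\tilde{\psi}'_{0,1}(x)]=-\delta_{D_{W,x}\otimes x}$ you invoke is only stated as an unproved remark in the paper (with proof deferred to this very section), so you should derive the jump directly from the explicit formulas for $A'$ and $B'$ -- it is the one-line computation \eqref{psi'formula}, so nothing is circular, but it should be said; and in case (ii) the closing appeal to Theorem~\ref{linkSol} is unnecessary (and, read literally, would require an extra explicit evaluation you have not done). The clean finish is the paper's: continuity of the relevant $dw_3$-component gives $\int_{c_y(\eps)}\beta=\int_{c_y}\beta$ for small $\eps>0$ (Lemma~\ref{selflinkingforx}), case (i) applied to $c_y(\eps)$ gives $\Lk(\partial C_n,c_y(\eps))$, and $\Lk(\partial C_n,c_y(\eps))=\Lk(\partial C_n,c_y)$ by the push-off convention of Section~\ref{generaltheoryoflink} for the component equal to $c_y$ together with homotopy invariance of linking in the complement for the parallel disjoint components.
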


We will first deal with the case in which $c$ is disjoint from $F_n$ (which we will refer to in what follows as  case (i)), then at the end of this section  we will reduce the case in which $c=c_y$ (which we will refer to as case (ii)) to case (i) by a Stokes' Theorem argument.
Thus we will now assume we are in case (i).

The key step is 

\begin{proposition}\label{finalintegral}
Let $n>0$ and let $\eta$ be an exact $2$-form in $e'(P)$ which is compactly supported in the complement of $F_n$. Then 
\begin{equation}\label{integraletapsi}
 \int_{e'(P)} \eta \wedge \tilde{\psi'}_{0,1}(n) = \left(\int_{A_n} \eta\right) e^{ - 2 \pi n}. 
\end{equation}
\end{proposition}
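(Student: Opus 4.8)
The plan is to write $\eta = d\omega$ for a $1$-form $\omega$ on $e'(P)$ and to integrate by parts against the singular form $\tilde{\psi'}_{0,1}(n)$, taking careful account of the fact that $\tilde{\psi'}_{0,1}(x)$ is only locally constant (i.e. closed away from its singular locus) rather than globally smooth. By Lemma~\ref{xi'closed} applied componentwise (using the map $\iota_P$), the form $\tilde{\psi'}_{0,1}(n)$ is closed on the complement of $\coprod_{(x,x)=2n} D_x$, hence on $e'(P)$ it is closed away from $F_n$. Since $\eta$ is compactly supported in $e'(P) - F_n$, the integrand $\eta \wedge \tilde{\psi'}_{0,1}(n)$ is supported away from $F_n$, and there $d\tilde{\psi'}_{0,1}(n) = 0$. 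So Stokes gives
\[
\int_{e'(P)} \eta \wedge \tilde{\psi'}_{0,1}(n) = \int_{e'(P)} d\omega \wedge \tilde{\psi'}_{0,1}(n) = \int_{e'(P)} d\left(\omega \wedge \tilde{\psi'}_{0,1}(n)\right) \pm \int_{e'(P)} \omega \wedge d\tilde{\psi'}_{0,1}(n),
\]
and both terms on the right would vanish were it not for the singularities: the first because $e'(P)$ is closed, the second because $d\tilde{\psi'}_{0,1}(n) = 0$ off $F_n$.

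The point, then, is that $\tilde{\psi'}_{0,1}(n)$ is a current on $e'(P)$ whose distributional exterior derivative is \emph{not} zero but is supported on $F_n$. Concretely, from the remark following Lemma~\ref{xi'closed}, for each $x$ with $(x,x)=2n$ one has $d[\tilde{\psi}'_{0,1}(x)] = -\delta_{D_{W,x}\otimes x}$ on $D_W$; pushing this forward via $\iota_P$ and summing over $x \in \G_P \backslash \calL_{n,u}$ (cf. Lemma~\ref{LemmaB} and \eqref{boundaryofspecialcycle}) identifies $d[\tilde{\psi'}_{0,1}(n)]$, up to the normalizing factor $e^{-2\pi n}$ coming from \eqref{xiZ} evaluated on the $n$-th Fourier mode, with the current of integration over $\partial C_n = \sum c_y$ inside $e'(P)$ — more precisely over a suitable $2$-chain bounding it, since $\tilde{\psi'}_{0,1}(n)$ is a \emph{primitive}. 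So I would compute the current $d[\tilde{\psi'}_{0,1}(n)]$ and recognize that $\tilde{\psi'}_{0,1}(n)$, restricted to $e'(P) - F_n$, is (a multiple of) a linking dual in the sense of Proposition~\ref{linkingdualprop}: it is a primitive of (a constant times) the Poincaré dual current of $\partial C_n$. Pairing the current identity $\pm\, d[\tilde{\psi'}_{0,1}(n)] = e^{-2\pi n}\,[\partial C_n]$-as-a-boundary against $\omega$, i.e. evaluating $[\tilde{\psi'}_{0,1}(n)]$ on $d\omega = \eta$, yields $\int_{A_n}\eta \cdot e^{-2\pi n}$, where $A_n$ is the capping $2$-chain from Proposition~\ref{rat-cap} with $\partial A_n = \partial C_n$ (the dependence on the choice of $A_n$ disappears because $\eta$ is exact, or alternatively because $A_n$ is pinned down by the normalization $\int_{A_n}\Omega_P = 0$ together with exactness of $\eta$).

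The main obstacle is handling the singularities of $\tilde{\psi'}_{0,1}(n)$ rigorously: one must excise tubular neighborhoods $U_\epsilon$ of the circles $c_y$ comprising $\partial C_n$, apply Stokes on $e'(P) - U_\epsilon$, and control the boundary integral $\int_{\partial U_\epsilon} \omega \wedge \tilde{\psi'}_{0,1}(n)$ as $\epsilon \to 0$. Here the explicit formulas of Lemma~\ref{singularitiesofA'andB'} are essential: they show $\tilde{\psi'}_{0,1}(x)$ has at worst a jump discontinuity (it is locally \emph{constant} on $D_W$ away from $D_{W,x}$, with the two constants differing across $D_{W,x}$), so the limit of the boundary term picks out exactly the jump of $\tilde{\psi'}_{0,1}(n)$ across $\partial C_n$ integrated against $\omega$ — and because $\min(|x_2-x_3|,|x_2+x_3|)$ in \eqref{AB'-eq} controls the size of that jump, one recovers precisely the combinatorial multiplicities $\min'_{\lambda}|(\lambda,x)|$ appearing in Lemma~\ref{LemmaB}, so that the jump term equals $e^{-2\pi n}\int_{A_n}\eta$. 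I would organize this as: (1) reduce to one component $c_y$ by linearity; (2) choose adapted coordinates $(s,w_2,w_3)$ near $F_x$ in which $\tilde{\psi'}_{0,1}(n)$ is piecewise constant in $s$ with an explicit jump; (3) compute the $\epsilon\to 0$ limit of the excised boundary integral and match it to the intersection $A_n \cdot (\text{normal disk})$, i.e. to $\int_{A_n}\eta$; (4) collect the factor $e^{-2\pi n}$.
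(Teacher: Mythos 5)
Your proposal is correct and follows essentially the same route as the paper's proof: write $\eta=d\omega$, excise a tubular neighborhood $U_\eps$ of the singular fibers, apply Stokes to $d(\omega\wedge\tilde{\psi'}_{0,1}(n))$ on $e'(P)-U_\eps$, and let the $\eps\to 0$ limit of the boundary term pick out the jump of the piecewise-constant form across $s=0$, which combined with $\int_{c_x}\omega=\int_{a_x}\eta$ yields $e^{-2\pi n}\int_{A_n}\eta$. The only bookkeeping nuance is that in the paper the multiplicity $\min_{\lambda}'|(\lambda,x)|$ arises as the jump size $|\mu|$ times the width $\int_{T^2/c_{e_2}}dw_2$ of the quotient torus (using that $\omega$ restricted to $F_x$ is closed, so its period over parallel circles is constant), rather than directly from the $\min$ in $B'$, but carrying out your step (3) would produce exactly this.
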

\begin{remark}\label{youwillneedthis}
Note that \eqref{integraletapsi} also holds in case $\eta =\Omega_P$. In this case the right-hand side is zero by the normalization of the cap $A_n$ and the left-hand side is zero because $\Omega \wedge \tilde{\psi'}_{0,1}(n) =0$ since $\Omega$ has bidegree $(0,2)$
and $\tilde{\psi'}_{0,1}(n)$ has bidegree $(0,1)$ (here we use the obvious base/fiber bigrading on the de Rham algebra of $e'(P)$). 
\end{remark} 

\subsection{Proof of Proposition~\ref{finalintegral}}\label{8.2}

\begin{lemma}\label{LemmaA}
Under the hypothesis on $\eta$ in Proposition~\ref{finalintegral} we have
\[
 \int_{A_n} \eta = \sum_{ \substack{x\in \G_M \back \mathcal{L}_W \\ (x,x)=2n}} \min_{\la \in \Lambda_W} {\hspace{-5pt}'} |(\la,x)| \int_{a_x} \eta
\]
\end{lemma}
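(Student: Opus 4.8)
The plan is to unwind the definitions of $A_n$ and $a_x$ and apply the linearity of integration of the $2$-form $\eta$ over chains, being careful about how the $\Gamma_P$-equivalence classes organize the cap. Recall from Proposition~\ref{rat-cap} and the discussion following it that $A_n = \sum_P (A_n)_P$ where $(A_n)_P$ caps the boundary $(\partial C_n)_P$, and that $(A_x)_P = \sum_{y \in [x]_P} a_y$; since we have dropped the superscript $P$ we write simply $A_n = \sum A_x$ where the sum is over a set of representatives of $\sim_\Gamma$-classes in $\mathcal{L}_{n,u}$. By Lemma~\ref{LemmaB}, the set $\{x + ku : x \in \Gamma_M\backslash \mathcal{L}_W,\ (x,x)=2n,\ 0 \leq k < \min'_{\lambda \in \Lambda_W}|(\lambda,x)|\}$ is a complete set of $\Gamma_P$-representatives in $\mathcal{L}_{n,u}$, and each such vector $x+ku$ gives a circle $c_{x+ku}$ and a cap $a_{x+ku}$.

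The first step is therefore to write, using the decomposition $A_n = \coprod_{x+ku} a_{x+ku}$ of the cap into its elementary pieces and linearity of $\int(\cdot)\,\eta$,
\[
\int_{A_n} \eta = \sum_{\substack{x \in \Gamma_M\backslash\mathcal{L}_W\\(x,x)=2n}} \ \sum_{0 \leq k < \min'_{\lambda\in\Lambda_W}|(\lambda,x)|} \int_{a_{x+ku}} \eta.
\]
The second step is to observe that, for fixed $x$, each of the $\min'_{\lambda}|(\lambda,x)|$ many caps $a_{x+ku}$ contributes the same integral, namely $\int_{a_x}\eta$. The natural way to see this is that the circles $c_{x+ku}$, for $0 \leq k < \min'_{\lambda}|(\lambda,x)|$, all lie in the same torus fiber $F_x$ (by Proposition~\ref{boundaryofC}, since $s(x+ku)=s(x)$, the value $s(x)$ depending only on the class of $x$ in $\ell^\perp/\ell \cong W$) and are parallel translates of one another within that fiber by the covering action; hence the $a_{x+ku}$ may be chosen as translates of $a_x$. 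Since $\eta$ is exact and compactly supported in the complement of $F_n \supseteq F_x$, the integral $\int_{a}\eta$ depends only on the homology class of the relative cycle $a$ in $(e'(P), \text{complement of } F_n)$ — equivalently, writing $\eta = d\zeta$ with $\zeta$ supported off $F_n$, Stokes gives $\int_a \eta = \int_{\partial a}\zeta$, and $\partial a_{x+ku} = c_{x+ku}$ is a translate of $c_x$ lying in $F_x$, away from the support of... wait — here one must instead use that translation within the fiber is realized by a path, and the "difference" $a_{x+ku} - (\text{translate of }a_x)$ together with the connecting annulus in $F_x$ bounds a region disjoint from the support of $\eta$. Summing over the $\min'_{\lambda}|(\lambda,x)|$ values of $k$ then yields the factor $\min'_{\lambda\in\Lambda_W}|(\lambda,x)|$, giving the claimed formula.

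The main obstacle I expect is this second step: making precise that $\int_{a_{x+ku}}\eta$ is independent of $k$ and equals $\int_{a_x}\eta$. This requires a clean statement that, because $\eta$ is \emph{exact} and supported away from the fiber $F_x$ containing all the relevant circles, the integral over a cap depends only on its boundary circle's class in the fiber (all these circles being freely homotopic within $F_x$, indeed parallel), and a cobordism/Stokes argument inside $e'(P)$ — using an annulus in $F_x$ interpolating between $c_x$ and $c_{x+ku}$ — shows the integrals agree. One should check the orientations are consistent (so that the $\min'$-many contributions add rather than cancel), which is where the normalization conventions on $c_x$ and the parametrization of $\partial C_n$ from Lemma~\ref{LemmaB} enter. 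Once this is in hand, the double sum collapses to the single sum $\sum_x \min'_\lambda|(\lambda,x)| \int_{a_x}\eta$, which is exactly the assertion of Lemma~\ref{LemmaA}.
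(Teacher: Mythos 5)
Your overall strategy is the same as the paper's: use Lemma~\ref{LemmaB} to decompose $A_n$ (at the fixed cusp) into the elementary caps $a_{x+ku}$, and then show that $\int_{a_{x+ku}}\eta$ is independent of $k$ and equal to $\int_{a_x}\eta$, so that the number of admissible $k$ produces the factor $\min_{\la\in\Lambda_W}'|(\la,x)|$. The gap is in your justification of that second step: both versions you offer fail as stated. You cannot ``write $\eta=d\zeta$ with $\zeta$ supported off $F_n$''; exactness provides a primitive on all of $e'(P)$, but nothing controls its support. And the replacement claim --- that $a_{x+ku}-a_x$ together with a connecting annulus $S\subset F_x$ bounds a region disjoint from $\supp\eta$ --- is false in general: that $2$-cycle need not bound at all. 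Indeed, by the normalization $\int_{a_y}\Omega_P=0$ of Proposition~\ref{rat-cap}, its pairing with $\Omega_P$ is $-\int_S\Omega_P\neq 0$ generically, so its class is a nonzero multiple of the fiber class $b_P$ (Lemma~\ref{ePhomology}). (Also, ``translates of $a_x$'' are not available: fiberwise translation does not descend to $e'(P)$, since conjugation by $\G_M$ turns a translation by $w_0$ into translation by $f(w_0)$; but no translate is needed, because for exact $\eta$ the integral over a cap depends only on its boundary circle.)

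The correct repair is the paper's one-line argument, which uses exactness directly rather than a bounding region: write $\eta=d\omega$ on $e'(P)$; Stokes gives $\int_{a_{x+ku}}\eta=\int_{c_{x+ku}}\omega$. Since $d\omega=\eta$ vanishes on a neighborhood of $F_n$, the restriction of $\omega$ to the fiber $F_x$ is closed; the circles $c_{x+ku}$ and $c_x$ are parallel, hence homologous in $F_x$ with the orientations coming from Lemma~\ref{LemmaB}, so $\int_{c_{x+ku}}\omega=\int_{c_x}\omega=\int_{a_x}\eta$. (Equivalently, in your cobordism language: the cycle $a_{x+ku}-a_x-S$ is a rational multiple of $b_P$, and $\int_{b_P}\eta=0$ because the fiber torus may be taken inside $F_n$, where $\eta$ vanishes --- or simply because $\eta$ is exact.) With this correction the collapse of the double sum to $\sum_x \min_{\la\in\Lambda_W}'|(\la,x)|\int_{a_x}\eta$ goes through exactly as you outline.
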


\begin{proof}
We use Lemma~\ref{LemmaB}. Write $\eta = d \omega$ for some $1$-form $\omega$ which by the support condition on $\eta$ is closed in $F_n$. Since $c_{x+ku}$ and  $c_x$ are parallel hence homologous circles in $F_x$, we see $\int_{a_{x+ku}} \eta = \int_{c_{x+ku}} \omega = \int_{c_x} \omega = \int_{a_x} \eta$. 
\end{proof}

Since 
\[
\tilde{\psi'}_{0,1}(n) = \sum_{ \substack{x\in \G_M \back \mathcal{L}_W \\ (x,x)=2n}} \sum_{ \g \in \G_M} \g^{\ast} \tilde{\psi'}_{0,1}(x),
\]
Proposition~\ref{finalintegral} will now follow from

\begin{proposition}
Under the hypothesis on $\eta$ in Proposition~\ref{finalintegral}, we have for any positive length vector $x \in \calL_W$
\[
\int_{e'(P)} \eta \wedge  \sum_{ \g \in \G_M} \g^{\ast} \tilde{\psi'}_{0,1}(x) = (\min_{\la \in \Lambda_W}   {\hspace{-2pt}'}|(\la,x)|) \left(\int_{a_x} \eta \right) e^{-\pi (x,x)}.
\]
\end{proposition}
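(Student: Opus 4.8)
The plan is to unfold the definitions of $\tilde{\psi'}_{0,1}(x)$ on $e'(P)$, reduce the integral over the quotient $e'(P) = \Gamma_M \ltimes \Gamma_N \backslash e(P)$ to an integral over the "strip" $\Gamma_N\backslash e(P)$ (an $\R$-bundle of tori, unfolded in the $M$-direction), and then carry out the fiber integration explicitly using the fact that $\tilde{\psi'}_{0,1}(x,s)$ is locally constant in $s$ with a jump across $D_{W,x}$ by Lemma~\ref{xi'closed} and Proposition~\ref{phi-prop}. First I would use the support hypothesis on $\eta$: since $\eta$ is exact and compactly supported away from $F_n \supseteq F_x$, write $\eta = d\omega$ with $\omega$ closed on a neighborhood of $F_x$; this is exactly what powers the telescoping in Lemma~\ref{LemmaA}, and the same device will let me replace $\int_{e'(P)}\eta\wedge(\cdots)$ by a boundary term via Stokes once the $s$-integration exposes a delta-like contribution along the fiber $F_x$.

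The core computation: unfolding $\sum_{\gamma\in\Gamma_M}\gamma^*\tilde{\psi'}_{0,1}(x)$ against $\eta$ turns the quotient integral into $\int_{\Gamma_N\backslash(N\cdot M)} \eta \wedge \tilde{\psi'}_{0,1}(x)$, i.e.\ an integral over $\R \times T^2$ where $\R$ is the $M = D_W$ direction parametrized by $s$ and $T^2 = \Gamma_N\backslash N$. By Remark~\ref{youwillneedthis}, only the component of $\eta$ of fiber-degree $1$ (and base-degree $1$) pairs nontrivially with $\tilde{\psi'}_{0,1}(x)$, which has bidegree $(1,0)$ in the base/fiber splitting, err — bidegree $(0,1)$; in any case the wedge $\eta\wedge\tilde{\psi'}_{0,1}(x)$ is a top form and I integrate first over the torus fiber, then over $s\in\R$. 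Because $\tilde\psi'_{0,1}(x,s)$ is constant in $s$ on each side of $s(x)$ and decays (it is supported, after the $e^{-\pi(x,x)}$ twist, on the locus $x_2^2 - x_3^2 > 0$, which in the rotated frame $m(s)$ is one half-line of $s$-values), the $s$-integral of $d$(torus integral of $\omega$) collapses to the jump of that torus integral across $s = s(x)$. That jump is precisely $\int_{c_x}\omega = \int_{a_x}\eta$ by Stokes and the definition of $c_x$ as $\overline{D}_x\cap e(P)$ projected to $e'(P)$ (Proposition~\ref{boundaryofC}).

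It remains to pin down the two scalar constants. The factor $e^{-\pi(x,x)}$ is visible directly from \eqref{AB'-eq} and the definition $\tilde\psi'_{0,1}(x,\tau,s) = v^{-1/2} m(s)\tilde\psi'_{0,1}(m^{-1}(s)\sqrt v x)e^{\pi i (x,x)\tau}$ evaluated at the relevant normalization. The factor $\min'_{\lambda\in\Lambda_W}|(\lambda,x)|$ arises because, as one unfolds over $\Gamma_N\backslash N \cong T^2$, the line $\{w\in W : (x,w) = (u',x)\}$ (the affine line of Proposition~\ref{boundaryofC}) wraps around the torus, and its primitive homology class in $T^2$ has length/multiplicity governed by $\min'_{\lambda\in\Lambda_W}|(\lambda,x)|$ — equivalently, the number of translates $c_{x+ku}$, $0\le k < \min'$, appearing in Lemma~\ref{LemmaB} coalesces into the single integral $\int_{a_x}\eta$ with exactly that multiplier (compare Lemma~\ref{LemmaA}).

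The main obstacle I anticipate is the careful bookkeeping of the base/fiber bigrading together with the orientation conventions: one must check that the $s$-derivative of the torus-integral of $\omega$ really is a sum of (signed) delta masses at $s = s(x)$ with the correct sign, that the "jump" of $\tilde\psi'_{0,1}$ across $D_{W,x}$ matches the coefficient in \eqref{AB'-eq} (the $\tfrac12\min(|x_2-x_3|,|x_2+x_3|)$ versus $0$ dichotomy, which after conjugation by $m(s)$ becomes a single jump of the right size), and that the lattice-covering multiplicity $\min'_{\lambda\in\Lambda_W}|(\lambda,x)|$ is extracted with no spurious factor. Everything else — the unfolding, the decay needed to justify interchanging sum and integral, and the final Stokes step — is routine given Lemmas~\ref{LemmaB}, \ref{xi'closed} and Proposition~\ref{phi-prop}.
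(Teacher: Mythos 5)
There is a genuine gap, and it sits exactly where you wave at decay. Your plan is to unfold the $\G_M$-sum and compute $\int_{\R\times T^2}\eta\wedge\tilde{\psi}'_{0,1}(x)$ by integrating over the fiber and then over $s$, localizing at the jump across $s=s(x)$. But the parenthetical justification is false: the support condition $x_2^2-x_3^2>0$ in \eqref{AB'-eq} is just $(x,x)>0$, which is $M$-invariant, so it holds for $m(s)^{-1}x$ for \emph{every} $s$; it is not ``one half-line of $s$-values''. In fact by \eqref{psi'formula} (for $x=\mu e_2$) one has $\tilde{\psi}'_{0,1}(\mu e_2,s,w)=\tfrac12|\mu|(\sgn(s)dw_2-dw_3)e^{-\pi\mu^2}$: the form is locally constant in $s$ of \emph{constant} size, with a jump at $s=s(x)$ and no decay whatsoever as $s\to\pm\infty$. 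Consequently the unfolded integral over $\R\times T^2$ is not absolutely convergent (the periodic lift of $\eta$ does not decay either), and when you write $\eta=d\omega$ and apply Stokes on $\{|s-s(x)|>\eps\}$ you pick up, besides the jump term you want, boundary integrals $\int_{T^2\times\{\pm R\}}\omega\wedge\tilde{\psi}'_{0,1}(x)$ which do not tend to zero as $R\to\infty$ (they oscillate with the $\G_M$-period). These terms only cancel after re-summing over $\G_M$, i.e.\ after undoing the unfolding, so the ``collapse to the jump'' step is unjustified as stated. Note that the convergence of $\sum_{\g\in\G_M}\g^*\tilde{\psi}'_{0,1}(x)$ comes from decay \emph{along the $\G_M$-orbit of $x$} at a fixed point, not from decay of $\tilde{\psi}'_{0,1}(x,s)$ in $s$; conflating the two is what makes the proposal break down.

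The paper's proof avoids this entirely by never leaving the compact manifold $e'(P)$: it excises a tubular neighborhood $U_\eps=(-\eps,\eps)\times T^2$ of the singular fiber $F_x$, uses $\eta\wedge\tilde{\psi}'_{0,1}=d(\omega\wedge\tilde{\psi}'_{0,1})$ outside $U_\eps$ (Lemma~\ref{xi'closed}), and applies Stokes on $e'(P)-U_\eps$, so the only boundary terms are the two tori at $s=\pm\eps$. In the limit $\eps\to0$ the terms with $\g\neq 1$ are continuous across $s=0$ and cancel, while the $\g=1$ term contributes through the $\sgn(s)$-jump in \eqref{psi'formula}; closedness of $\omega$ on $F_x$ then lets one factor the fiber integral as $\bigl(\int_{T^2/c_{e_2}}dw_2\bigr)\bigl(\int_{c_{e_2}}\omega\bigr)$, and $|\mu|\int_{T^2/c_{e_2}}dw_2=\min'_{\la\in\Lambda_W}|(\la,x)|$, $\int_{c_x}\omega=\int_{a_x}\eta$. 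Your local mechanism (jump at the singular fiber producing $\int_{a_x}\eta$, transverse covolume producing the $\min'$ factor, the $e^{-\pi(x,x)}$ from \eqref{AB'-eq}) is the right one — but to make it a proof you must run the Stokes argument on the compact quotient with the full $\G_M$-sum in place, rather than unfolding; also, the $\min'$ factor here is the circumference of $T^2$ transverse to $c_x$ for the single vector $x$, not the coalescing of the translates $c_{x+ku}$ of Lemma~\ref{LemmaB}, which is the separate bookkeeping done in Lemma~\ref{LemmaA}.
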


\begin{proof}
By choosing appropriate coordinates we can assume that $x = \mu e_2$ with $\mu = \pm \sqrt{2n}$, so that the singularity of $\sum_{ \g \in \G_M} \g^{\ast} \tilde{\psi'}_{0,1}(x)$ in $e'(P)$ occurs at $s=0$. 
We pick a tubular neighborhood $U_\eps= (-\eps,\eps) \times T^2$ in $e'(P)$ around $F_x$. Then we have first
\[
\int_{e'(P)} \eta \wedge  \sum_{ \g \in \G_M} \g^{\ast} \tilde{\psi'}_{0,1}^P(x)= 
  \lim_{\epsilon \to 0} \int_{e'(P) - U_{\eps}} \eta \wedge   \sum_{ \g \in \G_M} \g^{\ast}\tilde{\psi'}_{0,1}^P(x).
 \]
Since $\eta \wedge \tilde{\psi'}_{0,1}(x) = d(\omega \wedge \tilde{\psi'}_{0,1}(x))$ outside $U_{\eps}$ and $\partial (e'(P) - U_{\eps}) =
- \partial U_{\eps}$   we see by Stokes' theorem
\begin{align}\label{calcAA1}
&\int_{e'(P) - U_{\eps}} \eta \wedge  \sum_{ \g \in \G_M} \g^{\ast} \tilde{\psi'}_{0,1}(x)
= - \int_{\partial U_{\eps}} \omega \wedge   \sum_{ \g \in \G_M} \g^{\ast}\tilde{\psi'}_{0,1}(x) \\
& \quad = \sum_{ \g \in \G_M}  \int_{T^2} \left[\omega(-\eps,w) \wedge \tilde{{\psi'}_{0,1}}(\g^{-1}x,-\epsilon,w) - 
\omega(\eps,w) \wedge \tilde{\psi'}_{0,1}(\g^{-1} x,\epsilon,w) \right]. \notag
\end{align}
For $\g \ne 1$ we note that $\omega(s,w) \wedge \tilde{\psi'}_{0,1}(\g^{-1}x,s,w)$ is continuous at $s=0$, while for $\g=1$, we have
\begin{equation}\label{psi'formula}
 \tilde{\psi'}_{0,1}(\mu e_2,s,w) =  \frac12|\mu| (\sgn(s)dw_2-dw_3) e^{- \pi \mu^2}.
 \end{equation}
Hence taking the limit in the last term of  \eqref{calcAA1} we obtain 
\[
|\mu| e^{- \pi \mu^2}\int_{T^2} \omega_3(0,w) dw_2dw_3 = |\mu|  e^{- \pi \mu^2}\int_{T^2/ c_{e_2}} \left( \int_{c_{e_2}} \omega(0,w_2,w_3) \right) dw_2.
 \]In the expression $T^2/ c_{e_2}$ (and for the rest of this proof)  we have abused notation and identified the cycle $c_{e_2}$ with the subgroup $0 \times S^1$ of $T^2$.

Here $\omega_3$ is the $dw_3$ component of $\omega$ and we used that $\partial D_{x}$ is the $w_3$-line in $W$. Note that the inner integral on the right is the period 
of $\omega$ over (homologous) horizontal translates of the cycle $c_{e_2}$. But the restriction of $\omega$ to $F_x$ is closed so $\int_{c_{e_2}} \omega(0,w_2,w_3)$ is independent of $w_2$ and the last integral becomes
$\left(  \int_{T^2/ c_{e_2}} dw_2 \right)\left( \int_{c_{e_2}} \omega \right)e^{- \pi \mu^2}$. 
 But $\int_{c_{e_2}} \omega = \int_{A_{e_2}} \eta$. The proposition is then a consequence of
\[
|\mu| \int_{T^2/ \partial c_{e_2}} dw_2  = |\mu|  \min_{\la \in \Lambda_W}  {\hspace{-2pt}'} |(\la,e_2)| = \min_{\la \in \Lambda_W}  {\hspace{-2pt}'} |(\la,x)|,
\] 
which follows from the fact that the map $W \to \R$ given by $w \mapsto (w,e_2)$ induces an isomorphism $T^2/ \partial C_{e_2} \simeq \R / \min_{\la \in \Lambda_W}'|(\la,e_2)|)\Z$.  

\end{proof}

\subsection{Proof of Theorem~\ref{linking-dual}}\label{W-currents}

We now prove Theorem~\ref{linking-dual}. First we will assume that we are in case(i). We need to show  
\[
\int_c  \tilde{\psi'}_{0,1}(n) = \Lk(\partial C_n,c) e^{-2\pi n} =  (A_n \cdot c) e^{-2\pi n}.
\]
The theorem will be a consequence of the following discussion.  
We may assume that $c$ is an embedded loop in $e'(P)$ (note that since any loop in a manifold of dimension $3$ or more is homotopic to an embedded loop by transversality any homology class of degree $1$ in $e'(P)$ is represented by an embedded loop). 

Choose a tubular neighborhood $N(c)$ of $c$ such that $N(c)$ is disjoint from $F_n$. Let $\eta_c$ be a closed $2$-form which is supported inside $N(c)$ and has integral $1$ on the disk fibers of $N(c)$ (a Thom class for the normal disk bundle $N(c)$).
Then we have proved in Subsection \ref{generallinking}  

\begin{lemma}\label{integrallink}
\begin{equation} \label{secondformula}
\int_{A_n} \eta_c= A_n \cdot  c= \Lk(\partial C_n, c).
\end{equation}
\end{lemma}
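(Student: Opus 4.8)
The plan is to split the displayed identity into its two equalities and treat each in turn: the second is essentially the \emph{definition} of the linking number, and the first is precisely the Poincar\'e-duality computation already carried out in Subsection~\ref{generallinking}.

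For the equality $A_n\cdot c=\Lk(\partial C_n,c)$: by Proposition~\ref{TnBS}(ii) every component of $\partial C_n$ lies in a torus fibre of $e'(P)$, hence is a rational boundary by Lemma~\ref{ePhomology}, so $\partial C_n=\partial A_n$ is a rational $1$-boundary in the closed $3$-manifold $e'(P)$ and $\Lk(\partial C_n,c)$ is defined. Since $A_n$ is a rational $2$-chain with $\partial A_n=\partial C_n$ and $c$ is disjoint from $F_n\supset\partial C_n$, the definition of the linking number in Subsection~\ref{generaltheoryoflink} (with $a=\partial C_n$, $A=A_n$, $b=c$) gives exactly $\Lk(\partial C_n,c)=A_n\cdot c$; that this intersection number is independent of the choice of $A_n$ is the well-definedness of $\Lk$, which uses that $[c]$ is a rational boundary so that $[c]\cdot b_P=0$.

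For the equality $\int_{A_n}\eta_c=A_n\cdot c$: this is the computation recorded in Subsection~\ref{generallinking}. Recall that $\eta_c$, being a Thom form of $c$ compactly supported in the tubular neighbourhood $N(c)$ and extended by zero, represents the cohomology class on $e'(P)$ Poincar\'e dual to $c$. I would choose an open neighbourhood $V$ of $\partial C_n$ disjoint from $N(c)$ (possible since $\partial C_n\subset F_n$ and $N(c)\cap F_n=\emptyset$); then $\eta_c$ restricted to $e'(P)-V$ represents the relative Poincar\'e dual of the absolute cycle $c$ in $(e'(P)-V,\partial(e'(P)-V))$. Since $\partial A_n\subset V$, the chain $A_n\cap(e'(P)-V)$ is a relative cycle in this pair, and pairing it with the relative Poincar\'e dual gives
\[
\int_{A_n}\eta_c=\int_{A_n\cap(e'(P)-V)}\eta_c=A_n\cdot c.
\]
Concretely, one perturbs $A_n$ rel $\partial A_n$ to be transverse to $c$ (changing neither side, as $\eta_c$ is closed and $\partial A_n$ is left fixed), shrinks the support of $\eta_c$ so that $A_n$ meets it only in small disks about the transverse intersection points $p_i$, and observes that the integral of $\eta_c$ over each such disk equals the local intersection sign $\epsilon_i$: compare the disk with a normal fibre disk over $p_i$, their difference being a $2$-cycle in $N(c)\simeq S^1$, hence null-homologous, while $\int_{\mathrm{fibre}}\eta_c=1$.

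The main obstacle is the care required in this last, purely differential-topological step: carrying out the transversality perturbation without disturbing $\partial A_n$, and keeping track of orientations so that the contribution of each disk is $+\epsilon_i$ and not $-\epsilon_i$, i.e.\ so that $\int_{A_n}\eta_c$ is the intersection number $A_n\cdot c$ with the correct sign. This is exactly what the statement ``the extension of the Thom class by zero is the Poincar\'e dual of the zero section'' in Subsection~\ref{generallinking} packages; once it is granted, both equalities follow immediately.
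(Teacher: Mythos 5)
Your proposal is correct and follows essentially the same route as the paper: the paper's proof of this lemma is simply an appeal to the general discussion of Subsection~\ref{generallinking} (the extension of the Thom class $\eta_c$ by zero is Poincar\'e dual to $c$, its restriction to the complement of a neighborhood of $\partial C_n$ is the relative Poincar\'e dual, hence $\int_{A_n}\eta_c=A_n\cdot c$), together with the definition $\Lk(\partial C_n,c)=A_n\cdot c$ since $\partial A_n=\partial C_n$ and $c$ is a rational boundary disjoint from $F_n$. Your added transversality and orientation details only make explicit what the paper packages into the Thom-class statement, so there is nothing to correct.
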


We then have

\begin{lemma} \label{laststep}
\begin{equation*} \label{fifthformula}
\int_c \tilde{\psi'}_{0,1}(n) = \left(\int_{A_n} \eta_c\right) e^{- 2\pi n}.
\end{equation*}
\end{lemma}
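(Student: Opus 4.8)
The statement to be proved is
\[
\int_c \tilde{\psi'}_{0,1}(n) = \left(\int_{A_n} \eta_c\right) e^{-2\pi n},
\]
and the plan is to combine Proposition~\ref{finalintegral} with Proposition~\ref{linkingdualprop} (the abstract linking-dual machinery of Section~\ref{generallinking}). First I would record the setup carried over from Section~\ref{generallinking}: since $\partial C_n$ is a rational boundary in $e'(P)$ (Proposition~\ref{TnBS}(ii)), the extension by zero $(\eta_{\partial C_n})_{e'(P)}$ of a Thom form supported near $F_n$ is exact, say $=d\beta$, and $\beta$ restricted to the complement of a neighborhood $U$ of $F_n$ is a linking dual of $\partial C_n$. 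In particular $\int_c \beta = \Lk(\partial C_n, c)$ for our $c$, which is disjoint from $F_n$.

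The heart of the argument is to show $e^{2\pi n}\tilde{\psi'}_{0,1}(n)$ and $\beta$ represent the same class in $H^1(e'(P)-U)$, so that their periods over the homologically-arbitrary $1$-cycle $c$ agree. Both are closed $1$-forms on $e'(P)-U$ by Lemma~\ref{xi'closed}. To compare them I would pair each against exact $2$-forms: by Proposition~\ref{finalintegral}, for any exact $2$-form $\eta$ compactly supported in the complement of $F_n$,
\[
\int_{e'(P)} \eta \wedge \tilde{\psi'}_{0,1}(n) = \left(\int_{A_n}\eta\right) e^{-2\pi n},
\]
and by the computation in Section~\ref{generallinking} (the displayed identity $\int_A \eta_M = A\cdot c$ applied with roles reversed, or directly Lemma~\ref{integrallink}) the analogous pairing of $\eta$ against $\beta$ computes $\int_{A_n}\eta$ as well — more precisely, writing $\eta = d\omega$ one gets $\int_{e'(P)}\eta\wedge\beta = \pm\int_{e'(P)}\omega\wedge d\beta = \pm\int_{e'(P)}\omega\wedge\eta_{\partial C_n} = \int_{A_n}\eta$ after using that $\eta_{\partial C_n}$ is Poincaré dual to $\partial C_n = \partial A_n$ and a Stokes argument. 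Hence $e^{2\pi n}\tilde{\psi'}_{0,1}(n) - \beta$ annihilates all exact $2$-forms supported off $F_n$, which forces it to be exact on $e'(P)-U$ (this is where one invokes that, on the compact $3$-manifold-with-boundary $e'(P)-U$, a closed $1$-form pairing to zero with every such exact $2$-form is itself exact — essentially nondegeneracy of the de Rham pairing together with the observation that $A_n$ can be taken in $e'(P)-U$).

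With $e^{2\pi n}\tilde{\psi'}_{0,1}(n) = \beta + d\lambda$ on $e'(P)-U$ and $c$ a cycle there, Stokes gives $\int_c e^{2\pi n}\tilde{\psi'}_{0,1}(n) = \int_c \beta = \Lk(\partial C_n, c) = A_n\cdot c = \int_{A_n}\eta_c$, which rearranges to the claimed formula. The main obstacle I anticipate is the cohomological comparison step: one must be careful that $A_n$ (a priori a chain in $\partial\overline X = e'(P)$ meeting $F_n$) can be represented, for the purpose of these period computations, by a chain avoiding the small neighborhood $U$ of $F_n$ — here the normalization $\int_{a_x}\Omega_P = 0$ of Proposition~\ref{rat-cap} and Remark~\ref{youwillneedthis} are what make the pairing against $\Omega_P$ (the "missing" cohomology direction in $H^2$ of the fiber) come out consistently zero on both sides, so that matching periods against exact forms genuinely suffices to match the full cohomology classes. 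The remaining bookkeeping — orientations, the factor $e^{-2\pi n} = e^{-\pi(x,x)}$, and the reduction of case (ii) to case (i) — is routine and handled by the Stokes argument already indicated at the end of Section~\ref{W-currents}.
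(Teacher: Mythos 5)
Your strategy (compare $e^{2\pi n}\tilde{\psi}'_{0,1}(n)$ with a linking dual $\beta$ of $\partial C_n$ by pairing both against test $2$-forms) can be made to work, but the pivotal step is not justified as written. You assert that since $e^{2\pi n}\tilde{\psi}'_{0,1}(n)-\beta$ pairs to zero with every $2$-form that is exact on $e'(P)$ and compactly supported off $F_n$, it must be exact on $e'(P)-U$, and you support this with ``nondegeneracy of the de Rham pairing,'' the remark that ``$A_n$ can be taken in $e'(P)-U$,'' and the normalization $\int_{a_x}\Omega_P=0$. The last two points do not do the job: $A_n$ can never be pushed off a neighborhood of $F_n$, since $\partial A_n=\partial C_n\subset F_n$; and $\Omega_P$ is neither exact nor supported off $F_n$, so it is not among your test forms, and its normalization is irrelevant to this lemma (it enters only via Remark~\ref{youwillneedthis} in the proof of Theorem~\ref{newcurrenteq}). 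What you actually need, and do not supply, is that your test forms realize \emph{every} class in $H^2_c(e'(P)-U)$, so that nondegeneracy of the Poincar\'e pairing on the open complement applies. This is true here, but for a specific geometric reason: any closed $2$-form compactly supported off $F_n$ integrates to zero over a torus fiber contained in $F_n$, hence is already exact on $e'(P)$ because $H^2(e'(P),\R)$ is detected by the fiber class $b_P$ (Lemma~\ref{ePhomology}); since $e'(P)-F_n$ is a disjoint union of $T^2\times{}$intervals, duality then gives the exactness you want. Without some such argument, ``forces it to be exact'' only says the difference annihilates the span of your test classes, which is not a priori all of $H^2_c$.

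Once repaired, your route contains the paper's argument as a proper subset, and the detour through $\beta$ and the exactness claim is unnecessary: the single test form $\eta_c$ suffices. Indeed $\eta_c$ is closed, supported in $N(c)$ disjoint from $F_n$, and exact on $e'(P)$ because $c$ bounds rationally; Proposition~\ref{finalintegral} applied with $\eta=\eta_c$ gives $\int_{e'(P)}\eta_c\wedge\tilde{\psi}'_{0,1}(n)=\left(\int_{A_n}\eta_c\right)e^{-2\pi n}$, while the fact that $\eta_c$ restricted to $e'(P)-V_n$ represents the Poincar\'e dual of $c$ there, together with smoothness and closedness of $\tilde{\psi}'_{0,1}(n)$ on $e'(P)-V_n\supset\supp(\eta_c)$, gives $\int_{e'(P)}\tilde{\psi}'_{0,1}(n)\wedge\eta_c=\int_c\tilde{\psi}'_{0,1}(n)$. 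That is exactly the paper's proof of Lemma~\ref{laststep}; in your framework it amounts to pairing the difference against $\eta_c$ alone, so you never need to match the cohomology classes, nor to introduce $\beta$ at all: its role (converting $\int_{A_n}\eta_c$ into $\Lk(\partial C_n,c)$ via Lemma~\ref{integrallink}) belongs to the surrounding Theorem~\ref{linking-dual}, not to this lemma, whose statement is purely the displayed integral identity.
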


\begin{proof}
To prove the Lemma we compute $\int_{e'(P)} \eta_c \wedge  \tilde{\psi'}_{0,1}(n)= \int_{e'(P)} \tilde{\psi'}_{0,1}(n) \wedge \eta_c$ in two different ways. First we apply Proposition \ref{finalintegral} with $\eta = \eta_c$.  We deduce
\begin{equation*}\label{thirdformula}
\int_{e'(P)}\eta_c \wedge \tilde{\psi'}_{0,1}(n) = \left(\int_{A_n} \eta_c\right) e^{-2 \pi n}.
\end{equation*}
Next choose a tubular neighborhood $V_n$ of the fibers $F_n$   such that $e'(P) - V_n$
contains $N(c)$.  Then $\tilde{\psi'}_{0,1}(n)$ is smooth on $e'(P) - V_n \supset \supp (\eta_c)$.  Also, since $\eta_c$ is the extension of a Thom class by zero,  the restriction of  $\eta_c$ to $e'(P) - V_n$ represents the Poincar\'e dual $PD(c)$ of the absolute cycle $c$ in 
$e'(P) -V_n$. The lemma now follows from 
\begin{equation*}
\int_{e'(P)} \tilde{\psi'}_{0,1}(n) \wedge \eta_c   = \int_{e'(P)- V_n} \tilde{\psi'}_{0,1}(n) \wedge \eta_c = \int_{e'(P)- V_n} \tilde{\psi'}_{0,1}(n) \wedge PD(c)
  = \int_{c} \tilde{\psi'}_{0,1}(n). 
\end{equation*}
\end{proof}

By Lemma \ref{integrallink} this concludes the proof of Theorem~\ref{linking-dual} in the case when $c$ is disjoint from the fibers $F_n$.

It remains to treat case (ii). 
Thus we now assume that $c=c_y$ which is contained in a fiber $F_x$ containing a component  of $\partial C_n$.  We first prove
\begin{lemma}\label{selflinkingforx}
\[
\int_{c} \widetilde{\psi}'_{0,1}(x) =
\int_{c(\epsilon)}  \widetilde{\psi}'_{0,1}(x).
\]
\end{lemma}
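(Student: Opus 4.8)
\textbf{Proof proposal for Lemma~\ref{selflinkingforx}.}

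The plan is to apply Stokes' theorem on the region $R \subset e'(P)$ swept out between the two fibers, i.e.\ the region $R = [0,\epsilon] \times T^2$ bounded by $F_x$ (at $s=0$) and its translate $F_x(\epsilon)$ (at $s=\epsilon$). On this region, $\widetilde{\psi}'_{0,1}(x)$ is a smooth closed $1$-form \emph{provided} $R$ avoids the singularity $D_{W,x}$ of $\widetilde{\psi}'_{0,1}(x)$; but the singular set meets $F_x$ precisely along the geodesic $c_x$, so a naive application of Stokes' theorem is not available. I would instead excise a small tubular neighborhood of $c_x$ inside $R$: remove a thin $2$-dimensional ``ribbon'' neighborhood of $c_x$ from the fiber $F_x$ and push it slightly into $R$, obtaining a chain $R'$ whose boundary consists of $F_x(\epsilon)$, $-F_x$ (away from $c_x$), and a small cylinder wrapping around $c_x$. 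By Lemma~\ref{xi'closed}, $d\widetilde{\psi}'_{0,1}(x) = 0$ outside $D_{W,x}$, so $\int_{\partial R'} \widetilde{\psi}'_{0,1}(x) = 0$, and the lemma reduces to showing that the contribution of the small linking cylinder around $c_x$ vanishes in the limit as its radius shrinks to zero.

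The key computation is then the local behavior of $\widetilde{\psi}'_{0,1}(x)$ near its singularity. By \eqref{psi'formula}, in the coordinates where $x = \mu e_2$ with $\mu = \pm\sqrt{2n}$ we have
\[
\widetilde{\psi}'_{0,1}(\mu e_2,s,w) = \tfrac12 |\mu| \left( \sgn(s)\, dw_2 - dw_3 \right) e^{-\pi\mu^2},
\]
so $\widetilde{\psi}'_{0,1}(x)$ is \emph{locally constant} in $w$ and only jumps across $s=0$ (the sign change in the $dw_2$-component). In particular it has no $ds$-component and is bounded near $c_x$. Integrating such a bounded form over a cylinder of circumference $O(1)$ and meridian-circle radius $\to 0$ (the meridian direction involves $ds$, which does not appear) gives a contribution tending to $0$; more precisely, the relevant piece of $\partial R'$ over which $\widetilde\psi'_{0,1}(x)$ could be nonzero shrinks in measure. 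This is where one must be slightly careful: the jump discontinuity across $s=0$ means that on $F_x$ itself the form is genuinely two-valued along $c_x$, so rather than working on $F_x$ directly I would take $R'$ to be the full solid region $[0,\epsilon]\times T^2$ with a solid-torus neighborhood $U_\delta$ of $c_x \times \{0\}$ removed, apply Stokes on $[0,\epsilon]\times T^2 \setminus U_\delta$ (where the form is smooth and closed), and check that $\int_{\partial U_\delta} \widetilde\psi'_{0,1}(x) \to 0$ as $\delta \to 0$ using the explicit formula above.

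The main obstacle is bookkeeping the orientations and the singular locus correctly: one has to make sure that the faces $s=0$ and $s=\epsilon$ of the removed region $U_\delta$ contribute exactly $-\int_{c} \widetilde\psi'_{0,1}(x)$ and $+\int_{c(\epsilon)}\widetilde\psi'_{0,1}(x)$ (the sign flip in \eqref{psi'formula} across $s=0$ must be tracked, and since $c = c_y$ is a translate of $c_x$ within $F_x$, its intersection pattern with the singular geodesic $c_x$ must be accounted for — here Proposition~\ref{TnBS}(iii) ensures $c_y$ and $c_x$ are parallel, hence either equal or disjoint, which keeps the geometry manageable), and that the annular/toroidal correction term genuinely vanishes rather than contributing a finite jump. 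Once Lemma~\ref{selflinkingforx} is in hand, case (ii) of Theorem~\ref{linking-dual} follows by applying case (i) to the nearby cycle $c(\epsilon)$, which \emph{is} disjoint from $F_n$, together with the observation that $\Lk(\partial C_n, c_y) = \Lk(\partial C_n, c_y(\epsilon))$ since moving $c_y$ within a small product neighborhood disjoint from all of $\partial C_n$ except along $F_x$ does not change the linking number (the cap $A_n$ can be chosen to meet the product neighborhood in a controlled way).
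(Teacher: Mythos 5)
There is a genuine gap: the mechanics of your Stokes argument do not make sense as stated, and the geometric premise it rests on is incorrect. First, the degree bookkeeping: $\widetilde{\psi}'_{0,1}(x)$ is a $1$-form, so Stokes' theorem can only compare its integrals over the two circles $c$ and $c(\epsilon)$ via a $2$-chain whose boundary is $c(\epsilon)-c$; your region $R'=[0,\epsilon]\times T^2\setminus U_\delta$ is a $3$-chain, its boundary pieces ($F_x(\epsilon)$, $-F_x$, $\partial U_\delta$) are surfaces, and expressions like $\int_{\partial R'}\widetilde{\psi}'_{0,1}(x)$ or $\int_{\partial U_\delta}\widetilde{\psi}'_{0,1}(x)$ pair a $1$-form with $2$-dimensional chains, so no identity between $\int_c$ and $\int_{c(\epsilon)}$ can come out of this setup. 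Second, the singular locus is not the circle $c_x$: by \eqref{psi'formula}, $\widetilde{\psi}'_{0,1}(\mu e_2,s,w)=\tfrac12|\mu|(\sgn(s)\,dw_2-dw_3)e^{-\pi\mu^2}$ is independent of $w$, so the $\sgn(s)$ jump in the $dw_2$-component occurs at \emph{every} point of the fiber $F_x=\{s=0\}$ (the point $D_{W,x}\in D_W$ corresponds to a whole torus fiber of $e'(P)\to X_W$). Hence excising a solid torus around $c_x$ does not leave a region on which the form is smooth, and the excision scheme fails even after repairing the degrees; note also that case (ii) allows $c=c_x$, in which case you would be excising the very cycle you integrate over.

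The correct argument is simpler and uses exactly the two facts you isolated (no $ds$-component, jump only in $dw_2$), but applied to the right chain: take the $2$-dimensional cylinder $[0,\epsilon]\times c$ swept out by translating $c$. Since $c$ runs in the $e_3$ (i.e.\ $w_3$) direction, the pullback of $\gamma^{\ast}\widetilde{\psi}'_{0,1}(x)$ (for every $\gamma\in\Gamma_M$) to this cylinder only involves the $dw_3$-component, which by \eqref{psi'formula} is continuous — in fact constant — across $s=0$; the discontinuous $dw_2$-component dies on restriction. So the pulled-back form is smooth, closed (locally constant) on the whole cylinder including the boundary circle $0\times c$, and Stokes on the cylinder gives that the integrals over the circles $s\times c$ agree for all $s\in[0,\epsilon]$, which is Lemma~\ref{selflinkingforx}. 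This is the paper's proof; your shrinking-tube limit is not needed, and without the cylinder (or some other explicit $2$-chain cobounding $c(\epsilon)-c$ on which the restricted form is shown to be smooth and closed) the proposal does not establish the lemma. Your final paragraph (deducing case (ii) from case (i) via $\Lk(\partial C_n,c)=\Lk(\partial C_n,c(\epsilon))$) is in the spirit of the paper, but the justification must handle the component $c_i=c$ separately, where the equality is the definition of the self-linking number rather than a homology-invariance statement.
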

\begin{proof} 
We can take $x = \mu e_2$ and hence $c$ is contained in the fiber over the image of $e_3 \in W$. Hence, by Proposition \ref{boundaryofC}, $c $ is the circle in the torus fiber at $s(x )=0$ {\it in the $e_3$-direction}, i.e., parallel to the image of $(0,\R e_3)$ in $e'(P)$. We note that by \eqref{psi'formula} even though $\widetilde{\psi}'_{0,1}(x)$ is not defined on the whole fiber over $s=0$ its restriction to $c$ is smooth. Hence the left hand side is well-defined since all the other terms in the sum are defined on the whole fiber and in fact in a neighborhood of that fiber. Hence the locally constant form $\sum_{\gamma \in \Gamma_M} \gamma^* \widetilde{\psi}'_{0,1}(x)$ is closed on the cylinder $[0, \epsilon] \times c $, and its integrals over the circles $s \times c $ all coincide. But $ \eps \times c = c(\epsilon)$. The lemma follows.
\end{proof}
Summing over $x$ and using case (i) we obtain 
\[
\int_{c} \widetilde{\psi}'_{0,1}(n) =\int_{c(\epsilon)}  \widetilde{\psi}'_{0,1}(n) = \Lk(\partial C_n ,c(\epsilon)),
\]
since $c(\epsilon)$ is disjoint from all the components of $F_n$. Thus it suffices to prove 
\begin{equation}\label{lastlinkingstep}
\Lk(\partial C_n, c) = \Lk(\partial C_n, c(\epsilon)).
\end{equation}
To this end suppose that $c \subset F_x \subset F_n$ and $c_1,\cdots,c_k$ are the components of $\partial C_n$ contained in $F_x$. Hence
$c$ and $c_i,1 \leq i \leq k$, are all parallel. Since the fibers containing all other components of $\partial C_n$ are disjoint from $c$, \eqref{lastlinkingstep} will follow from
\[
\Lk(c_i,c) = \Lk(c_i, c(\epsilon)), 1 \leq i \leq k.
\]
If $c_i = c$ then the previous equation is the definition of $Lk(c,c)$. Thus we may assume $c_i$ is parallel to and disjoint from $c$. 
In this case their linking number is already topologically defined.
But since $c$ is disjoint from $c_i$ the circles $c$ and $c(\eps)$ are homologous in the complement of $c_i$ (by the product homology $c \times [0,\eps]$) and since the linking number with $c_i$ is a homological invariant of the complement of $c_i$ in $e'(P)$ we have 
$\Lk(c_i, c(\eps)) =\ Lk(c_i, c)$.

We this Theorem~\ref{linking-dual} is proved.

\section{The generating series of the capped cycles}

In this section, we show that the generating series of the `capped' cycles $C_n^c$ gives rise to a modular form, extending Theorem~\ref{KM90} to a lift of the full cohomology $H^2(X)$ of $X$. In particular, we give our new proof of the theorem of Hirzebruch and Zagier and show how a remarkable feature of their proof appears from our point of view.

\subsection{The theta series associated to $\varphi_2$}

We define the theta series
\[
\theta_{\varphi_2}(\tau,\calL) = \sum_{x \in \calL} \varphi_2(x,\tau,z).
\]
 In the following we will often drop the argument $\calL = L+h$. For $n \in \Q$, we also set 
\[
\varphi_2(n) = \sum_{n \in \calL_n, x \ne0} \varphi_2(x).
\]
Clearly, $\theta_{\varphi_2}(\tau,\calL)$ and $\varphi_2(n)$ descend to closed differential $2$-forms on $X$. Furthermore, $\theta_{\varphi_2}(\tau,\calL)$ is a non-holomorphic modular form in $\tau$ of weight $2$ for the principal congruence subgroup $\G(N)$. In fact, for $\mathcal{L} = L$ as in Example~\ref{HZex}, $\theta_{\varphi_2}(\tau,\calL)$ transforms like a form for $\G_0(d)$ of nebentypus. 

\begin{theorem}[Kudla-Millson \cite{KM90}]\label{KM90}
We have
\[
[\theta_{\varphi_2}(\tau)] =  -\frac{1}{2\pi}\delta_{h0} [\omega] + \sum_{n>0} \PD[C_n] q^n \in H^2(X,\Q) \otimes M_2(\G(N)).
\]
That is, for any closed $2$-form $\eta$ on $X$ with compact support,
\[
\Lambda(\eta,\tau) := \int_X \eta \wedge \theta_{\varphi_2}(\tau,\calL)= -\frac{1}{2\pi}\delta_{h0} \int_X \eta \wedge \omega + \sum_{n>0} \left( \int_{C_n} \eta \right)q^n.
\]
Here $\delta_{h0}$ is Kronecker delta, and $\omega$ is the K{\"a}hler form on $D$ normalized such that its restriction to the base point is given by $\omega_{13}\wedge \omega_{14}+\omega_{23}\wedge \omega_{24}$. 
We obtain a map
\begin{equation}
\Lambda: H_c^{2}(X,\C) \to M_{2}(\G(N))
\end{equation}
from the cohomology with compact supports to the space of holomorphic modular forms of weight $2$ for the principal congruence subgroup $\G(N) \subset \SL_2(\Z))$. Alternatively, for $C$ an absolute $2$-cycle in $X$ defining a class in $H_2(X,\Z)$, the lift $\Lambda(C,\tau)$ is given by \eqref{KM-id} with $C_0$ the class given by $-\frac{1}{2\pi}\delta_{h0} [\omega]$.
\end{theorem}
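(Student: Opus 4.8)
The plan is to deduce the statement by pairing $\theta_{\varphi_2}(\tau,\calL)$ against an arbitrary closed, compactly supported $2$-form $\eta$ on $X$ and analyzing the resulting function $\Lambda(\eta,\tau)=\int_X\eta\wedge\theta_{\varphi_2}(\tau,\calL)$: first I would show it is holomorphic of weight $2$, then compute its Fourier coefficients geometrically, and finally translate the result into (co)homology. For the preliminaries: since $\varphi_2\in[\calS(V_{\R})\otimes\wwedge{2}\mathfrak{p}^{\ast}]^{K}$ is a cocycle in the relative Lie algebra complex, $\tilde\varphi_2$ is a closed $\calS(V_{\R})$-valued $2$-form on $D$; the Schwartz property makes $\sum_{x\in\calL}\varphi_2(x,\tau,z)$ converge absolutely and locally uniformly with all derivatives, and the $\G$-invariance of $\Theta_{\calL}$ makes $\theta_{\varphi_2}(\tau,\calL)$ a closed $2$-form on $X$; the same holds for $\theta_{\psi_1}(\tau,\calL):=\langle\Theta_{\calL},\tilde\psi_1\rangle$. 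The weight-$2$ transformation law in $\tau$ for $\G(N)$ (resp. for $\G_0(d)$ with nebentypus when $\calL=L$) is the standard theta transformation formula, a consequence of Poisson summation and the fact that $\varphi_2$ has weight $2$ for $\SO(2)\subset\SL_2(\R)$.

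To establish holomorphy I would expand by the value $n=\tfrac12(x,x)$, writing $\Lambda(\eta,\tau)=\sum_n a_n(v)\,q^n$ with $a_n(v)=\int_X\eta\wedge\sum_{x\in\calL_n}\varphi_2^0(\sqrt{v}x,z)$. By \eqref{partial-d} (valid at every $z$ by $G$-equivariance), $v\,\partial_v$ of the integrand is $d\big(\sum_{x}\psi_1^0(\sqrt{v}x,z)\big)$, so
\[
v\,\partial_v\,a_n(v)=\int_X\eta\wedge d\Big(\sum_{x\in\calL_n}\psi_1^0(\sqrt{v}x,z)\Big)
=-\int_X d\eta\wedge\Big(\sum_{x\in\calL_n}\psi_1^0(\sqrt{v}x,z)\Big)=0
\]
by Stokes' theorem, since $\eta$ is closed with compact support and $\sum_{x\in\calL_n}\psi_1^0(\sqrt vx,z)$ is a smooth form on $\supp\eta$. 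Hence $a_n(v)=a_n$ is constant and $\Lambda(\eta,\tau)=\sum_n a_n q^n$ is holomorphic; once the next step shows $a_n=0$ for $n<0$, the weight-$2$ law and this $q$-expansion give $\Lambda(\eta,\cdot)\in M_2(\G(N))$. (Equivalently one may argue from Theorem~\ref{localholomorphic1}: $\omega(L)\varphi_2=d\psi_1$ gives $L\,\theta_{\varphi_2}=d\,\theta_{\psi_1}$, so $L\,\Lambda(\eta,\tau)=\int_X\eta\wedge d\,\theta_{\psi_1}=0$, and $L=-2iv^2\partial_{\bar\tau}$.)

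The heart of the proof is the evaluation of $a_n$. If $(x,x)=2n\le 0$ and $x\ne 0$, then $D_x$ does not meet the interior of $D$, so by the Remark after Proposition~\ref{schluesselV} the form $\tilde\psi_1(x,z)$ is a globally defined primitive of $\varphi_2(x,z)$ on $D$; its Gaussian decay makes $\sum_{x\in\calL_n}\tilde\psi_1(x,z)$ a smooth form on $X$, so $\sum_{x\in\calL_n}\varphi_2^0(\sqrt vx,z)$ is exact and $a_n=0$. This disposes of all $n<0$ as well as of the nonzero isotropic vectors with $n=0$. The remaining $n=0$ contribution is the $x=0$ term, present exactly when $h=0$: applying the (commuting) operators $x_\alpha-\tfrac1{2\pi}\partial_{x_\alpha}$ to $\varphi_0$ and evaluating at $x=0$ gives $\varphi_2(0,z_0)=-\tfrac1{2\pi}(\omega_{13}\wedge\omega_{14}+\omega_{23}\wedge\omega_{24})=-\tfrac1{2\pi}\omega|_{z_0}$, hence $\varphi_2(0,z)=-\tfrac1{2\pi}\omega$ by $G$-equivariance, contributing $-\tfrac1{2\pi}\delta_{h0}\int_X\eta\wedge\omega$. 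Finally, for $n>0$ every $x\in\calL_n$ has $(x,x)>0$, and here I would invoke the Thom-form property of the Kudla-Millson form: $\varphi_2(x)$ represents the Thom class of the normal bundle of $D_x$ in $D$, so $\varphi_2^0(\sqrt{v}x,z)\to\delta_{D_x}$ as currents as $v\to\infty$. Since $a_n$ is independent of $v$ we may take this limit; unfolding the sum over the $\G$-orbit on $X$ and using that $\eta$ has compact support (so $\int_{C_n}\eta$ is finite despite $C_n$ being a relative cycle) gives $a_n=\sum_{x\in\G\backslash\calL_n}\int_{C_x}\eta=\int_{C_n}\eta$. Assembling the coefficients, $\Lambda(\eta,\tau)=-\tfrac1{2\pi}\delta_{h0}\int_X\eta\wedge\omega+\sum_{n>0}\big(\int_{C_n}\eta\big)q^n$.

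It remains to rephrase this in (co)homology. Since the identity holds for every closed, compactly supported $\eta$, and $\langle\PD[C_n],[\eta]\rangle=\int_{C_n}\eta$ defines $\PD[C_n]\in H^2(X,\Q)$ under the pairing $H^2(X)\times H^2_c(X)\to\Q$, we obtain $[\theta_{\varphi_2}(\tau)]=-\tfrac1{2\pi}\delta_{h0}[\omega]+\sum_{n>0}\PD[C_n]\,q^n$ in $H^2(X,\Q)\otimes M_2(\G(N))$. The induced map $\Lambda\colon H_c^2(X,\C)\to M_2(\G(N))$ is well defined because $\eta=d\mu$ with $\mu$ compactly supported gives $\int_X d\mu\wedge\theta_{\varphi_2}=-\int_X\mu\wedge d\theta_{\varphi_2}=0$; transporting the pairing to a $2$-cycle $C$ via Poincar\'e duality $H_2(X)\simeq H^2_c(X)$ turns the formula into $\Lambda(C,\tau)=\int_C\theta_{\varphi_2}=\sum_{n\ge 0}(C_n\cdot C)q^n$ of \eqref{KM-id}, with $C_0$ the class $-\tfrac1{2\pi}\delta_{h0}[\omega]$. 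The step I expect to be the main obstacle is the Thom-form property for $n>0$, i.e. the assertion $\varphi_2^0(\sqrt{v}x,z)\to\delta_{D_x}$: this requires the explicit transverse analysis of $\varphi_2$ near $D_x$ and is the genuine content of \cite{KM90} (in the non-compact setting one can instead argue with currents as in \cite{BFDuke}); the remaining ingredients are formal (Stokes, Poisson summation) or a short explicit computation (the $n=0$ term).
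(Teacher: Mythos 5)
Your proposal is correct and follows essentially the route the paper indicates for this result: holomorphy via $\omega(L)\varphi_2=d\psi_1$ together with Stokes' theorem, vanishing of the coefficients for $n\le 0$ via the global primitive $\tilde{\psi}_1$ (exactness of $\varphi_2(n)$), the explicit evaluation $\varphi_2(0)=-\tfrac{1}{2\pi}\omega$ for the constant term, and the Poincar\'e dual (Thom form) property of $\varphi_2(n)$ for $n>0$, which the paper likewise takes from \cite{KM90} (or the current argument of \cite{BFDuke}) rather than reproving. No gaps beyond the standard convergence/unfolding checks, which you handle correctly.
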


The key fact for the proof of the Fourier expansion is that for $n>0$, the form $\varphi_2(n)$ is a Poincar\'e dual form of $C_n$, while $\varphi_2(n)$ is exact for $n \leq 0$, see also Section~\ref{currents}.

\subsection{The restrictions of the global theta functions}

\begin{theorem}\label{restriction}

The differential forms $\theta_{\varphi_2}(\calL_V)$ and $\theta_{\psi_1}(\calL_V)$ on $X$ extend to the Borel-Serre compactification $\overline{X}$. More precisely, for the restriction $i_P^{\ast}$ to the boundary face $e'(P)$ of $\overline{X}$, we have
\[
i_P^{\ast} \theta_{\varphi_2}(\calL_V) = \theta^P_{\varphi_{1,1}}(\calL_{W_P}) \qquad \text{and} \qquad i_P^{\ast} \theta_{\psi_1}(\calL_V) =  \theta^P_{\psi_{0,1}}(\calL_{W_P}). 
\]
\end{theorem}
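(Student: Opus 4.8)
The plan is to compute the restriction $i_P^{\ast}$ of the theta series $\theta_{\varphi_2}(\calL_V)$ and $\theta_{\psi_1}(\calL_V)$ term by term, using the geodesic flow coordinates $z=z(t,s,w)$ on the product neighborhood $[(T,\infty]\times e'(P)]$ of the boundary face $e'(P)$, and then taking the limit $t\to\infty$. The first step is bookkeeping: one splits the lattice sum $\sum_{x\in\calL_V}$ according to the value of $(x,u)$, where $u$ is the chosen primitive isotropic vector defining $P$. Vectors with $(x,u)\neq 0$ should contribute nothing in the limit, because for such $x$ the Gaussian factor $\varphi_0(g_z^{-1}x)$ decays exponentially as $t\to\infty$ (this is the Schwartz-function analogue of Proposition~\ref{boundaryofC}, which says $D_x$ eventually leaves any neighborhood of $e(P)$); one must check this decay is uniform enough to interchange the limit with the infinite sum. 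The vectors with $(x,u)=0$ but $x\notin u^\perp\cap\ell$... more precisely one organizes $\calL_V\cap u^\perp$ modulo $\calL_V\cap\Q u = \Z u$, which by the discussion preceding Lemma~\ref{LemmaB} is $\calL_{W_P}=\coprod_k(L_{W,k}+h_{W,k})$; vectors lying on the line $\Q u$ itself (i.e.\ with $(x,x)=0$ and $x$ proportional to $u$) also need to be handled, but $\varphi_2$ and $\psi_1$ applied to such null vectors likewise decay or vanish in the limit since $n(w)$ translates them along $u$.

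The heart of the computation is then the local identity at the level of Schwartz forms: one must show that the restriction to $e(P)$ of $\varphi_2(x,\tau,z(t,s,w))$, for $x\in W\subset u^\perp$, converges as $t\to\infty$ to $\iota_P$ applied to $\varphi_{1,1}(x,\tau,s)$, and similarly $\psi_1\rightsquigarrow \iota_P\psi_{0,1}$. Concretely one writes $g_z = n(w)a(t)m(s)$, uses the explicit base-point formulas for $\varphi_2$ (the product $\tfrac12\prod_{\mu=3}^4\sum_\alpha(x_\alpha-\tfrac1{2\pi}\partial_{x_\alpha})\varphi_0\otimes\omega_{\alpha\mu}$) and for $\psi_1$ from \eqref{psi20}, transports them by $g_z$, and expands in the coordinates $(t,s,w)$. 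The factors of $t$ enter through the action of $a(t)$ on the $u,u'$ components: the $u'$-component of $g_z^{-1}x$ is scaled by $t$ and, since $x\in u^\perp$ has no $u'$-component to begin with, the relevant Gaussian coordinates of $g_z^{-1}x$ in the $\Span(e_2,e_3)=W_{\R}$ directions become independent of $t$ as $t\to\infty$ while the $e_1,e_4$ (equivalently $u,u'$) directions contribute the vanishing terms already discarded. The differential-form part is where $\iota_P$ appears: $\omega_{13},\omega_{14},\omega_{24}$ restrict/degenerate in a controlled way, and the surviving combination of $\omega_{\alpha\mu}$'s on $e(P)=MN$ is exactly $\omega_{23}\wedge(w\wedge u')$-type, i.e.\ the image under $\iota_P$ of $\omega_{23}\otimes e_2$; one reads off that the scalar coefficient $\tfrac12(4x_2^2-\tfrac1\pi)e^{-\pi(x_2^2+x_3^2)}$ of $\varphi_{1,1}$ is precisely what is left. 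The explicit formula given in Section~\ref{iotaP} for the images $e_2\mapsto \cosh(s)dw_2-\sinh(s)dw_3$, $e_3\mapsto\sinh(s)dw_2-\cosh(s)dw_3$ is the target to match, after conjugating by $m(s)$.

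Having the pointwise (in $x$) limit, one reassembles the sum: $\lim_{t\to\infty}i^\ast\theta_{\varphi_2}$ over the coset $\calL_V\cap u^\perp$ modulo $\Z u$ collapses — the sum over the $\Z u$-translates $x+ku$ contributes only through the single limit value since the $ku$ shift dies in the limit — and one is left with $\sum_{x\in\calL_{W_P}}\iota_P\varphi_{1,1}(x,\tau,s) = \theta^P_{\varphi_{1,1}}(\calL_{W_P})$, and identically for $\psi$. This simultaneously proves that the forms extend continuously (indeed smoothly) to $\overline{X}$, since the limit exists and is a smooth form on $e'(P)$; a short additional argument with the $t$-derivatives, or invoking that $\varphi_2,\psi_1$ are $\calS(V_\R)$-valued so all derivatives decay comparably, upgrades continuity of the extension to smoothness. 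The main obstacle I anticipate is the uniformity needed to justify interchanging $\lim_{t\to\infty}$ with the infinite theta sum — that is, producing a dominating bound on $\sum_{(x,u)\neq 0}|\varphi_2(x,\tau,z(t,s,w))|$ uniform for $t$ large and $(s,w)$ in a compact fundamental domain for $\G_P$ — together with carefully tracking the several $\omega_{ij}$ terms through the $g_z$-conjugation so that exactly the $\iota_P$-image survives and all other form-components are killed by the degeneration of the coframe as $t\to\infty$; the rest is the routine (if lengthy) Gaussian bookkeeping sketched above.
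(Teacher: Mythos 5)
Your bookkeeping for the vectors with $(x,u)\neq 0$ is fine: writing $x=au+x_W+bu'$, the majorant at $z=(w,t,s)$ is $(x,x)_z=\tfrac1{t^2}\bigl(a-(x_W,w)-bq(w)\bigr)^2+(x_W+bw,x_W+bw)_s+b^2t^2$, so the terms with $b=(x,u)\neq 0$ carry $e^{-\pi b^2t^2}$ and are uniformly rapidly decreasing. The genuine gap is in your treatment of the remaining sum: you claim the sum over the translates $x_W+ku$ ``contributes only through the single limit value since the $ku$ shift dies in the limit.'' It does not die: for $b=0$ the $u$-component enters the Gaussian only through $e^{-\pi(k+c)^2/t^2}$, which tends to $1$ for every fixed $k$, so no individual translate is suppressed. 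What actually happens is that the components of $\varphi_2(x_W+ku,\tau,z)$ and $\psi_1(x_W+ku,\tau,z)$ carry factors $1/t$ or $1/t^2$ (from the $a(t)$-scaling of the $u$-coordinate and from the pullback of the coframe $\omega_{\alpha\mu}$ in horospherical coordinates), so each term tends to $0$ termwise, while the sum over $k$ grows like $t$; the limit of the sum is therefore \emph{not} the sum of the termwise limits, and no dominated-convergence bound of the kind you propose can exist, because the naive interchange gives the wrong (zero or divergent) answer. The same problem affects the vectors proportional to $u$, which you discard: they contribute the nonzero $x_W=0$ term of the boundary theta series. This is visible concretely in the $n=0$ computation in the proof of Theorem~\ref{psitilderes}, where $\sum_{a\neq 0}\tilde{\psi}_1(au,z)=\tfrac{1}{2\sqrt{2}\pi}\sum_{a\neq 0}e^{-\pi a^2/t^2}\tfrac{dw_3}{t}$ consists of termwise-vanishing terms and nevertheless converges to $\tfrac{1}{2\sqrt{2}\pi}dw_3=\tilde{\psi}_{0,1}(0)\neq 0$.

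The missing idea, which is exactly the paper's method (``switching to a mixed model of the Weil representation,'' with the model calculation in the proof of Theorem~\ref{psitilderes}), is Poisson summation in the $u$-component: for fixed $x_W\in\calL_{W_P}$ one applies Poisson summation to $\sum_{a\in\Z}$, turning the slowly varying sum in $a/t$ into a dual sum whose $k\neq 0$ frequencies are $O(e^{-\pi t^2k^2})$ and hence negligible, while the $k=0$ term supplies the factor of $t$ that cancels the $1/t$'s from the coframe and the polynomial prefactors and yields precisely $\iota_P\varphi_{1,1}(x_W,\tau,s)$, respectively $\iota_P\psi_{0,1}(x_W,\tau,s)$, with the correct normalization. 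With that replacement of your ``collapse'' step (and the resulting uniform estimates, which come for free from the exponential decay in $t$ of the nonzero dual frequencies and of the $b\neq 0$ terms), the rest of your outline — the identification of the surviving form components with the image of $\iota_P$, the smooth extension to $\overline{X}$, and the reassembly over $\calL_{W_P}$ — goes through as in the paper.
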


\begin{proof}
The restriction of $\theta_{\varphi_2}(\calL_V)$ is the theme (in much greater generality) of \cite{FMres}. For $\theta_{\psi_1}(\calL_V)$ one proceeds in the same way. In short, one detects the boundary behaviour of the theta functions by switching to a mixed model of the Weil representation. For a model calculation see the proof of Theorem~\ref{psitilderes} below. 
\end{proof}

We conclude by Proposition~\ref{globalholomorphic2}

\begin{theorem}\label{globalexact}

The restriction of $\theta_{\varphi_2}(\calL_V)$ to the boundary of $\overline{X}$ is exact and
\[
i_P^{\ast} \theta_{\varphi_2}( \calL_V) = d\left(  \theta^P_{\phi_{0,1}}(\calL_{W_P}) \right). 
\]
\end{theorem}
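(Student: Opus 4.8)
The plan is to combine the two facts that are already in place by the time we reach Theorem~\ref{globalexact}. First, by Theorem~\ref{restriction} we know that $i_P^{\ast}\theta_{\varphi_2}(\calL_V)=\theta^P_{\varphi_{1,1}}(\calL_{W_P})$, so the statement reduces to proving that $\theta^P_{\varphi_{1,1}}(\calL_{W_P})=d\bigl(\theta^P_{\phi_{0,1}}(\calL_{W_P})\bigr)$ as differential forms on $e'(P)$. Second, Theorem~\ref{local-phi} gives the pointwise (Lie-algebra level) identity $d\phi_{0,1}=\varphi_{1,1}$ on $D_W$, and since $\iota_P$ is a map of complexes (Section~\ref{iotaP}), applying $\iota_P$ yields $d\phi_{0,1}^P=\varphi_{1,1}^P$ on $e(P)$, hence on $e'(P)$ after descent. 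Thus, formally, one wants to interchange $d$ with the summation over $x\in\calL_{W_P}$ that defines the theta series:
\[
d\theta^P_{\phi_{0,1}}=d\sum_{x\in\calL_{W_P}}\phi_{0,1}^P(x,\tau)=\sum_{x\in\calL_{W_P}}d\phi_{0,1}^P(x,\tau)=\sum_{x\in\calL_{W_P}}\varphi_{1,1}^P(x,\tau)=\theta^P_{\varphi_{1,1}}.
\]

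The subtlety — and the only real content beyond quoting earlier results — is that $\phi_{0,1}$ is not a Schwartz function: it is merely $C^1$ (Proposition~\ref{phi-prop}), its components $A+A'$ and $B+B'$ have mild singularities along the light cone, and moreover each individual summand $\tilde{\psi}_{0,1}(x,s)$ has a genuine singularity along $D_{W,x}$ (Lemma~\ref{schluessel}). So one must justify that termwise differentiation is legitimate and that no distributional boundary terms are introduced. The cleanest way is to argue that $\phi_{0,1}(x,s)$ is globally $C^1$ on $D_W$ after summing the two pieces (this is exactly Proposition~\ref{phi-prop}(iii), which says $X_{23}(B+B')=-(A+A')$ holds on \emph{all} of $W$, i.e. the would-be singularities of the separate pieces cancel), so that $\theta^P_{\phi_{0,1}}$ is a $C^1$ form on $e'(P)$ whose exterior derivative is computed termwise, each term contributing $d\phi_{0,1}^P(x)=\varphi_{1,1}^P(x)$ with no jump, since the singular locus $D_{W,x}$ of the two constituents is common and their contributions to the distributional derivative cancel. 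The rapid decay of $\varphi_{1,1}$ then guarantees the resulting series converges to a smooth form and that the interchange of $d$ and $\sum$ is valid (differentiating a locally uniformly convergent series of $C^1$ functions whose derivatives also converge locally uniformly).

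Concretely I would organize the proof as: (1) invoke Theorem~\ref{restriction} to replace $i_P^{\ast}\theta_{\varphi_2}$ by $\theta^P_{\varphi_{1,1}}$; (2) invoke Theorem~\ref{local-phi} and the fact that $\iota_P$ is a cochain map to get $d\phi_{0,1}^P(x,\tau)=\varphi_{1,1}^P(x,\tau)$ for each fixed $x$ of positive length, and note the $(x,x)\le 0$ case is handled the same way since $\phi_{0,1}$ is defined (and $C^1$) for all $x$ including $x=0$; (3) use Proposition~\ref{phi-prop} to see $\theta^P_{\phi_{0,1}}$ is a well-defined $C^1$ $1$-form on $e'(P)$, and that the singularities of the separate currents $\tilde{\psi}_{0,1}(x)$, $\tilde{\psi}'_{0,1}(x)$ along $D_{W,x}$ cancel so no delta-currents appear in $d\theta^P_{\phi_{0,1}}$; (4) justify termwise differentiation via the locally uniform convergence coming from the Gaussian decay of $\varphi_{1,1}$ and its derivatives, concluding $d\theta^P_{\phi_{0,1}}(\calL_{W_P})=\theta^P_{\varphi_{1,1}}(\calL_{W_P})=i_P^{\ast}\theta_{\varphi_2}(\calL_V)$.

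The main obstacle is step (3): making rigorous that the sum $\phi_{0,1}=\tilde{\psi}_{0,1}+\tilde{\psi}'_{0,1}$, each term of which is only locally integrable and has a codimension-one singular locus, has an exterior derivative with \emph{no} singular contribution. This is exactly why the paper introduced $\tilde{\psi}'_{0,1}$ with its specific piecewise-linear profile \eqref{AB'-eq}: the distributional derivatives $d[\tilde{\psi}_{0,1}(x)]$ and $d[\tilde{\psi}'_{0,1}(x)]$ each carry a term $\pm\delta_{D_{W,x}\otimes x}$ (as recorded in the Remark following Lemma~\ref{xi'closed}), and these cancel in the sum, leaving $d[\phi_{0,1}(x)]=[\varphi_{1,1}(x)]$ with $\varphi_{1,1}(x)$ a genuine smooth form. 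Once this cancellation is in hand — either by the current computation sketched in that Remark, or by directly checking via Proposition~\ref{phi-prop}(iii) that $\phi_{0,1}(x,s)$ extends to a $C^1$ function across $D_{W,x}$ — everything else is routine convergence bookkeeping.
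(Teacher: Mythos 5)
Your proposal is correct and follows essentially the same route as the paper: the paper obtains Theorem~\ref{globalexact} by combining the restriction formula of Theorem~\ref{restriction} with the termwise identity $d\phi_{0,1}=\varphi_{1,1}$ of Theorem~\ref{local-phi}, transported by the cochain map $\iota_P$ and summed over $\calL_{W_P}$. The only difference is that the paper treats the interchange of $d$ with the lattice sum as immediate, whereas you spell out the regularity of $\phi_{0,1}$ (Proposition~\ref{phi-prop}) and the cancellation of the $\delta_{D_{W,x}}$ contributions --- details the paper relegates to the remark on the modularity of $\theta_{\phi_{0,1}}$ and the remark following Lemma~\ref{xi'closed}.
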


We also have a crucial restriction result for the singular form ${ \tilde{\psi}_{0,1}}$. However, one needs to be careful in forming the naive theta series associated to $\tilde{\psi}_{0,1}$ by summing over all (non-zero) lattice elements. This would give a form on $X$ with singularities on a dense subset of $X$. Instead we define $\tilde{\psi}_{{2,0}}(n)$ in the same way as for $\varphi_2(n)$ by summing over all non-zero $x \in \calL_V$ of length $n \in \Q$. This gives a $1$-form on $X$ which for $n>0$ has singularities along the locally finite cycle $C_n$. Similarly, we define
\[
\tilde{\psi}_{0,1}^P(n)= \sum_{\substack{x \in \calL_{W_P}, (x,x)=2n}} \tilde{\psi}_{0,1}^P(x),
\]
\vskip-.2cm
\noindent
which descends to a $1$-form on $e'(P)$ with singularities. We also define $\tilde{\psi'}_{0,1}(n)$ 
and $\phi_{0,1}^P(n)$ in the same way. We have

\begin{proposition}\label{psitilderes}
The restriction of the $1$-form $\tilde{\psi}_{{1}}(n)$ to $e'(P)$ is given by 
\[
i_P^{\ast} \tilde{\psi}_1(n) = \tilde{\psi}_{0,1}^P(n).
\]
\end{proposition}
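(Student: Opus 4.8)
The plan is to compute the restriction $i_P^{\ast}\tilde\psi_1(n)$ directly by the same method used for the other theta functions in Theorem~\ref{restriction}, namely by switching to a mixed model of the Weil representation adapted to the cusp $P$ and taking the limit $t\to\infty$ along $z=z(t,s,w)$ in the product neighborhood of $e'(P)$. Recall $\tilde\psi_1(n)=\sum_{x\in\calL_n,\,x\ne0}\tilde\psi_1(x)$, where $\tilde\psi_1(x,z)$ is the singular $1$-form of \eqref{GreeneqV}; since $\tilde\psi_1(x)=-\frac1{2\pi(x_3^2+x_4^2)}\psi_1(x)$ is, up to the explicit scalar factor depending only on the majorant, built from $\psi_1$, the computation will run parallel to the computation of $i_P^{\ast}\theta_{\psi_1}=\theta^P_{\psi_{0,1}}$.

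First I would split $\calL_V$ using the Witt decomposition $V=\ell\oplus W\oplus\ell'$: write $x=au+x'+bu'$ with $x'\in W$ (and $a,b$ in the appropriate cosets of $\Z$), so the sum over $\calL_n$ becomes a triple sum over $a$, $b$, and $x'\in\calL_{W}$ with $\tfrac12(x',x')+ab=n$. In the coordinates $z=z(t,s,w)$ the Gaussian $\varphi_0(x,z)$ acquires the shape $\exp(-\pi[t^{-2}(\cdots)^2+2q_W^+(x'')+t^2(\cdots)^2])$, so as $t\to\infty$ every term with $b\ne0$ is exponentially killed; this forces $b=0$, hence $ab=0$, hence $\tfrac12(x',x')=n$, which is exactly the condition appearing in the definition of $\tilde\psi^P_{0,1}(n)=\sum_{x'\in\calL_{W_P},(x',x')=2n}\tilde\psi^P_{0,1}(x')$. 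The remaining sum over $a\in\Z$ (the $u$-direction, on which $\G_N$ acts by translation) is handled by Poisson summation, exactly as in the model calculation sketched in the commented-out proof of Theorem~\ref{restriction}: after Poisson summation in $a$ and passing to the limit $t\to\infty$, only the $a=0$ Fourier mode survives, and one is left with precisely $\iota_P$ applied to $\tilde\psi_{0,1}(x')$ summed over $x'$.

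Then I would match terms. On the one hand, $\tilde\psi_1(x)$ restricted and limited produces, for each surviving $x'\in W$ of length $2n$, the form obtained from $\psi_1$ by the same limiting procedure, times the scalar $-\frac1{2\pi(x_3^2+x_4^2)}$ evaluated in the limit; using $u=(e_1+e_4)/\sqrt2$, $u'=(e_1-e_4)/\sqrt2$ one checks the coordinate $x_4$ (equivalently the $\ell$-component) degenerates in the right way so that the majorant factor limits to the $W$-majorant $\tfrac1{2\pi(x_3^2)}$-type factor appearing in $\tilde\psi_{0,1}$ (cf. \eqref{Greeneq}, \eqref{GreeneqV}). On the other hand, the definition \eqref{xiZV} of $\tilde\psi_1(x,\tau,z)=-\bigl(\int_v^\infty\psi_1^0(\sqrt r x,z)\frac{dr}{r}\bigr)e^{\pi i(x,x)\tau}$ exchanges the $r$-integral with the limit/Poisson-summation operations — this exchange is justified by dominated convergence away from the singular locus $C_n$ — so it suffices to know that the integrand $i_P^{\ast}\psi_1(\sqrt r x)$ equals $\psi_{0,1}^P(\sqrt r x')$ for the relevant $x'$, which is exactly the content of the already-proven identity $i_P^{\ast}\theta_{\psi_1}=\theta^P_{\psi_{0,1}}$ at the level of individual lattice vectors. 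Integrating in $r$ then turns $\psi_{0,1}^P$ into $\tilde\psi_{0,1}^P$ by definition \eqref{Greeneq}, giving the claim.

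The main obstacle I expect is bookkeeping around the singularities: $\tilde\psi_1(n)$ is genuinely singular along the locally finite cycle $C_n$, so "restriction to $e'(P)$" and "interchange of $\int_v^\infty dr/r$ with the $t\to\infty$ limit" must be carried out on the complement of the (closure of the) singular locus, and one must check that the singular set of the limiting form on $e'(P)$ is exactly $\coprod_{x'}D_{W,x'}$, i.e. the fibers $F_n$, so that the identity of smooth forms off $F_n$ determines the distributional/locally-$L^1$ object unambiguously. The honest way to organize this is to fix a vector $x'\in\calL_{W_P}$ with $(x',x')=2n$, work in a neighborhood of $e'(P)$ minus a neighborhood of $F_{x'}$ where everything is smooth and the $r$-integral converges uniformly, establish the identity there, and then observe that $\tilde\psi^P_{0,1}(n)$ and $i_P^{\ast}\tilde\psi_1(n)$ have the same (fiberwise) singularity type along $F_n$ by Lemma~\ref{singularitiesofAandB} together with Proposition~\ref{boundaryofC}, so they agree as currents/locally integrable forms on all of $e'(P)$. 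Everything else is the routine mixed-model computation already invoked for $\theta_{\varphi_2}$ and $\theta_{\psi_1}$.
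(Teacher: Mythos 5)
Your overall strategy coincides with the paper's proof: decompose $x = au + x_W + bu'$ along the Witt decomposition, use the shape of the majorant at $z=z(t,s,w)$ to discard all terms with $b\neq 0$ as uniformly rapidly decreasing when $t\to\infty$, apply Poisson summation in the $u$-coordinate $a$ (after reducing to $w=0$, $s=0$), and observe that only the $k=0$ dual mode survives and equals $\tilde{\psi}^P_{0,1}(x_W)$. That is exactly the computation carried out in the paper, so in outline your proposal is the intended argument.

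Two specific steps in your write-up are not correct as stated, although both are repairable inside your own framework. First, the final identification cannot be made ``at the level of individual lattice vectors'': for a single vector $x=x_W+au$ the restriction $i_P^{\ast}\psi_1(x,\cdot)$ is \emph{not} $\psi^P_{0,1}(x_W,\cdot)$ (the identity $i_P^{\ast}\theta_{\psi_1}=\theta^P_{\psi_{0,1}}$ is a statement about the lattice sum, created by the averaging over $a$ and Poisson summation), and likewise the prefactor $-\tfrac{1}{2\pi(x_3^2+x_4^2)}$ does not ``limit'' to anything producing $\tilde{\psi}_{0,1}$: by \eqref{Greeneq} and Lemma~\ref{firstformulaforAandB}, $\tilde{\psi}_{0,1}$ is given by an $r^{-3/2}\,dr$ integral and involves incomplete Gamma functions, not a rational multiple of $\psi_{0,1}$. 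The mechanism that makes the coefficients match is the Jacobian factor $t/\sqrt{r}$ coming from Poisson summation of the Gaussian in $a$ (scaled by $\sqrt{r}/t$), which is precisely what converts the measure $dr/r$ in \eqref{xiZV} into the $r^{-3/2}dr$ of \eqref{Greeneq}; so the correct order of operations is the one in the first half of your second paragraph (Poisson sum inside the $r$-integral, keep $k=0$), not the pointwise matching you fall back on afterwards. Second, the contribution of $x_W=0$ (the isotropic vectors $au$, $a\neq 0$, which enter $\tilde{\psi}_1(0)$) needs a separate argument: each individual term tends to $0$ as $t\to\infty$ while the sum tends to $\tilde{\psi}^P_{0,1}(0)\neq 0$, and since $x=0$ is excluded from $\tilde{\psi}_1(n)$ one must add and subtract the $a=0$ term before Poisson summing. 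The paper treats this case explicitly; your proposal, organized entirely around positive-length vectors and the singular fibers $F_n$, omits it, and it is needed for the later use of the proposition for $n\le 0$.
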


\begin{proof}
We assume that $P$ is the stabilizer of the isotropic line $\ell=\Q u$. For $x = au + x_W + bu'$, we have for the majorant at $z=(w,t,s)$ the formula
\[
(x,x)_z = \frac1{t^2}(a-(x_W,w)-bq(w))^2 + (x_w+bw,x_w+bw)_s +b^2t^2.
\]
Here $(\,,\,)_s$ is the majorant associated to $W$. Hence by \eqref{GreeneqV} and \eqref{psi20} we see that the sum of all $x \in \calL_V$ with $b \ne 0$ in $\tilde{\psi}_1(n)$ is uniformly rapidly decreasing as $t \to \infty$. Now fix an element $x_W \in \calL_W$. Then  $x_W +(a+h)u \in \calL_V$ for all $a \in \Z$ for some $h \in \Q/\Z$; in fact all elements in $\calL_V \cap u^{\perp}$ are of this form. We consider $\sum_{a \in \Z} \tilde{\psi}_1(x_W +(a+h)u,z)$ as $t \to \infty$. By considerations as in \cite{FMres}, sections 4 and 9, we can assume $w=0$ and $s=0$. We apply Poisson summation for the sum on $a \in \Z$ and obtain
\begin{align*}
\sum_{a \in \Z} \tilde{\psi}_1(x_W +au,z) = \sum_{k \in \Z} \left( \int_{1}^{\infty} P(x,t,r) e^{-2\pi x_3^2r +t^2k^2/r} \frac{dr}{r} \right) e^{-2\pi i k h} e^{-\pi (x_W,x_W)},
\end{align*}
where
\[
P(x,t,r) = \frac{x_2x_3\sqrt{r}}{\sqrt{2}} dw_2 + \frac{1}{2\sqrt{2}}\left(\frac1{2\pi}-\frac{t^2k^2}{r}\right)dw_3 - \frac{i x_3 k}{\sqrt{2}}dt +  \frac{i x_2kt}{\sqrt{2}} ds.\]
Now the sum over all $k \ne 0$ is rapidly decreasing while for $k=0$ we obtain $\tilde{\psi}_{0,1}(x_W)$. If $x_W=0$, i.e., for $n=0$ one needs to argue slightly differently. Then we have
\[
\sum_{a \ne 0} \tilde{\psi}_1(au,z) = \frac{1}{2\sqrt{2}\pi} \sum_{a \ne 0} e^{-\pi a^2/t^2} \frac{dw_3}{t} =  \frac{1}{2\sqrt{2}\pi} \left(\sum_{k \in \Z} e^{-\pi t^2k^2} \right) dw_3 -  \frac{1}{2\sqrt{2}\pi} \frac{dw_3}{t},
\]
which goes to $ \tfrac{1}{2\sqrt{2}\pi} dw_3 = \tilde{\psi}_{0,1}(0)$. 
This proves the proposition.
\end{proof}

\subsection{Main result}

In the previous sections, we constructed a closed $2$-form $\theta_{\varphi_2}$ on $\overline{X}$ such
that the restriction of $\theta_{\varphi_2}$ to the boundary $\partial \overline{X}$ was exact with
primitive $ \sum_{[\underline{P}]} \theta^P_{\phi_{0,1}}$. From now on we usually write $\varphi$ for $\varphi_2$ and $\phi$ for $\phi_{0,1}$ if it does not cause any confusion. By the definition of the differential for the mapping cone complex $C^{\bullet}$ we immediately obtain by Theorem~\ref{restriction} and Theorem~\ref{globalexact}

\begin{proposition}
The pair $(\theta_{\varphi_2}(\calL_V), \sum_{[P]} \theta^P_{\phi_{0,1}}(\calL_{W_P}))$ is a $2$-cocycle in $C^{\bullet}$.
\end{proposition}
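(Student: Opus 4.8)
The plan is to verify directly that the pair $(\theta_{\varphi_2}(\calL_V), \sum_{[P]} \theta^P_{\phi_{0,1}}(\calL_{W_P}))$ satisfies the cocycle condition for the mapping cone complex $C^{\bullet}$ of Section~\ref{mappingconesection}. Recall that $C^i = \{(a,b) : a \in A^i(\overline{X}),\, b \in A^{i-1}(\partial\overline{X})\}$ with differential $d(a,b) = (da,\, i^{\ast}a - db)$. So with $a = \theta_{\varphi_2}(\calL_V)$ (a $2$-form on $\overline{X}$) and $b = \sum_{[P]} \theta^P_{\phi_{0,1}}(\calL_{W_P})$ (a $1$-form on $\partial\overline{X}$, one summand on each boundary face $e'(P)$), we must check that $d(a,b) = (0,0)$, i.e. that $da = 0$ and $i^{\ast}a = db$.

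First I would address that $\theta_{\varphi_2}(\calL_V)$ is a well-defined closed $2$-form on all of $\overline{X}$. Closedness on $X$ follows since $\varphi_2$ is a cocycle in the relative Lie algebra complex (the Kudla-Millson form), so each $\varphi_2(x,\tau,z)$ is closed and hence so is the convergent theta sum; this is stated in the excerpt around Theorem~\ref{KM90}. That the form extends smoothly to $\overline{X}$ is exactly Theorem~\ref{restriction}, which moreover identifies its restriction to $e'(P)$: $i_P^{\ast}\theta_{\varphi_2}(\calL_V) = \theta^P_{\varphi_{1,1}}(\calL_{W_P})$. Smoothness up to the boundary plus closedness on the interior gives closedness on $\overline{X}$, so $da = 0$.

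Next I would verify the second component $i^{\ast}a = db$. Since $\partial\overline{X} = \coprod_{[P]} e'(P)$ and $b = \sum_{[P]}\theta^P_{\phi_{0,1}}(\calL_{W_P})$ with the $P$-th summand supported on $e'(P)$, it suffices to check face by face that $i_P^{\ast}\theta_{\varphi_2}(\calL_V) = d\big(\theta^P_{\phi_{0,1}}(\calL_{W_P})\big)$. But this is precisely the content of Theorem~\ref{globalexact}, which was itself deduced from Theorem~\ref{restriction} (giving $i_P^{\ast}\theta_{\varphi_2} = \theta^P_{\varphi_{1,1}}$) together with Proposition~\ref{globalholomorphic2} / Theorem~\ref{local-phi} (giving $d\theta^P_{\phi_{0,1}} = \theta^P_{\varphi_{1,1}}$, the exactness at the boundary obtained by applying the map of complexes $\iota_P$ to the local identity $d\phi_{0,1} = \varphi_{1,1}$ on $D_W$). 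Assembling the two components, $d(a,b) = (da,\, i^{\ast}a - db) = (0,\, 0)$, so $(a,b)$ is a $2$-cocycle in $C^{\bullet}$.

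In truth there is almost no obstacle here: the statement is a bookkeeping corollary of the two substantial theorems immediately preceding it, and the only thing to be careful about is matching conventions for the mapping-cone differential and checking that $\theta^P_{\phi_{0,1}}$ — which involves the non-Schwartz functions $\tilde\psi_{0,1}$, $\tilde\psi'_{0,1}$ — is genuinely a smooth $1$-form on $e'(P)$ so that $d\theta^P_{\phi_{0,1}}$ makes sense pointwise. That regularity is guaranteed by Proposition~\ref{phi-prop} (the relevant component $B + B'$ is $C^2$ off the light cone, which is avoided since $W$ is anisotropic by the $\Q$-rank $1$ hypothesis) and by the convergence/modularity remark following the definition of $\theta_{\phi_{0,1}}$. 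So the proof is simply: cite Theorem~\ref{restriction} and Theorem~\ref{globalexact}, unwind the definition of $d$ on $C^{\bullet}$, and observe both components vanish.
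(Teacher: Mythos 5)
Your proof is correct and follows the paper's own route: the paper likewise treats this as an immediate consequence of unwinding the mapping-cone differential $d(a,b)=(da,\,i^{\ast}a-db)$ and citing Theorem~\ref{restriction} together with Theorem~\ref{globalexact}. Your added remarks on the smoothness of $\theta^P_{\phi_{0,1}}$ (via Proposition~\ref{phi-prop}) and on closedness extending to $\overline{X}$ are sensible bookkeeping but do not change the argument.
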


We write for short $(\theta_{\varphi},\theta_{\phi})$. We obtain a class $[[\theta_{\varphi},\theta_{\phi}]]$ in $H^2(C^{\bullet})$ and hence a class $[\theta_{\varphi},\theta_{\phi}]$ in $H^2_c(X)$. The pairing with $[\theta_{\varphi}, \theta_{\phi}]$ then defines a lift $\Lambda^c$ on differential $2$-forms on $\overline{X}$, which factors through $H^2(\overline{X}) = H^2(X)$. By Lemma~\ref{integralformula} it is given by 
\[
\Lambda^c(\eta,\tau) =  \int_{\overline{X}} \eta \wedge \theta_{\varphi_2} - \sum_{[P]}  \int_{e'(P)} i^*\eta \wedge \theta^P_{\phi_{0,1}}.
\]

\begin{theorem}\label{La^c-hol}
The class $[[\theta_{\varphi},  \theta_{\phi}]]$ is holomorphic, that is, 
\[
L\left(\theta_{\varphi}, \theta_{\phi}\right) = d(\theta_{\psi_1},0).
\]
Hence $[\theta_{\varphi}, \theta_{\phi}]$ is a holomorphic modular form with values in the compactly supported cohomology of $X$, so that the lift $\Lambda^c$ takes values in the holomorphic modular forms.
\end{theorem}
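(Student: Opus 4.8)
The plan is to verify the cocycle identity $L(\theta_{\varphi},\theta_{\phi}) = d(\theta_{\psi_1},0)$ in the mapping cone complex $C^{\bullet}$, since by the general theory of the Weil representation the lowering operator $L$ corresponds to $\partial/\partial\bar\tau$ (up to the factor $-2iv^2$), and a cocycle annihilated by $L$ modulo coboundaries produces a cohomology class whose pairing with any fixed cycle or form is a $\tau$-function killed by the Cauchy--Riemann operator, hence holomorphic. So the real content is the displayed differential identity in $C^{\bullet}$.

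Recall the mapping cone differential is $d(a,b) = (da,\, i^*a - db)$, so unwinding definitions, $d(\theta_{\psi_1},0) = (d\theta_{\psi_1},\, i^*\theta_{\psi_1})$. On the other hand $L$ acts componentwise on the pair. Thus I must check two equalities: first, on $\overline{X}$, that $L\theta_{\varphi_2} = d\theta_{\psi_1}$; second, on each boundary face $e'(P)$, that $L\theta^P_{\phi_{0,1}} = i_P^*\theta_{\psi_1}$. The first is exactly the global theta series version of Theorem~\ref{localholomorphic1} ($\omega(L)\varphi_2 = d\psi_1$), applied term by term to the lattice sum $\theta_{\varphi_2}(\tau,\calL_V) = \sum_{x\in\calL_V}\varphi_2(x,\tau,z)$; since $L$ is a differential operator in $\tau$ and the sum converges absolutely and uniformly on compacta (standard Schwartz estimates), one may differentiate term by term, giving $L\theta_{\varphi_2} = \theta_{d\psi_1} = d\theta_{\psi_1}$, the last step because $d$ commutes with the lattice sum and $\theta_{d\psi_1}=d\theta_{\psi_1}$. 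For the second equality I combine two earlier results: Proposition~\ref{globalholomorphic2} gives $L\theta^P_{\phi_{0,1}} = \theta^P_{\psi_{0,1}}$, and Theorem~\ref{restriction} gives $i_P^*\theta_{\psi_1}(\calL_V) = \theta^P_{\psi_{0,1}}(\calL_{W_P})$; chaining these yields $L\theta^P_{\phi_{0,1}} = i_P^*\theta_{\psi_1}$ exactly as needed.

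Having established $L(\theta_{\varphi},\theta_{\phi}) = d(\theta_{\psi_1},0)$ in $C^{\bullet}$, I then conclude: the cohomology class $[[\theta_{\varphi},\theta_{\phi}]] \in H^2(C^{\bullet}) \cong H^2_c(X,\C)$ depends holomorphically on $\tau$, meaning that for any closed $2$-form $\eta$ on $\overline{X}$ (equivalently any relative $2$-cycle $C$), the pairing $\Lambda^c(\eta,\tau) = \langle[\theta_{\varphi},\theta_{\phi}],[\eta]\rangle$ satisfies $L\Lambda^c(\eta,\tau) = \langle d(\theta_{\psi_1},0),[\eta]\rangle = 0$ since the pairing of a coboundary with a cocycle vanishes (using Lemma~\ref{integralformula} and Stokes). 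Because $\Lambda^c(\eta,\tau)$ is already known to transform as a (nonholomorphic) modular form of weight $2$ for $\Gamma(N)$ — inherited from the modularity of $\theta_{\varphi_2}$ and $\theta_{\phi_{0,1}}$ established via the usual theta machinery and the Remark following the definition of $\theta_{\phi_{0,1}}$ — and is real-analytic in $\tau$ with at most polynomial growth at the cusps, being annihilated by $L = -2iv^2\partial/\partial\bar\tau$ forces holomorphicity, so $\Lambda^c$ lands in $M_2(\Gamma(N))$.

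The main obstacle is the term-by-term differentiation and the handling of $\theta_{\phi_{0,1}}$: unlike $\varphi_2$ and $\psi_1$, the function $\phi_{0,1} = \tilde\psi_{0,1} + \tilde\psi'_{0,1}$ is not a Schwartz function (it is only $L^1$ and $C^1$, with singularities along light cones and along $D_{W,x}$), so one cannot blindly invoke Schwartz-space convergence to justify applying $L$ under the summation sign. The careful point — which is really the content of Proposition~\ref{globalholomorphic2} together with the Proposition on $\phi_{0,1}$ being a weight-$2$ eigenfunction under $\SO(2)$, proved via the current identity $[\omega(k')(B+B')] = \chi^2(k')[B+B']$ — is that the identity $L\phi_{0,1} = d\psi_{0,1}$ holds as an identity of (locally $L^1$) functions/forms, and since $W$ is anisotropic the resulting theta sum converges well enough (Poisson summation applies to the $C^2$ component $B+B'$, and $X_{23}$ recovers the rest) that the operator identity descends to $\theta^P_{\phi_{0,1}}$. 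I would simply cite Proposition~\ref{globalholomorphic2} and Theorem~\ref{restriction} for these facts rather than reproving them, so that the proof of Theorem~\ref{La^c-hol} itself reduces to the short formal manipulation in the mapping cone complex sketched above.
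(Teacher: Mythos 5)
Your proposal is correct and follows essentially the same route as the paper: the paper's proof is exactly the one-line mapping cone computation $d(\theta_{\psi_1},0)=(d\theta_{\psi_1},i^{\ast}\theta_{\psi_1})=(L\theta_{\varphi_2},\sum_{[P]}\theta^P_{\psi_{0,1}})=L(\theta_{\varphi_2},\sum_{[P]}\theta^P_{\phi_{0,1}})$, citing Theorem~\ref{restriction} and Proposition~\ref{globalholomorphic2}, just as you do. Your additional remarks on term-by-term differentiation, the non-Schwartz nature of $\phi_{0,1}$, and the passage from $L$-annihilation to holomorphy are sensible elaborations of points the paper delegates to those earlier results.
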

\begin{proof}
By Theorem~\ref{restriction} and Theorem~\ref{globalholomorphic2} we calculate 
\[
d(\theta_{\psi_1},0) = (d\theta_{\psi_1}, i^{\ast} \theta_{\psi_1}) = \left(L \theta_{\varphi_2},  \sum_{[P]}\theta^P_{\psi_{0,1}}\right) =L\left(\theta_{\varphi_2}, \sum_{[P]} \theta^P_{\phi_{0,1}}\right). \qedhere
\]
\end{proof}

It remains to compute the Fourier expansion in $\tau$ of $[\theta_{\varphi}, \theta_{\phi}](\tau)$. We will carry this out in Section~\ref{currents}. 

\begin{theorem}\label{FM-main-th}
We have 
\[
[\theta_{\varphi}, \theta_{\phi}](\tau) =  -\frac{1}{2\pi}\delta_{h0} [\omega] + \sum_{n>0} \PD[C^c_n] q^n \in H_c^2(X,\Q) \otimes M_2(\G(N)).
\]
That is, for any closed $2$-form $\eta$ on $\overline{X}$
\[
\Lambda^c(\eta,\tau) = -\frac{1}{2\pi}\delta_{h0} \int_X \eta \wedge \omega + \sum_{n>0} \left( \int_{C^c_n} \eta \right)q^n,
\]
In particular, the map takes values in the holomorphic modular forms and factors through cohomology. We obtain a map
\begin{equation}
\Lambda^c: H^{2}(X) \to M_{2}(\G(N))
\end{equation}
from the cohomology with compact supports to the space of holomorphic modular forms of weight $2$ for the principal congruence subgroup $\G(N) \subseteq \SL_2(\Z))$.
Alternatively, for $C$ any relative $2$-cycle in $X$ defining a class in $H_2(\overline{X},\partial \overline{X},\Z)$, we have
\[
\Lambda^c(C,\tau) = -\frac{1}{2\pi}\delta_{h0} \vol(C) + \sum_{n>0} ( C^c_n \cdot C ) q^n \in M_2(\G(N)).
\]
\end{theorem}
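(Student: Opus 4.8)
The plan is to compute the Fourier coefficients of the modular form produced by Theorem~\ref{La^c-hol} and recognise them as the capped classes. Since by that theorem $\Lambda^c(\cdot,\tau)$ is already known to be a holomorphic weight-$2$ form valued in $H^2_c(X,\Q)$, write $[\theta_\varphi,\theta_\phi](\tau)=\sum_{n\ge 0}\alpha_n q^n$ with $\alpha_n\in H^2_c(X,\Q)$; the content is the identification of the $\alpha_n$, together with the elementary translation between pairing with closed $2$-forms $\eta$ on $\overline{X}$ and pairing with relative cycles $C$ (Poincar\'e--Lefschetz duality $H^2(\overline{X})\cong H_2(\overline{X},\partial\overline{X})$, under which $\int_{C^c_n}\eta$ and $\int_X\eta\wedge\omega$ become $C^c_n\cdot C$ and $\vol(C)$ when $[\eta]=\PD[C]$). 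By Theorems~\ref{restriction} and~\ref{globalexact} the $q^n$-Fourier coefficient of the cocycle $(\theta_{\varphi_2},\sum_{[P]}\theta^P_{\phi_{0,1}})$ is, for $n>0$, the mapping-cone cocycle $(\varphi_2(n),\sum_{[P]}\phi_{0,1}^P(n))$ --- its cocycle relation $i^\ast_P\varphi_2(n)=\varphi_{1,1}^P(n)=d\phi_{0,1}^P(n)$ being the $n$-th coefficient of the identities there --- so $\alpha_n$ is the class of this cocycle in $H^2(C^\bullet)=H^2_c(X)$. For $n=0$ this cocycle equals $(-\tfrac1{2\pi}\delta_{h0}\,\omega,0)$ modulo the coboundary $d(\tilde{\psi}_1(0),0)$: for isotropic $x\ne 0$ one has $D_x=\emptyset$, hence $\varphi_2(x)=d\tilde{\psi}_1(x)$ everywhere on $D$ by Proposition~\ref{schluesselV}, and $\tilde{\psi'}_{0,1}(x)=0$ when $(x,x)=0$ by~\eqref{AB'-eq}, so that $\phi_{0,1}^P(0)=\tilde{\psi}_{0,1}^P(0)=i_P^\ast\tilde{\psi}_1(0)$ by Proposition~\ref{psitilderes}; hence $\alpha_0=-\tfrac1{2\pi}\delta_{h0}[\omega]$, which recovers the constant term in Theorem~\ref{KM90}.

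So fix $n>0$; by Lemma~\ref{integralformula} it suffices to show that $\langle\alpha_n,C\rangle=\int_C\varphi_2(n)-\sum_{[P]}\int_{\partial C}\phi_{0,1}^P(n)$ equals $C^c_n\cdot C$ for every relative cycle $C$, which I would take transverse to $C_n$ in the interior, product-like in the Borel--Serre collar, and with boundary $\partial C$ disjoint from the union $F_n$ of torus fibers meeting $\partial C_n$ (one checks $\partial C$ is then a rational boundary in $\partial\overline{X}$, since the image of $H_1(T^2,\Q)$ in $H_1(e'(P),\Q)$ is trivial, Section~\ref{boundaryhom}). The key input is that $\varphi_2(n)$ is a Poincar\'e-dual (Thom) form for $C_n$ with the singular form $\tilde{\psi}_1(n)$ as associated Green current: by Proposition~\ref{schluesselV}, $d\tilde{\psi}_1(n)=\varphi_2(n)$ off $C_n$ and $\tilde{\psi}_1(n)$ has transverse period $1$ around $C_n$ (up to the Gaussian normalisation), following the local analysis of~\cite{BFDuke}; moreover $i_P^\ast\tilde{\psi}_1(n)=\tilde{\psi}_{0,1}^P(n)$ by Proposition~\ref{psitilderes}. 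Combining these on $\overline{X}$ --- that is, excising small normal disks about the finitely many points of $C\cap C_n$, applying Stokes' theorem, and letting the radii tend to $0$ --- yields $\int_C\varphi_2(n)=(C_n\cdot C)+\sum_{[P]}\int_{\partial C}\tilde{\psi}_{0,1}^P(n)$.

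Subtracting and using $\phi_{0,1}^P(n)=\tilde{\psi}_{0,1}^P(n)+\tilde{\psi'}_{0,1}^P(n)$, we reduce to $\langle\alpha_n,C\rangle=(C_n\cdot C)-\sum_{[P]}\int_{\partial C}\tilde{\psi'}_{0,1}^P(n)$. Since $\partial C$ is a rational boundary in $\partial\overline{X}$ disjoint from $F_n$, Theorem~\ref{linking-dual} applies: after absorbing the Gaussian factor $e^{-2\pi n}$ into the normalisation of the $q$-expansion (cf.\ Theorem~\ref{xi'-integralP}), $\int_{\partial C}\tilde{\psi'}_{0,1}^P(n)=\Lk\!\big((\partial C_n)_P,\partial C\big)=(A_n)_P\cdot\partial C$ with $A_n$ the cap of Proposition~\ref{rat-cap}. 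Summing over cusps, $\sum_{[P]}\int_{\partial C}\tilde{\psi'}_{0,1}^P(n)=A_n\cdot\partial C$, and therefore $\langle\alpha_n,C\rangle=(C_n\cdot C)-(A_n\cdot\partial C)=(C_n-A_n)\cdot C=C^c_n\cdot C$, since $C^c_n=C_n\cup(-A_n)$. Hence $\alpha_n=\PD[C^c_n]$, and by the first paragraph the theorem follows.

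The main obstacle is the assertion that $\tilde{\psi}_1(n)$ is a Green current for $C_n$ on the \emph{compactification} $\overline{X}$: one must control $\tilde{\psi}_1(n)$ simultaneously along its interior singular locus $C_n$ (where the transverse degree is $1$, by~\cite{BFDuke}) and towards $\partial\overline{X}$, so that the excision-and-Stokes computation above is rigorous and the only residual terms are the interior intersection number $C_n\cdot C$ and the boundary integral of $\tilde{\psi}_{0,1}^P(n)=i_P^\ast\tilde{\psi}_1(n)$. Here it is essential that the mild singularities of $\tilde{\psi}_{0,1}^P(n)$ and $\tilde{\psi'}_{0,1}^P(n)$ along $F_n$ cancel in the $C^1$-form $\phi_{0,1}^P(n)$ (Proposition~\ref{phi-prop}), so that the split used above is harmless. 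This current-theoretic bookkeeping on $\overline{X}$ is carried out in Section~\ref{currents}; granting it, the rest is the manipulation of Fourier coefficients together with Theorems~\ref{linking-dual} and~\ref{xi'-integralP}.
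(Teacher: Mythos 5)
Your strategy is the cycle-side dual of the paper's argument: the paper proves a current identity against (special) closed $2$-forms (Theorem~\ref{newcurrenteq}), assembled from the differential character property of $\tilde{\psi}_1(n)$ (Theorem~\ref{BrFu}) and the boundary integral formula (Proposition~\ref{finalintegral}), whereas you pair the mapping cone class with relative cycles via Lemma~\ref{integralformula} and use Stokes/excision plus Theorem~\ref{linking-dual}. The assembly of the interior term, the cancellation of the $\tilde{\psi}_{0,1}$ contributions, and the $n=0$ discussion are coherent. But there is a genuine gap in the class of cycles you allow. You assume $C$ can be chosen with $\partial C$ disjoint from the fibers $F_n$, and you deduce (with an incorrect justification -- the relevant point is not the vanishing of the image of $H_1(T^2,\Q)$, but that disjointness from a fiber torus forces the intersection number with the fiber class $b_P$, i.e.\ the $a_P$-coefficient, to vanish, $H_1(e'(P),\Q)$ being generated by $a_P$ by Lemma~\ref{ePhomology}) that $\partial C$ is a rational boundary, so that Theorem~\ref{linking-dual} applies. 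However, a relative cycle whose boundary has nonzero $a_P$-component (nonzero degree over the base circle) can \emph{never} be made disjoint from any fiber, and such cycles exist precisely in the cases of interest: for the Hirzebruch--Zagier surfaces $H_1(X,\Q)=0$, so the connecting map $H_2(\overline{X},\partial\overline{X},\Q)\to H_1(\partial\overline{X},\Q)$ is onto and classes with $[\partial C]=\lambda a_P\neq 0$ occur. For these, Theorem~\ref{linking-dual} does not apply (its proof needs the Thom form of $\partial C$ to be exact, i.e.\ $[\partial C]=0$ rationally), and your computation of $\int_{\partial C}\phi^P_{0,1}(n)$ is missing.

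This is not a technicality: the cycles you do treat span only the image of $H_2(\overline{X},\Q)$ in $H_2(\overline{X},\partial\overline{X},\Q)$, whose annihilator under the perfect pairing with $H^2_c(X,\Q)$ is the image of $H^1(\partial\overline{X},\Q)\to H^2_c(X,\Q)$, which is nonzero (e.g.\ of dimension the number of cusps when $H^1(X,\Q)=0$). So your argument pins down only the image of $\alpha_n$ in $H^2(X,\Q)$ -- essentially the Kudla--Millson/Hirzebruch--Zagier level statement -- and neither the asserted identity $\alpha_n=\PD[C_n^c]$ in $H^2_c(X,\Q)$, nor the displayed formula for arbitrary relative cycles $C$, nor (via duality) the formula for arbitrary closed $\eta$ on $\overline{X}$. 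To close the gap you must handle the $a_P$-direction separately, which is exactly the dual of the step the paper disposes of with $\eta=\Omega_P$ via the bidegree argument and the normalization $\int_{A_n}\Omega_P=0$ (Remark~\ref{youwillneedthis} and Proposition~\ref{rat-cap}); on the cycle side this means directly evaluating $\int_{(\partial C)_P}\phi^P_{0,1}(n)(\tau)$ when $[\partial C]_P=\lambda a_P$ and matching it with $-(A_n)_P\cdot(\partial C)_P\,q^n$ plus the non-holomorphic term. A secondary caveat: your citation of Section~\ref{currents} for the ``bookkeeping'' is not accurate as stated -- that section proves the current equation against special forms by a cutoff argument, not the restriction-to-a-surface excision you need, so the local analysis of $\tilde{\psi}_1(n)$ along $C_n$ and at $\partial\overline{X}$ (integrability of its restriction to $C$, the residue $q^n$ on small transverse circles, interchange of sum and limit) would have to be supplied.
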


\begin{remark}
In the theorem we now consider the K\"ahler form $\omega$ representing a class in the compactly supported cohomology. In fact, our mapping cone construction gives an explicit coboundary by which $\omega$ is modified to become rapidly decreasing. 
\end{remark}

\subsection{The Hirzebruch-Zagier Theorem} 

We now view $[\theta_{\varphi}, \theta_{\phi}]$ as a class in $H^2(\tilde{X})$ via the map $j_{\#}: H_c^2(X) \to H^2(\tilde{X})$. We recover the Hirzebruch-Zagier-Theorem. 

\begin{theorem}\label{HZTheorem}
We have 
\[
j_{\#}[\theta_{\varphi}, \theta_{\phi}](\tau) =  -\frac{1}{2\pi}\delta_{h0} [\omega] + \sum_{n>0} [T^c_n] q^n \in H^2(\tilde{X},\Q) \otimes M_2(\G(N)).
\]
In particular,
\[
 -\frac{1}{2\pi}\delta_{h0} \vol(T_m) + \sum_{n>0} (T_n^c \cdot T_m)_{\tilde{X}} q^n \in M_2(\G(N)).
\]
This is the result Hirzebruch-Zagier proved for certain Hilbert modular surfaces (Example~\ref{HZex}) by explicitly computing the intersection numbers $T_m \cdot T^c_n $.
\end{theorem}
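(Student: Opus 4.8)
The plan is to deduce Theorem~\ref{HZTheorem} from Theorem~\ref{FM-main-th} and Proposition~\ref{CnTn} essentially by transport of structure under the push-forward map $j_{\#}: H^2_c(X) \to H^2(\tilde X)$. First I would recall that Theorem~\ref{FM-main-th} gives the identity
\[
[\theta_{\varphi},\theta_{\phi}](\tau) = -\tfrac{1}{2\pi}\delta_{h0}[\omega] + \sum_{n>0}\PD[C_n^c]\,q^n
\]
in $H^2_c(X,\Q)\otimes M_2(\G(N))$. Applying the linear map $j_{\#}$ coefficientwise in $\tau$, and using that $j_{\#}\PD_X[C_n^c] = \PD_{\tilde X}[j_{\ast}C_n^c]$ (compatibility of push-forward on compactly supported cohomology with push-forward of cycles under the open inclusion $j$), together with $j_{\ast}C_n^c = T_n^c$ from Proposition~\ref{CnTn}, gives the first displayed formula of the theorem. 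For the $n=0$ term one notes $j_{\#}[\omega] = [\omega]$ with $\omega$ now understood as a class in $H^2(\tilde X)$, using that $\omega$ extends to (the smooth compactification) $\tilde X$; this is where one must be slightly careful that the compactly-supported class represented by $\omega$ via the mapping cone construction pushes forward to the expected absolute class.

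Next I would pair both sides with the cycle $T_m$. Since $T_m$ is a closed $2$-cycle in $\tilde X$, evaluating the cohomology-valued modular form on $[T_m]$ via the intersection pairing on $\tilde X$ yields a scalar-valued modular form in $M_2(\G(N))$; the $q^n$-coefficient becomes $\PD[T_n^c]\cdot T_m = (T_n^c\cdot T_m)_{\tilde X}$, and the constant term becomes $-\tfrac{1}{2\pi}\delta_{h0}$ times $[\omega]\cdot T_m = \vol(T_m)$ (up to the normalization of $\omega$ fixed in Theorem~\ref{KM90}). This produces exactly the second displayed formula and hence the Hirzebruch--Zagier modularity statement. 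To be fully rigorous one should also observe that $(T_n^c\cdot T_m)_{\tilde X}$ agrees with the pairing $\langle j_{\#}\PD[C_n^c], [T_m]\rangle$, which follows from the projection formula / compatibility of the Kronecker pairing under $j_{\#}$ and $j_{\ast}$.

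The only genuine subtlety — the main obstacle — is bookkeeping around the map $j_{\#}$ and the orthogonal splitting $H^2(\tilde X) = j_{\#}H^2_c(X)\oplus(\oplus_{[P]}S_P^{\vee})$ recorded in Section~3.2: one must check that $j_{\#}$ sends the class $\PD_X[C_n^c]$ to precisely $\PD_{\tilde X}[T_n^c]$ and not to $\PD_{\tilde X}[T_n]$ or some intermediate class. This is exactly the content of Proposition~\ref{CnTn} combined with the fact that $j_{\#}$ lands in the summand $j_{\ast}H_2(X)$ dual to $j_{\#}H^2_c(X)$, so that $T_n = j_{\ast}C_n^c + B_n^c$ with $B_n^c \in S_P$ forces $T_n^c = j_{\ast}C_n^c$ on the nose. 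Once this identification is in place, everything else is formal: linearity of $j_{\#}$ and of the Kronecker pairing in the (finite-dimensional) cohomology coefficients, term-by-term in the $q$-expansion. I would therefore organize the proof as: (1) apply $j_{\#}$ to Theorem~\ref{FM-main-th}; (2) invoke Proposition~\ref{CnTn} and $j_{\#}[\omega]=[\omega]$ to get the $H^2(\tilde X)$-valued identity; (3) pair with $[T_m]$ using the intersection form on $\tilde X$ and the projection formula to extract the scalar modular form.

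\begin{proof}
Apply the push-forward map $j_{\#}\colon H^2_c(X,\Q)\to H^2(\tilde X,\Q)$ coefficient-by-coefficient in $\tau$ to the identity of Theorem~\ref{FM-main-th}. Since $j$ is an open inclusion, $j_{\#}$ carries $\PD_X[C_n^c]$ to $\PD_{\tilde X}[j_{\ast}C_n^c]$, and by Proposition~\ref{CnTn} we have $j_{\ast}C_n^c = T_n^c$; moreover $\omega$ extends to $\tilde X$ so that $j_{\#}[\omega]=[\omega]$ in $H^2(\tilde X,\Q)$. This gives
\[
j_{\#}[\theta_{\varphi},\theta_{\phi}](\tau) = -\frac{1}{2\pi}\delta_{h0}[\omega] + \sum_{n>0}[T^c_n]\,q^n \in H^2(\tilde X,\Q)\otimes M_2(\G(N)),
\]
which is the first assertion. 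Pairing both sides with the class $[T_m]\in H_2(\tilde X,\Q)$ under the (non-degenerate) intersection pairing on $\tilde X$, and using the projection formula to identify $\langle j_{\#}\PD_X[C_n^c],[T_m]\rangle$ with $(T_n^c\cdot T_m)_{\tilde X}$ and $\langle[\omega],[T_m]\rangle$ with $\vol(T_m)$ (with $\omega$ normalized as in Theorem~\ref{KM90}), we obtain
\[
-\frac{1}{2\pi}\delta_{h0}\vol(T_m) + \sum_{n>0}(T_n^c\cdot T_m)_{\tilde X}\,q^n \in M_2(\G(N)),
\]
which is the Hirzebruch--Zagier modularity statement.
\end{proof}
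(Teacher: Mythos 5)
Your proposal is correct and takes essentially the same route as the paper: apply $j_{\#}$ coefficientwise to Theorem~\ref{FM-main-th}, use the compatibility $j_{\#}\PD_X(C)=\PD_{\tilde X}(j_{\ast}C)$ (which the paper justifies by replacing the Poincar\'e dual form with a Thom representative supported in a tubular neighborhood and extending by zero) together with Proposition~\ref{CnTn}, and then pair with $[T_m]$ to extract the scalar-valued modular form.
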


\begin{proof}

This follows from Theorem~\ref{FM-main-th} since $j_{\ast} C_n^c = T_n^c$ (Proposition~\ref{CnTn}), combined with the following general principle.
Suppose $\omega$ is a compactly supported form on $X$ such that the cohomology class of $\omega$ is the Poincar\'e dual of the homology class of a cycle $C$: $[\omega] = \PD(C)$. Then we have $ j_{\#}[\omega] = \PD( j_* C)$.
To see this we have only to replace $\omega$ by a cohomologous  `Thom representative' of $\PD(C)$, namely a closed form $\tilde{\omega}$ supported in a tubular neighborhood $N(C)$ of $C$ in $X$ such that the integral of $\tilde{\omega}$ over any disk of $N(C)$ is one. Then it is a general fact from algebraic topology (extension by zero of a Thom class)  that $\tilde{\omega}$ represents the Poincar\'e dual of $C$ in any manifold $M$ containing $N(C)$, in particular for $M = \tilde{X}$.
\end{proof}

\begin{remark}
If one is only interested in recovering the statement of this theorem, then there is also a different way of deriving this from the Kudla-Millson theory. Namely, the lift $\Lambda$ on $H_2(X)$ (Theorem~\ref{KM90}) factors through the quotient of $H_2(X)$ by $H_2(\partial X)$ since the restriction of $\theta_{\varphi_2}$ is exact (Theorem~\ref{globalexact}). But by Proposition~\ref{intersectionhom} we have $j_{\ast} H_2(X) \simeq H_2(X)/  H_2(\partial X)$, and the Hirzebruch-Zagier result exactly stipulates the modularity of the lift of classes in $j_{\ast} H_2(X)$. However, in that way one misses the  remarkable  extra structure coming from $\partial X$ as we will explain in the next subsection.
\end{remark}

\subsection{The lift of special cycles}\label{special-lift-section}

We now consider the lift of a special cycle $C_y$. By Theorem~\ref{FM-main-th} and Lemma~\ref{integralformula} we see
\begin{align}\label{special-lift}
\La^c(C_y,\tau,\calL_V) &= -\frac{1}{2\pi}\delta_{h0} \vol(C_y) + \sum_{n>0} ( C^c_n \cdot C_y) q^n \\
&= \int_{C_y} \theta_{\varphi_2}(\tau,\calL_V) - \sum_{[P]}\int_{(\partial C_y)_P} \theta^P_{\phi_{0,1}}(\tau,\calL_{W_P}). \notag
\end{align}

The two terms on the right, the integrals over $C_y$ and ${\partial C_y}$, are both non-holomorphic modular forms (see below) whose difference is holomorphic (by Theorem~\ref{La^c-hol}). So the generating series series of $(C^c_n \cdot C_y)$ is the sum of two non-holomorphic modular forms. We now give geometric interpretations for the two individual non-holomorphic forms. 

Following \cite{HZ} we define the {\it interior} intersection number of two special cycles by
\[
( C_n \cdot C_y )_X =  (C_n \cdot C_y )^{tr} + \vol(C_n \cap C_y),
\]
the sum of the transversal intersections and the volume of the $1$-dimensional (complex) intersection of  $C_n$ and $C_y$ which occur if one of the components of $C_n$ is equal to $C_y$. 

\begin{theorem}\label{interior-lift}
We have
\[
 \int_{C_y} \theta_{\varphi_2}(\tau,\calL_V) = -\frac{1}{2\pi}\delta_{h0} \vol(C_y) \:+\; 
 \sum_{n=1}^{\infty} ( C_n \cdot C_y )_X q^n \; + \; \sum_{n \in \Q} \sum_{[P]} \int_{(\partial C_y)_{P}}  {\tilde{\psi}_{0,1}^P}(n)(\tau).
 \]
 So the Fourier coefficients of the holomorphic part of the non-holomorphic modular form $\int_{C_y} \theta_{\varphi_2}$ are the interior intersection numbers of the cycles $C_y$ and $C_n$. 
\end{theorem}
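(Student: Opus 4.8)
The plan is to expand the non-holomorphic weight $2$ form $\int_{C_y}\theta_{\varphi_2}(\tau,\calL_V)$ into its ``$n$-pieces'' $\int_{C_y}\varphi_2(n)(\tau)$, where $\varphi_2(n)(\tau) = \sum_{x\in\calL_n,\, x\ne 0}\varphi_2(x,\tau,z)$, and for $n=0$ with $h=0$ one adds the scalar term $\varphi_2(0,\tau,z)$; by Theorem~\ref{restriction} each of these is a closed $2$-form on $X$ extending to $\overline{X}$. Termwise integration and all the limit interchanges below are justified by absolute convergence and uniform rapid decay of the theta series, and modularity in $\tau$ is automatic, so the whole content is the Fourier expansion. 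The $n=0$ scalar summand produces $-\tfrac{1}{2\pi}\delta_{h0}\vol(C_y)$ exactly as in Theorem~\ref{KM90}, via the normalization of $\omega$. The engine for the remaining coefficients is the primitive identity $d\tilde{\psi}_1(x,\tau,z) = \varphi_2(x,\tau,z)$ for $z\notin D_x$ (Proposition~\ref{schluesselV}) combined with Stokes' theorem on $C_y$ truncated at the cusps.

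For $n\le 0$ the cycle $D_x$ is empty (or $x$ is isotropic) for every $x$ that occurs, so $\varphi_2(n)(\tau)=d\tilde{\psi}_1(n)(\tau)$ globally on $X$, where $\tilde{\psi}_1(n)(\tau)=\sum_{x\in\calL_n,\,x\ne 0}\tilde{\psi}_1(x,\tau)$; Stokes, then $T\to\infty$, gives $\int_{C_y}\varphi_2(n)(\tau)=\sum_{[P]}\int_{(\partial C_y)_P}i_P^{\ast}\tilde{\psi}_1(n)(\tau)$, and Proposition~\ref{psitilderes} rewrites the integrand as $\tilde{\psi}_{0,1}^P(n)(\tau)$ — giving the $n\le 0$ part of the asserted boundary sum with no interior contribution, as it must be since the intersection series starts at $n=1$. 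For $n>0$ one splits $\G_y\backslash\calL_n$ into the transversal vectors ($D_x\ne D_y$) and the finitely many self vectors ($D_x=D_y$, i.e. $x\in\R y$). For a transversal $x$, $\varphi_2(x,\tau)=d\tilde{\psi}_1(x,\tau)$ off the finite set $D_x\cap D_y$ in $C_y$; applying Stokes to $C_y$ punctured by small $\eps$-balls about those points and truncated at $t=T$, letting $\eps\to 0$ first and using that $\tilde{\psi}_1$ is a differential character for $C_x$ (\cite{BFDuke}, Theorem~7.2), so the period of $\tilde{\psi}_1(x,\tau)$ over a small normal circle to $C_x$ equals $\pm q^{(x,x)/2}$, and then letting $T\to\infty$, one obtains $\int_{C_y}\varphi_2(x,\tau)=(C_x\cdot C_y)^{tr}q^{(x,x)/2}+\sum_{[P]}\int_{(\partial C_y)_P}i_P^{\ast}\tilde{\psi}_1(x,\tau)$. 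For a self vector one instead restricts $\varphi_2(x,\tau)$ to $D_x=D_y$ directly; this is the computation carried out in the first author's thesis \cite{FCompo}, producing $\vol(C_x\cap C_y)q^{(x,x)/2}$ together with a remainder whose integral over $\partial C_y$ is again of the form $\sum_{[P]}\int_{(\partial C_y)_P}i_P^{\ast}\tilde{\psi}_1(x,\tau)$.

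Summing over all $x\in\calL_n$, the interior terms give $(C_n\cdot C_y)^{tr}+\vol(C_n\cap C_y)=(C_n\cdot C_y)_X$ by definition, while the boundary terms assemble via Proposition~\ref{psitilderes} into $\sum_{[P]}\int_{(\partial C_y)_P}\tilde{\psi}_{0,1}^P(n)(\tau)$; here one uses, exactly as in the proof of Theorem~\ref{linking-dual}, case~(ii), that even for a self vector $i_P^{\ast}\tilde{\psi}_1(x)$ restricts to a smooth form on the boundary circle $c_y$ (its singular component is a multiple of a $1$-form that pulls back to zero along $c_y$), so that the full sum over $\calL_n$ is legitimate. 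This yields the claimed expansion. An alternative, and perhaps the shortest route given what is already available, is to start from \eqref{special-lift}, insert Theorem~\ref{FM-main-th} for $\Lambda^c(C_y,\tau)$ and Theorem~\ref{xi'-integralP} for $\sum_{[P]}\int_{(\partial C_y)_P}\theta^P_{\phi_{0,1}}$, and then invoke only the geometric identity $(C_n^c\cdot C_y)+\sum_{[P]}\Lk((\partial C_n)_P,(\partial C_y)_P)=(C_n\cdot C_y)_X$, which records that capping $C_n$ by $A_n$ trades the boundary intersections of $C_n$ with $C_y$ for a linking term and which is proved by perturbing $C_y$ to be transverse to $C_n^c$ and to $\partial\overline{X}$ and counting. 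In either route the main obstacle is the self-intersection contribution: extracting $\vol(C_n\cap C_y)$ with the correct normalization from $\varphi_2(x,\tau)|_{D_x}$ — the normal-bundle/Euler-class computation of \cite{FCompo} — and checking that the leftover boundary term is exactly what completes $\tilde{\psi}_{0,1}^P(n)$; the rest, namely making the doubly-truncated Stokes argument rigorous and interchanging the $\eps\to 0$ and $T\to\infty$ limits with the sum over $\calL_n$, is routine with the mixed-model estimates of \cite{FMres}.
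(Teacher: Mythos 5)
Your proposal is correct in substance, but it is not the route the paper actually takes: the paper's proof of this theorem is essentially a citation to \cite{FCompo}, Section~5 (direct computation of $\int_{C_y}\theta_{\varphi_2}$ for $\Orth(p,2)$, with ``a little calculation'' extracting the non-holomorphic boundary terms), whereas you carry out what the paper only flags as the ``more conceptual proof'': Stokes' theorem on the truncated cycle with the singular primitive $\tilde{\psi}_1$ (Proposition~\ref{schluesselV}), the differential-character/residue behaviour of $\tilde{\psi}_1(x)$ along $D_x$ from \cite{BFDuke} giving the transversal count $(C_n\cdot C_y)^{tr}q^n$, and the restriction formula (Theorem~\ref{psitilderes}) converting the $t\to\infty$ boundary terms into $\sum_{[P]}\int_{(\partial C_y)_P}\tilde{\psi}^P_{0,1}(n)$. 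What your route buys is a uniform treatment of all coefficients (including $n\le 0$, where $D_x=\emptyset$ and exactness gives the pure boundary term) without redoing the explicit $\Orth(p,2)$ integrals; what it still borrows, exactly as the paper does, is the self-intersection input from \cite{FCompo} — the restriction of $\varphi_2(x,\tau)$ to $D_x$ yielding $\vol(C_n\cap C_y)q^n$ with the correct normalization — so neither route is self-contained on that point. Your ``shortest alternative'' (Theorem~\ref{FM-main-th} plus Theorem~\ref{xi'-integralP} plus the splitting $C_n^c\cdot C_y=(C_n\cdot C_y)_X-\Lk(C_n,C_y)$) is the reverse of the paper's own Remark following Theorem~\ref{xi'-integral}, where that splitting is used to deduce the boundary theorem from the interior one; running it in your direction is not circular, since Theorem~\ref{xi'-integralP} rests on Theorem~\ref{linking-dual} and Theorem~\ref{FM-main-th} on the current argument of Section~\ref{currents}, but be aware that for components with $C_x=C_y$ the splitting identity is not a mere transversality count — its content is precisely that the capped self-intersection produces the volume term, i.e.\ the same normal-bundle/Euler-form computation you would otherwise import from \cite{FCompo} — so that alternative does not actually evade the main obstacle you correctly identify.
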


\begin{proof}
This is essentially \cite{FCompo}, section~5, where more generally $\Orth(p,2)$ is considered. 
There the interpretation of the holomorphic Fourier coefficients as interior intersection number is given. (For more details of an analogous calculation see \cite{FMspec}, section~8). A little calculation using the formulas in \cite{FCompo} gives the non-holomorphic contribution. A more conceptual proof would use the relationship between $\varphi_2$ and $\tilde{\psi}_1$ (see Proposition~\ref{schluesselV} and Section~\ref{currents}) and the restriction formula for $\tilde{\psi}_1(n)$ (Theorem~\ref{psitilderes}). 
\end{proof}

By slight abuse of notation we write $\Lk(C_n,C_y) = \sum_{[P]} \Lk((\partial C_n)_P, (\partial C_y)_P)$ for the total linking number of $\partial C_n$ and $\partial C_y$. Then by Theorem~\ref{xi'-integralP} we obtain

\begin{theorem}\label{xi'-integral}
\[
\sum_{[P]}\int_{(\partial C_y)_P} \theta^P_{\phi_{0,1}}(\tau,\calL_{W_P}) = 
  \sum_{n>0}  \Lk(C_n,C_y) q^n
 \; + \; \sum_{n \in \Q} \sum_{[P]} \int_{(\partial C_y)_{P}}  {\tilde{\psi}_{0,1}^P}(n)(\tau).
\]
 So the Fourier coefficients of the holomorphic part of $\int_{(\partial C_y)_P} \theta^P_{\phi}(\tau,\calL_{W_P})$ are the linking numbers of the cycles $\partial C_y$ and $ \partial C_n$ at the boundary component $e'(P)$.  
 \end{theorem}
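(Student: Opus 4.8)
The statement to prove is Theorem~\ref{xi'-integral}, the assertion that
\[
\sum_{[P]}\int_{(\partial C_y)_P} \theta^P_{\phi_{0,1}}(\tau,\calL_{W_P}) =
  \sum_{n>0}  \Lk(C_n,C_y) q^n
 \; + \; \sum_{n \in \Q} \sum_{[P]} \int_{(\partial C_y)_{P}}  {\tilde{\psi}_{0,1}^P}(n)(\tau).
\]
The plan is to deduce this directly from Theorem~\ref{xi'-integralP}, which is the same identity but ``one cusp at a time'' and for a general homologically trivial $1$-cycle $c$ in $e'(P)$ (or $c = \partial C_y$ a component of some $\partial C_n$). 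First I would observe that $\partial C_y$ is, by Proposition~\ref{TnBS}, a finite disjoint union of circles, each of which lies in a single torus fiber of some boundary face $e'(P)$ and is a rational boundary there by Lemma~\ref{ePhomology}. Thus $(\partial C_y)_P$ is precisely of the form to which Theorem~\ref{xi'-integralP} applies (the second, exceptional, case $c = \partial C_y$). Applying that theorem to each component of $(\partial C_y)_P$ and summing over the finitely many $\G$-conjugacy classes $[P]$ of cusps gives exactly the claimed formula, with $\Lk(C_n,C_y) = \sum_{[P]}\Lk((\partial C_n)_P,(\partial C_y)_P)$ by the definition of the total linking number introduced just before the theorem.

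The one genuinely substantive point, which I would address before invoking Theorem~\ref{xi'-integralP}, is that the linking numbers $\Lk((\partial C_n)_P,(\partial C_y)_P)$ are well-defined even though $\partial C_y$ is generally \emph{not} disjoint from $F_n$: some component $c_y$ of $\partial C_y$ may lie in the very same fiber $F_x$ as components of $\partial C_n$, and indeed may coincide with one of them. This is exactly the ``case (ii)'' situation already handled in the proof of Theorem~\ref{linking-dual} (Section~\ref{W-currents}): there one shifts $c_y$ to a nearby parallel fiber $c_y(\eps)$, uses Lemma~\ref{selflinkingforx} to see that $\int_{c_y}\tilde{\psi}'_{0,1}(x) = \int_{c_y(\eps)}\tilde{\psi}'_{0,1}(x)$, and uses \eqref{lastlinkingstep} to see that $\Lk(\partial C_n, c_y) = \Lk(\partial C_n, c_y(\eps))$. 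So the self-linking terms are defined by exactly this perturbation, and Theorem~\ref{xi'-integralP} already incorporates this convention. Hence no new work is needed here; I would simply cite Theorem~\ref{xi'-integralP} in the form that already covers $c = \partial C_y$.

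The decomposition of $\theta^P_{\phi_{0,1}}$ that makes this work is $\phi_{0,1} = \tilde{\psi}_{0,1} + \tilde{\psi}'_{0,1}$ (from the definition of $\phi_{0,1}$ in Section on the form $\phi_{0,1}$), which upon applying $\iota_P$ and forming theta series gives $\theta^P_{\phi_{0,1}}(n) = \tilde{\psi}^P_{0,1}(n) + \tilde{\psi}'^P_{0,1}(n)$ for each Fourier index $n$. Integrating over $(\partial C_y)_P$ and summing over $n$, the $\tilde{\psi}'_{0,1}$-part contributes $\sum_{n>0}\Lk((\partial C_n)_P,(\partial C_y)_P)\,q^n$ by Theorem~\ref{linking-dual} (using $e^{2\pi n}\tilde{\psi}'_{0,1}(n)$ is a linking dual and the linking form has only positive Fourier support since $\tilde{\psi}'_{0,1}(x) = 0$ for $(x,x)\le 0$), while the $\tilde{\psi}_{0,1}$-part contributes the non-holomorphic term $\sum_{n\in\Q}\int_{(\partial C_y)_P}\tilde{\psi}^P_{0,1}(n)(\tau)$ essentially by definition. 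Summing over $[P]$ finishes the proof.

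\begin{proof}
By Proposition~\ref{TnBS}, for each component $C_y$ of $C_n$ the boundary $\partial C_y$ is a finite disjoint union of circles $c_{y'}$, each contained in a single torus fiber of some boundary face $e'(P)$ and, by Lemma~\ref{ePhomology}, a rational boundary there. Recall $\phi_{0,1} = \tilde{\psi}_{0,1} + \tilde{\psi}'_{0,1}$, so that after applying $\iota_P$ and forming theta series we have, for every $n$,
\[
\theta^P_{\phi_{0,1}}(n) = \tilde{\psi}^P_{0,1}(n) + \tilde{\psi'}^P_{0,1}(n),
\]
and hence $\theta^P_{\phi_{0,1}}(\tau,\calL_{W_P}) = \sum_{n\in\Q}\left(\tilde{\psi}^P_{0,1}(n)(\tau) + \tilde{\psi'}^P_{0,1}(n)(\tau)\right)$. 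Integrating over $(\partial C_y)_P$, the contribution of the $\tilde{\psi'}_{0,1}$-summands is computed by Theorem~\ref{linking-dual}: for $n>0$, $e^{2\pi n}\tilde{\psi'}_{0,1}(n)$ is a linking dual for $\partial C_n$ in $e'(P)$, and equation~\eqref{Linkingdualintegral} holds also when $c = c_{y'}$ is a component of $\partial C_n$ lying in a fiber $F_x$ (case (ii) of that theorem), so that
\[
\int_{(\partial C_y)_P}\tilde{\psi'}^P_{0,1}(n)(\tau) = \Lk\big((\partial C_n)_P,(\partial C_y)_P\big)\,q^n;
\]
for $n\le 0$ the form $\tilde{\psi'}_{0,1}(n)$ vanishes identically since $\tilde{\psi'}_{0,1}(x)=0$ whenever $(x,x)\le 0$ by \eqref{AB'-eq}. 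Therefore
\[
\int_{(\partial C_y)_P}\theta^P_{\phi_{0,1}}(\tau,\calL_{W_P}) = \sum_{n>0}\Lk\big((\partial C_n)_P,(\partial C_y)_P\big)\,q^n + \sum_{n\in\Q}\int_{(\partial C_y)_P}\tilde{\psi}^P_{0,1}(n)(\tau).
\]
This is precisely Theorem~\ref{xi'-integralP} applied to $c = \partial C_y$. Summing over the finitely many $\G$-conjugacy classes $[P]$ of rational parabolics and using the definition $\Lk(C_n,C_y) = \sum_{[P]}\Lk\big((\partial C_n)_P,(\partial C_y)_P\big)$ yields
\[
\sum_{[P]}\int_{(\partial C_y)_P}\theta^P_{\phi_{0,1}}(\tau,\calL_{W_P}) = \sum_{n>0}\Lk(C_n,C_y)\,q^n + \sum_{n\in\Q}\sum_{[P]}\int_{(\partial C_y)_P}\tilde{\psi}^P_{0,1}(n)(\tau),
\]
as claimed. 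In particular, the Fourier coefficients of the holomorphic part of $\int_{(\partial C_y)_P}\theta^P_{\phi_{0,1}}(\tau,\calL_{W_P})$ are the linking numbers $\Lk\big((\partial C_n)_P,(\partial C_y)_P\big)$.
\end{proof}
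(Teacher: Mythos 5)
Your proposal is correct and follows essentially the same route as the paper: the paper deduces Theorem~\ref{xi'-integral} directly from Theorem~\ref{xi'-integralP} applied to $c=\partial C_y$ at each cusp and summed over $[P]$, with that theorem in turn resting on the decomposition $\phi_{0,1}=\tilde{\psi}_{0,1}+\tilde{\psi}'_{0,1}$ and Theorem~\ref{linking-dual} (including its case (ii) for circles lying in fibers of $\partial C_n$), exactly as you describe. Your additional remarks on the vanishing of $\tilde{\psi}'_{0,1}(n)$ for $n\le 0$ and on the perturbation convention for self-linking are consistent with the paper's treatment.
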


\begin{remark}
There is also another ``global'' proof for Theorem~\ref{xi'-integral}. The cycle $C_y$ intersects $e'(P)$ transversally (when pushed inside) and hence also the cap $A_n$. From this it is not hard to see that we can split the intersection number $C_n^c \cdot C_y$ as 
\[
C^c_n \cdot C_y  =  (C_n \cdot C_y)_X  -  \Lk(C_n,C_y).
\]
Hence Theorem~\ref{xi'-integral} also follows from combining \eqref{special-lift} and Theorem~\ref{interior-lift}. 
\end{remark}

Hirzebruch-Zagier also obtain the modularity of the functions given in Theorems~\ref{interior-lift} and \ref{xi'-integral}, but by quite different methods. In particular, they explicitly calculate the intersection number $T^c_n \cdot T_m$. They split the intersection number into the interior part $(T_n \cdot T_m)_X$ and a `boundary contribution' $(T_n \cdot T_m)_{\infty}$ given by 
 \[
 (T_n \cdot T_m )_{\infty} = (T_n \cdot T_m)_{\tilde{X}-X}  - ({T}_m-T_m^c) \cdot ({T}_n -T_n^c).
 \]
Now by Theorem~\ref{HZTheorem} and its proof we have
\[
T^c_n \cdot T_m = C^c_n \cdot C_m.
\]
We have (per definition) $(T_n \cdot T_m)_X = (C_n \cdot C_m)_X$, so Theorem~\ref{interior-lift} gives the generating series for $(T_n \cdot T_m)_X$. Note that Theorem~5.4 in \cite{FCompo} also compares the explicit formulas in \cite{HZ} for $(T_n \cdot T_m)_X$ with the ones obtained via  $\int_{C_y} \theta_{\varphi_2}(\tau,\calL_V)$. All this implies
\[
 (T_n \cdot T_m)_{\infty}  =\Lk( C_n \cdot C_m).
 \]
Independently, we also obtain this from comparing the explicit formulas for the boundary contribution in \cite{HZ}, Section~1.4 with our formulas for the linking numbers, Theorem~\ref{LinkCnCm} and Example~\ref{LinkCnCmex}.

\section{A current approach for the special cycles}\label{currents}

In this section we prove Theorem~\ref{FM-main-th}, the crucial Fourier coefficient formula for our lift $\Lambda^c$.  As a consequence of our approach we will also obtain Theorem~\ref{linking-dual}, the linking number interpretation for the lift at the boundary. 

\subsection{A differential character for $C_n^c$}

The key step for the entire Kudla-Millson theory is that for $n>0$ the form $\varphi_2(n)$ is a Poincar\'e dual form for the cycle $C_n$, i.e., 

\begin{theorem}[\cite{KM2,KMCan}]
Let $\eta$ be a closed rapidly decreasing $2$-form. Then 
\[
\int_X \eta \wedge  \varphi_2(n)= \left(\int_{C_n} \eta \right) e^{-2\pi n}.
\]
\end{theorem}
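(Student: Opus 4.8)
The plan is to reduce the stated integral formula to a local computation near each component of $C_n$, using the singular primitive $\tilde\psi_1$ constructed in Proposition~\ref{schluesselV}. First I would recall that for $n>0$ the form $\varphi_2(n)$, originally a Schwartz form summed over $\calL_n$, is rapidly decreasing on $X$ (this is part of the Kudla--Millson package and follows from the Gaussian decay of $\varphi_2$), so the pairing $\int_X \eta\wedge\varphi_2(n)$ converges. Next, by Proposition~\ref{schluesselV} we have $d\tilde\psi_1(x) = \varphi_2(x)$ away from $D_x$, and $\tilde\psi_1(x)$ has an integrable singularity of codimension $2$ along $D_x$; summing over $x\in\G\backslash\calL_n$ we obtain a locally integrable $1$-form $\tilde\psi_1(n)$ on $X$ with $d\tilde\psi_1(n) = \varphi_2(n)$ on $X\setminus C_n$. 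The strategy is then the classical differential-character / Stokes argument: excise an $\epsilon$-tube $U_\epsilon$ around $C_n$, write $\int_X \eta\wedge\varphi_2(n) = \lim_{\epsilon\to0}\int_{X\setminus U_\epsilon}\eta\wedge d\tilde\psi_1(n)$, integrate by parts (using $d\eta=0$), and pick up the boundary term $-\lim_{\epsilon\to0}\int_{\partial U_\epsilon}\eta\wedge\tilde\psi_1(n)$.

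The key step is the residue computation: showing that $\lim_{\epsilon\to0}\int_{\partial U_\epsilon}\eta\wedge\tilde\psi_1(x) = -\big(\int_{C_x}\eta\big)e^{-2\pi n}$ for each component, i.e. that $\tilde\psi_1(x)$ has ``residue'' the delta current along $C_x$ times the constant $e^{-(x,x)\pi} = e^{-2\pi n}$. This is where the explicit formula $\tilde\psi_1(x) = -\frac{1}{2\pi(x_3^2+x_4^2)}\psi_1(x)$ from \eqref{GreeneqV} is used: in coordinates transverse to $D_x$ the form $\psi_1(x)$ restricted to a small normal $2$-sphere (or $S^1$ in the appropriate slice — note $D_x$ has real codimension $2$) integrates, after including the $1/(x_3^2+x_4^2)$ factor, to a universal constant; the Gaussian $e^{-\pi(x,x)_z}$ localizes to $e^{-\pi(x,x)} = e^{-2\pi n}$ on $D_x$ since the minimal majorant agrees with $(x,x)$ exactly on $D_x$. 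Concretely one normalizes so that $\int_{\partial U_\epsilon(D_x)}\tilde\psi_1(x) \to -e^{-2\pi n}\,[C_x]$ as a current; this is exactly the content of ``$\tilde\psi_1$ is a differential character for $C_x$'' alluded to in the Remark after Proposition~\ref{schluesselV}, and is essentially Theorem~7.2 of \cite{BFDuke}.

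I would organize the write-up as: (1) convergence and the identity $d\tilde\psi_1(n)=\varphi_2(n)$ off $C_n$; (2) the Stokes argument on $X\setminus U_\epsilon$; (3) the local residue lemma identifying the boundary limit with $-(\int_{C_n}\eta)e^{-2\pi n}$, reducing by $G$-equivariance to a single model computation at the base point with $x$ a fixed positive vector; (4) assembling the pieces. The main obstacle is step (3) — making the transverse residue computation rigorous, in particular controlling the $1/(x_3^2+x_4^2)$ singularity and verifying the normalization constant is exactly $1$ (equivalently that the Thom-form normalization matches), and checking that the non-compactness of $X$ and of the $C_x$ causes no boundary contribution at the Borel--Serre boundary (the rapid decay of $\varphi_2(n)$ and of $\tilde\psi_1(n)$ along $e'(P)$, which one sees from the majorant formula as in the proof of Theorem~\ref{psitilderes}, handles this). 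Once (3) is in hand the theorem follows immediately.
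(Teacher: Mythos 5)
Your proposal is correct in outline, but it is not the route taken by the source the paper relies on for this statement: the theorem is quoted from Kudla--Millson \cite{KM2,KMCan}, whose proof shows that $\varphi_2(n)$ is a Thom form for the normal bundle of each component of $C_n$ and then uses a homotopy argument (which is precisely what forces the hypothesis that $\eta$ be rapidly decreasing). What you propose instead --- taking the singular primitive $\tilde{\psi}_1(n)$ from Proposition~\ref{schluesselV}, excising an $\epsilon$-tube around $C_n$, applying Stokes, and computing the transverse residue, with the factor $e^{-2\pi n}$ coming from the value of the majorant on $D_x$ --- is exactly the differential character/current argument of \cite{BFDuke}, Section~7, which the paper imports as Theorem~\ref{BrFu} and then extends (Theorem~\ref{newcurrenteq}, Section~\ref{8.1}) to handle forms that merely extend to $\overline{X}$. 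So you have in effect re-proved the Kudla--Millson statement by the Bruinier--Funke method; the trade-off is that the Thom-form/homotopy proof is more self-contained topologically but tied to rapid decay of $\eta$, whereas your current-theoretic proof localizes everything at $C_n$ and is the one that generalizes to the mapping-cone setting of the paper, at the cost of the delicate residue normalization and local integrability analysis you identify as step (3).

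One slip to repair: $\varphi_2(n)$ and $\tilde{\psi}_1(n)$ are \emph{not} rapidly decreasing along the Borel--Serre boundary. Their restrictions to $e'(P)$ are $\varphi^P_{1,1}(n)$ and $\tilde{\psi}^P_{0,1}(n)$, which are generically nonzero --- indeed Theorem~\ref{psitilderes}, which you cite, computes exactly this nonzero limit; only the terms with nontrivial $u'$-component decay. Both the convergence of $\int_X\eta\wedge\varphi_2(n)$ and the vanishing of the boundary term at the cusp in your Stokes argument must therefore be justified by the rapid decrease of $\eta$ together with the boundedness (moderate growth and local integrability) of $\tilde{\psi}_1(n)$, not by decay of the theta forms themselves; this is how the truncation argument with the cutoff $\rho_T$ in Section~\ref{8.1} is organized. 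The fix is routine and does not affect the rest of your argument.
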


To show this they employ at some point a homotopy argument which requires $\eta$ to be rapidly decaying. Since we require $\eta$ to be any closed $2$-form on the compactification $\overline{X}$, their approach is not applicable in our case. Instead, we use a differential character argument for $\varphi_2$ which implicitly already occurred in \cite{BFDuke}, Section~7 for general signature $(p,q)$. Namely, we have

\begin{theorem} (\cite{BFDuke}, Section~7)\label{BrFu}
Let $n>0$. 
The singular form $ \tilde{\psi}_1(n)$ is a differential character in the sense of Cheeger-Simons for the cycle $C_n$. More precisely, $\tilde{\psi}_1(n)$ is a locally integrable $1$-form on $X$, and for any compactly supported $2$-form $\eta$ we have 
\[
\int_{X} \eta \wedge \varphi_2(n)  = \left(\int_{C_n}  \eta \right) e^{-2 \pi n}  - \int_{X}  d \eta\wedge \tilde{\psi}_1(n).
\]
\end{theorem}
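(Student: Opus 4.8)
The plan is to establish the current equation $d[\tilde\psi_1(n)] = \delta_{C_n} + [\varphi_2(n)]$ on $X$ (where $\delta_{C_n}$ denotes the current of integration over the locally finite cycle $C_n$), and then pair with a compactly supported $2$-form and integrate by parts. The crucial input, already recorded in Proposition~\ref{schluesselV}, is the pointwise identity $d\tilde\psi_1(x,z) = \varphi_2(x,z)$ away from the singular locus $D_x$, together with the fact that $\tilde\psi_1(x)$ has an $L^1_{\mathrm{loc}}$ singularity along $D_x$ (visible from the explicit formula \eqref{GreeneqV}, since $1/(x_3^2+x_4^2)$ is integrable in two transverse variables). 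Summing over $x \in \calL_n$, $x \neq 0$, the series defining $\tilde\psi_1(n)$ converges to a locally integrable $1$-form on $X$ whose singularities lie exactly along the components $C_x$ of $C_n$, each of which is a copy of $D_x$ modulo $\Gamma_x$.

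First I would verify the local statement: near a point of $D_x$, choosing coordinates so that $D_x = \{x_3 = x_4 = 0\}$, compute the distributional exterior derivative of $\tilde\psi_1(x,z)$. By a standard excision-of-$\epsilon$-tube argument (remove the $\epsilon$-neighborhood $U_\epsilon$ of $D_x$, apply Stokes, and let $\epsilon \to 0$), the smooth part contributes $[\varphi_2(x,z)]$ by Proposition~\ref{schluesselV}, and the boundary term $-\int_{\partial U_\epsilon} \eta \wedge \tilde\psi_1(x,z)$ converges to $\int_{D_x} \eta$ times the weight $e^{-2\pi n}$ coming from the Gaussian factor $e^{\pi i(x,x)\tau}$ evaluated appropriately — this is exactly the residue computation that identifies $\tilde\psi_1$ as a Thom–type differential character, and it is essentially the content of \cite{BFDuke}, Section~7. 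The normalization constant is pinned down by the fact that $\psi_1$ restricted to a normal slice integrates, after the $\int_1^\infty \frac{dr}{r}$ transform, to the Thom class of the rank-$2$ normal bundle of $D_x$.

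Second, I would globalize: sum the local identity over $\Gamma$-orbit representatives $x \in \Gamma\backslash\calL_n$, using that each $C_x = \Gamma_x\backslash D_x$ is properly embedded in $X$ and that the sum defining $\tilde\psi_1(n)$ is locally finite on $X$ (only finitely many $D_x$ meet a given compact set — this uses positivity of $(x,x) = 2n$ and discreteness of $\calL_n$). This yields $d[\tilde\psi_1(n)] = e^{-2\pi n}\,\delta_{C_n} + [\varphi_2(n)]$ as currents on $X$. Applying both sides to a compactly supported $2$-form $\eta$ and unwinding the definition of the exterior derivative of a current, $\langle d[\tilde\psi_1(n)], \eta\rangle = \int_X d\eta \wedge \tilde\psi_1(n)$ (with the sign dictated by the degree of $\tilde\psi_1(n)$), gives precisely
\[
\int_X \eta \wedge \varphi_2(n) = \left(\int_{C_n}\eta\right) e^{-2\pi n} - \int_X d\eta \wedge \tilde\psi_1(n),
\]
which is the assertion.

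The main obstacle is the local residue computation at the singularity: one must show rigorously that $\tilde\psi_1(n)$ is genuinely $L^1_{\mathrm{loc}}$ (so that it defines a current at all) and that the limit of the tube integral $\int_{\partial U_\epsilon} \eta \wedge \tilde\psi_1(x)$ produces exactly $\int_{D_x}\eta$ with the correct constant $e^{-2\pi n}$ and no anomalous terms. This requires a careful analysis of the behavior of $\psi_1^0(\sqrt r x, z)$ as $z \to D_x$ and control of the interchange of the $\int_1^\infty\frac{dr}{r}$ integral with the limiting process; the anisotropic-type decay of $\psi_1$ in the directions normal to $D_x$ is what makes this work. Once the normalization is fixed (most cleanly by comparison with the Kudla–Millson Thom form, or directly as in \cite{BFDuke}), the rest is bookkeeping.
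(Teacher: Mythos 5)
Your outline is sound, but you should know that the paper itself does not actually prove Theorem~\ref{BrFu}: its ``proof'' is a citation to the proofs of Theorems~7.1 and 7.2 of \cite{BFDuke}, and the argument carried out there is essentially the one you sketch — local integrability of $\tilde{\psi}_1(n)$, excision of an $\eps$-tube around $D_x$, Stokes' theorem using $d\tilde{\psi}_1(x,z)=\varphi_2(x,z)$ off $D_x$ (Proposition~\ref{schluesselV}), identification of the limiting tube integral as $e^{-2\pi n}\int_{D_x}\eta$, and summation over $\G$-orbits in $\calL_n$ using local finiteness. Two small repairs: first, your parenthetical justification of local integrability is wrong as stated, since $1/(x_3^2+x_4^2)$ is \emph{not} integrable in two transverse variables; what saves the claim is that $\psi_1(x)$ in \eqref{psi20} vanishes to first order along $D_x$, so by \eqref{GreeneqV} the form $\tilde{\psi}_1(x,z)$ has only a $1/r$ singularity in the two normal directions, which is $L^1_{\mathrm{loc}}$. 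Second, your intermediate current equation places $\delta_{C_n}$ with the sign opposite to the convention of Theorem~\ref{newcurrenteq} (where $d[\tilde{\psi}_1^c(n)]+\delta_{C_n}e^{-2\pi n}=[\varphi_2^c(n)]$); your final integral identity is nonetheless the correct one, so this is only a matter of fixing the sign conventions for currents consistently. The genuinely hard point, as you correctly identify, is the local residue/Thom-class computation pinning the constant $e^{-2\pi n}$ with no anomalous boundary terms; that computation is exactly what is supplied by \cite{BFDuke}, Section~7, and would need to be reproduced (or cited) to make your sketch complete.
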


\begin{proof}
This is the content of the proofs of Theorem~7.1 and Theorem~7.2 in \cite{BFDuke}. There the analogous properties for a singular theta lift associated to $\psi$ is established. However, the proofs boil down to establish the claims for $\tilde{\psi}_1$. The form $\tilde{\psi}$ there is indeed the form $\tilde{\psi}_1$ of this paper. 
\end{proof}

\begin{remark}\label{Kudla-xi}
The form $\tilde{\psi}_1$ is closely related to Kudla's Green function $\xi$ \cite{KAnn97,KBforms} (more generally for $\Orth(p,2)$) which is given by 
\[
\xi(x) =   \left( \int_1^{\infty} \varphi_0^0(\sqrt{r}x)  \frac{dr}{r} \right) e^{- \pi (x,x) }.
\]
Then $\Xi(n) = \sum_{x\in\calL_n} \xi(x)$ gives rise to a Green's function for the divisor $C_n$ and moreover $dd^c \xi = \varphi_2$. Here $d^c = \tfrac{1}{4\pi i}(\partial - \overline{\partial})$. This suggests $d^c \xi = \tilde{\psi}_1$, which indeed follows from $d^c \varphi_0 = -\psi_1$, see \cite{BFDuke}, Remark~4.5. 
\end{remark}

For $n \in \Q$ we define
\[
 \varphi_2^c(n) :=  \varphi_2(n) - \sum_{[P]} d(f \pi^{\ast} \phi^P_{0,1}(n))
\]
and follow the current approach to show that for $n>0$ the form $\varphi_2^c(n)$
is a Poincar\'e dual form for the cycle $C_n^c$. Here we follow the notation of subsection~\ref{mappingconesection}. That is, $\pi^{\ast} \phi^P_{0,1}(n)$ is the pullback to a product neighborhood $V$ of $\partial \overline{X}$, and $f$ is a smooth function on $V$ of the geodesic flow coordinate $t$ which is $1$ near $t=\infty$ and zero else. Note that  $\varphi_2^c(n)$ is exactly the $n$-th Fourier coefficient of the mapping cone element $[\theta_{\varphi},\theta_{\phi}]$, when realized as a rapidly decreasing form on $X$. We also define 
\[
\tilde{\psi}_1^c(n) = \tilde{\psi}_1(n) - f \pi^{\ast} \phi^P_{0,1}(n). 
\]
We call a differential form $\eta$ on $\overline{X}$ special if in a neighborhood of each boundary component $e'(P)$ it is the pullback of a form $\eta_P$ on $e'(P)$ under the geodesic retraction and if the pullback of the form $\eta_P$ to the universal cover $e(P)$ is $N$-left-invariant. The significance of the forms lies in the fact that the complex of special forms also computes the cohomology of $\overline{X}$. Note that the proof of Theorem~\ref{restriction} shows that $\theta_{\varphi_2}$ is `almost' special; it only differs from a special form by a rapidly decreasing form.

\begin{theorem}\label{newcurrenteq}
Let $n>0$. The form $ \tilde{\psi}_1^c(n)$ is a differential character for the cycle $C^c_n$. More precisely, $\tilde{\psi}_1^c(n)$ is a locally integrable $1$-form on $X$ and satisfies the following current equation on special $2$ forms on $\overline{X}$:
\[
d[\tilde{\psi}_1^c(n)] + \delta_{C_n}  e^{-2\pi n} = [\varphi_2^c(n)].
\]
That is, for any special $2$-form $\eta$ on $\overline{X}$ we have 
\[
\int_{X} \eta \wedge \varphi^c_{2}(n)  = \left(\int_{C^c_n}  \eta \right) e^{-2 \pi n} - \int_{X}  d\eta \wedge \tilde{\psi}^c_{2}(n).
\]
\end{theorem}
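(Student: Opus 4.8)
The plan is to obtain this current equation by adding boundary corrections to the current equation of Theorem~\ref{BrFu}, which already handles $\tilde{\psi}_1(n)$ on $X$ against \emph{compactly supported} test forms. Write $\beta=\sum_{[P]}f\,\pi^{\ast}\phi_{0,1}^{P}(n)$, so that $\varphi_2^c(n)=\varphi_2(n)-d\beta$ and $\tilde{\psi}_1^c(n)=\tilde{\psi}_1(n)-\beta$, and recall $C_n^c=C_n-A_n$. First I would dispatch the analytic preliminaries. The form $\tilde{\psi}_1^c(n)$ is locally integrable on $X$ since $\tilde{\psi}_1(n)$ is (Theorem~\ref{BrFu}) and $\phi_{0,1}^{P}(n)$ is bounded with at worst $C^0/C^1$ singularities (Proposition~\ref{phi-prop}). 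The form $\varphi_2^c(n)$ is rapidly decreasing: near $e'(P)$ the $n$-th Fourier coefficient of Theorem~\ref{restriction} together with Theorem~\ref{local-phi} gives $i_P^{\ast}\varphi_2(n)=d\phi_{0,1}^{P}(n)$, and, exactly as in the proof of Theorem~\ref{restriction} via the mixed model, $\varphi_2(n)$ differs from $\pi^{\ast}i_P^{\ast}\varphi_2(n)$ by a rapidly decreasing form, so in the region where $f\equiv 1$ the term $d(\pi^{\ast}\phi_{0,1}^{P}(n))$ cancels the leading part of $\varphi_2(n)$; the same analysis, with Proposition~\ref{psitilderes}, shows $\tilde{\psi}_1(n)-\pi^{\ast}\tilde{\psi}_{0,1}^{P}(n)$, hence also $\tilde{\psi}_1^c(n)+\pi^{\ast}\tilde{\psi'}_{0,1}^{P}(n)$, is rapidly decreasing on the collar. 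The bookkeeping device used everywhere below is the trivial remark that a product of forms each pulled back from the $3$-manifold $e'(P)$ vanishes once the total degree exceeds $3$; this is what makes the integrals $\int_X\eta\wedge\varphi_2(n)$, $\int_X d\eta\wedge\tilde{\psi}_1(n)$, $\int_X d\eta\wedge\beta$, $\int_X\eta\wedge d\beta$, $\int_{C_n}\eta$ convergent for a special $2$-form $\eta$, and makes $\int_{C_n^c}\eta=\int_{C_n}\eta-\int_{A_n}\eta$ a difference of finite numbers.

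Next I would upgrade Theorem~\ref{BrFu} to special test forms. Choosing a cutoff $\chi_T$ that depends only on the geodesic-flow coordinate, equals $1$ off the collar and vanishes near $\partial\overline{X}$, I apply Theorem~\ref{BrFu} to $\chi_T\eta$ and let $T\to\infty$. Writing $d(\chi_T\eta)=d\chi_T\wedge\eta+\chi_T\,d\eta$, every term tends to its naive limit except $\int_X d\chi_T\wedge\eta\wedge\tilde{\psi}_1(n)$; since $d\chi_T$ is a multiple of $dt$ concentrated in a thin collar, where $\eta\wedge\tilde{\psi}_1(n)$ agrees up to a rapidly decreasing error with $\pi^{\ast}(i_P^{\ast}\eta\wedge\tilde{\psi}_{0,1}^{P}(n))$ (Proposition~\ref{psitilderes}), this term converges to $-\sum_{[P]}\int_{e'(P)}i_P^{\ast}\eta\wedge\tilde{\psi}_{0,1}^{P}(n)$. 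This yields, for special $\eta$,
\[
\int_X\eta\wedge\varphi_2(n)=\Big(\int_{C_n}\eta\Big)e^{-2\pi n}-\int_X d\eta\wedge\tilde{\psi}_1(n)+\sum_{[P]}\int_{e'(P)}i_P^{\ast}\eta\wedge\tilde{\psi}_{0,1}^{P}(n).
\]

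Two further ingredients complete the argument. First, Stokes' theorem on the manifold-with-boundary $\overline{X}$ applied to $\eta\wedge\beta$ — legitimate because $d[\phi_{0,1}^{P}(n)]=[d\phi_{0,1}^{P}(n)]$ has no singular part, the delta-currents of $\tilde{\psi}_{0,1}$ and $\tilde{\psi'}_{0,1}$ cancelling in $d[\phi_{0,1}]$ (the Remark after Lemma~\ref{xi'closed}) — gives
\[
\int_X d\eta\wedge\beta+\int_X\eta\wedge d\beta=\sum_{[P]}\int_{e'(P)}i_P^{\ast}\eta\wedge\phi_{0,1}^{P}(n)=\sum_{[P]}\int_{e'(P)}i_P^{\ast}\eta\wedge\big(\tilde{\psi}_{0,1}^{P}(n)+\tilde{\psi'}_{0,1}^{P}(n)\big).
\]
Second, and this is the geometric core, one needs
\[
\sum_{[P]}\int_{e'(P)}i_P^{\ast}\eta\wedge\tilde{\psi'}_{0,1}^{P}(n)=\Big(\int_{A_n}\eta\Big)e^{-2\pi n},
\]
i.e. the statement that $e^{2\pi n}\tilde{\psi'}_{0,1}^{P}(n)$ is a linking dual of $\partial C_n$ — Theorem~\ref{linking-dual}, proved through Proposition~\ref{finalintegral}. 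To reduce it to Proposition~\ref{finalintegral} (which assumes an exact form compactly supported away from the fibres $F_n$) I would use that $H^2(e'(P);\Q)$ is one-dimensional, spanned by the fibre area form $\Omega_P$ (Lemma~\ref{ePhomology}): write $i_P^{\ast}\eta=c\,\Omega_P+d\zeta$. The $\Omega_P$-summand contributes $0$ to both sides, on the left by the bidegree argument of Remark~\ref{youwillneedthis} and on the right by $\int_{A_n}\Omega_P=0$ (Proposition~\ref{rat-cap}). For $d\zeta$, the period of $i_P^{\ast}\eta-c\,\Omega_P$ over each torus fibre vanishes, so a relative Poincar\'e lemma lets one alter $\zeta$ by a form supported near $F_n$, replacing $d\zeta$ by a cohomologous exact form supported in the complement of $F_n$; comparing the two sides under this change by means of the differential-character property of $\tilde{\psi'}_{0,1}^{P}(n)$ for $\partial C_n$ (Lemma~\ref{xi'closed} and the Remark after it, together with $\partial A_n=\partial C_n$) shows they transform compatibly, so Proposition~\ref{finalintegral} applies.

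Assembling: substituting $\varphi_2^c(n)=\varphi_2(n)-d\beta$ and $\tilde{\psi}_1^c(n)=\tilde{\psi}_1(n)-\beta$ into $\int_X\eta\wedge\varphi_2^c(n)$, then using the extended identity of the second paragraph and the Stokes identity of the third, the two $\tilde{\psi}_{0,1}^{P}(n)$-terms cancel and one is left with
\[
\int_X\eta\wedge\varphi_2^c(n)=\Big(\int_{C_n}\eta\Big)e^{-2\pi n}-\int_X d\eta\wedge\tilde{\psi}_1^c(n)-\sum_{[P]}\int_{e'(P)}i_P^{\ast}\eta\wedge\tilde{\psi'}_{0,1}^{P}(n);
\]
the linking-dual identity turns the last sum into $(\int_{A_n}\eta)e^{-2\pi n}$, giving $\int_X\eta\wedge\varphi_2^c(n)=(\int_{C_n^c}\eta)e^{-2\pi n}-\int_X d\eta\wedge\tilde{\psi}_1^c(n)$ — the asserted integral identity; the statement in the form $d[\tilde{\psi}_1^c(n)]+\delta_{C_n}e^{-2\pi n}=[\varphi_2^c(n)]$ follows by noting that the boundary jump of the current $d[\tilde{\psi}_1^c(n)]$ on $\overline{X}$ along $\partial\overline{X}$ equals $i^{\ast}\tilde{\psi}_1^c(n)=-\sum_{[P]}\tilde{\psi'}_{0,1}^{P}(n)$, which is precisely what converts $\delta_{C_n}$ into $\delta_{C_n^c}$. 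The step I expect to be hardest is the boundary analysis behind the ``rapidly decreasing error'' claims — upgrading the cohomological restriction results (Proposition~\ref{psitilderes}, Theorem~\ref{restriction}) to pointwise decay of $\tilde{\psi}_1(n)-\pi^{\ast}\tilde{\psi}_{0,1}^{P}(n)$ and $\varphi_2(n)-\pi^{\ast}\varphi_{1,1}^{P}(n)$ on the collar, which requires re-running the mixed-model computation of \cite{FMres} — together with the relative Poincar\'e lemma manipulation in the linking-dual step and keeping all orientation signs consistent across the current-theoretic identities. Once Theorem~\ref{newcurrenteq} is established, Theorem~\ref{FM-main-th} follows by summing over $n$, the terms with $n\le 0$ being exact (Proposition~\ref{schluesselV}) and the $n=0$ term producing $-\tfrac{1}{2\pi}\delta_{h0}[\omega]$ exactly as in Theorem~\ref{KM90}.
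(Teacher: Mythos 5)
Your overall architecture is the same as the paper's: cut off with a function of $t$, apply Theorem~\ref{BrFu} to the truncated form, use Proposition~\ref{psitilderes} to identify the collar asymptotics $\tilde{\psi}_1(n)=\pi^{\ast}\tilde{\psi}_{0,1}(n)+O(e^{-Ct})$, and reduce everything to the boundary identity $\int_{e'(P)}i_P^{\ast}\eta\wedge\tilde{\psi'}_{0,1}(n)=\bigl(\int_{A_n}i_P^{\ast}\eta\bigr)e^{-2\pi n}$, handled by Remark~\ref{youwillneedthis} for the $\Omega_P$-part and by Proposition~\ref{finalintegral} for the exact part (your separate Stokes argument for $\beta$ on $\overline{X}$, with the $\tilde{\psi}_{0,1}$-terms cancelling, is only a reorganization of the paper's single limit computation). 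The algebraic assembly is correct.

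The genuine gap is in your treatment of the exact part $d\zeta$. You alter $\zeta$ near $F_n$ so that the new exact form is supported away from $F_n$, and you claim the two sides of the boundary identity ``transform compatibly'' under the change $d(\chi\zeta)$ by invoking the differential-character property of $\tilde{\psi'}_{0,1}(n)$ for $\partial C_n$ (Lemma~\ref{xi'closed} and the Remark following it). But that Remark is a current equation on the one-dimensional space $D_W$; on $e'(P)$ the current $d[\tilde{\psi'}_{0,1}(n)]$ is, by the jump formula \eqref{psi'formula}, \emph{fiber integration} against $|\mu|\,dw_2\,e^{-2\pi n}$ over the tori $F_x$, not the delta current of the circles $\partial C_n$. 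The two agree only when tested against $1$-forms whose periods over the parallel circles in a fiber are all equal --- e.g.\ forms closed on the fiber, or $N$-invariant (special) forms; this is exactly the point in the proof of Proposition~\ref{finalintegral} where one uses that ``$\int_{c_{e_2}}\omega(0,w_2,w_3)$ is independent of $w_2$''. Your $\zeta$ comes from an abstract use of Lemma~\ref{ePhomology} and is not required to be special, and $\chi\zeta$ is in general not closed on the fibers, so for the correction $d(\chi\zeta)$ the left side (a fiber-averaged period) and the right side (a sum of periods over the specific circles $c_{x+ku}$, via $\partial A_n=\partial C_n$ and Proposition~\ref{rat-cap}) need not change by the same amount; indeed the identity you need is false for general exact forms whose support meets $F_n$, which is why Theorem~\ref{newcurrenteq} is stated only for special test forms. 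To close the gap you must either choose the primitive $\zeta$ special ($N$-invariant, hence with constant coefficients on each fiber) and verify the constancy of the parallel-circle periods directly, or, as the paper does, re-run the proof of Proposition~\ref{finalintegral} for an exact special form with special primitive, where this constancy is automatic; as written, the ``compatibility'' step assumes essentially the statement being proved.
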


This implies Theorem~\ref{FM-main-th} for the positive Fourier coefficients. For $n\leq 0$, the form $\varphi^c_{2}(n)$ is exact with primitive $\tilde{\psi}^c_{2}(n)$ which by Theorem~\ref{psitilderes} is decaying. So Theorem~\ref{newcurrenteq} holds also for $n \leq 0$ with $C_n^c = \emptyset$. Hence for the these coefficients only the term $x=0$ contributes, which gives the integral of $\eta$ against the K\"ahler form. 

\begin{remark}\label{Kudla-modification}
In view of Remark~\ref{Kudla-xi} it is very natural question to ask how one can modify Kudla's Green's function $\Xi(n)$ to obtain a Green's function for the cycle $T_n^c$ in $\tilde{X}$. Extensive discussions with K\"uhn suggest that (if $X$ has only one cusp) 
\[
\Xi(n) - t \sum_{\substack{x \in \calL_W\\ (x,x)=2n}} f \pi^{\ast}(B(x)+B'(x))
\]
is such a Green's function, but we have not checked all details.
\end{remark}

\subsection{Proof of Theorem~\ref{newcurrenteq} }\label{8.1}

For simplicity assume that $X$ has only one cusp and continue the drop the superscript $P$. We let $\rho_{T}$ be a family of smooth functions on a standard fundamental domain $\calF$ of $\G$ in $D$ only depending on $t$ which is $1$ for $t\leq T$ and $0$ for $T+1$. We then have 
\begin{align*}
\int_{X} \eta \wedge \varphi^c_{2}(n)  &= \lim_{T\to \infty} \int_{X} \rho_T \eta \wedge
\left(\varphi_2(n) -  d(f \pi^{\ast} \phi_{0,1}(n)) \right). 
\end{align*}
We apply Theorem~\ref{BrFu} for the compactly supported form $\rho_T\eta$ and obtain
\begin{align}\label{eq1}
\int_{X} \eta \wedge \varphi^c_{2}(n&)=  \lim_{T\to \infty} \Biggl[ \left(\int_{C_n}  \rho_T \eta \right) e^{- 2\pi n} - \int_{X}  d (\rho_T \eta) \wedge \tilde{\psi}_1(n) \\
& \quad - \int_X  d\left( \rho_T \eta \wedge (f \pi^{\ast} \phi_{0,1}(n)) \right) - d(\rho_T \eta) \wedge f \pi^{\ast} \phi_{0,1}(n) \Biggr] \notag
\end{align}
The first term on the right hand side of \eqref{eq1} goes to $\left(\int_{C_n} \eta\right)e^{-2\pi n}$ as $T \to \infty$, while the third vanishes for any $T$ by Stokes' theorem. For the two remaining terms of \eqref{eq1} we first note $d(\rho_T \eta) = \rho_T'(t) dt \wedge \eta + \rho_T d\eta$ and  $\rho_T'(t)=0$ outside $[T,T+1]$. We obtain for these two terms
\begin{multline}\label{eq2}
-  \int_{X}  (d \eta) \wedge \left( \tilde{\psi}_1(n) - f \pi^{\ast} \phi_{0,1}(n) \right) \\ - \lim_{T\to \infty} \int_T^{T+1} \int_{e'(P)} \rho_T'(t)dt \wedge \eta \wedge \left( \tilde{\psi}_1(n) - f  \pi^{\ast}\phi_{0,1}(n)\right). 
\end{multline}
It remains to compute the second term in the previous equation. For $T$ sufficiently large we have $f \equiv 1$. Furthermore by Theorem~\ref{psitilderes} and its proof we have $\tilde{\psi}_1(n) = \pi^{\ast} \tilde{\psi}_{0,1}(n) + O(e^{-Ct})$. As
$\phi_{0,1}(n) = \tilde{\psi}_{0,1}(n)+\tilde{\psi}'_{0,1}(n)$, we can replace  
$\tilde{\psi}_1(n) - f  \pi^{\ast}\phi_{0,1}(n)$ by $-\pi^{\ast} \tilde{\psi'}_{0,1}(n)$. Since $\eta$ is special it does not depend on the $t$-variable near the boundary. For the last term in \eqref{eq2}
\[
 \lim_{T\to \infty} \int_T^{T+1}  \rho_T'(t)dt \int_{e'(P)} \eta \wedge \pi^{\ast} \tilde{\psi'}_{0,1}(n) = -  \int_{e'(P)} \eta \wedge \tilde{\psi'}_{0,1}(n) = -
\left(\int_{A_n} \eta \right) e^{-2\pi n}.
\]
Indeed, for $\eta = \Omega$ this is Remark~\ref{youwillneedthis}. Otherwise, $\eta$ is exact with special primitive $\omega$, and it is not hard to see that the proof of Proposition~\ref{finalintegral} carries over to this situation. Since $C_n^c = C_n \coprod (-A_n)$ collecting all terms completes the proof of Theorem~\ref{newcurrenteq}.

\end{document}